\newtheorem{theorem}{Theorem}[section]
\newtheorem{lemma}[theorem]{Lemma}
\newtheorem{corollary}[theorem]{Corollary}
\newtheorem{proposition}[theorem]{Proposition}
\theoremstyle{definition}
\newtheorem{definition}[theorem]{Definition}
\newtheorem{algorithm}[theorem]{Algorithm}
\newtheorem{example}[theorem]{Example}
\theoremstyle{remark}
\newtheorem{remark}[theorem]{Remark}
\numberwithin{figure}{subsection}
\newenvironment{ajd1}{\noindent\color{blue} AJD }{}
\newenvironment{nb1}{\noindent\color{red} NB }{}
\newenvironment{dmr1}{\noindent\color{magenta} DMR }{}
\newenvironment{del1}{\noindent\color{red}}{}
\renewcommand{\a}{\alpha }
\renewcommand{\b}{\beta }
\newcommand{\G}{\Gamma}
\newcommand{\D}{\Delta}
\newcommand{\e}{\varepsilon }
\renewcommand{\l}{\lambda }
\renewcommand{\L}{\Lambda }
\newcommand{\Si}{\Sigma}
\renewcommand{\t}{\tau }
\renewcommand{\phi}{\varphi}
\newcommand{\Om}{\Omega}
\newcommand{\om}{\omega}
\newcommand{\pf}{\mathfrak{p}}
\newcommand{\qf}{\mathfrak{q}}
\newcommand{\FF}{\mathbb{F}}
\newcommand{\NN}{\mathbb{N}}
\newcommand{\ZZ}{\mathbb{Z}}
\newcommand{\cA}{\mathcal{A}}
\newcommand{\cC}{\mathcal{C}}
\newcommand{\cI}{\mathcal{I}}
\newcommand{\cM}{\mathcal{M}}
\newcommand{\cO}{\mathcal{O}}
\newcommand{\cR}{\mathcal{R}}
\newcommand{\cV}{\mathcal{V}}
\newcommand{\cX}{\mathcal{X}}
\newcommand{\cPC}{\mathcal{PC}}
\newcommand{\lp}{\langle}
\newcommand{\rp}{\rangle}
\newcommand{\maps}{\rightarrow}
\newcommand{\cxx}{\mathbf{x}}
\newcommand{\mA}{\lp A\rp}
\newcommand{\ml}{\lp \l\rp}
\newcommand{\xA}{\mathbf{x}\lp A\rp}
\newcommand{\XA}{X\mA}
\newcommand{\bh}{\backslash}
\newcommand{\Ms}{\cM_\psi}
\newcommand{\Mh}{\cM_\varphi}
\newcommand{\Msa}{\cM_{\psi^a}}
\newcommand{\Mhb}{\cM_{\varphi^b}}
\newcommand{\hsa}{\widehat \psi^a}
\newcommand{\hhb}{\widehat \varphi^b}
\newcommand{\be}{\begin{enumerate}}
\newcommand{\ee}{\end{enumerate}}
\newcommand{\bd}{\begin{description}}
\newcommand{\ed}{\end{description}}
\newcommand{\biz}{\begin{itemize}}
\newcommand{\eiz}{\end{itemize}}
\newcommand\RAS[2]{\overset{#1}{\underset{#2}{\Longrightarrow}}}
\newcommand\LAS[2]{\overset{#1}{\underset{#2}{\Longleftarrow}}}
\def\pond*{pond}
\def\OmegaHeading{\texorpdfstring{$\Omega$}{Omega}}
\def\VnrHeading{\texorpdfstring{$V_{n,r}$}{V(n,r)}}
\def\GnrHeading{\texorpdfstring{$G_{n,r}$}{G(n,r)}}
\title{The power conjugacy problem in Higman-Thompson groups
\footnote{This research was partially supported by the EPSRC Grant EP/K016687/1.}
\footnote{The first and third authors were supported by EPSRC doctoral training grants. Parts of this paper appear in the first author's thesis.}
}
\author[1]{Nathan Barker}
\author[2]{Andrew J.~Duncan}
\author[2]{David M.~Robertson}
\affil[1]{CMEP, 
Department for Pure Mathematics and Mathematical Statistics, 
Cambridge University, CB3 OWB, UK}
\affil[2]{School of Mathematics and Statistics, Newcastle University, Newcastle upon Tyne, NE1 7RU, UK}
\begin{document}
\maketitle
\begin{abstract}
An introduction to the universal algebra approach to Higman-Thompson groups (including Thompson's group~$V$) is given, 
following a series of lectures by Graham Higman in 1973.
In these talks, Higman outlined an algorithm for the conjugacy problem; which although essentially correct 
fails in certain cases, as we show here.  
A revised and complete version of the algorithm is written out explicitly. 
From this, we construct an algorithm for the power conjugacy problem in these groups.
Python implementations of these algorithms can be found at~\cite{DMR}. 
\end{abstract}

\section{Introduction}

In 1965, Thompson introduced the group now called  ``Thompson's group $V$'' and its subgroups $F<T$.
In doing so he gave the first examples (namely $V$ and $T$) of finitely presented, infinite simple groups (see \cite{CFP,RT}). 
McKenzie and Thompson \cite{MT} later used $V$ to construct finitely presented groups with unsolvable word problem. 
Subsequently, Galvin and Thompson (unpublished) identified $V$ with the automorphism group of an algebra $V_{2,1}$, studied by J\'onsson and Tarski \cite{JT}.
Higman \cite{Higg} generalised this construction, defining $G_{n,r}$   
as the automorphism group of a generalisation $V_{n,r}$ of $V_{2,1}$, for $n\ge 2$ and $r\ge 1$.
Moreover, Higman showed that the commutator subgroup of~$G_{n,r}$ 
is a finitely generated, infinite, simple group, for all  $n\ge 2$. 
($G_{n,r}$ is perfect when $n$ is even, and its commutator subgroup has index $2$ when $n$ is odd.)

The groups $G_{n,r}$ are the ``Higman-Thompson'' groups
of the title. There are many isomorphic groups in this set: 
in fact the algebras $V_{n,r}$ and $V_{n',r'}$ are isomorphic if and only if $n=n'$ and
$r\equiv r' \mod{n-1}$; so $G_{n,r} \cong G_{n',r'}$ if $n=n'$ and
$r\equiv r' \mod{n-1}$. Higman \cite{Higg} showed that there are infinitely many non-isomorphic groups $G_{n,r}$
and gave necessary conditions for such groups  to be isomorphic. Recently Pardo \cite{P} completed 
the isomorphism classification, showing that Higman's necessary conditions are also sufficient: that is 
$G_{n,r}\cong G_{n',r'}$ if and only if $n=n'$ and $\gcd(n-1,r)=\gcd(n'-1,r')$. 
Higman-Thompson groups have been much studied and further generalised:  
we refer to 
 \cite{CFP,B,Bleaketal,MPN,DMP,BCR} for example.

In this paper we consider the conjugacy and power conjugacy problems in Higman-Thompson groups. 
We use Higman's method, describing the groups $G_{n,r}$ in terms of universal algebra. This allows us to  give
a detailed description of the algorithm for the conjugacy problem; and to uncover a gap in the original algorithm proposed by Higman. 
To be precise, Lemma 9.6 of \cite{Higg} is false, and consequently the ``orbit sharing'' algorithm in \cite{Higg} does not always detect
elements in the same orbit of an automorphism.  
The orbit sharing algorithm is crucial to the algorithm for conjugacy given in \cite{Higg}, which
 may fail to recognise that a pair of elements of $G_{n,r}$ are conjugate. Fortunately it is not difficult to complete the algorithm.   
 We then extend 
 these results to construct an algorithm for the power conjugacy problem: that is, 
given elements $g,h$ in a group  $G$ decide whether or not there exist 
 non-zero integers $a$ and $b$ such that $g^a$ is conjugate to $h^b$. 

The power conjugacy problem though less well known than the conjugacy
problem, already occurs as one of the problems in the hierarchy of decision problems 
studied by Lipschutz and Miller \cite{LM71}. The problem has been shown to be decidable in, for example,  
certain HNN-extensions and free products with cyclic amalgamation \cite{AS74,C77}, in certain one-relator groups \cite{P08}, 
in Artin groups of extra large type \cite{BK08}, in groups with small cancellation  conditions C(3) and T(6) \cite{Bez14} and in free-by-cyclic
groups \cite{BMOV}. 
Cryptographic protocols based on the power conjugacy search problem have been proposed, see for example \cite{KA09}, although these may
be susceptible to attack by quantum computer \cite{Fes14}. 

The third author has implemented the algorithms described in this paper in Python \cite{DMR}. 
In fact it was the process of testing this implementation which uncovered the existence of an orbit unrecognised in \cite{Higg}; 
and it became evident that
the algorithms of \cite{Higg} were incomplete.

Note that other approaches to  algorithmic problems 
in $G_{n,r}$ have been developed. For example \cite{Olga} proposes an algorithm for  the conjugacy problem in $G_{2,1}$  
based on the the revealing tree pairs of Brin \cite{B}. 
In \cite{Bleaketal} the same methods are used to study the centralisers of elements of $G_{n,1}$ for $n\ge 2$. 
Again 
Belk and Matucci \cite{BM} gave a solution to the conjugacy problem in 
$G_{2,1}$ based on strand diagrams. In another direction, Higman's methods were used 
by Brown \cite{Br} to show that all the Higman-Thompson groups are of type $FP_\infty$.
This discussion of finiteness properties has been
extended to generalisations of Higman-Thompson groups,
by Martinez-Perez and Nucinkis \cite{MPN}.
 
In detail the contents of the paper are as follows. In order to make this account 
self-contained,
we begin with an introduction to universal algebra. 
Section \ref{chapterUA} outlines the universal algebra required, following Cohn's account \cite{cohnAlgebra3}. 
In Section \ref{omegaAlgebra} we introduce $\Om$-algebras; that is universal algebras with signature $\Om$. Sections
\ref{sec:cong} and \ref{freealgebrasIntro}  cover quotients of $\Om$-algebras, varieties of $\Om$-algebras 
and free $\Om$-algebras. We use this 
machinery in Section \ref{v21freealgebra} to define the algebras $V_{n,r}$ and establish 
their basic properties, following
the exposition of \cite{Higg}.

 The groups $G_{n,r}$ are defined in Section \ref{HigmanThompsonG21} as the automorphism groups of $V_{n,r}$. 
We represent elements of $G_{n,r}$ as bijections between carefully chosen generating
sets of the algebras  $V_{n,r}$. This is done in two stages, beginning with the semi-normal forms of 
Section \ref{SNF}. There are many ways of representing a given automorphism in \emph{semi-normal} form, but
 in Section \ref{sec:qnf} it is shown that this representation may be refined to a unique \emph{quasi-normal} form.
Furthermore, an algorithm is given which takes an automorphism and produces a quasi-normal form representation.
 
The solution to the conjugacy problem is based on an analysis of certain orbits of automorphisms in 
quasi-normal form, and we give a full account of this analysis in Sections \ref{SNF} and \ref{sec:qnf}. Here we follow 
\cite{Higg} except that, as pointed out above, there exist orbits of types not recognised there, which give   
 automorphisms in quasi-normal form a richer structure, as described here.  
 
Section \ref{VPVRI} contains the algorithm for the conjugacy problem. This involves 
breaking an automorphism down into well-behaved parts. It is shown that every element of $G_{n,r}$ decomposes
into factors which are called \emph{periodic} and \emph{regular infinite} parts. The conjugacy problem 
for periodic and regular infinite components are solved separately and then the results recombined. The decomposition
into these parts is the subject of 
Section \ref{sec:pri} and here we give the main algorithm for the conjugacy problem, 
Algorithm \ref{conjAlgorithm}. This algorithm depends on algorithms for periodic and regular infinite
automorphisms: namely Algorithm \ref{alg:periodic} in Section \ref{subsectionP} and 
Algorithm \ref{alg:reginf} in Section \ref{subsectionRI}.

  In Section \ref{PowerConj} we turn to the 
power conjugacy problem.  In the version considered here the problem is, given $g,h\in G_{n,r}$  to find all pairs
of non-zero integers $(a,b)$ such that $g^a$ is conjugate to $h^b$. Again the problem splits into the periodic and regular infinite parts. The 
periodic part is straightforward, and reduces to the conjugacy problem; see Section \ref{torPower}. 
The algorithm for power conjugacy of regular infinite elements is Algorithm \ref{powerconjAlgorithm}, in Section 
\ref{powerconjAlgorithmSec} and gives the main result of the paper  Theorem \ref{PCG}: that the power conjugacy 
problem is solvable. On input $g,h \in G_{n,r}$ the algorithm returns a (possibly empty) set $S$ consisting of all 
pairs of integers $(a,b)$ such that $g^a$ and $h^b$ are conjugate; as well as a conjugator, for each pair.

In outline, the main steps of the algorithm for the (power-)conjugacy problem are: 
\begin{itemize}
	\item Lemma~\ref{lem:qnf} which computes the quasi-normal basis of a given automorphism;
	\item Lemma~\ref{lem:compcheck},  the `component-sharing test', as in Higman's original algorithm;
        \item Lemma~\ref{9.7H}, the `orbit-sharing test', which recognises and combines components which belong to a single orbit;
	\item Algorithm~\ref{conjAlgorithm} which is Higman's solution to the conjugacy problem; and 
	\item Algorithm~\ref{powerconjAlgorithm} which determines if two automorphisms are power conjugate.
\end{itemize}

The examples given throughout the text are used as examples in \cite{DMR}, from where these and other examples may be run through the third author's implementations of the algorithms.
To find Example $x$.$y$ in \cite{DMR}, follow the instructions in the documentation to install the program; then run

{\ttfamily
	\indent >>> from thompson import * \\
	\indent >>> f = load\_example('example\_x\_y')
}

\noindent in a Python session.
The automorphism will then be available as the Python object \texttt{f}.

\subsection*{Acknowledgements}
The authors thank  Collin Bleak, for suggesting work on conjugacy problems in Thompson's group, and 
Claas R\"over, who suggested the use of Higman's approach, 
and made many improvements to earlier versions of this work. We thank  Sarah Rees for overall guidance and constructive suggestions. We thank 
Steve Pride for pointing us in the direction of 
the power conjugacy problem and Francesco Matucci, Jos\'e Burillo and Matt Brin for helpful conversations.

\section{Universal Algebra}\label{chapterUA}

\subsection{\OmegaHeading-algebras}\label{omegaAlgebra}
In this section we review enough universal algebra to underpin the construction of the
 Higman-Thompson groups in later sections. We follow  \cite{cohnAlgebra3}.

\begin{definition}
An \emph{operator domain} consists of a set $\Omega$ and a mapping $a:\Omega \to \mathbb{N}_0$. The elements of $\Omega$ are called \emph{operators}. If $\omega \in \Omega$, then $a(\omega)$ is called 
the \emph{arity} of $\omega$. We shall write $\Omega(n)=\{\omega\in\Omega \mid a(\omega)=n\}$, and refer to the members of $\Omega(n)$ as $n$\emph{-ary operations}.

An \emph{algebra} with \emph{operator domain} (or \emph{signature}) $\Om$
   consists of a set $S$, called the \emph{carrier} of the algebra, and  
a family of maps $\{\phi_\om\}_{\om\in \Om}$ indexed by $\Omega$, such that for $\omega\in \Omega(n)$, 
$\phi_\om$ is a map from $S^n$ to $S$. 
\end{definition}
Following \cite{cohnAlgebra3} we suppress all mention of  
the maps $\phi_\om$, 
identifying $\phi_\om$ with $\om$, and referring  to any algebra with carrier $S$ and 
operator domain $\Omega$ 
as an $\Om$\emph{-algebra},
which we denote by $(S,\Om)$.
For example, a group $(G,\cdot, ^{-1}, 1)$ is a $\Om$-algebra with operator domain
$\{\cdot, ^{-1}, 1\}$ and carrier $G$, where~$\cdot$ is binary, $^{-1}$ is unary and 
$1$ is a constant.
For this to describe a group, certain laws must hold between these operations, i.e.\ the group axioms.

Given an $\Omega$-algebra $(S,\Omega)$ and $f\in \Omega(n)$, we  write $s_1\cdots s_{n}f$ for
the image of the $n$-tuple $(s_1,\ldots,s_{n})\in S^n$ under $f$. 
We say that a subset $T\subseteq S$ is \emph{closed under the operations of} $\Omega$ 
(or that $T$ is $\Om$\emph{-closed}) if, for all 
$n\ge 0$, for all $f$ 
in $\Omega(n)$ and for all $s_1,\ldots ,s_{n}\in T$ 
the element $s_1\cdots s_{n}f$ is also an element of $T$. Indeed, if $T$ is a subset of $S$ then 
$T$ is $\Om$-closed if and only if  
$(T,\Om)$ is an $\Om$-algebra: which brings us to the next definition.

\begin{definition}\label{subalgebrafree}
Given an $\Omega$-algebra $(S,\Omega)$, an $\Omega$\emph{-subalgebra} is an $\Omega$-algebra $(T,\Omega)$ 
whose carrier $T$ is a subset of $S$.
\end{definition}

The intersection of any family of subalgebras is again a subalgebra. 
Hence, for any subset $X$ of the set $S$ we may define the subalgebra $\langle X\rangle$ \emph{generated} by
$X$ to  be the intersection of all subalgebras containing $X$. 
 The subalgebra  $\langle X\rangle$ may also be defined recursively: that is $\langle X \rangle$ is the
subset of $S$ such that 
(i) $X\subseteq \lp X\rp$, (ii) if $y_1,\ldots ,y_n\in \langle X\rangle$ then $y_1\cdots y_nf\in \langle X\rangle$, 
for all $f\in \Om(n)$ and (iii) if $s$ does not satisfy (i) or (ii) then $s$ 
does not belong to $\langle X\rangle$.  Loosely speaking we might say that 
$\lp X\rp$ is obtained from $X$ by applying a finite sequence of operations
of $\Om$.
 If the subalgebra generated by $X$ is the whole of $S$, then $X$ is called a \emph{generating set} for 
$(S,\Om)$.

A mapping $ g: \mathcal{A}\to \mathcal{B}$ between two $\Omega$-algebras $\mathcal{A}=(S,\Omega),\mathcal{B}=(S',\Omega)$ is said to be \emph{compatible} with $f\in\Omega(n)$ if, for all $s_1,\ldots,s_{n}\in S$,
\[(s_1g)\cdots (s_{n}g)f=(s_1\cdots s_{n}f)g.\]
If $g$ is compatible with each $f\in\Omega$, it is called a \emph{homomorphism} from $\mathcal{A}=(S,\Omega)$ to $\mathcal{B}=(S',\Omega)$. If a homomorphism $g$ from $\mathcal{A}$ to $\mathcal{B}$ has an inverse $g^{-1}$ which is again a homomorphism, $g$ is called an \emph{isomorphism} and then the $\Omega$-algebras $\mathcal{A}=(S,\Omega),\mathcal{B}=(S',\Omega)$ are said to be \emph{isomorphic}. 
An isomorphism of an algebra $\mathcal{A}=(S,\Omega)$ with itself is called an \emph{automorphism} and a homomorphism of an algebra into itself is called an endomorphism. 
A homomorphism is determined once the images of a generating set are fixed.
\begin{proposition}[{\cite[Proposition 1.1]{cohnAlgebra3}}]
Let $g,h:\mathcal{A}\to \mathcal{B}$ be two homomorphisms between $\Omega$-algebras $\mathcal{A}=(S,\Omega),\mathcal{B}=(S',\Omega)$. If $g$ and $h$ agree on a generating set for $\mathcal{A}$, then they are equal. 
\end{proposition}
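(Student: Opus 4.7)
The plan is to show that the set $E = \{a \in S : ag = ah\}$ on which the two homomorphisms agree is an $\Omega$-subalgebra of $\mathcal{A}$. Once that is established, the hypothesis that $g$ and $h$ agree on a generating set $X$ gives $X \subseteq E$, and the characterization of $\langle X \rangle$ as the intersection of all subalgebras containing $X$ (or equivalently, the recursive description from Definition \ref{subalgebrafree}) forces $\langle X \rangle \subseteq E$. Since $X$ generates $\mathcal{A}$, we have $\langle X \rangle = S$, so $E = S$ and $g = h$.

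The heart of the proof is thus verifying that $E$ is $\Omega$-closed. Given $f \in \Omega(n)$ and $s_1, \ldots, s_n \in E$, the compatibility of each homomorphism with $f$ lets us compute
\[
(s_1 \cdots s_n f)g = (s_1 g)\cdots (s_n g) f = (s_1 h)\cdots (s_n h) f = (s_1 \cdots s_n f)h,
\]
where the middle equality uses $s_i g = s_i h$ for each $i$, which holds because $s_i \in E$. This shows $s_1\cdots s_n f \in E$, so $E$ is indeed $\Omega$-closed, hence an $\Omega$-subalgebra by the remark preceding Definition \ref{subalgebrafree}.

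I do not anticipate any real obstacle: the argument is purely formal and uses only the definitions of homomorphism, subalgebra, and generating set. The only subtlety worth noting is the nullary case $n = 0$, where constants $f \in \Omega(0)$ are automatically sent to the same element of $\mathcal{B}$ by $g$ and $h$ (since the compatibility condition $()g \cdot f = f g$ and similarly for $h$ forces $fg = fh$), so constants automatically lie in $E$ and need not be stipulated in the generating set.
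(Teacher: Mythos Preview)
Your proof is correct and is the standard argument; the paper itself does not supply a proof of this proposition but simply cites it from Cohn. There is nothing to compare against, and your treatment (including the remark on nullary operations) is complete.
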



From a family $\{\mathcal{A}_i\}_{i=1}^{m}$ ($\mathcal{A}_i=(S_i,\Omega)$) of $\Omega$-algebras we can form the \emph{direct product} $P=\prod_{i=1}^{m} \mathcal{A}_i$ of $\Omega$-algebras. Its set is the Cartesian product $S$ of the $S_i$, and the operations are carried out component wise. Thus, if $\pi_i:S\to S_i$ are the projections from the product to the factors then any $f\in \Omega$ of arity $n$ is defined on $S^n$ by the equation 
\[(p_1\cdots p_nf)\pi_i=(p_1\pi_i)\cdots (p_n\pi_i)f,\]
where $p_i\in S$.

 Let $\mathcal{C}$ be a class of $\Omega$-algebras, whose elements we will call $\mathcal{C}$-algebras. By a \emph{free $\mathcal{C}$-algebra} on a set $X$ we mean a $\mathcal{C}$-algebra $F$ with the following universal property.

\begin{quote}
There is a mapping $\mu : X\to F$ such that every mapping $f:X\to \mathcal{A}$ into a $\mathcal{C}$-algebra $\mathcal{A}$ can be factored uniquely by $\mu$ to give a homomorphism from $F$ to $\mathcal{A}$, \emph{i.e.} there exists a unique homomorphism $f':F\to \mathcal{A}$ such that $\mu f'=f$.
\end{quote}

In this case we say that $X$ is a \emph{free generating set} or a \emph{basis} for $F$. If $X$ is a subset of $F$ then we shall always assume that
$\mu$ is the inclusion map.  
Not every class has free algebras, but they do  exist in the  class under consideration here 
(see Proposition \ref{cohnPropFA}).  

A \emph{free product} is defined similarly,  replacing the set $X$ by a collection of $\mathcal{C}$ algebras. Given an indexing set $I$ and for each $i\in I$ an $\Omega$ algebra $A_i$ from $\mathcal{C}$ the free product $\mathcal{A}$ of $\{A_i\}_{i\in I}$, written $\mathcal{A}=\ast_{i\in I}\mathcal{A}_i$, is an $\Omega$-algebra in $\mathcal{C}$ satisfying the following property.
\begin{quote}
There exist homomorphisms $\mu_i : A_i\to \mathcal{A}$, for all $i\in I$, such that for any $\Omega$-algebra $\mathcal{B}$ and homomorphisms $f_i:A_i\to \mathcal{B}$, for all $i\in I$, there exists a unique homomorphism $f':\mathcal{A} \to \mathcal{B}$ such that $\mu_i f'=f_i$, for all $i$.
\end{quote}
Given collections $\{\mathcal{A}_i\}_{i\in I}$ and $\{\mathcal{B}_i\}_{i\in I}$ of $\Omega$-algebras 
such that there exist free products 
$\mathcal{A}=\ast_{i\in I} \mathcal{A}_i$ and $\mathcal{B}=\ast_{i\in I} \mathcal{B}_i$, 
then, by definition, there exist homomorphisms 
$\mu_i:\mathcal{A}_i\to \mathcal{A}$ and  $\mu'_i:\mathcal{B}_i\to \mathcal{B}$,  
for all $i\in I$. In this case, given  homomorphisms $f_i:\mathcal{A}_i\to \mathcal{B}_i$,  
for all $i\in I$, the composition $f_i\mu'_i$ is a homomorphism from $\mathcal{A}_i$ 
to $\mathcal{B}$,  
 so there exists a unique homomorphism $f':\mathcal{A}\to \mathcal{B}$,  
with $\mu_if'=f_i\mu'_i$, for all $i\in I$. We denote $f'$ by $\ast_{i\in I}f_i$.

\subsection{Congruence on an \OmegaHeading-algebra}\label{sec:cong}
A \emph{relation} between  two sets $S$ and $R$ is defined to be a subset of the Cartesian product $S\times R$. 
A \emph{mapping} $f: S\to R$ is a relation $\Gamma_f\subset S\times R$ with the properties that 
for each $s\in S$ there exists $r\in R$ such that $(s,r)\in \Gamma_f$ (everywhere defined) and 
if $(s,r), (s,r')\in \Gamma_f$ then $r=r'$ (single valued).
 A relation $\Gamma\subset S\times R$ has an \emph{inverse} $\G^{-1}$, defined by  
 \[\Gamma^{-1}=\{(r,s)\in R\times S \mid (s,r)\in \Gamma\};\]
 and if  $\Delta \subset R\times T$ is a relation  then the composition  $\Gamma \circ \Delta$ of $\G$ and $\D$ is 
defined by 
 \[\Gamma \circ \Delta =\{(s,t)\in S\times T \mid (s,x)\in \Gamma \ \text{and} \ (x,t)\in\Delta \ \text{for some} \ x\in R\}.\]
 If $\Gamma\subset S\times R$ and $S'\subset S$ we define
 \[S'\Gamma=\{r\in R \mid (s,r)\in \Gamma \ \text{for some $s\in S'$}\}.\] 
 Given a set $S$ the  \emph{identity relation} $1_S=\{(s,s) \mid s\in S\}$ and the \emph{universal relation} $S^2=\{(s,s') \mid s,s'\in S\}$ always exist. 

An \emph{equivalence} on a set $S$ is a subset $\Gamma$ of $S^2$ with the properties
$\Gamma \circ \Gamma\subset \Gamma$ (transitivity): 
$\Gamma^{-1}=\Gamma$ (symmetry) and 
$1_S\subseteq \Gamma$ (reflexivity).
 The \emph{equivalence class} of $s\in S$ is $\{s'\in S \mid (s,s')\in \Gamma\}=\{s\}\Gamma$.  
 Given any subset $U$ of $S\times S$, the \emph{equivalence generated} by $U$ is 
 \[E=\bigcap\{V\subseteq S\times S \mid V \ \text{is an equivalence and $U\subseteq V$}\};\]
that is, the smallest equivalence $E$ on $S$ containing $U$. 
It follows that $E$ is 
\[\{(a,b)\in S\times S\mid \text{there exists $a_0,\ldots,a_n$ such that $a_0=a$, $a_n=b$ and $(a_i,a_{i+1})\in U$}\}.\]

Of particular interest in   the study of $\Omega$-algebras are  relations which are also subalgebras. 
Firstly, if $\mathcal{A}=(S,\Omega)$ and $\mathcal{B}=(R,\Omega)$ are $\Omega$-algebras 
and $\Gamma\subset S\times R$ is a relation which is closed under the operations of $\Omega$, 
as defined in $\mathcal{A}\times \mathcal{B}$, then $(\Gamma,\Omega)$ is a subalgebra of $\mathcal{A}\times \mathcal{B}$. 
In this case we abuse notation and say $\Gamma$ is a subalgebra of $\mathcal{A}\times \mathcal{B}$.

\begin{lemma}[{\cite[Lemma 2.1, Chapter 1]{cohnAlgebra3}}]
Let $\mathcal{A}, \mathcal{B}, \mathcal{C}$ be $\Omega$-algebras and let $\Gamma, \Delta$ be subalgebras of $\mathcal{A}\times \mathcal{B}, \mathcal{B}\times \mathcal{C}$ respectively. Then $\Gamma^{-1}$ is a subalgebra of $\mathcal{B}\times \mathcal{A}$, $\Gamma\circ \Delta$ is a subalgebra of $\mathcal{A}\times \mathcal{C}$ and if $\mathcal{A}'$ is a subalgebra of $\mathcal{A}$, with
carrier $S'\subseteq S$, then $(S'\Gamma,\Om)$ is a subalgebra of $\mathcal{B}$.
\end{lemma}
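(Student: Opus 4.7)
The plan is to verify directly, for each of the three claims, that the candidate subset is closed under the operations of $\Omega$, exploiting only the hypothesis that $\Gamma$ (respectively $\Delta$) is an $\Omega$-subalgebra of the product algebra, which by definition means it is closed under the component-wise action of each $f \in \Omega(n)$ on pairs.

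For $\Gamma^{-1} \subseteq \mathcal{B} \times \mathcal{A}$, I would fix $f \in \Omega(n)$ and take arbitrary $(b_1,a_1), \ldots, (b_n,a_n) \in \Gamma^{-1}$. By definition of $\Gamma^{-1}$, each pair $(a_i,b_i)$ lies in $\Gamma$. Closure of $\Gamma$ under $f$ in $\mathcal{A}\times\mathcal{B}$ gives $(a_1\cdots a_n f,\ b_1\cdots b_n f) \in \Gamma$, so by definition $(b_1\cdots b_n f,\ a_1\cdots a_n f) \in \Gamma^{-1}$; but this is exactly the image of $(b_1,a_1),\ldots,(b_n,a_n)$ under $f$ in $\mathcal{B}\times\mathcal{A}$.

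For $\Gamma \circ \Delta$, I would again fix $f \in \Omega(n)$ and $(a_1,c_1),\ldots,(a_n,c_n) \in \Gamma\circ\Delta$. For each $i$ choose a witness $b_i$ in the carrier of $\mathcal{B}$ with $(a_i,b_i) \in \Gamma$ and $(b_i,c_i) \in \Delta$. Closure of $\Gamma$ in $\mathcal{A}\times\mathcal{B}$ yields $(a_1\cdots a_n f,\ b_1\cdots b_n f) \in \Gamma$, and closure of $\Delta$ in $\mathcal{B}\times\mathcal{C}$ yields $(b_1\cdots b_n f,\ c_1\cdots c_n f) \in \Delta$. The common middle element $b_1\cdots b_n f$ then witnesses $(a_1\cdots a_n f,\ c_1\cdots c_n f) \in \Gamma\circ\Delta$, which is the $f$-image in $\mathcal{A}\times\mathcal{C}$.

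Finally, for $S'\Gamma \subseteq S_{\mathcal{B}}$, I would take $b_1,\ldots,b_n \in S'\Gamma$ and $f \in \Omega(n)$, and for each $i$ pick $s_i \in S'$ with $(s_i,b_i) \in \Gamma$. Because $\mathcal{A}'$ is an $\Omega$-subalgebra, $s_1\cdots s_n f \in S'$; because $\Gamma$ is closed under $f$, $(s_1\cdots s_n f,\ b_1\cdots b_n f) \in \Gamma$. These together say $b_1\cdots b_n f \in S'\Gamma$, giving $\Omega$-closure and hence (by the remark preceding Definition \ref{subalgebrafree}) that $(S'\Gamma,\Omega)$ is a subalgebra of $\mathcal{B}$. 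No step here is a real obstacle: each claim is a one-line unwinding of definitions, and the only conceptual content is remembering that operations on relations are inherited component-wise from the product algebra.
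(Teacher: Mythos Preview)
Your proof is correct and is exactly the standard element-chasing verification one expects for this lemma. Note, however, that the paper does not actually supply a proof of this statement: it is simply quoted from Cohn \cite{cohnAlgebra3} without argument, so there is no ``paper's own proof'' to compare against. Your direct verification of $\Omega$-closure for each of the three constructions is precisely the intended argument and would serve perfectly well as the omitted proof.
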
 
 
 Let $S$ and $T$ be sets and $f: S\to T$ a mapping between them. The \emph{image} of $f$ is defined as $S\Gamma_f$,
 and  the \emph{kernel} of $f$ is defined as 
 \[\operatorname{ker} f=\{(x,y)\in S^2 \mid xf=yf\}. \]
 The latter is an equivalence on $S$; the equivalence classes are the inverse images of elements in the image  
(sometimes called the fibres of $f$).
 
 \begin{example}[Groups]
Given a group homomorphism $f:G\to H$, the (group-theoretic) kernel of $f$ is a normal subgroup $N$; 
and the different cosets of $N$ in $G$ are the fibres of $f$.  
 So, the equivalence classes of $\text{ker} f$, in the definition above, are the cosets of $N$ in $G$.
 \end{example}

A \emph{congruence} on an $\Omega$-algebra $\mathcal{A}=(S,\Omega)$ is an equivalence on $S$ which is also a subalgebra of $\mathcal{A}^2$ \emph{i.e.} an equivalence $\Gamma\subset S\times S$ which is $\Omega$-closed. From the above, $1_{\mathcal{A}}$ and $\mathcal{A}^2$ are congruences on $\mathcal{A}$. 
Given any subset $U\subseteq S\times S$ the \emph{congruence generated by $U$} is  
\[C=\bigcap\{V\subseteq S\times S \mid  \text{$V$ is a congruence and $U\subseteq V$}\}.\] 
It follows that $C$ is the smallest congruence on $\mathcal{A}$ containing $U$. 

Let $\mathcal{A}$ be an $\Omega$-algebra. By definition a congruence is an equivalence which admits the operations $\omega$ ($\omega\in \Omega$). Now each $n$-ary operator $\omega$ defines an $n$-ary operation on $\mathcal{A}$:
\begin{equation}\label{law1}
(a_1,\ldots,a_n)\mapsto a_1\cdots a_n\omega \ \text{for $a_1,\ldots ,a_n\in \mathcal{A}$}.
\end{equation}
By giving fixed values in $\mathcal{A}$ to some of the arguments, we obtain $r$-ary operations for $r\leq n$. 
In particular, if we fix all the $a_j$ except one, say the $i$th, 
we obtain, for any $n-1$ fixed elements $a_1,\ldots ,a_{n-1}\in \mathcal{A}$, a unary operation 
\begin{equation}\label{rule1}
x\mapsto a_1\cdots a_{i-1}xa_i\cdots a_{n-1}\omega;
\end{equation}
and this applies for  all $i\in \{1,\ldots,n\}$.
We say that the operation (\ref{rule1}) 
is an \emph{elementary translation} (derived from $\Om$ by specialisation in $\mathcal{A}$). 
 Given a finite sequence $\t_1,\ldots, \t_n$ of elementary transformations the composition
$\t=\t_1\circ \cdots \circ \t_n$ is also a unary operation on $\cA$, which we call a \emph{translation}.
(In particular we allow $n=0$ in this definition, so the identity map on $\cA$ is a translation.)   

\begin{proposition}[{\cite[Proposition 6.1, Chapter6]{Cohn}}]\label{equivTOcong}
An equivalence $\mathfrak{q}$ on an $\Omega$-algebra $\mathcal{A}$ is a congruence if and only if it 
is closed under  all translations. More precisely, a congruence is closed under all translations, 
while any equivalence which is closed under all elementary translations is a congruence.
\end{proposition}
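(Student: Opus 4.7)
The plan is to prove both directions by combining the three equivalence axioms with the definition of $\Omega$-closure, using elementary translations as a bridge between the unary and $n$-ary pictures.

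First, for the forward direction, assume $\mathfrak{q}$ is a congruence and let $\tau$ be an elementary translation, say $\tau(x) = a_1\cdots a_{i-1}xa_i\cdots a_{n-1}\omega$ for some $\omega \in \Omega(n)$ and fixed $a_j \in \mathcal{A}$. Given $(x,y)\in\mathfrak{q}$, I apply reflexivity to obtain $(a_j,a_j)\in\mathfrak{q}$ for each $j$, and then use the fact that $\mathfrak{q}$ is $\Omega$-closed (being a subalgebra of $\mathcal{A}^2$) to conclude $(\tau(x),\tau(y))\in\mathfrak{q}$. Closure under arbitrary translations then follows because any translation is, by definition, a finite composition of elementary translations, and closure is preserved under composition.

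For the converse — the substantive half — assume $\mathfrak{q}$ is an equivalence on $\mathcal{A}$ closed under all elementary translations, and let $\omega \in \Omega(n)$ together with pairs $(a_i,b_i)\in \mathfrak{q}$ for $i = 1,\ldots,n$. The goal is to show $(a_1\cdots a_n\omega,\,b_1\cdots b_n\omega)\in \mathfrak{q}$. The key idea is to change one coordinate at a time. Define, for each $i$, the elementary translation
\[
\tau_i(x) = b_1\cdots b_{i-1}\,x\,a_{i+1}\cdots a_n\,\omega.
\]
Closure under $\tau_i$ applied to $(a_i,b_i)\in\mathfrak{q}$ yields
\[
(b_1\cdots b_{i-1}a_ia_{i+1}\cdots a_n\omega,\ b_1\cdots b_{i-1}b_ia_{i+1}\cdots a_n\omega)\in\mathfrak{q}.
\]
Chaining these pairs for $i=1,\ldots,n$ and invoking transitivity of $\mathfrak{q}$ produces $(a_1\cdots a_n\omega, b_1\cdots b_n\omega)\in\mathfrak{q}$, which is exactly the statement that $\mathfrak{q}$ is $\Omega$-closed, hence a subalgebra of $\mathcal{A}^2$, hence a congruence.

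The proof is essentially routine; the only mild subtlety is bookkeeping in the chaining step, where one must be careful to specialise the \emph{already replaced} coordinates $b_1,\ldots,b_{i-1}$ on the left of $x$ and the \emph{not-yet-replaced} coordinates $a_{i+1},\ldots,a_n$ on the right, so that consecutive pairs share an endpoint and transitivity applies. No further difficulty is expected.
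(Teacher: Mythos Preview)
Your proof is correct and is the standard argument. Note, however, that the paper does not actually supply its own proof of this proposition: it is stated with a citation to Cohn and left unproved, so there is nothing in the paper to compare your argument against. Your write-up would serve perfectly well as the missing proof.
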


\begin{remark}
If $U\subseteq S\times S$, then the congruence generated by $U$ can be seen to consist of pairs $(a,b)\in S\times S$ such that there 
exist $m\ge 0$,  $a_0,\ldots  ,a_m \in S$,  and a translation $\t$ with    
\begin{itemize}
\item $a_0=a$, $a_m=b$ and 
\item $(a_i,a_{i+1})=(u_i\tau,u_{i+1}\tau)$ 
\end{itemize}
where  either $(u_i,u_{i+1})\in U$,  $(u_{i+1},u_i)\in U$ or $u_i=u_{i+1}$.  
That is,  there exist $s_1,\ldots ,s_{n-1}\in S$,  $u_0,\ldots , u_m\in S$, 
and $\omega \in \Omega(n)$ such that $(u_i,u_{i+1})\in U\cup U^{-1}\cup 1_S$ and setting
\[a_i=(s_1,\ldots,s_{j-1},u_i,s_j,\ldots,s_{n-1})\omega,\]
for $0\le i\le m$, we have $a=a_0$ and $b=a_m$.
\end{remark}

The next two theorems explain the significance of congruences for $\Omega$-algebras and will be used in the following section on free algebras and varieties. 
 \begin{theorem}[{\cite[Theorem 2.2, Chapter 1]{cohnAlgebra3}}]
 Let $g: \mathcal{A}\to \mathcal{B}$ be a homomorphism of $\Omega$-algebras. Then the image of $g$ is a subalgebra of $\mathcal{B}$ 
and the  kernel of $g$ is a congruence on $\mathcal{A}$.
 \end{theorem}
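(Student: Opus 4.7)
The plan is to verify the two assertions separately by direct appeal to the defining compatibility condition of a homomorphism. Both parts amount to unpacking definitions, and nothing subtle should arise; the ``main obstacle'' is really just notational care in handling $n$-ary operators in the reverse Polish convention used in the paper.

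For the image being a subalgebra: I would take any $f\in\Omega(n)$ and any $b_1,\ldots,b_n\in S\Gamma_g$, choose preimages $a_i\in S$ with $a_ig=b_i$, and compute
\[
b_1\cdots b_n f \;=\; (a_1g)\cdots (a_ng)f \;=\; (a_1\cdots a_n f)g,
\]
where the second equality is compatibility of $g$ with $f$. The right-hand side lies in $S\Gamma_g$, so the image is $\Omega$-closed. This suffices to conclude that the image is a subalgebra of $\mathcal{B}$ in the sense of Definition \ref{subalgebrafree}, since an $\Omega$-closed subset inherits the structure of an $\Omega$-algebra.

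For the kernel being a congruence: first I would observe that $\ker g$ is an equivalence on $S$, which is immediate from reflexivity, symmetry and transitivity of equality in $S'$. It then remains to show that $\ker g$ is a subalgebra of $\mathcal{A}^2$, i.e.\ that it is $\Omega$-closed in the product algebra. Given any $f\in\Omega(n)$ and pairs $(a_1,a_1'),\ldots,(a_n,a_n')\in \ker g$, so that $a_ig=a_i'g$ for each $i$, compatibility of $g$ with $f$ yields
\[
(a_1\cdots a_n f)g \;=\; (a_1g)\cdots (a_ng)f \;=\; (a_1'g)\cdots (a_n'g)f \;=\; (a_1'\cdots a_n' f)g,
\]
so $(a_1\cdots a_n f,\, a_1'\cdots a_n' f)\in \ker g$. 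This is precisely the statement that $\ker g$ is closed under the componentwise action of $f$ on $\mathcal{A}^2$, so $\ker g$ is a subalgebra of $\mathcal{A}^2$ and hence a congruence on $\mathcal{A}$.

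Both steps are one-line calculations once the definitions are in hand, and there is no genuine difficulty; the proof is essentially forced by the definition of homomorphism. The only thing worth flagging is that the equivalence relation axioms for $\ker g$ are verified in the carrier $S$ (not $S'$), while the $\Omega$-closure is verified in $S^2$ via the componentwise operations in the direct product of $\mathcal{A}$ with itself, so the two conditions in the definition of congruence are being checked on genuinely different ambient sets.
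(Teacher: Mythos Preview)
Your proof is correct and is the standard argument. The paper does not actually supply its own proof of this statement; it merely cites Cohn~\cite[Theorem 2.2, Chapter 1]{cohnAlgebra3}, so there is nothing to compare against beyond noting that your argument is exactly the routine verification one would expect.
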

  \begin{theorem}[{\cite[Theorem 2.3, Chapter 1]{cohnAlgebra3}}]
 Let $\mathcal{A}$ be an $\Omega$-algebra and $\mathfrak{q}$ a congruence on $\mathcal{A}$. Then, there exists a unique $\Omega$-algebra, denoted $\mathcal{A}/\mathfrak{q}$, with carrier the set of all $\mathfrak{q}$-classes such that the natural mapping $\nu: \mathcal{A}\to \mathcal{A}/\mathfrak{q}$ is a homomorphism.
 \end{theorem}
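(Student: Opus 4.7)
The plan is to construct $\mathcal{A}/\mathfrak{q}$ by defining each operation of $\Omega$ on $\mathfrak{q}$-classes via representatives, and then verify well-definedness using the fact that $\mathfrak{q}$ is $\Omega$-closed as a subset of $S\times S$ (where $\mathcal{A}=(S,\Omega)$). Concretely, I would set the carrier to be $S/\mathfrak{q}=\{[a]\mid a\in S\}$, where $[a]=\{a\}\mathfrak{q}$, and for each $\omega\in \Omega(n)$ and $a_1,\ldots,a_n\in S$ define
\[
[a_1]\cdots[a_n]\omega := [a_1\cdots a_n\omega].
\]
The natural map $\nu:a\mapsto [a]$ is then a homomorphism by construction.

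The main (and really only) obstacle is well-definedness of these operations. Suppose $[a_i]=[a'_i]$ for $i=1,\ldots,n$, i.e.\ $(a_i,a'_i)\in\mathfrak{q}$ for each $i$. Since $\mathfrak{q}$ is a subalgebra of $\mathcal{A}\times \mathcal{A}$, applying the component-wise operation $\omega$ to the $n$-tuple $((a_1,a'_1),\ldots,(a_n,a'_n))\in \mathfrak{q}^n$ yields
\[
(a_1\cdots a_n\omega,\; a'_1\cdots a'_n\omega)\in\mathfrak{q},
\]
so $[a_1\cdots a_n\omega]=[a'_1\cdots a'_n\omega]$, as required. Alternatively one can invoke Proposition \ref{equivTOcong}: closure of $\mathfrak{q}$ under elementary translations shows that varying one argument at a time within its $\mathfrak{q}$-class preserves the class of the output, and iterating over all $n$ arguments gives the same conclusion.

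For uniqueness, note that if $(S/\mathfrak{q},\Omega)$ is any $\Omega$-algebra structure on the set of $\mathfrak{q}$-classes making $\nu$ a homomorphism, then for each $\omega\in\Omega(n)$ compatibility with $\nu$ forces
\[
[a_1]\cdots[a_n]\omega=(a_1\nu)\cdots(a_n\nu)\omega=(a_1\cdots a_n\omega)\nu=[a_1\cdots a_n\omega],
\]
so the operations are determined on all of $(S/\mathfrak{q})^n$, since each class has at least one representative. Thus the $\Omega$-algebra structure is unique, completing the proof. The routine pieces are the verification that $\nu$ is a homomorphism and the check that the same argument works uniformly for nullary operators (constants) and higher-arity operators; the only conceptual content is the use of the congruence property in the well-definedness step.
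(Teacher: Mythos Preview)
Your argument is correct and is the standard proof of this classical fact. Note that the paper does not actually prove this theorem: it is stated with a citation to Cohn and no proof is given, so there is nothing to compare against beyond observing that your construction and well-definedness check are exactly what Cohn's proof does.
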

The homomorphism $\nu$ in the previous theorem, which maps an element $s$ of the carrier of $\cA$ to its $\qf$-equivalence class,
is called the \emph{natural homomorphism} from $\cA$ to $\cA/\qf$. 
 The algebra $\mathcal{A}/\mathfrak{q}$ is called the \emph{quotient algebra} of $\mathcal{A}$ by $\mathfrak{q}$. 
 
 \begin{example}
 Given a group $G$ and a normal subgroup $N$ of $G$, the natural mapping $G\to G/N$ is a homomorphism.
 \end{example}
 
\subsection{Free algebras and varieties}\label{freealgebrasIntro}

Let $X=\{x_1,x_2,\ldots\}$ be a non-empty, finite or countably enumerable set, called an \emph{alphabet}, and $\Omega$ an operator domain, 
with $\Omega\cap X=\emptyset$. 
We define an $\Omega$-algebra as follows.  An $\Omega$\emph{-row} in $X$ is a finite sequence of elements of $\Omega\cup X$. 
The set of all $\Omega$-rows in $X$ is denoted $W(\Om;X)$.  The \emph{length}  of the  $\Omega$-row  $w=w_1\cdots w_m$ (where $w_i\in \Omega\cup X$) is defined to be $m$ and  is written $|w|$. The carrier of our $\Om$-algebra is $W(\Om;X)$, the set of $\Om$-rows.

We define the action of elements $\Omega$ on $W(\Om;X)$ by concatenation. First observe that if~$u$ and~$v$ are $\Om$-rows then the 
concatenation $uv$ of $u$ with $v$ is also an $\Om$-row, and this may be extended to the concatenation of 
arbitrarily many $\Om$-rows in the obvious way.  For $f\in \Omega(n)$ and $u_1,\ldots ,u_{n}\in W(\Omega;X)$, we define the 
the image of the $n$-tuple $(u_1,\ldots ,u_{n})\in W(\Om,X)^n$ under the operation $f$ 
to be the $\Om$-row $u_1\cdots u_{n}f$.  By abuse of notation we will refer to $W(\Om;X)$ as an $\Omega$-algebra. 

The alphabet $X\subset W(\Omega;X)$ and we call the subalgebra generated by $X$ the $\Omega$-\emph{word algebra} on $X$, 
denoted  $W_{\Omega}(X)$. Its elements are called $\Omega$-\emph{words} in the alphabet $X$. 
There is a clear distinction between $\Omega$-rows that are $\Omega$-words and those that are not. For example, if~$f$ is a binary operation then
\[x_1x_2x_3fx_4ff=(x_1,((x_2,x_3)f,x_4)f)f\]
is a $\Omega$-row which is also an $\Omega$-word, whereas $x_1ffx_2fx_3$ is an $\Omega$-row which is not an $\Omega$-word.

\begin{definition}[{\cite[Chapter 1]{cohnAlgebra3}}] \label{def:valency}
We define the \emph{valency} of an $\Omega$-row $w=w_1\cdots w_m$ ($w_i\in\Omega\cup X$) as $v(w)=\sum_{i=1}^{m}v(w_i)$ where
\[v(w_i)=\begin{cases} 1, & \text{if $w_i\in X$,}
\\
1- \operatorname{arity}(w_i), &\text{if $w_i\in \Omega$.}
\end{cases}\]

\end{definition}

\begin{proposition}[{\cite[Proposition 3.1, Chapter 1]{cohnAlgebra3}}]\label{propC3.1}
An $\Omega$-row $w=w_1\cdots w_m$ in $W(\Omega;X)$ is an $\Omega$-word if and only if every left-hand factor $u_i=w_1\cdots w_i$ of $w$ satisfies
\begin{equation*}
	\text{$v(u_i)>0$ for $i=1,\ldots ,m$ \qquad and \qquad $v(w)=1$.}
\end{equation*}
Moreover, each $\Om$-word can be obtained in precisely one way by applying a finite sequence of operations of $\Om$ to 
elements of $X$.
\end{proposition}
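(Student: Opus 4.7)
The plan is to prove the two directions separately using different inductions. The forward direction (``$w$ an $\Omega$-word implies the valency conditions hold'') will be proved by induction on the construction of $\Omega$-words from $X$ via the operations of $\Omega$, as permitted by the recursive description of $\lp X\rp$ given earlier. The base case $w\in X$ is immediate: $v(w)=1$ and the only left factor is $w$ itself. For the inductive step, if $w=u_1\cdots u_n f$ with $f\in\Omega(n)$ and each $u_j$ an $\Omega$-word of valency $1$ with positive left-factor valencies, then $v(w)=n\cdot 1+(1-n)=1$, and any proper left factor either coincides with $u_1\cdots u_j$ for some $j$ (valency $j\ge 1$) or ends strictly inside some $u_j$ (valency $(j-1)+v(\text{proper prefix of }u_j)>j-1\ge 0$).

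For the converse (and uniqueness of construction), I will prove by strong induction on $m=|w|$ the following stronger statement: if $v(w)=k\ge 1$ and $v(w_1\cdots w_i)>0$ for all $i\le m$, then $w$ factors \emph{uniquely} as $w=p_1\cdots p_k$ with each $p_j$ an $\Omega$-word. Taking $k=1$ gives the desired characterization. The key observation driving the argument is that in passing from $w_1\cdots w_{i-1}$ to $w_1\cdots w_i$, the valency either increases by exactly $1$ (when $w_i\in X\cup\Omega(0)$) or decreases by $a(w_i)-1\ge 0$: so every \emph{rise} of the valency sequence is by $+1$. Define $m_j$ ($0\le j\le k$) to be the largest index $i$ with $v(w_1\cdots w_i)=j$; the positivity assumption forces $m_0=0$, and the ``rises are $+1$'' property guarantees $m_j$ exists and $m_0<m_1<\cdots<m_k=m$. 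I then set $p_j=w_{m_{j-1}+1}\cdots w_{m_j}$, so $v(p_j)=1$.

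The core technical step will be verifying that each $p_j$ satisfies the valency hypotheses relative to its own indexing, so that the induction hypothesis applies. The claim is that $v(w_1\cdots w_i)\ge j$ for $m_{j-1}<i\le m_j$. The maximality of $m_{j-1}$ rules out the value $j-1$ beyond $m_{j-1}$; and if the valency dropped below $j-1$ somewhere between $m_{j-1}$ and $m_j$, then climbing back up to $j$ at $m_j$ would force the value $j-1$ to be attained again past $m_{j-1}$, contradicting maximality. Hence $v(p_j)$-prefixes are positive, and when $k\ge 2$ each $p_j$ is shorter than $w$, so induction applies. When $k=1$ and $m>1$, a separate short argument handles the recursion: the last symbol $w_m$ must lie in $\Omega(n)$ for some $n\ge 1$ (else $v(w_1\cdots w_{m-1})=0$), so $w_1\cdots w_{m-1}$ has valency $n$ and shorter length, hence factors as $p_1\cdots p_n$ by induction, giving $w=p_1\cdots p_n w_m$ as an $\Omega$-word.

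Finally, uniqueness of both the factorization and the construction: given any $w=q_1\cdots q_k$ with each $q_j$ an $\Omega$-word, the forward direction applied to each $q_j$ shows that prefix valencies of $w$ reach $j$ at the end of $q_1\cdots q_j$ and exceed $j-1$ thereafter, so $|q_1\cdots q_{j-1}|$ coincides with $m_{j-1}$ and the factorization is forced. The main obstacle I anticipate is organising the induction correctly: the naive $k=1$ statement does not support the inductive step (splitting a single word would leave fragments of valency $0$), and the generalisation to arbitrary $k$ together with the maximality definition of $m_j$ is needed to make everything fit together.
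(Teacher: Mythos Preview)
Your proof is correct and complete. Note, however, that the paper does not actually prove this proposition: it is stated with a citation to Cohn's textbook and used without further argument, so there is no ``paper's own proof'' to compare against. Your argument is essentially the standard one (as in Cohn): the forward direction by structural induction on the formation of $\Omega$-words, and the converse by strengthening to arbitrary $k\ge 1$ so that the induction on length goes through. The key device---defining $m_j$ as the \emph{last} index where the prefix valency equals $j$, and using the fact that increments are at most $+1$ to show the valency stays $\ge j$ on $(m_{j-1},m_j]$---is exactly what makes the factorisation both exist and be unique, and you have handled it correctly, including the separate treatment of the case $k=1$, $m>1$ where the final symbol must lie in $\Omega(n)$ for some $n\ge 1$.
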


Let $\mathcal{A}$ be an $\Omega$-algebra. If in an element $w$ of $W_{\Omega}(X)$ we replace each element of $X$ by an element of $\mathcal{A}$ we obtain a unique element of $\mathcal{A}$. For $|w|=1$, this is clear, so assume $|w|>1$ and we will use induction on the length of $w$. We have $w=u_1\cdots u_{n}f$ ($f\in\Omega(n)$, $u_i\in W_{\Omega}(X)$), where, by Proposition \ref{propC3.1}, 
the $u_i$ are uniquely determined once $w$ is given. By induction each $u_i$ becomes a unique element $a_i\in \mathcal{A}$, when 
we replace the elements of $X$ by elements of $\mathcal{A}$. Hence $w$ becomes $a_1\cdots a_{n}f$; a uniquely determined 
element of $\mathcal{A}$. 

This establishes the next theorem.

\begin{theorem}[{\cite[Theorem 3.2, Chapter 1]{cohnAlgebra3}}]\label{theoC3.2}
Let $\mathcal{A}$ be an $\Omega$-algebra and let $X$ be a set. Then any injective mapping $\theta: X\to \mathcal{A}$ extends,
 in just one way,  to a homomorphism $\bar{\theta}:W_{\Omega}(X)\to \mathcal{A}$. 
That is,  $W_{\Omega}(X)$ is a free $\Omega$-algebra, freely generated by $X$. 
\end{theorem}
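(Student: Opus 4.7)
The plan is to prove the theorem by constructing $\bar\theta$ explicitly via induction on word length, leveraging the unique-parsing property of $\Omega$-words established in Proposition \ref{propC3.1}. Specifically, for each $w\in W_\Omega(X)$ I define $w\bar\theta$ recursively: if $|w|=1$ then $w\in X$ and I set $w\bar\theta=w\theta$; if $|w|>1$ then Proposition \ref{propC3.1} guarantees that there exist a unique $f\in\Omega(n)$ (for some $n\ge 1$) and unique $\Omega$-words $u_1,\ldots,u_n\in W_\Omega(X)$ of smaller length with $w=u_1\cdots u_n f$, and I set
\[
w\bar\theta = (u_1\bar\theta)\cdots(u_n\bar\theta)f,
\]
where the right-hand operation is the $\Omega$-operation $f$ as interpreted in $\mathcal{A}$. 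The unique decomposition guaranteed by Proposition \ref{propC3.1} is what makes this definition unambiguous.

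Next I would verify that $\bar\theta$ is in fact a homomorphism of $\Omega$-algebras. This is immediate from the construction: given $w_1,\ldots,w_n\in W_\Omega(X)$ and $f\in\Omega(n)$, the concatenation $w_1\cdots w_n f$ is itself an $\Omega$-word whose unique parsing as an $f$-image is precisely into the factors $w_1,\ldots,w_n$ (again by Proposition \ref{propC3.1}), so by definition
\[
(w_1\cdots w_n f)\bar\theta = (w_1\bar\theta)\cdots(w_n\bar\theta)f,
\]
which is the compatibility condition. That $\bar\theta$ extends $\theta$ on $X$ is built into the base case of the recursion.

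For uniqueness I would appeal to Proposition 1.1 of \cite{cohnAlgebra3}, quoted earlier in the excerpt: since $X$ generates $W_\Omega(X)$ as an $\Omega$-algebra (by definition of the $\Omega$-word algebra as $\langle X\rangle$), any two homomorphisms $W_\Omega(X)\to\mathcal{A}$ which agree on $X$ coincide everywhere. Combining existence and uniqueness yields the universal property in the definition of a free $\mathcal{C}$-algebra, with $\mathcal{C}$ the class of all $\Omega$-algebras and $\mu$ the inclusion of $X$ into $W_\Omega(X)$, so $W_\Omega(X)$ is freely generated by $X$.

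The only potentially subtle step is the well-definedness of the recursive construction, which rests entirely on the second clause of Proposition \ref{propC3.1} (unique decomposition of an $\Omega$-word as a finite sequence of operations applied to letters of $X$); once that is granted, the rest of the argument is a routine structural induction and an invocation of the generation principle for homomorphisms. Notably, injectivity of $\theta$ plays no role in the construction or in the verification of the universal property — it is stated in the hypothesis presumably to match the convention, adopted earlier in the excerpt, that the map $\mu:X\to F$ is viewed as an inclusion.
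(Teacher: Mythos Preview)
Your proposal is correct and follows essentially the same approach as the paper: the paper's proof (given in the paragraph immediately preceding the theorem statement) is the same structural induction on $|w|$, using the unique decomposition from Proposition~\ref{propC3.1} to define the image of $w$ in $\mathcal{A}$. Your write-up is in fact more thorough than the paper's terse version, since you explicitly verify the homomorphism condition, invoke Proposition~1.1 for uniqueness, and correctly observe that injectivity of $\theta$ is not actually used.
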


\begin{corollary}[{\cite[Corollary 3.3, Chapter 1]{cohnAlgebra3}}]\label{corC3.2}
Any $\Omega$-algebra $\mathcal{A}$ can be expressed as a homomorphic image of an $\Omega$-word algebra $W_{\Omega}(X)$ for a suitable set $X$. Here $X$ can be taken to be any set mapping  onto a generating set of $\mathcal{A}$.
\end{corollary}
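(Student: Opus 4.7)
The plan is to deduce the corollary as a direct application of Theorem \ref{theoC3.2}, together with the earlier fact that the image of any homomorphism of $\Omega$-algebras is a subalgebra of the codomain.

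First I would start from the second sentence of the statement and use it to manufacture the set $X$. Let $G$ be any generating set for $\mathcal{A}$ and choose a set $X$ together with an injection $\theta : X \to \mathcal{A}$ whose image $X\theta$ equals $G$; for instance one may simply take $X = G$ and let $\theta$ be the inclusion. Theorem \ref{theoC3.2} then yields a (unique) homomorphism $\bar\theta : W_{\Omega}(X) \to \mathcal{A}$ extending $\theta$.

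Next I would argue that $\bar\theta$ is surjective, so that $\mathcal{A}$ is exhibited as a homomorphic image of $W_{\Omega}(X)$. By the theorem on homomorphisms cited just before the statement of the corollary, the image $W_{\Omega}(X)\bar\theta$ is a subalgebra of $\mathcal{A}$. This subalgebra contains $X\bar\theta = X\theta = G$, so it contains the subalgebra $\langle G\rangle$ generated by $G$, which is $\mathcal{A}$ itself by the choice of $G$. Hence $W_{\Omega}(X)\bar\theta = \mathcal{A}$, as required.

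There is no real obstacle here: the argument is essentially a one-line consequence of the universal property of the word algebra combined with the elementary fact that a subalgebra containing a generating set is the whole algebra. The only mild subtlety is matching the injectivity hypothesis of Theorem \ref{theoC3.2}, which is why I would specify the map $\theta$ to be an injection with image $G$ (rather than allowing an arbitrary surjection $X \twoheadrightarrow G$, which is not needed for the corollary as stated).
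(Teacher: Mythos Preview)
Your argument is correct and is exactly the intended deduction: the paper states the corollary without proof, as an immediate consequence of Theorem~\ref{theoC3.2}, and your reasoning (extend the inclusion of a generating set via the universal property, then observe that the image is a subalgebra containing the generators) is precisely how one reads off the corollary from that theorem. The only cosmetic point is that the phrase ``any set mapping onto a generating set'' in the statement is slightly more general than the injective case you treat, but since Theorem~\ref{theoC3.2} as quoted imposes injectivity, your choice to take $X$ in bijection with $G$ is the right way to match the available hypothesis.
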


By an \emph{identity} or \emph{law} over $\Omega$ in $X$ we mean a pair 
$(u,v)\in W_{\Omega}(X)\times W_{\Omega}(X)$ or an equation $u=v$ formed from such a pair. 
We say that the law $(u,v)$ \emph{holds} in the $\Omega$-algebra $\mathcal{A}$ 
or that $\mathcal{A}$ \emph{satisfies} the equation $u=v$ 
if every homomorphism $W_{\Omega}(X)\to \mathcal{A}$ maps $u$ and $v$ to the same element of $\mathcal{A}$. 
This correspondence between sets of laws and classes of algebras establishes a pair of maps, with the following definitions. 
\begin{itemize}
\item Given a set $\Sigma$ of laws over $\Om$ in $X$, form $\mathcal{V}_{\Omega}(\Sigma)$, the class of all $\Omega$-algebras satisfying all the laws in $\Sigma$. This class $\mathcal{V}_{\Omega}(\Sigma)$ is called the \emph{variety} generated by $\Sigma$. 
\item  Given a 
class $\cC$ of $\Om$-algebras we can form the set 
$\qf=\qf(\cC)$ of all laws over $\Om$ in $X$ which hold in all algebras of $\cC$.
\end{itemize}
Thus we have a pair of maps $\cV_{\Om}$ and $\qf$; relating 
each variety of $\Omega$-algebras to a relation $\mathfrak{q}$ on $W_{\Omega}(X)$ and vice-versa. We shall see below that
$\qf(\cC)$ is a congruence, but first we make a further definition.

A subalgebra of an $\Omega$-algebra $\mathcal{A}$ is called \emph{fully invariant} if it is mapped into itself by all endomorphisms of $\mathcal{A}$. A congruence $\Gamma$ on $\mathcal{A}$ is said to be \emph{fully invariant}  if $(u,v)\in \G$ implies 
$(u\theta,v\theta)\in \G$, for all endomorphisms $\theta$ of $\cA$. The \emph{fully invariant congruence generated by $\Gamma$} is 
\[\cI=\bigcap\{V \mid \text{$V$ is a fully invariant congruence and $\Gamma\subseteq V$}\}.\]
It follows that $\cI$ is the smallest invariant congruence on $\mathcal{A}$ generated by $\Gamma$.

We claim that if $\cC$ is a class of $\Om$-algebras then $\qf(\cC)$ is a fully invariant congruence on $W_\Om(X)$. To see that $\qf(\cC)$ is 
a congruence, note that in in every class $\mathcal{C}$ of $\Omega$-algebras we have the following:
 $u=u$ for all $u\in W_{\Omega}(X)$; if $u=v$ holds then so does $v=u$; and if $u=v$ and  $v=w$ then also $u=w$. Further, 
if $u_i=v_i$ for $i=1,\ldots ,n$ are laws holding in $\mathcal{A}$ and if $\omega\in\Omega(n)$, then $u_1\cdots u_n\omega=v_1\cdots v_n\omega$ holds 
in $\mathcal{A}$. Hence $\qf(\cC)$ is indeed a congruence. 

To see that $\qf(\cC)$ is a fully invariant congruence, let $(u,v)\in \qf(\cC)$ and let $\theta$ be any endomorphism of  $W_{\Omega}(X)$.  If $\mathcal{A}\in \mathcal{C}$ and $\alpha: W_{\Omega}(X) \to \mathcal{A}$ is any homomorphism, then so is $\theta\alpha$, hence $u\theta\alpha=v\theta\alpha$. Thus the law $u\theta=v\theta$ holds in $\mathcal{A}$, so $(u\theta,v\theta)\in \qf(\cC)$ and thus $\qf(\cC)$ is a fully invariant congruence. 
Cohn shows in addition that the map $\cV_{\Om}$ is a bijection with inverse $\qf$, and deduces the following theorem.

Given sets $S$ and $T$ and a relation $\G$ from $S$ to $T$, we may use  $\Gamma$ to define a system of subsets of $S$, $T$, 
as follows. 
 For any subset $X$ of $S$ we define a subset $X^*$ of $T$ by 
 \[X^*=\{y\in T \mid (x,y)\in \Gamma \ \text{for all $x\in X$}\}=\cap_{x\in X}\{x\}\Gamma,\]
 and similarly, for any subset $Y$ of $T$ we define a subset $Y^*$ of $S$ by
 \[Y^*=\{x\in S \mid (x,y)\in \Gamma \ \text{for all $y\in Y$}\}=\cap_{y\in Y}\{y\}\Gamma^{-1}.\]
 We thus have mappings $X\mapsto X^*$ and $Y\mapsto Y^*$ of the power sets of $S$ and $T$ with the following properties:
 \begin{align}
 \label{power1}
 X_1\subseteq X_2 \Rightarrow X_1^*\supseteq X^*_2 \qquad & \qquad  Y_1\subseteq Y_2 \Rightarrow Y_1^*\supseteq Y^*_2
 \\
 \label{power2}
  X\subseteq X^{**} \qquad &\qquad Y\subseteq Y^{**},
 \\
 \label{power3}
  X^{***}=X^* \qquad &\qquad  Y^{***}=Y^*.
 \end{align}
 A pair of maps $X\mapsto X^*$, from the power set $2^S$ of $S$  to the power set $2^T$ of $T$, and $Y\mapsto Y^*$,
from $2^T$ to $2^S$, satisfying (\ref{power1}--\ref{power3}) is called a \emph{Galois connection}.

\begin{theorem}[{\cite[Theorem 3.5, Chapter 1]{cohnAlgebra3}}]\label{theorem3.5}
Let $W=W_{\Omega}(X)$ be the $\Omega$-word algebra on the alphabet $X$. 
The pair of maps $\Si\mapsto \cV_{\Om}(\Si)$ and $\cC\mapsto \qf(\cC)$ 
forms a Galois connection giving a bijection between 
 varieties of $\Omega$-algebras and  fully invariant congruences $\mathfrak{q}$ on $W_{\Omega}(X)$. 
\end{theorem}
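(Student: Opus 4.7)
The plan is to first check that the two maps form a Galois connection in the sense of (\ref{power1})--(\ref{power3}), and then to identify the closed sets on each side. Take as underlying relation $\G$ on (laws)~$\times$~($\Om$-algebras) the relation defined by $((u,v),\cA)\in\G$ iff $\cA$ satisfies $u=v$. Then the abstract star-operations of the Galois-connection setup coincide with $\cV_\Om$ and $\qf$ by definition. Property (\ref{power1}) is immediate: enlarging $\Si$ forces algebras to satisfy strictly more laws, so both $\cV_\Om$ and $\qf$ reverse inclusions. Property (\ref{power2}) is also immediate, since a law in $\Si$ holds in every algebra of $\cV_\Om(\Si)$ (giving $\Si\subseteq\qf(\cV_\Om(\Si))$), and dually every $\cA\in\cC$ satisfies every law in $\qf(\cC)$ (giving $\cC\subseteq\cV_\Om(\qf(\cC))$). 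Property (\ref{power3}) then follows formally from (\ref{power1}) and (\ref{power2}) by the standard two-line argument.

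The substantive content is the identification of the closed sets. On the laws side, the text has already shown that every $\qf(\cC)$ is a fully invariant congruence on $W_\Om(X)$. The crux is the converse: every fully invariant congruence $\qf$ satisfies $\qf=\qf(\cV_\Om(\qf))$. One inclusion is (\ref{power2}). For the other, I would exhibit $W_\Om(X)/\qf$, with natural projection $\nu$, as a member of $\cV_\Om(\qf)$. Given a law $(u,v)\in\qf$ and any homomorphism $f:W_\Om(X)\to W_\Om(X)/\qf$, use freeness of $W_\Om(X)$ (Theorem~\ref{theoC3.2}) to lift $f$: for each $x\in X$ pick a preimage $u_x$ of $xf$ under $\nu$, and extend $x\mapsto u_x$ to an endomorphism $\theta$ of $W_\Om(X)$; by the uniqueness part of Theorem~\ref{theoC3.2} this satisfies $\theta\nu=f$. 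Full invariance of $\qf$ then gives $(u\theta,v\theta)\in\qf$, whence $uf=(u\theta)\nu=(v\theta)\nu=vf$, so the law $u=v$ holds in $W_\Om(X)/\qf$. Conversely, if $(u,v)\notin\qf$ then $u\nu\neq v\nu$ and the law $u=v$ fails in $W_\Om(X)/\qf$, so since this algebra lies in $\cV_\Om(\qf)$ we conclude $(u,v)\notin\qf(\cV_\Om(\qf))$. Hence $\qf(\cV_\Om(\qf))=\qf$.

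The algebra side is then essentially formal. Varieties are by definition the image of $\cV_\Om$, so they are closed in the Galois sense: for $\cC=\cV_\Om(\Si)$, (\ref{power2}) gives $\Si\subseteq\qf(\cC)$, whence by the order-reversal (\ref{power1}) of $\cV_\Om$ we get $\cV_\Om(\qf(\cC))\subseteq\cV_\Om(\Si)=\cC$, while the reverse inclusion is again (\ref{power2}). Combining this with the previous paragraph, $\cV_\Om$ and $\qf$ restrict to mutually inverse bijections between varieties of $\Om$-algebras and fully invariant congruences on $W_\Om(X)$. The only real obstacle in the proof is the freeness-plus-lifting argument used to place $W_\Om(X)/\qf$ in $\cV_\Om(\qf)$; this is exactly what forces the notion \emph{fully invariant} (and not merely congruence) to be the correct one on the laws side.
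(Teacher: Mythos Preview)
Your argument is correct and is the standard one. Note, however, that the paper does not actually prove this theorem: it is stated with a citation to Cohn, and the only ingredient supplied in the surrounding text is the verification (just before the theorem) that each $\qf(\cC)$ is a fully invariant congruence on $W_\Om(X)$. Your write-up fills in precisely what the paper omits---the Galois-connection formalities and, crucially, the lifting argument showing $W_\Om(X)/\qf\in\cV_\Om(\qf)$ whenever $\qf$ is fully invariant---so there is nothing to compare against beyond noting that your approach is the expected one from \cite{cohnAlgebra3}.
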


\begin{proposition}[{\cite[Proposition 3.6, Chapter 1]{cohnAlgebra3}}]\label{cohnPropFA}
Let $\mathcal{V}$ be a variety of $\Omega$-algebras and $\mathfrak{q}$ the congruence on $W_{\Omega}(X)$ (the $\Omega$-word algebra generated by $X$) consisting of all the laws on $\mathcal{V}$ \emph{i.e.} the fully invariant congruence $\mathfrak{q}(\mathcal{V})$. Then $W_{\Omega}(X)/\mathfrak{q}$ is the free $\mathcal{V}$-algebra on $X$.
\end{proposition}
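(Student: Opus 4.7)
The plan is to verify the two halves of the definition of a free $\mathcal{V}$-algebra in turn: (i) $W_{\Omega}(X)/\mathfrak{q}$ is itself a member of $\mathcal{V}$; and (ii) the map $\mu:X\to W_{\Omega}(X)/\mathfrak{q}$, obtained by composing the inclusion $X\hookrightarrow W_{\Omega}(X)$ with the natural homomorphism $\nu:W_{\Omega}(X)\to W_{\Omega}(X)/\mathfrak{q}$, has the universal property defining a free $\mathcal{V}$-algebra on $X$.

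For (i), by the Galois correspondence of Theorem \ref{theorem3.5} it suffices to show that every law $(u,v)\in\mathfrak{q}$ is satisfied in $W_{\Omega}(X)/\mathfrak{q}$; equivalently, that $u\phi=v\phi$ for every homomorphism $\phi:W_{\Omega}(X)\to W_{\Omega}(X)/\mathfrak{q}$. Given such a $\phi$, I would lift it by choosing, for each $x\in X$, a representative $w_x\in W_{\Omega}(X)$ of the class $x\phi$; by Theorem \ref{theoC3.2} the assignment $x\mapsto w_x$ determines a unique endomorphism $\theta$ of $W_{\Omega}(X)$, and by construction $\theta\nu$ agrees with $\phi$ on $X$, so $\theta\nu=\phi$. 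Since $\mathfrak{q}$ is fully invariant, $(u\theta,v\theta)\in\mathfrak{q}$, and therefore $u\phi=(u\theta)\nu=(v\theta)\nu=v\phi$, as required.

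For (ii), let $\mathcal{A}\in\mathcal{V}$ and let $f:X\to\mathcal{A}$ be an arbitrary map. By Theorem \ref{theoC3.2}, $f$ extends uniquely to a homomorphism $\bar{f}:W_{\Omega}(X)\to\mathcal{A}$. The key point is that $\mathfrak{q}\subseteq\ker\bar{f}$: each pair $(u,v)\in\mathfrak{q}$ is, by definition, a law holding in every member of $\mathcal{V}$ and therefore in $\mathcal{A}$, so the homomorphism $\bar{f}$ must send $u$ and $v$ to the same element. Consequently $\bar{f}$ descends to a uniquely determined homomorphism $f':W_{\Omega}(X)/\mathfrak{q}\to\mathcal{A}$ with $\nu f'=\bar{f}$, and restricting to $X$ gives $\mu f'=f$. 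Uniqueness of $f'$ follows because $X\mu$ generates $W_{\Omega}(X)/\mathfrak{q}$, so any homomorphism out of $W_{\Omega}(X)/\mathfrak{q}$ is determined by its values on $X\mu$.

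The main obstacle is the lifting step in (i): full invariance of $\mathfrak{q}$ is framed as a condition on endomorphisms of $W_{\Omega}(X)$, whereas the law-holding condition is quantified over homomorphisms into $W_{\Omega}(X)/\mathfrak{q}$. Using Theorem \ref{theoC3.2} to realise $\phi$ in the form $\theta\nu$ is exactly what bridges this gap; without full invariance, the mere congruence property of $\mathfrak{q}$ would not deliver the required identity $u\phi=v\phi$ from the membership $(u,v)\in\mathfrak{q}$.
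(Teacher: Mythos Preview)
Your proof is correct and is the standard argument. Note, however, that the paper does not supply its own proof of this proposition: it is stated with a citation to Cohn \cite[Proposition~3.6, Chapter~1]{cohnAlgebra3} and then used immediately in the derivation of Corollary~\ref{cor:fic}. So there is no paper-proof to compare against; your argument---lifting an arbitrary homomorphism $\phi:W_\Omega(X)\to W_\Omega(X)/\mathfrak{q}$ to an endomorphism $\theta$ via Theorem~\ref{theoC3.2} and invoking full invariance for part~(i), then using $\mathfrak{q}\subseteq\ker\bar f$ to factor through the quotient for part~(ii)---is exactly the proof one finds in Cohn and is entirely sound.
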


Suppose $\Sigma$ is a set of laws over $\Omega$ in $X$ and let ${\cal{V}}={\cal{V}}_{\Omega}(\Sigma)$ and $\mathfrak{q}=\mathfrak{q}(\cal{V})$. Then $\Sigma \subseteq \mathfrak{q}$ and, from Proposition \ref{cohnPropFA}, $\mathfrak{q}$ is a fully invariant 
congruence and $W_\Omega(X)/\mathfrak{q}$ is the free $\cal{V}$-algebra. 

Now let $\mathfrak{p}$ be the fully invariant congruence  generated by 
$\Sigma$. Then, as $\Sigma\subseteq \mathfrak{q}$ and $\mathfrak{q}$ is a 
fully invariant congruence, we have $\mathfrak{p}\subseteq \mathfrak{q}$.
 Let $\cA=W_\Omega(X)/\mathfrak{p}$. Then $\cA$ is an
$\Omega$-algebra, in which every law of $\Sigma$ holds (as $\Sigma \subseteq 
\mathfrak{p}$). Thus $\cA$ is a $\cal{V}$-algebra. 
Then, from Proposition \ref{cohnPropFA}, the natural map $X\to \cA$ extends to
 a homomorphism $W_\Omega(X)/\mathfrak{q}\to \cA$. It follows that 
$\mathfrak{q}\subseteq \mathfrak{p}$. Therefore $\mathfrak{p}=
\mathfrak{q}=\mathfrak{q}(\cal{V})$. 
 We record this as a corollary which we shall  use in  Section \ref{v21freealgebra} to 
construct Higman's algebras $V_{n,r}$. 
\begin{corollary}\label{cor:fic}
Let $\Sigma$ be a set of laws over $\Omega$ in $X$,  
let ${\cal{V}}={\cal{V}}_{\Omega}(\Sigma)$ and $\mathfrak{q}=\mathfrak{q}(\cal{V})$.
Then $\qf$ is the fully invariant congruence generated by $\Si$. 
\end{corollary}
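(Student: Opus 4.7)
The plan is to give a two-sided containment argument, following the sketch already assembled in the paragraph immediately preceding the corollary. Write $\mathfrak{p}$ for the fully invariant congruence on $W_\Omega(X)$ generated by $\Sigma$; the goal is to show $\mathfrak{p}=\mathfrak{q}$.

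For the inclusion $\mathfrak{p}\subseteq \mathfrak{q}$, I would observe that by Theorem~\ref{theorem3.5} the set $\mathfrak{q}=\mathfrak{q}(\mathcal{V})$ is itself a fully invariant congruence on $W_\Omega(X)$, and that every law of $\Sigma$ is by construction satisfied in every algebra of $\mathcal{V}=\mathcal{V}_\Omega(\Sigma)$, so $\Sigma\subseteq \mathfrak{q}$. Since $\mathfrak{p}$ is the smallest fully invariant congruence containing $\Sigma$, this immediately gives $\mathfrak{p}\subseteq\mathfrak{q}$.

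For the reverse inclusion $\mathfrak{q}\subseteq\mathfrak{p}$, I would form the quotient algebra $\mathcal{A}=W_\Omega(X)/\mathfrak{p}$ and argue that it is itself a $\mathcal{V}$-algebra. Indeed, because $\Sigma\subseteq\mathfrak{p}$, every law $(u,v)\in\Sigma$ is collapsed by the natural homomorphism $\nu:W_\Omega(X)\to\mathcal{A}$, and full invariance of $\mathfrak{p}$ ensures this persists under every substitution $X\to \mathcal{A}$ — concretely, any homomorphism $W_\Omega(X)\to\mathcal{A}$ factors through $\nu$ composed with an endomorphism of $W_\Omega(X)$, which preserves $\mathfrak{p}$ — so every law in $\Sigma$ holds in $\mathcal{A}$, and therefore $\mathcal{A}\in\mathcal{V}$. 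Now applying Proposition~\ref{cohnPropFA}, the quotient $W_\Omega(X)/\mathfrak{q}$ is the free $\mathcal{V}$-algebra on $X$, so the inclusion $X\hookrightarrow\mathcal{A}$ extends uniquely to an $\Omega$-homomorphism $W_\Omega(X)/\mathfrak{q}\to \mathcal{A}$. Chasing this through the natural maps, the kernel of $\nu:W_\Omega(X)\to\mathcal{A}$, which is precisely $\mathfrak{p}$, must contain $\mathfrak{q}$, giving $\mathfrak{q}\subseteq\mathfrak{p}$.

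The only mildly delicate point is verifying that $\mathcal{A}=W_\Omega(X)/\mathfrak{p}$ really lies in $\mathcal{V}$, because ``$\Sigma\subseteq\mathfrak{p}$'' gives us the laws of $\Sigma$ under the tautological homomorphism $\nu$, whereas membership in $\mathcal{V}$ demands that they hold under every homomorphism $W_\Omega(X)\to\mathcal{A}$. This is exactly the role of full invariance of $\mathfrak{p}$, and is the step I would write out carefully; the rest is a clean application of the universal property of free $\mathcal{V}$-algebras from Proposition~\ref{cohnPropFA} together with the minimality of $\mathfrak{p}$.
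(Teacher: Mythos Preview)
Your proposal is correct and follows essentially the same two-inclusion argument as the paper: $\mathfrak{p}\subseteq\mathfrak{q}$ by minimality of $\mathfrak{p}$, and $\mathfrak{q}\subseteq\mathfrak{p}$ by showing $W_\Omega(X)/\mathfrak{p}\in\mathcal{V}$ and invoking the universal property of $W_\Omega(X)/\mathfrak{q}$ from Proposition~\ref{cohnPropFA}. You are in fact more careful than the paper on the ``mildly delicate point'': the paper simply asserts that every law of $\Sigma$ holds in $W_\Omega(X)/\mathfrak{p}$ because $\Sigma\subseteq\mathfrak{p}$, whereas you correctly spell out that full invariance of $\mathfrak{p}$ is what makes this work for \emph{every} homomorphism $W_\Omega(X)\to\mathcal{A}$, not just the natural quotient map.
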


\section{The Higman Algebras \VnrHeading}\label{v21freealgebra}

In this section we define the algebras which Higman called $V_{n,r}$. 
Let $n \geq 2$ be an integer and  let $\cA$ be
 an $\Omega$-algebra, with carrier $S$ and operator 
domain $\Omega=\{\lambda,\alpha_1,\ldots, \alpha_n\}$, such that  $a(\alpha_i)=1$, for $i=1,\ldots, n$ and $a(\lambda)=n$. 
We call the  $n$-ary operation $\lambda : S^n\to S$ 
a \emph{contraction} and the unary operations 
$\alpha_i: S\to S$ 
\emph{descending operations}. 
We define  a map $\a:S\maps S^n$, which we shall call an \emph{expansion}, by 
\[v\alpha=(v\alpha_1,\ldots, v\alpha_n),\]
for all $v\in S$.
 For any subset $Y$ of $S$, a \emph{simple expansion} of $Y$ consists of substituting some element $y$ of $Y$ by 
the $n$ elements of the tuple $y\alpha$. A sequence of $d$ simple expansions of $Y$ is called a 
\emph{$d$-fold expansion of $Y$}.  A set obtained from $Y$ by a $d$-fold expansion, $d\ge 0$,  
is called an \emph{expansion} of $Y$.  For example,   if $x\in S$ then 
$\{x\alpha_1, \ldots, x\alpha_n\}$ is the unique simple expansion of $\{x\}$ and   
the  $2$-fold expansions of $\{x\}$ are  
the sets $\{x\alpha_1, \ldots, x\a_{i-1},x\a_i\a_1,\ldots ,x\a_i\a_n,x\a_{i+1},\ldots , x\alpha_n\}$,
for $1\le i\le n$. Every  $d$-fold expansion of $Y$ has 
$|Y|+(n-1)d$ elements. 
 Similarly, 
a \emph{simple contraction} of $Y$ consists of substituting $n$ distinct elements $\{y_1,\ldots, y_n\}\in Y$ 
by the single element $(y_1,\ldots, y_n)\lambda$. A set obtained from $Y$ by applying a finite number of simple contractions is called a
 \emph{contraction} of $Y$.  

From now on in this paper, $\Om$ is fixed as above. 
 Let $\cxx$ be a non-empty set and 
 recall that  the $\Om$-word algebra $W_{\Omega}(\cxx)$  is the free $\Omega$-algebra on $\cxx$. 
 

\begin{definition}\label{SigmaLaws}
Let $\Sigma_n$ be the set of laws over $\Om$ in $\cxx$:
\begin{enumerate}
\item for all $w\in W_{\Omega}(\cxx)$,
\[w\alpha\lambda=w,\]
(or explicitly $w\alpha_1\cdots w\alpha_n\lambda=w$). 
\item for all  $(w_1,\ldots ,w_n)\in W_{\Omega}(\cxx)^n$ and  $i\in \{1,\ldots, n\}$, 
\[w_1\cdots w_n\lambda\alpha_i=w_i.\]
\end{enumerate}
That is, 
\begin{align*}
\Sigma_n &=
\{(w\alpha_1\cdots w\alpha_n\lambda,w) \mid w\in W_{\Omega}(\cxx)\}\\ &\cup\bigcup_{i=1}^n
\{(w_1\cdots w_n\lambda\alpha_i,w_i) \mid w_i\in W_{\Omega}(\cxx)\}.
\end{align*}
 Let $\mathcal{V}_n=\cV_\Om(\Si_n)$ the variety of $\Omega$-algebras which satisfy $\Si_n$ and let 
$\mathfrak{q}=\qf(\cV_n)$.
\end{definition}

From Proposition \ref{cohnPropFA} and Corollary \ref{cor:fic}, it follows that  $\mathfrak{q}$ is the fully invariant 
congruence on $W_\Om(\cxx)$ generated by $\Si_n$ and $W_\Om(\cxx)/\mathfrak{q}$ is the 
free $\cV_n$-algebra on $\cxx$. 
\begin{definition}\label{def:vnr}
Let $\cxx$ be a non-empty, finite or countably enumerable set of cardinality $r$ and $n\ge 2$ an integer. 
Then $V_{n,r}(\cxx)$ is the free $\cV_n$-algebra $W_\Om(\cxx)/\mathfrak{q}$, where $\qf=\qf(\cV_n)$ and
$\cV_n=\cV_\Om(\Si_n)$. 
\end{definition}
When no ambiguity arises we refer to $V_{n,r}(\cxx)$ as $V_{n,r}$. 

\begin{remark} \label{rem:Higman_std_form}
In \cite[Section 2]{Higg} Higman defines a \emph{standard form over} $\cxx$ to be one of the 
finite sequences of elements of $\cxx\cup \{\alpha_1,\ldots, \alpha_n, \l\}$ specified by the following rules.
\be[(i)]
\item\label{it:sf1} $x\alpha_{i_1}\cdots \alpha_{i_k}$ is a standard form whenever $k\ge 0$, $x \in \cxx$ and $1\leq i_j \leq n$ for $j=1,\dots,k$.
\item\label{it:sf2} If $w_1,\ldots, w_n$ are standard forms then so is $w_1\cdots w_n\lambda$, unless there is a standard 
form $u$ such that $w_i=u\alpha_i$ for $i=1,\ldots, n$.
\item\label{it:sf3} No sequence is a standard form unless this follows from \eqref{it:sf1} and \eqref{it:sf2}.
\ee
We define the descending operations $\alpha_1,\ldots, \alpha_n$ by the rules
\[(x\alpha_{i_1}\cdots \alpha_{i_k})\alpha_i=x\alpha_{i_1}\cdots \alpha_{i_k}\alpha_i,\]
\[(w_1\cdots w_n\lambda)\alpha_i=w_i\]
for $i\in \{1,\ldots, n\}$.
The contraction operation~$\lambda$ is defined by
\[(w_1,\ldots, w_n)\lambda=w_1\cdots w_n\lambda,\]
unless there is a standard form $u$ such that $w_i=u\alpha_i$ for $i=1,\ldots ,n$ in which case 
\[(w_1,\ldots, w_n)\lambda=(u\alpha_1,\ldots, u\alpha_n)\lambda=u.\]
These operations turn the set of standard forms into an $\Omega$-algebra. 
Higman then goes on to prove that this is a free  $\mathcal{V}_n$-algebra, freely generated by $\cxx$ 
(\cite[Lemma 2.1]{Higg}). This follows in our case  from the definition above, and the remarks following it, together with  Lemma
\ref{lem:stdform} below.
\end{remark}

\begin{lemma}\label{lem:stdform}
Let $U$ be an equivalence class of the congruence $\mathfrak{q}$ on $W_{\Omega}(\cxx)$.
 Then there exists a unique minimal length element $u$ in $U$. 
The unique minimal length elements of equivalence classes are precisely the 
standard forms of Higman.
\end{lemma}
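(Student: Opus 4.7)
My plan is to reinterpret $\Si_n$ as a term rewriting system and show it is terminating and confluent, so each $\qf$-class has a unique normal form which coincides both with the unique shortest representative and with Higman's standard form.

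Define a one-step reduction $\to$ on $W_\Om(\cxx)$ by $a\to b$ whenever $b$ is obtained from $a$ by replacing an $\Om$-subword of the form
\begin{align*}
\text{(R1)}\quad & w\a_1\cdots w\a_n\l \ \text{by}\ w,\\
\text{(R2)}\quad & w_1\cdots w_n\l\a_i \ \text{by}\ w_i,
\end{align*}
for some $w,w_1,\ldots,w_n\in W_\Om(\cxx)$ and $i\in\{1,\dots,n\}$. Because the laws in $\Si_n$ already range over all of $W_\Om(\cxx)$, the fully invariant congruence $\qf$ coincides with the symmetric-transitive-congruence closure of $\to$. A straightforward count shows $\to$ is strictly length-decreasing: R1 sends length $n|w|+n+1$ to $|w|$, and R2 sends $|w_1|+\cdots+|w_n|+2$ to $|w_i|$. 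Hence $\to$ terminates and every $\qf$-class contains at least one $\to$-normal form.

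I then identify normal forms with standard forms by induction on length, using the unique parsing in Proposition~\ref{propC3.1}. A length-$1$ word lies in $\cxx$ and is of type~\eqref{it:sf1}. For longer $w$, write $w=uf$ with $f\in\Om$ uniquely determined. If $f=\a_i$, irreducibility forces $u$ to be irreducible and not to end in $\l$, so inductively $u=x\a_{i_1}\cdots\a_{i_k}$ and $w$ is of type~\eqref{it:sf1}. If $f=\l$, write $w=w_1\cdots w_n\l$; each $w_j$ is an irreducible proper subword, hence inductively a standard form, and outermost irreducibility under R1 says there is no $u\in W_\Om(\cxx)$ with $w_j=u\a_j$ for all $j$---but any such $u$ would itself be irreducible (being a prefix of an irreducible word) and hence a standard form, matching Higman's condition~\eqref{it:sf2}. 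The converse, that standard forms are irreducible, is immediate from~\eqref{it:sf1}--\eqref{it:sf3}.

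Uniqueness within each $\qf$-class will follow from confluence, which by Newman's lemma reduces to local confluence given termination. In tree notation R1 reads $\l(\a_1(w),\ldots,\a_n(w))\to w$ and R2 reads $\a_i(\l(w_1,\ldots,w_n))\to w_i$; the only non-trivial critical pairs arise when (i) R2's LHS unifies with a subterm $\a_j(w)$ of R1's LHS, forcing $w=\l(w_1,\ldots,w_n)$, in which case both one-step reducts rewrite to $\l(w_1,\ldots,w_n)$; and (ii) R1's LHS unifies with the inner $\l$ of R2's LHS, forcing $w_j=\a_j(w)$ for all $j$, in which case both one-step reducts rewrite to $\a_i(w)$. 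Hence $\to$ is confluent, so each $\qf$-class contains a unique normal form, which by the previous paragraph is precisely Higman's standard form; and since every reduction strictly decreases length, this normal form is also the unique minimal-length element of the class. I expect the main obstacle to be the critical-pair analysis: verifying that R1 and R2 interact coherently at all overlapping redexes. Once that is done, termination, the characterization of normal forms, and the length bound are routine.
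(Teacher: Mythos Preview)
Your proposal is correct and follows essentially the same strategy as the paper: both set up $\Si_n$ as a rewriting system on $W_\Om(\cxx)$, observe it is length-decreasing (hence terminating), verify local confluence by analysing overlapping redexes, and conclude via Newman's lemma that each $\qf$-class has a unique normal form which is then identified with Higman's standard form. The only difference is presentational---the paper carries out the overlap analysis at the level of $\Om$-rows using the valency constraint of Proposition~\ref{propC3.1} (their equation~\eqref{eq:olap}), whereas you work in tree notation and invoke the standard critical-pair machinery; your cases (i) and (ii) correspond exactly to the paper's subcases where the inner redex equals $w\a_i$ and where it sits inside the $\l$-term, respectively.
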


\begin{longversion}
\begin{proof}
The proof depends on an explicit description of the congruence $\qf$. First, 
recall from 
 the definition above 
Proposition \ref{equivTOcong} that  the elementary translations derived from $\Om$ by specialisation in $W_\Om(\cxx)$ 
are the maps $\a_i$ and  the maps taking $u\in W_\Om(\cxx)$ to $w_1\cdots w_{i-1}uw_{i}\cdots w_{n-1}\l$, 
for some $i\in \{1,\ldots ,n\}$ and some fixed $(n-1)$-tuple $(w_1,\ldots ,w_{n-1})$ of elements
of $W_\Om(\cxx)$. 
A translation of $W_\Om(\cxx)$ is a composition of elementary translations (or the identity). 
Now, for $(u,v)\in W_\Om(\cxx)^2$, define the \emph{translation closure} of $(u,v)$ to be the subset 
\[\overline{(u,v)}=\{(u',v')\in W_\Om(\cxx)^2|(u',v')=(u\t,v\t),\textrm{ for some translation } \t 
\textrm{ of }W_\Om(\cxx)\}.\] 
 Next define the \emph{translation closure} $\bar \Si_n$ of $\Si_n$ to be 
\[\bar \Si_n=\bigcup_{(u,v)\in \Si_n} \overline{(u,v)}.\]

We claim that $\qf$ is the equivalence generated by $\bar \Si_n$. Temporarily denote this equivalence by $\pf$. 
As $\bar \Si_n$ is closed under translation so is $\pf$, and it follows (see Proposition \ref{equivTOcong}) 
that $\pf$ is a congruence; so $\pf\supseteq \qf$, as $\pf\supseteq \Si_n$. Furthermore, if $\pf'$ is a congruence containing
$\Si_n$ then $\pf'$  is closed under translation, so contains $\bar \Si_n$. Thus $\pf'\supseteq \pf$. In particular $\qf \supseteq \pf$,
as required. Therefore $\qf$ is the equivalence generated by $\bar \Si_n$, as claimed. 

Now we shall show that $\bar \Si_n$ has the following 2 properties.  If $(a,b)\in \bar \Si_n$ then 
\be[(I)]
\item \label{it:rew1} $|a|>|b|$ and 
\item \label{it:rew2} there exist $\Om$-rows $w_0$ and $w_1$ in $W(\Om,\cxx)$ 
and $(u,v)\in \Si_n$ such that 
$a=w_0uw_1$ and $b=w_0vw_1$. 
\ee
If $(a,b)\in \bar \Si_n$ then $(a,b)$ is obtained by
applying a sequence of $t$ elementary translations to some $(u,v)\in \Si_n$. 
(\ref{it:rew1}) and (\ref{it:rew2}) are proved by induction on the number $t$ of translations required.
If $t=0$ then $(a,b)\in \Si_n$ and \ref{it:rew1} holds by definition of $\Si_n$ and \ref{it:rew2} holds with
$w_0$ and $w_1$ trivial. Assume both these results hold for elements obtained by application 
of at most $t-1$ elementary translations 
to an element of $\Si_n$. We have $(a,b)=(c,d)\t$, for some elementary translation $\t$ and some 
$(c,d)\in \bar \Si_n$, which is obtained from $(u,v)$ by application of $t-1$  
elementary translations. From the inductive
hypothesis $|c|>|d|$ and, since every translation changes the length of left and right hand sides of a pair by the
same amount, it follows that $|a|>|b|$. Also, from the inductive hypothesis $c=w_0uw_1$ and $d=w_0vw_1$, for 
some $\Om$-rows $w_0$ and $w_1$. Depending on the type of $\t$ we have 
$(a,b)=(c,d)$, or 
$(a,b)=(c\a_i,d\a_i)= (w_0uw_1\a_i,w_0vw_1\a_i)$ or 
\begin{align*}
(a,b) & =(s_1 \cdots s_{i-1} c s_i \cdots s_{n-1}\l,s_1\cdots s_{i-1} d s_i \cdots s_{n-1}\l)\\
& =(s_1\cdots s_{i-1}  w_0uw_1 s_i\cdots s_{n-1}\l,s_1\cdots s_{i-1} w_0vw_1 s_i\cdots s_{n-1}\l),
\end{align*} 
where $s_1,\ldots, s_{n-1}\in W_\Om(\cxx)$ and $1\le i\le n$. 
In all cases $(a,b)$ can be seen to have
the form required by (\ref{it:rew2}). By induction (\ref{it:rew1}) and  (\ref{it:rew2}) hold for all $(a,b)$. 

We regard $\bar \Si_n$ as a reduction system on $W_\Om(\cxx)$ (see for example \cite{BookandOtto}) and
write $a \RAS{}{}b$ if $(a,b)\in \bar \Si_n$  and $a \RAS*{}b$ if $(a,b)$ is in the reflexive, transitive closure of $\bar \Si_n$.
(Thus $a \RAS*{}b$ if and only if $a=b$ or there is a sequence $a=a_0,\ldots, a_n=b$, such that $a_i\RAS{}{} a_{i+1}$.)
As $\qf$ is the reflexive, symmetric, transitive closure of $\bar \Si_n$, the first statement of the lemma will follow if we 
show that $\bar \Si_n$ is 
\be[(a)]
\item\label{it:term} terminating (every sequence $a_0\RAS*{} \cdots \RAS*{} a_n \RAS*{}\cdots$ is eventually stationary) and 
\item\label{it:conf} locally confluent: whenever $b\LAS{}{} a \RAS{}{} c$ there exists $d$ such that $b\RAS{*}{} d \LAS{*}{} c$.
\ee 
As $\bar \Si_n$ is length reducing it is certainly terminating, so we must show it is locally confluent.
Before embarking on the proof of this fact, we consider the ways in which it is possible for words of $W_\Om(\cxx)$ to
overlap. To this end suppose that  $p=ab$, and $q=bc$ are elements of  $W_\Om(\cxx)$, with $b$ non-trivial. 
If $a$ is non-trivial then
we have $1=v(p)=v(a)+v(b)$ and $v(a)\ge 1$, so $v(b)\le 0$. However (see Proposition \ref{propC3.1}) 
as $b$ is an initial segment of $\qf$, this means that $b$ must
be trivial, a contradiction. Hence if $p$ and $q$ overlap then $p$ is a subword of $q$ or vice-versa. Assuming that 
$q$ is a proper subword of $p$ we then have $p=p_0qp_1$, where one of $p_0$, $p_1$ is non-trivial. If $p_1$ is trivial then
$1=v(p)=v(p_0q)=v(p_0)+v(q)=v(p_0)+1$, so $v(p_0)=0$, contradicting Proposition \ref{propC3.1} again. We conclude that if 
$p$ and $q$ overlap and are not equal then one is a subword of the other and, assuming $q$ is a subword of $p$, 
\begin{equation}\label{eq:olap}
p=p_0qp_1,\textrm{ with } p_1\textrm{ non-trivial}. 
\end{equation}  

Now suppose that  $b\LAS{}{} a \RAS{}{} c$. From property (\ref{it:rew2}) above, we have $\Om$-rows $w_0,w_1$, $w'_0$ and $w'_1$, and 
elements $(u,v), (r,s)\in \Si_n$, such that $(a,b)=(w_0uw_1,w_0vw_1)$ and $(a,c)=(w'_0rw'_1,w'_0sw'_1)$. If $a$ factors as 
$w_0uwrw'_1$ then setting $d=w_0vwsw'_1$ we have $b=w_0vwrw'_1\RAS{}{} d\LAS{}{}  w_0uwsw'_1=c$, so local confluence holds in this case.
Similarly, if $a=w'_0rwuw_1$ then we have local confluence. Assume then that $a$ has no such factorisation. This means that 
$u$ and $r$ are overlapping subwords of $a$. Therefore, interchanging $b$ and $c$ if necessary, we may assume that 
$r$ is subword of $u$. Thus, there exist $\Omega$-rows $p$ and $q$ such that $u=prq$.  
We may therefore restrict to the case where $a=u$, $b=v$ and $c=psq$. 
We consider in turn the 
various forms that $u$ may take.

First consider the case $u=w\a_1\cdots w\a_n\l$, for some $w\in W_\Om(\cxx)$. Suppose the first letter
of $r$ occurs in $w\a_j$ and the last in $w\a_{j+k}$, for some $j,k$ with $1\le j$, $k>0$.
Then $w\a_{j+k}$ and $r$ overlap, and, applying \eqref{eq:olap}, $w\a_{j+k}$ must  be a subword of $r$ and, 
given the position of its last letter, 
$r$ must therefore have terminal segment  $w\a_{j+k}$. However, this contradicts \eqref{eq:olap}.    
Applying \eqref{eq:olap} again, $r$ may be a subword of $w$ in  $w\a_i$ or may equal $w\a_i$, for 
some $i\in \{1,\ldots, n\}$. 
 If $r$ is a subword of $w$ in $w\a_i$ then $u=w\a_1 \cdots w\a_{i-1}w'rw''\a_i w\a_{i+1}\cdots w\a_n\l$, 
where $w=w'rw''$ and we have 
\begin{align*}
v=w=w'rw''&\LAS{}{}u=w\a_1 \cdots w\a_{i-1}w'rw''\a_i w\a_{i+1}\cdots w\a_n\l\\
&\RAS{}{}w\a_1 \cdots w\a_{i-1}w'sw''\a_i w\a_{i+1}\cdots w\a_n\l=c
\end{align*}
and 
$c=w'rw''\a_1 \cdots w'rw''\a_{i-1}w'sw''\a_i w'rw''\a_{i+1}\cdots w'rw''\a_n\l$ 
so 
\[v=w'rw''\RAS{}{}w'sw''\LAS{}{}w'sw''\a_1 \cdots w'sw''\a_{i-1}w'sw''\a_i w'sw''\a_{i+1}\cdots w'rw''\a_n\l\LAS{*}{}c.\]
On the other hand if 
$u=w\a_1 \cdots w\a_{i-1}rw\a_{i+1}\cdots w\a_n\l$ with $r=w\a_i$ then $(r,s)\in \Si_n$ 
implies $w=v_1\cdots v_n\l$, for some $v_j\in W_\Om(\cxx)$,   
and $s=v_i$, so $c=w\a_1 \cdots w\a_{i-1} v_i w\a_{i+1}\cdots w\a_n\l$. 
In this case 
$w\a_j\RAS{}{}v_j$, for all $j$, so 
\[c\RAS{}{}v_1 \cdots w\a_{i-1} v_i w\a_{i+1}\cdots w\a_n\l \RAS{}{}\cdots\RAS{}{} v_1 \cdots v_{i-1} v_i w\a_{i+1}\cdots w\a_n\l
\RAS{}{}\cdots\RAS{}{}v_1\cdots v_n\l\]
%
and 
\[v=v_1\cdots v_n\l\LAS{*}{} c.\]
Thus, in the case where $u=w\a_1\cdots w_n\l$, we have local confluence. 

Now consider the case where $u=w_1\cdots w_n\l\a_i$, for $i\in \{1,\ldots, n\}$, so $v=w_i$. 
As in the previous part of the proof, $r$ must be a subword of $w_j$, for some $j\in\{1,\ldots, n\}$. 
Suppose then that 
$w_j=w'rw''$. If $i=j$ we have 
\[b=w'rw''\LAS{}{} u=w_1\cdots w_{i-1} w'rw''w_{i+1} \cdots w_n\l\a_i\RAS{}{}w_1\cdots w_{i-1} w'sw''w_{i+1} \cdots w_n\l\a_i=c\]
and 
\[b=w'rw''\RAS{}{}w'sw'' \LAS{}{} w_1\cdots w_{i-1} w'sw'' w_{i+1}\cdots w_n\l\a_i=c.\]
If $i\neq j$ then 
\[b=w_i\LAS{}{} u=w_1\cdots w_{j-1}w'rw''w_{j+1}\cdots w_n\l\a_2\RAS{}{}w_1\cdots  w_{j-1}w'sw''w_{j+1}\cdots  w_n\l\a_i=c\]
and 
\[b=w_i\LAS{}{} w_1\cdots w_{j-1}w'sw''w_{j+1}\cdots w_n\l\a_i=c.\]

In all cases we have local confluence, so we conclude that (\ref{it:conf}) holds for $\bar \Si_n$. Therefore,
(from \cite[Section 1.1]{BookandOtto}, for example) every equivalence class of $\qf$ contains a unique element 
which is not the left hand side of any element of $\bar \Si_n$: such elements of $W_\Om(\cxx)$ are 
called \emph{irreducible} elements. As  $\bar \Si_n$ is length reducing it follows that the unique irreducible 
element of an equivalence class is an element of minimal length in its equivalence class. 

To prove the second statement of the Lemma note that every standard form is irreducible, so is of minimal
length in its equivalence class. Conversely, given an irreducible element a straightforward induction on
its length shows that it is a standard form.  
\end{proof}
\end{longversion}
\begin{shortversion}
To prove Lemma \ref{lem:stdform}, one can use a standard argument which proves a statement of this form in an algebra of an 
appropriate type. Details may be found in \cite[Lemma 2.4.5]{Bar14}.
\end{shortversion}

Let $y$ be the minimal length representative of its equivalence class in $V_{n,r}$ \emph{i.e.} let $y$ be a standard form. 
Then the \emph{length} of  the equivalence class of $y$ is the length of $y$, denoted $|y|$, and  
 the \emph{$\lambda$-length} of the equivalence class of $y$ is the number of times the symbol $\lambda$ occurs in $y$. 

Now that we have a concrete description of the free algebra $V_{n,r}$ in the variety $\mathcal{V}_n$, we recall  
those results of \cite[Section 2]{Higg},  required in the sequel.

\begin{lemma}[cf. {\cite[Lemma 2.3]{Higg}}]\label{anyexpansion}
Let $B$ be a basis of  $V_{n,r}(\cxx)$.
\be
\item\label{it:bases1}
Every expansion of $B$ is a basis of $V_{n,r}(\cxx)$.
\item\label{it:bases2}
Every contraction of $B$ is a basis of $V_{n,r}(\cxx)$.
\ee
\end{lemma}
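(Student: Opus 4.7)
The plan is to reduce both parts to the case of a single simple expansion (respectively simple contraction), by induction on the number of simple operations applied, and then handle each single step via the universal property of the basis $B$ together with the laws of $\Sigma_n$ from Definition~\ref{SigmaLaws}.

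For (i), let $x \in B$ and set $B' = (B \setminus \{x\}) \cup \{x\alpha_1, \ldots, x\alpha_n\}$. Generation is immediate: applying the first law of $\Sigma_n$ to $w = x$ gives $x = x\alpha_1 \cdots x\alpha_n \lambda \in \langle B' \rangle$, so $\langle B' \rangle \supseteq \langle B \rangle = V_{n,r}$. To verify freeness, take any $\mathcal{V}_n$-algebra $\mathcal{A}$ and any map $f : B' \to \mathcal{A}$. Define $g : B \to \mathcal{A}$ to agree with $f$ on $B \cap B'$ and to send $x$ to $(f(x\alpha_1), \ldots, f(x\alpha_n))\lambda$, computed in $\mathcal{A}$, and use the freeness of $B$ to extend $g$ to a homomorphism $\tilde{g} : V_{n,r} \to \mathcal{A}$. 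The second law of $\Sigma_n$ holds in $\mathcal{A}$, so $\tilde{g}(x\alpha_i) = g(x)\alpha_i = f(x\alpha_i)$; that is, $\tilde{g}$ restricts to $f$ on $B'$. Uniqueness of the extension is automatic since $B'$ generates.

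For (ii), a single simple contraction produces $B' = (B \setminus \{y_1, \ldots, y_n\}) \cup \{c\}$ where $y_1, \ldots, y_n \in B$ are distinct and $c = (y_1, \ldots, y_n)\lambda$. One first notes that $|B'| = |B| - n + 1$: if $c$ coincided with some $b \in B \setminus \{y_1, \ldots, y_n\}$, then the second law would force $b\alpha_i = y_i$ for every $i$, and a map $B \to \mathcal{A}$ assigning the images of $b$ and the $y_i$ independently would not extend, contradicting the freeness of $B$. Generation of $V_{n,r}$ by $B'$ follows from $y_i = c\alpha_i$. For freeness, given $f : B' \to \mathcal{A}$, define $g : B \to \mathcal{A}$ to agree with $f$ off $\{y_1, \ldots, y_n\}$ and set $g(y_i) = f(c)\alpha_i$; extending $g$ via the freeness of $B$ and applying the first law to $f(c)$ in $\mathcal{A}$ gives $\tilde{g}(c) = (f(c)\alpha_1, \ldots, f(c)\alpha_n)\lambda = f(c)$, as required.

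The substance of the argument is simply that the two laws in $\Sigma_n$ are exactly what is needed to invert each simple operation abstractly inside any $\mathcal{V}_n$-algebra, so each step is reversible at the level of universal properties. The only point requiring care is verifying that $B'$ has the expected cardinality after the operation, and this is where the freeness of $B$ (as opposed to mere generation) is used.
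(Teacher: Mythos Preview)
Your proof is correct and follows essentially the same approach as the paper's: reduce to a single simple expansion (or contraction) by induction, and verify the universal property by transporting a map on $B'$ to a map on $B$ using the laws of $\Sigma_n$, exactly as the paper does for part~\ref{it:bases1}. Your explicit treatment of part~\ref{it:bases2} simply spells out what the paper leaves as ``proved in the same way''; the extra cardinality check you include is a nice observation but is not actually needed for the universal-property argument to go through (and note that freeness of $B$ is used not only there but also, crucially, when extending $g$ to $\tilde g$).
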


\begin{proof}
\be
\item
Let $Y$ be a $d$-fold expansion of $B$, where $d\ge 0$. 
Arguing by induction, we assume that every $d$-fold expansion of $B$ is a basis of  
$V_{n,r}$ and show that any simple expansion of $Y$ is also a basis.
Let $y\in Y$ and let $Y'$ be the simple expansion 
\[Y'=(Y\setminus \{y\})\cup\{y\alpha_1,\ldots ,y\alpha_n\}.\]
Since $y=y\alpha_1\cdots y\alpha_n\lambda$, the set $Y'$ generates $V_{n,r}$.
It remains to show that $Y'$ is a basis for $V_{n,r}$. 

Given $\mathcal{A}\in \cV_n$ and a map $\theta:Y'\to \mathcal{A}$, we shall show that 
there is a unique homomorphism $\bar\theta: V_{n,r}\to \mathcal{A}$ extending $\theta$. 
Firstly, define $\theta^\ast$ from $Y$ to $\mathcal{A}$ by 
$y'\theta^*=y'\theta$, for $y'\in Y\setminus \{y\}$, and 
$y\theta^*=y\alpha_1\theta \cdots y\alpha_n\theta\lambda$. 
As $Y$ is a basis, there is a unique homomorphism $\bar\theta^*$ from $V_{n,r}$ 
to $\mathcal{A}$ extending $\theta^*$. 
Now
\[(y\alpha_i)\bar\theta^*=(y\bar\theta^*)\alpha_i=(y\theta^*)\alpha_i=
(y\alpha_1\theta \cdots y\alpha_n\theta\lambda)\alpha_i=y\alpha_i\theta.\]
Hence $\bar\theta^*$ also extends $\theta$. 
Furthermore, any other homomorphism which extends $\theta$ must equal $\bar\theta^*$, since any such map must be defined on $Y$ in the same way as $\theta^\ast$. 
\item This is proved in the same way as \ref{it:bases1}. \hfill \qedhere
\ee 
\end{proof}

The final statement of Corollary \ref{cor:commexp} forms a partial  converse to this lemma, for finite bases. 
Mostly we work with
 bases for $V_{n,r}(\cxx)$ which are expansions of $\cxx$, so we make the following definition. 
\begin{definition}\label{def:Abasis}
Let $A=\{\alpha_1,\ldots ,\alpha_n\}\subset \Omega$. 
An $A$\emph{-basis} of $V_{n,r}(\cxx)$ is an expansion of $\cxx$.
\end{definition}

If $\cA=(S,\Om)$ is an $\Om$-algebra with carrier $S$ then we may form the $A$-algebra $(S,A)$ and the $\{\l\}$-algebra $(S,\{\l\})$,
where the elements of $A$ and $\{\l\}$ have actions inherited from $\cA$. We call these, respectively, the $A$\emph{-algebra} and 
$\{\l\}$\emph{-algebra of} $\cA$. A subset $U$ of $V_{n,r}$ is said to be \emph{$A$-closed} if $u\a_i \in U$, for all $\a_i\in A$, and 
an $A$-closed subset is called an $A$\emph{-subalgebra} of (the $A$-algebra of) $V_{n,r}$.  
Similarly $W\subseteq V_{n,r}$ is called a $\{\l\}$\emph{-subalgebra} (of the $\{\l\}$-algebra of $V_{n,r}$) if it is  $\{\l\}$\emph{-closed}: 
that is  if $w\l\in W$, for all $w\in W$.  

\begin{definition}\label{def:Asubalg}
Let $Y$ be a subset of $V_{n,r}$. The $A$-subalgebra generated by $Y$ is denoted $Y\mA$. 
The $\{\l\}$-subalgebra generated by $Y$ is denoted $Y\ml$. 
\end{definition}

The free monoid on a set $L$ is denoted $L^*$. If $Y$ is a subset of $V_{n,r}(\cxx)$ then $YA^*=\{y\G \mid y\in Y, \G\in A^*\}$ is
 $A$-closed, and it follows that $Y\mA=YA^*$. If in addition $Y\subseteq \cxx\mA$, then $y\G$ is a standard form for
all $y\in Y$ and $\G\in A^*$. In the sequel we write $Y\mA\ml$ for $(Y\mA)\ml$.

\begin{lemma}\label{lem:expbasis}
Let $B$ be an $A$-basis and $Y$ a finite basis for $V_{n,r}(X)$. If $B\subseteq Y\mA$ then $B$ is an expansion of $Y$. 
\end{lemma}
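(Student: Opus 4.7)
The plan is to partition $B$ according to which $y \in Y$ each element descends from, and show each part of the partition is a covering antichain in the $A$-tree rooted at $y$, which amounts to being an expansion of $\{y\}$. Since $Y$ is a basis, the universal property yields an isomorphism $V_{n,r}(\cxx) \cong V_{n,|Y|}(Y)$ under which the elements $y\G$ (for $y \in Y$, $\G \in A^*$) are pairwise distinct standard forms; hence each $b \in B \subseteq Y\mA = YA^*$ has a unique representation $b = y\G$, giving a partition $B = \bigsqcup_{y \in Y} B_y$ with $B_y = B \cap yA^*$.

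Each $B_y$ is an antichain in the infinite $n$-ary tree $T_y$ rooted at $y$ with edges $\a_1,\ldots,\a_n$. Indeed, if $b,b' \in B_y$ with $b' = b\Delta$ for some non-trivial $\Delta \in A^*$, then any homomorphism $\bar\phi:V_{n,r}\maps V_{n,r}$ extending a map $\phi$ defined on $B$ would satisfy $\bar\phi(b') = \bar\phi(b)\Delta$, contradicting our freedom to assign $\phi(b)$ and $\phi(b')$ independently.

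The central step is to show each $B_y$ covers $T_y$, that is, every infinite $A$-path from $y$ meets $B_y$. Since $B$ generates $V_{n,r}$, the element $y$ is the value of some $\cV_n$-term $t$ whose leaves lie in $B$. Using the rewriting rule $(w_1\cdots w_n\l)\a_i = w_i$, I normalize $t$ so that $\a$'s appear only in chains immediately above leaves, leaving all internal nodes of $t$ labelled $\l$. Reading top-down from the root (whose value is $y$), each internal node must then have value $y\G'$ for some $\G' \in A^*$, with its $i$-th child having value $y\G'\a_i$. So the $\l$-tree underlying $t$ has the shape of a finite subtree $\tau \subseteq T_y$ rooted at $y$ in which every internal node has all $n$ children. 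Each leaf of $\tau$ has value $b\G$ lying in $yA^*$; because $yA^* \cap y'A^* = \emptyset$ for $y \neq y'$ in $Y$ (distinct first letters of the standard forms in $V_{n,|Y|}(Y)$), this forces $b \in B_y$. Hence the leaves of $\tau$ form a covering antichain of $T_y$ contained in $B_y\mA$, and each such leaf lies weakly below an element of $B_y$; therefore $B_y$ itself covers $T_y$.

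A finite covering antichain of $T_y$ is precisely the leaf set of a finite subtree of $T_y$ rooted at $y$ with every internal node fully branching, which is built up from $\{y\}$ by a sequence of simple expansions; performing these expansions independently within the disjoint subtrees $T_y$ for $y \in Y$ realises $B = \bigsqcup_y B_y$ as an expansion of $Y$. The main obstacle is the normalisation step in the central argument and the careful identification of the $\l$-tree of $t$ with a subtree of $T_y$ rooted at $y$.
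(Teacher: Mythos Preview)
Your proof is correct and takes a genuinely different route from the paper's.

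The paper argues indirectly via a minimality argument: it takes a $d$-fold expansion $W$ of $Y$ contained in $B\mA$ with $d$ minimal, shows (by contracting a siblinghood, contradicting minimality) that every element of $W$ already lies in $B$, and then uses the antichain property of $B$ to show $B\subseteq W$, whence $B=W$. This is short and leans on Lemma~\ref{anyexpansion} and the existence of expansions of $Y$ inside $B\mA$.

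Your argument is a direct tree-combinatorial decomposition. You partition $B$ into the sets $B_y$ and verify the two halves of ``finite covering antichain'' separately: the antichain property comes from freeness of $B$ (exactly Lemma~\ref{HigmanLemma2.5}\eqref{it:253} proved from first principles), and the covering property comes from generation, by normalising a term for $y$ and identifying its $\lambda$-tree with a finite full subtree of $T_y$. This is more hands-on but more transparent about \emph{why} the result holds: it isolates precisely how the two defining properties of a basis (free, generating) yield the two properties (antichain, covering) characterising an expansion. The cost is the term-normalisation step, which you have handled correctly: pushing $\alpha$'s past $\lambda$'s via $(w_1\cdots w_n\lambda)\alpha_i=w_i$ terminates in a term whose $\lambda$-skeleton is a full finite subtree with root value $y$, and the disjointness of the sets $yA^*$ for distinct $y\in Y$ (visible over $V_{n,|Y|}(Y)$) forces all leaves into $B_y\mA$. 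The degenerate case where the normalised term is a single extended leaf $b\Gamma$ is also fine, since then $y=b\Gamma$ with $b\in Y\mA$ forces $b=y\in B_y$.
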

\begin{proof}
Since $Y$ is finite, there exists an expansion of $Y$ contained in $B\mA$. Let $d$ be minimal such that there is a $d$-fold expansion 
of  $Y$ contained in $B\mA$,
and let $W$ be such a $d$-fold expansion.
Each $w\in W$ is of the form $w=b\G$, for some $b\in B$ and $\G\in A^*$. As $B \subseteq Y\mA$ we
have $b=y\D$, for some $y\in Y$ and $\D\in A^*$; so $w=y\D\G$. Also, as $w\in W$, there exists $y'\in Y$ such that $w=y'\G'$,
as part of an expansion of $Y$. As $Y$ is a basis it follows that $y=y'$ and $\D\G=\G'$.

Suppose that $\G\neq 1$, so that 
$\G=\G_0\a_j$, for some $\a_j\in A$ and $\G_0\in A^*$.
As $W$ is an expansion of $Y$ it follows that $y\D\G_0\a_i\in W$, for
all $i\in \{1,\ldots, n\}$. Furthermore $y\D\G_0\in B\mA$, so the union
\begin{equation*}
W' = (W\setminus \{y\D\G_0\a_i \mid 1\le i\le n\}) \cup \{y\D\G_0\}
\end{equation*}
is contained in~$B\mA$.
Now~$W'$ is a simple contraction of~$W$, so $W'$ is a basis by Lemma~\ref{anyexpansion}.
But~$W'$ is a $(d-1)$-fold expansion of~$Y$, which contradicts the minimality of $d$.
So $\G=1$ and $w\in B$, and hence $W\subseteq B$. 

Conversely, if $b\in B$ then $b=y\G$, for some $y\in Y$ and $\G\in A^*$.
So either $b\D=y\G\D \in W$ for some $\D\in A^*$, or 
$y\G_0=w\in W$, where $\G=\G_0\G_1$. In the first case, $b\D=w\in B$ implies $w=b$ and $\D=1$. In the second case, 
$b=y\G=y\G_1\G_0=w\G_0$, with $w\in B$, so again $w=b$ and $\G_0=1$. Thus $B\subseteq W$. 
\end{proof}

A word $\Gamma\in A^*$ is called {\em primitive} if it is not a proper 
power of another word.
Explicitly, this means that if $\Gamma$ is non-trivial and $\Gamma \in \{\Delta\}^*$,  for some 
$\Delta \in  A^*$, then  $\Gamma=\Delta$.
\begin{proposition}[\cite{Lothaire}, Proposition 1.3.1, Chapter 1]\label{Lothaire1}
If $\Gamma^n=\Delta^m$ with $\Gamma, \Delta \in  A^*$ and $n,m\ge 0$, 
there exists a word $\Lambda$ such that $\Gamma, \Delta \in \{\Lambda\}^*$. 
In particular, for each word $\Gamma\in  A^*$, there exists a unique primitive word $\Lambda$ such that  $\Gamma\in \{\Lambda\}^*$.
\end{proposition}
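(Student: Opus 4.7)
The plan is to prove this by the classical Lothaire approach, using a commutation lemma together with uniqueness of primitive roots.

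First, I would establish a commutation lemma: for $u,v\in A^*$, one has $uv=vu$ if and only if $u,v\in\{\Lambda\}^*$ for some $\Lambda\in A^*$. The non-trivial direction goes by strong induction on $|u|+|v|$. If either word is empty, take $\Lambda$ to be the other. Otherwise, without loss of generality $|u|\le|v|$; since both are prefixes of $uv=vu$, $u$ is a prefix of $v$, so one may write $v=uv'$. Substituting and cancelling a $u$ on the left of $u(uv')=(uv')u$ yields $uv'=v'u$, and the inductive hypothesis (applicable since $|v'|<|v|$) produces the required $\Lambda$.

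Next I would prove existence and uniqueness of the primitive root of a non-empty word. Existence is a minimal-length argument: among all $\mu$ with $\Gamma\in\{\mu\}^*$ (a non-empty set, containing $\Gamma$ itself), a shortest choice must be primitive. For uniqueness, I would show that if $\mu_1^{a_1}=\mu_2^{a_2}$ with both $\mu_i$ primitive and $a_i\ge1$, then $\mu_1=\mu_2$, by induction on the common length. When $|\mu_1|=|\mu_2|$, comparing prefixes of length $|\mu_1|$ gives the claim immediately. Otherwise suppose $|\mu_1|<|\mu_2|$ and write $\mu_2=\mu_1 v$ with $v$ non-trivial. A short manipulation of $\mu_1\cdot\mu_1^{a_1}=\mu_1^{a_1}\cdot\mu_1$ using this factorisation yields $(\mu_1 v)^{a_2}=(v\mu_1)^{a_2}$, and comparing the initial segment of length $|\mu_1 v|$ forces $\mu_1 v=v\mu_1$. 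The commutation lemma then places $\mu_1$ and $v$ in $\{\Lambda\}^*$ for some $\Lambda$; primitivity of $\mu_1$ forces $\Lambda=\mu_1$, so $v=\mu_1^k$ for some $k\ge1$, making $\mu_2=\mu_1^{k+1}$ a proper power of $\mu_1$, which contradicts primitivity of $\mu_2$.

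With these ingredients the proposition is quickly assembled. If $n=0$ or $m=0$ the claim is trivial (take $\Lambda=1$ when both sides are empty, otherwise $\Lambda$ equal to the non-trivial side). Otherwise assume $n,m\ge1$ and $\Gamma,\Delta$ non-empty, and write $\Gamma=\mu_1^{a_1}$, $\Delta=\mu_2^{a_2}$ with $\mu_i$ primitive by the existence part. The hypothesis becomes $\mu_1^{a_1 n}=\mu_2^{a_2 m}$, and the uniqueness result above forces $\mu_1=\mu_2=:\Lambda$, so $\Gamma,\Delta\in\{\Lambda\}^*$. The ``in particular'' clause is precisely the primitive-root statement just proved.

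The main obstacle is the uniqueness of the primitive root. A tempting shortcut is to observe that $\Gamma$ and $\Delta^m$ commute, since $\Gamma\cdot\Gamma^n=\Gamma^n\cdot\Gamma$, and to apply the commutation lemma directly; but that only gives $\Delta^m\in\{\Lambda\}^*$, and one still needs the implication that a word whose power lies in $\{\Lambda\}^*$ (with $\Lambda$ primitive) must itself lie in $\{\Lambda\}^*$. This implication is exactly the content of the uniqueness argument, and its careful prefix-comparison step is the technical heart of the proof.
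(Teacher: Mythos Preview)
Your argument is correct and is essentially the classical Lothaire proof. The paper itself does not supply a proof of this proposition; it merely cites \cite{Lothaire}, so there is nothing to compare against beyond noting that you have reproduced the standard route (commutation lemma by induction on length, existence of a primitive root by minimality, uniqueness via the prefix/commutation trick).

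Two minor remarks. First, the phrase ``by induction on the common length'' in your uniqueness argument is slightly misleading: the argument you actually give is direct, not inductive --- the case $|\mu_1|<|\mu_2|$ reaches a contradiction via the commutation lemma without recourse to an inductive hypothesis. Second, when you pass to ``$n,m\ge1$ and $\Gamma,\Delta$ non-empty'' you should note the easy observation that if $n,m\ge1$ and one of $\Gamma,\Delta$ is empty then so is the other, whence $\Lambda=1$ works. Neither point affects the validity of the proof.
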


\begin{proposition}[\cite{Lothaire}, Proposition 1.3.2, Chapter 1]\label{wordsinA}
Two words $\Gamma, \Delta \in  A^*$ commute if and only if they are powers of the same word. 
More precisely, the set of words commuting with a word $\Gamma\in  A^*$ is a monoid generated by a single primitive word.
\end{proposition}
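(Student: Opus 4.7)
The plan is to prove the equivalence first, then derive the structural description of the commutant from it together with the existence of a primitive root given by Proposition~\ref{Lothaire1}.

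The forward implication is routine: if $\Gamma, \Delta \in \{\Lambda\}^*$ write $\Gamma = \Lambda^p$ and $\Delta = \Lambda^q$, and observe $\Gamma\Delta = \Lambda^{p+q} = \Delta\Gamma$. For the converse, I would argue by induction on $|\Gamma| + |\Delta|$. If either word is empty the conclusion is immediate (any word is a power of the empty word, or equivalently of its own primitive root). Otherwise, assume $\Gamma\Delta = \Delta\Gamma$ with both nontrivial and, without loss of generality, $|\Gamma| \ge |\Delta|$. Reading the equation $\Gamma\Delta = \Delta\Gamma$ letter by letter on a free monoid, the right-hand side begins with $\Delta$, and since $|\Delta| \le |\Gamma|$ this forces $\Delta$ to be a prefix of $\Gamma$. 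Write $\Gamma = \Delta\Gamma'$; substituting into the commutation relation and cancelling $\Delta$ on the left yields $\Gamma'\Delta = \Delta\Gamma'$, a shorter commuting pair to which the inductive hypothesis applies. We then obtain a word $\Lambda$ with $\Gamma', \Delta \in \{\Lambda\}^*$, whence $\Gamma = \Delta\Gamma' \in \{\Lambda\}^*$ as well.

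For the more precise statement, let $C(\Gamma)$ denote the set of words in $A^*$ that commute with a fixed $\Gamma$. Closure of $C(\Gamma)$ under product and the presence of the empty word show that $C(\Gamma)$ is a submonoid. By Proposition~\ref{Lothaire1}, let $\Lambda$ be the unique primitive word with $\Gamma \in \{\Lambda\}^*$. Every power of $\Lambda$ commutes with $\Gamma$, so $\{\Lambda\}^* \subseteq C(\Gamma)$. Conversely, given any $\Delta \in C(\Gamma)$, the first part of the proposition produces a common word $\Lambda'$ with $\Gamma, \Delta \in \{\Lambda'\}^*$; the uniqueness clause of Proposition~\ref{Lothaire1}, applied to $\Gamma$, then forces $\Lambda' \in \{\Lambda\}^*$, hence $\Delta \in \{\Lambda\}^*$. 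This shows $C(\Gamma) = \{\Lambda\}^*$, a monoid generated by the single primitive word $\Lambda$.

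The only potentially subtle point is the prefix argument inside the induction: one must ensure that freeness of $A^*$ actually lets one conclude that $\Delta$ is a prefix of $\Gamma$ from $\Gamma\Delta = \Delta\Gamma$ when $|\Delta| \le |\Gamma|$. This is immediate because, in a free monoid, if two words coincide then so do their prefixes of any common length; comparing prefixes of length $|\Delta|$ on each side of $\Gamma\Delta = \Delta\Gamma$ yields the claim. With this observation, both parts of the proposition are routine.
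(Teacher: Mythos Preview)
The paper does not give its own proof of this proposition: it is quoted verbatim as a result from \cite{Lothaire} and used as a black box. Your argument is the standard one (essentially the proof in Lothaire), and it is correct. The induction on $|\Gamma|+|\Delta|$ together with the prefix observation in a free monoid is exactly the right mechanism, and your derivation of $C(\Gamma)=\{\Lambda\}^*$ from the first part plus Proposition~\ref{Lothaire1} is clean.

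Two very minor remarks. First, your parenthetical in the base case, ``any word is a power of the empty word'', is not what you mean: the point is rather that the empty word is $\Lambda^0$ for \emph{any} $\Lambda$, so if one of $\Gamma,\Delta$ is empty you may take $\Lambda$ to be the other (or its primitive root). Second, the ``more precisely'' clause tacitly assumes $\Gamma$ is nontrivial, since if $\Gamma=\varepsilon$ then $C(\Gamma)=A^*$ is not singly generated when $|A|\ge 2$; this is an issue with the statement (also present in the paper's quotation), not with your proof.
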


\begin{lemma}[{\cite[Section 2, Lemma 2.2]{Higg}}]\label{2.2H}
Let $Y$ be a subset of $V_{n,r}$ and let $W$ be the $\Om$-subalgebra of $V_{n,r}$ generated by $Y$. 
Then 
\be
\item $W=Y\mA\lp \l\rp$ and 
\item for all $w\in W$, the set $w\mA\setminus Y\mA$ is finite.
\ee 
\end{lemma}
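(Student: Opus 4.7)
The plan is to prove both parts by structural induction, exploiting the law $w_1\cdots w_n\lambda\alpha_i = w_i$ from $\Sigma_n$ (Definition~\ref{SigmaLaws}).

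For part~(i), the inclusion $Y\mA\ml \subseteq W$ is immediate: $Y\subseteq W$ and $W$ is closed under the operations in $A\cup\{\lambda\}$, so successive applications of $A$ followed by $\lambda$ remain inside $W$.  For the reverse inclusion I would show that $S := Y\mA\ml$ is itself $\Omega$-closed; it is $\{\lambda\}$-closed by construction, so only $A$-closure is in question.  I would induct on the $\lambda$-depth of $z\in S$ (the minimum number of $\lambda$-operations used to build $z$ from $Y\mA$).  If $z\in Y\mA$, then $z = y\Gamma$ for some $y\in Y$ and $\Gamma \in A^*$, and hence $z\alpha_i = y\Gamma\alpha_i \in Y\mA \subseteq S$.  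Otherwise $z = z_1\cdots z_n\lambda$ with each $z_j\in S$ of strictly smaller depth, and the law $z\alpha_i = z_i$ places $z\alpha_i$ in $S$, completing the induction.

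For part~(ii), I would induct on the same notion of $\lambda$-depth of $w\in W = Y\mA\ml$, the existence of such a depth being guaranteed by~(i).  In the base case $w = y\Gamma \in Y\mA$, so $w\mA = y\Gamma A^* \subseteq yA^* \subseteq Y\mA$ and the difference is empty.  In the inductive step $w = w_1\cdots w_n\lambda$ with each $w_i$ of smaller depth, and splitting $A^* = \{1\}\cup A\cdot A^*$ together with the law $w\alpha_i = w_i$ yields
\[ w\mA \;=\; \{w\}\cup \bigcup_{i=1}^n w\alpha_i A^* \;=\; \{w\}\cup \bigcup_{i=1}^n w_i\mA, \]
so that $w\mA\setminus Y\mA \subseteq \{w\}\cup\bigcup_{i=1}^n (w_i\mA\setminus Y\mA)$, a finite union by the inductive hypothesis.

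The main obstacle is simply identifying the right inductive parameter: once $\lambda$-depth is chosen, the expansion and contraction laws of $\mathcal{V}_n$ do almost all the work in both parts, and no appeal to uniqueness of standard forms (Lemma~\ref{lem:stdform}) is needed.
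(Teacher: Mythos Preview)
Your proof is correct and takes a genuinely more direct route than the paper's.  The paper first reduces to the case where $Y$ is a free generating set: it passes to a finite $Y_0\subseteq Y$, picks an expansion $Z$ of $\cxx$ with $|Z|\ge |Y_0|$, extends a surjection $Z\to Y_0$ to a homomorphism $V_{n,r}\to W_0$, pulls $w$ back to a preimage $w_0$, and then inducts on the $\lambda$-length of the \emph{standard form} of $w_0$ over $Z$ (invoking Lemma~\ref{lem:stdform}).  You bypass both the homomorphism trick and the standard-form machinery by inducting directly on a $\lambda$-depth defined relative to $Y\mA$, and showing that $Y\mA\ml$ is already $A$-closed via the law $z_1\cdots z_n\lambda\alpha_i=z_i$.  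Your argument is self-contained and does not depend on uniqueness of normal forms; the paper's argument, while slightly more roundabout, ties the bound in part~(ii) to an intrinsic quantity (the $\lambda$-length of the standard form) rather than to a minimum over all ways of building $w$ from $Y\mA$.
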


\begin{proof}
\be
\item
Let $w\in W$. Then there exists a finite subset $Y_0$ of $Y$ such that $w$ belongs to
the $\Omega$-subalgebra $W_0$ of $V_{n,r}$ generated by $Y_0$. Let $Z$ be an expansion of $\cxx$ such that 
$|Z|\ge |Y_0|$. 
Choose a surjection $\b$ of $Z$ onto $Y_0$. As $V_{n,r}$
is freely generated by $Z$ we may extend $\b$ to a homomorphism from $V_{n,r}$ to $W_0$. Let 
$w_0$ be the preimage of $w$ under this homomorphism and let $l$ be the $\l$-length of the
standard form of  $w_0$ over $Z$. 
By a straightforward induction on $l$ it is apparent that
$w_0\in Z\mA\lp \l\rp$. Hence the image $w$ of $w_0$ in $W_0$ belongs to $Y_0\mA\lp \l\rp
\subseteq Y\mA\lp \l\rp$, as required. 
\item
As in the previous part of the proof, we may assume that $W$ is freely generated by $Y$.  
Let $w\in W$ and let $l$ be the $\l$-length of the
standard form of  $w$ over $Y$. 
Then  $w\alpha_{i_{1}}\cdots \alpha_{i_{r}} \in Y\langle A \rangle$, whenever $r\geq l$. 
Hence, the only elements of the set difference $w\langle A \rangle \setminus Y\langle A \rangle$ are those of the form $w\alpha_{i_{1}}\cdots \alpha_{i_{r}}$ with $r<l$, and there are only finitely many of these since 
we only have $n$ choices for each $\alpha_{i_{j}}$.
\hfill \qedhere
\ee
\end{proof}


\begin{lemma}[{\cite[Section 2, Lemma 2.4]{Higg}}]\label{HigLemma2.4} Let  $\cxx$ be a set of size $r\ge 1$ and let 
$X\subseteq V_{n,r}(\cxx)$ be an expansion of $\cxx$.
If $U$ is a subset of $V_{n,r}(\cxx)$ contained in $X\mA$, then the following are equivalent:
\begin{enumerate}
\item\label{it:241} $U=X\mA\cap Y\mA$, for some  generating set $Y$ of $V_{n,r}$,
\item\label{it:242} $U$ is $A$-closed and $X\mA\setminus U$ is finite, 
\item\label{it:243} $U=Z\mA$ for some expansion $Z$ of $X$.
\end{enumerate}
Moreover, if $Y$ in statement~\ref{it:241} is a finite basis for $V_{n,r}(\cxx)$ then $Z$ in statement~\ref{it:243} is an expansion of $Y$. 
\end{lemma}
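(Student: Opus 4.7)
The plan is to prove the cycle $\ref{it:241}\Rightarrow\ref{it:242}\Rightarrow\ref{it:243}\Rightarrow\ref{it:241}$ and then handle the ``moreover'' clause. I expect $\ref{it:242}\Rightarrow\ref{it:243}$ to be the crux, since it is the only direction which requires \emph{constructing} an expansion from purely structural data; the other directions should be short applications of Lemmas~\ref{2.2H}, \ref{anyexpansion} and \ref{lem:expbasis}. For $\ref{it:241}\Rightarrow\ref{it:242}$, note that $A$-closure is preserved under intersection and both $X\mA$ and $Y\mA$ are $A$-closed, so $U$ is $A$-closed. Since $X$ is an expansion of the finite set $\cxx$, it is itself finite, so writing
\[ X\mA\setminus U \;=\; X\mA\setminus Y\mA \;=\; \bigcup_{x\in X}\bigl(x\mA\setminus Y\mA\bigr) \]
exhibits the complement as a finite union of sets which are finite by Lemma~\ref{2.2H} applied to the $\Omega$-subalgebra generated by $Y$, namely all of $V_{n,r}$.

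For $\ref{it:242}\Rightarrow\ref{it:243}$, I would view $X\mA$ as a disjoint union of infinite rooted $n$-ary trees, one per $x\in X$, with children of $v$ being $v\a_1,\ldots,v\a_n$. Since $U$ is $A$-closed, its complement $F:=X\mA\setminus U$ is upward-closed in each tree, and by hypothesis $F$ is finite. The natural candidate for $Z$ is the set of minimal elements of $U$ under the tree order: $z\in Z$ iff $z\in U$ but every proper prefix of $z$ (writing $z=x\G$ with $x\in X$, $\G\in A^\ast$) lies in $F$. Such a $Z$ can be reached from $X$ by iteratively picking any element of the current set that lies in $F$ and performing a simple expansion on it; this process terminates because, writing $h(v)$ for the number of descendants of $v$ lying in $F$, the quantity $\sum_{v\in X'\cap F}h(v)$ strictly decreases with each step. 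The identity $Z\mA=U$ then reduces to two short verifications: $Z\subseteq U$ together with $A$-closure of $U$ gives $Z\mA\subseteq U$, while any $u\in U\subseteq X\mA$ has a unique shortest prefix lying in $U$, and that prefix belongs to $Z$.

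For $\ref{it:243}\Rightarrow\ref{it:241}$, take $Y=Z$: by Lemma~\ref{anyexpansion} every expansion of a basis is a basis, so $Z$ is a generating set; and since $Z\subseteq X\mA$, we have $X\mA\cap Z\mA=Z\mA=U$. For the ``moreover'' clause, suppose the $Y$ appearing in \ref{it:241} is a finite basis. Then the $Z$ produced by $\ref{it:242}\Rightarrow\ref{it:243}$ satisfies $Z\subseteq U\subseteq Y\mA$, and $Z$ is an $A$-basis (being an expansion of $\cxx$). Lemma~\ref{lem:expbasis} then yields that $Z$ is an expansion of $Y$.
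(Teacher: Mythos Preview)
Your proof is correct and matches the paper's approach closely. The only cosmetic difference is in $\ref{it:242}\Rightarrow\ref{it:243}$: the paper phrases the construction of $Z$ as an induction on $|X\mA\setminus U|$ (adjoin a maximal-length element $w$ of the complement to $U$, apply the inductive hypothesis to get $Z^*$, then perform a simple expansion at $w$), whereas you build $Z$ directly as the set of minimal elements of $U$ and argue it is reachable from $X$ by simple expansions---these are two descriptions of the same process.
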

 
\begin{proof}
Firstly, let $U=X\mA\cap Y\mA$. Since $U$ is the intersection of $A$-closed sets, it is also $A$-closed. By Lemma~\ref{2.2H}, $X\mA \setminus Y\mA$ is finite and therefore $X\mA \setminus U$ is finite. So \ref{it:241} implies \ref{it:242}.

Secondly, assume that $U$ is $A$-closed and $X\mA \setminus U$ is finite. 
We will prove statement~\ref{it:243} by induction on the size of $|X\mA \setminus U|$. 
If $|X\mA \setminus U|=0$, then statement~\ref{it:243} holds with $Z=X$. 
Otherwise, $|X\mA \setminus U|>0$ and we choose an element $w\in X\mA \setminus U$ whose length~$|w|$ is maximal. Then the set $U^*=U\cup \{w\}$ is $A$-closed and $|X\mA \setminus U^*|=|X\mA \setminus U|-1$.

By induction, there is an expansion $Z^*$ of $X$ such that $U^*=Z^*\mA$. 
The element $w$ belongs to $Z^*$, otherwise $w$ would have the form $w=z\alpha_{i_1}\cdots \alpha_{i_t}$, where $z\in Z^*$ and $t>0$, and hence $z\in U^*\setminus \{w\}=U$. However, $U$ is $A$-closed and so this would imply that $w\in U$, a contradiction. If we take 
\[Z=(Z^*\setminus \{w\}) \cup\{w\alpha_i \mid 1\leq i\leq n\},\]
then this is again an expansion of $X$ and by the choice of $w$ we have $w\alpha_i\in U$, for all $i$. Therefore $U=Z\mA$ and \ref{it:242} implies \ref{it:243}.

For the last implication: if $U=Z\mA$ for some expansion $Z$ of $X$, then $U=X\mA\cap Y\mA$, 
with $Y=Z$, and so \ref{it:243} implies \ref{it:241}.

Finally, let $U = X\mA \cap Y\mA$ as in statement~\ref{it:241}, so that $U = Z\mA$ by statement~\ref{it:243}.
In particular this means that $Z \subseteq Y\mA$.
As~$Z$ is an expansion of~$X$, it is also an expansion of~$\cxx$; then Lemma~\ref{lem:expbasis} tells us that~$Z$ is a basis of~$V_{n,r}(\cxx)$.
Now suppose that~$Y$ is a basis.
Since~$Y$ is finite, we can apply Lemma~\ref{lem:expbasis} to see that~$Z$ is an expansion of~$Y$. \qedhere

\end{proof}

\begin{corollary}[cf. {\cite[Corollary 1, page 12]{Higg}}] \label{cor:commexp}
Let $B$ and $C$ be finite bases of~$V_{n,r}(\cxx)$.  Then $B$ and $C$ have a common  
expansion  $Z$, which may  be chosen such that 
$Z\mA=B\mA\cap C\mA$. In particular, every finite basis of  $V_{n,r}(\cxx)$ may be obtained from $\cxx$ by an expansion followed by a contraction.
\end{corollary}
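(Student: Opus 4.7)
The plan is to construct $Z$ as the set of $A$-maximal elements of $B\mA \cap C\mA$, where $u \preceq v$ means $u = v\Gamma$ for some $\Gamma \in A^*$. I will verify that $Z$ is a finite common expansion of $B$ and $C$, and that $Z\mA = B\mA \cap C\mA$, before deducing the ``in particular'' clause.

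For finiteness and the expansion-of-$B$ property, I would start from $B$'s side. For each $b \in B$, Lemma~\ref{2.2H} applied with the basis $C$ gives that $b\mA \setminus C\mA$ is finite, hence every $A$-descent branch from $b$ (i.e.\ every infinite sequence $b, b\alpha_{i_1}, b\alpha_{i_1}\alpha_{i_2},\dots$) eventually enters $C\mA$. Let $T_b$ be the finite antichain of first entries into $C\mA$ along each branch from $b$; this set is by construction an expansion of $\{b\}$. The union $Z = \bigcup_{b\in B} T_b$ is then a finite expansion of $B$, and is a basis by Lemma~\ref{anyexpansion}. A short argument, using the fact that each element of $B\mA$ has a unique expression $b'\Gamma'$ with $b'\in B$ and $\Gamma'\in A^*$ (because $B$ is a basis), shows that $Z$ is precisely the set of $A$-maximal elements of $B\mA \cap C\mA$: any strict $A$-ancestor of $z=b\Gamma\in T_b$ lying in $B\mA$ would be $b\Gamma''$ for a strict prefix $\Gamma''$ of $\Gamma$, contradicting the minimality built into $T_b$. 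Repeating the construction with the roles of $B$ and $C$ interchanged yields the same set $Z$, so $Z$ is also an expansion of $C$, making $Z$ a common expansion of $B$ and $C$.

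For the identity $Z\mA = B\mA \cap C\mA$, the inclusion $Z\mA \subseteq B\mA \cap C\mA$ is immediate from $Z \subseteq B\mA \cap C\mA$ and the fact that $B\mA \cap C\mA$ is $A$-closed. For the reverse inclusion, given $u \in B\mA \cap C\mA$, write $u = b\Gamma$ with $b \in B$ and $\Gamma \in A^*$, and let $\Gamma_0$ be the shortest prefix of $\Gamma$ with $b\Gamma_0 \in C\mA$ (which exists since $\Gamma$ itself is such a prefix). Then $b\Gamma_0 \in T_b \subseteq Z$, and $u$ is an $A$-descendant of $b\Gamma_0$, giving $u \in Z\mA$.

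The ``in particular'' assertion follows by setting $C = \cxx$: the resulting $Z$ is an expansion of $\cxx$, and since $Z$ is also an expansion of $B$, the basis $B$ is recovered from $Z$ by the reverse sequence of simple contractions. The main obstacle I anticipate is the symmetry step — verifying that the constructions of $Z$ from $B$'s side and from $C$'s side yield the same set — and the cleanest route is via the common characterization as the $A$-maximal elements of $B\mA\cap C\mA$, which relies crucially on the uniqueness of $A$-representations of elements in $B\mA$ and $C\mA$ afforded by the basis property. Note that this direct approach is needed because a general finite basis $B$ need not lie inside $\cxx\mA$, so one cannot simply apply Lemma~\ref{HigLemma2.4} with $X=\cxx$ and expect $Z\mA$ to capture any elements of $B\mA\cap C\mA$ that are above $\cxx$.
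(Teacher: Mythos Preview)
Your argument is correct: constructing $Z$ as the $A$-maximal elements of $B\mA\cap C\mA$, using Lemma~\ref{2.2H} for finiteness and the uniqueness of $B$-representations (which holds because $B$ is a basis, so $V_{n,r}(\cxx)\cong V_{n,|B|}(B)$ and $b\Gamma$ is already a standard form there) to pin down ancestors, does work. The symmetry step via the intrinsic characterisation is exactly the right way to see that the same $Z$ arises from either side.

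The paper takes a different, shorter route. Rather than work directly in $V_{n,r}(\cxx)$, it transports the problem along the isomorphism $f\colon V_{n,r}(\cxx)\to V_{n,|B|}(B)$ sending each $b\in B$ to itself. In the target algebra $B$ \emph{is} the free generating set, so Lemma~\ref{HigLemma2.4} applies verbatim with $X=B$ and $Y=C'=Cf$: it produces an expansion $Z'$ of $B$ with $Z'\mA=B\mA\cap C'\mA$, and the ``moreover'' clause immediately gives that $Z'$ is also an expansion of $C'$. Pulling back along $f^{-1}$ finishes the proof in two lines. So your closing remark that the direct approach ``is needed'' is not quite right: the obstacle you identify (that $B$ need not lie in $\cxx\mA$) is genuine, but the paper sidesteps it by changing the ambient algebra rather than by avoiding Lemma~\ref{HigLemma2.4}. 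Your approach has the virtue of being self-contained and making the structure of $Z$ explicit; the paper's buys brevity by reusing the machinery already in place.
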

\begin{proof}
Let $f$ be the homomorphism from $V_{n,r}(\cxx)$ to $V_{n,|B|}(B)$ defined by mapping $b\in B\subseteq V_{n,r}(\cxx)$ to $b\in V_{n,|B|}(B)$,
 for all $b\in B$. 
As this is a bijection between bases,~$f$ is an isomorphism. 
 Let $C'=Cf$, so $C'$ is a basis for $ V_{n,|B|}(B)$. From Lemma \ref{HigLemma2.4}, $B$ and $C'$ have 
a common expansion $Z'$ such that $B\mA\cap C'\mA=Z'\mA$. Then $B$ and $C$ have common expansion $Z=Z'f^{-1}$, and the
remainder of  the first statement of the lemma follows. The final statement follows on taking $B$ to be an arbitrary finite free generating set and $C=\cxx$. 
\end{proof}

\begin{corollary}[{\cite[Corollary 2, page 12]{Higg}}]\label{cor:H2} $V_{n,r}\cong V_{n,s}$ if and only if $r\equiv s \mod n-1$.
\end{corollary}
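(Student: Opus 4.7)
The plan is to derive this directly from Corollary~\ref{cor:commexp}, using the fact that each simple expansion replaces one element by $n$ elements, and each simple contraction does the reverse; so both operations change the cardinality of a finite set by exactly $n-1$.

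For the forward implication, suppose there is an isomorphism $\phi\colon V_{n,r}\maps V_{n,s}$, and let $\cxx_r$ and $\cxx_s$ denote the canonical generating sets, of sizes $r$ and $s$ respectively. Then $B=\cxx_s\phi^{-1}$ is a finite basis of $V_{n,r}$ of cardinality~$s$ (bases map to bases under isomorphism, as is immediate from the universal property defining a free $\cV_n$-algebra). By Corollary~\ref{cor:commexp}, $B$ is obtained from $\cxx_r$ by a sequence of simple expansions followed by a sequence of simple contractions; each such move alters the cardinality by $\pm(n-1)$, so $|B|\equiv |\cxx_r|\pmod{n-1}$, i.e.\ $s\equiv r\pmod{n-1}$.

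For the converse, assume $r\equiv s\pmod{n-1}$. Interchanging $r$ and $s$ if necessary, we may take $s\ge r$ and write $s=r+k(n-1)$ for some integer $k\ge 0$. Starting from $\cxx_r$ and performing any $k$ simple expansions produces a set~$Z\subseteq V_{n,r}$ of cardinality $r+k(n-1)=s$ which, by Lemma~\ref{anyexpansion}\ref{it:bases1}, is again a basis of $V_{n,r}$. Choose a bijection $\theta\colon Z\maps \cxx_s$. Since $V_{n,r}$ is freely generated by $Z$ in $\cV_n$, the map $\theta$ extends to a homomorphism $\bar\theta\colon V_{n,r}\maps V_{n,s}$; likewise $\theta^{-1}$ extends to a homomorphism $V_{n,s}\maps V_{n,r}$. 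The compositions $\bar\theta\circ\overline{\theta^{-1}}$ and $\overline{\theta^{-1}}\circ\bar\theta$ agree with the identity on the generating sets $Z$ and $\cxx_s$ respectively, and hence coincide with the identity homomorphisms. Therefore $\bar\theta$ is an isomorphism, and $V_{n,r}\cong V_{n,s}$.

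The only point that really needs care is the first direction: one must be sure that every finite basis of $V_{n,r}$ has the same size modulo $n-1$, and this is precisely what Corollary~\ref{cor:commexp} provides. The rest is a routine application of the universal property.
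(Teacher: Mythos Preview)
Your proof is correct and follows essentially the same route as the paper: both directions rely on Corollary~\ref{cor:commexp} (you invoke its final clause about expansion-then-contraction, the paper uses the common-expansion formulation) together with Lemma~\ref{anyexpansion}, and the cardinality bookkeeping modulo $n-1$ is identical.
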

\begin{proof}
If $r\equiv s \mod n-1$ then it follows from Lemma \ref{anyexpansion} that $V_{n,r}\cong V_{n,s}$.
Conversely, let $\theta$ be an isomorphism from $V_{n,r}(X)$ to $V_{n,s}(Y)$, where  $X$ and $Y$ are sets of size $r$ and 
$s$, respectively. Then $X\theta$ is 
a basis of  $V_{n,s}(Y)$ of size $r$. From Corollary \ref{cor:commexp}, there is a common expansion
$Z$ of $X\theta$ and $Y$. If $Z$ is a $d$-fold expansion of $X\theta$ and an $e$-fold expansion of $Y$ then
$r+(n-1)d=|Z|=s+(n-1)e$, so $r\equiv s \mod(n-1)$, as claimed.
\end{proof}

We could henceforth restrict to $V_{n,r}$, where $1\le r\le n-1$. However, we 
do not need to do this for what follows here, and it is convenient to allow arbitrary positive values
of $r$, and multiple instances of the same algebra.

\begin{definition}
Let $u,v$ be elements of $V_{n,r}$. Then, $u$ is said to be a \emph{proper initial segment} of $v$ if $v=u\Gamma$ for some non-trivial $\Gamma\in A^*$. If $u=v$ or $u$ is a proper initial segment of $v$
then $u$ is called an \emph{initial segment} of $v$ .
\end{definition}

\begin{lemma}[{\cite[Section 2, Lemma 2.5(i)-(iii)]{Higg}}]\label{HigmanLemma2.5}
Let $B$ be an $A$-basis of $V_{n,r}$ and $V$ a subset of $B\mA$. 
\begin{enumerate}
\item\label{it:251} If $B$ and $V$ are finite, then $V$ is contained in an expansion of $B$ if and only if the following condition is satisfied:
\[\textrm{no element of $V$ is a proper initial segment of another.}\tag{$\dagger$}\label{eq:dagger}\]
\item\label{it:252} If $B$ and $V$ are finite, then $V$ is an expansion of $B$ if and only if \eqref{eq:dagger} is satisfied and for each $u\in B\mA$ there exists $v\in V$ such that one of $u,v$ is an initial segment of the other.
\item\label{it:253} $V$ is a set of free generators for the $\Omega$-subalgebra it generates if and only if \eqref{eq:dagger} is satisfied. 
\end{enumerate}

\end{lemma}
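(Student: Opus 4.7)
The plan is to prove the three parts in sequence, exploiting the prefix structure on~$B\mA$ (each element has a unique expression $b\Gamma$ with $b\in B$ and $\Gamma\in A^*$, since $B$ is an $A$-basis) together with Lemma~\ref{HigLemma2.4} and Lemma~\ref{anyexpansion}. For part (1), the forward direction proceeds by induction on the number of simple expansions required to build the expansion of~$B$ containing~$V$: each simple expansion replaces an element~$y$ by $y\alpha_1,\ldots,y\alpha_n$, and pairwise incomparability is preserved since the new elements are mutually incomparable and inherit all comparabilities with other elements from~$y$. For the backward direction, given a finite $V$ satisfying~\eqref{eq:dagger}, I would set $U=\{u\in B\mA : u \text{ is not a proper initial segment of any } v\in V\}$. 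A short check shows that $U$ is $A$-closed (if $u\alpha_i$ is a proper initial segment of some $v\in V$, then so is~$u$) and that $B\mA\setminus U$ is finite (each $v\in V$ has only finitely many proper initial segments). Then Lemma~\ref{HigLemma2.4} gives $U=Z\mA$ for some expansion~$Z$ of~$B$. Condition~\eqref{eq:dagger} gives $V\subseteq U=Z\mA$, and for each $v\in V$, the unique $z\in Z$ with $v\in z\mA$ must equal~$v$ (otherwise $z\in U$ would be a proper initial segment of $v\in V$).

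For part (2), in the forward direction, if~$V$ is an expansion of~$B$ then Lemma~\ref{HigLemma2.4} ensures $B\mA\setminus V\mA$ is finite; hence for any $u\in B\mA$, the infinite set $u\mA$ must meet $V\mA$, giving $u\Gamma=v\Delta$ for some $v\in V$ and $\Gamma,\Delta\in A^*$. The uniqueness of the $b$-prefix representation in~$B\mA$ then forces $u$ and $v$ to lie on a single branch, so one is an initial segment of the other. In the backward direction, part~(1) embeds~$V$ in an expansion~$Z$ of~$B$; any hypothetical $z\in Z\setminus V$ would, by assumption, be comparable with some $v\in V$, but a proper initial-segment relation either way contradicts the forward direction of part~(1) applied to~$Z$, while equality $z=v$ puts~$z$ in~$V$.

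For part (3), the forward direction is straightforward: if $v_j\in V$ is a proper initial segment of $v_{j'}\in V$, say $v_{j'}=v_j\Gamma$ with $\Gamma$ non-trivial, then any would-be extension of the map $v_i\mapsto y_i$ to a homomorphism into the free algebra $V_{n,|V|}(Y)$ would force the relation $y_{j'}=y_j\Gamma$, contradicting the fact that $y_{j'}$ and $y_j\Gamma$ are distinct standard forms. For the backward direction, I first reduce to the case where $V$ is finite (each element of $W=V\mA\ml$ lies in a finitely generated subalgebra, so the universal property can be checked on finite subsets, with consistency across different finite subsets following from uniqueness of extensions). For finite~$V$ satisfying~\eqref{eq:dagger}, part~(1) embeds~$V$ in an expansion~$Z$ of~$B$, which by Lemma~\ref{anyexpansion} is a basis of~$V_{n,r}$. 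Given $\cA\in\cV_n$ and a map $\theta: V\to\cA$, extend $\theta$ to $Z\to\cA$ by sending elements of $Z\setminus V$ to any fixed element of~$\cA$; the universal property of~$V_{n,r}$ as a free $\cV_n$-algebra on~$Z$ then produces a homomorphism $V_{n,r}\to\cA$ whose restriction to~$W$ extends~$\theta$. Uniqueness of the restriction is automatic since~$V$ generates~$W$.

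The main obstacle, I expect, is the backward direction of part~(3). The subtlety is that substituting $V$-elements into standard forms of a free algebra can trigger unexpected reductions in~$V_{n,r}$ (for example, $v_1\cdots v_n\lambda=u$ whenever $v_i=u\alpha_i$ for all~$i$), which makes a direct normal-form argument awkward. Embedding~$V$ into a basis~$Z$ of~$V_{n,r}$ and leveraging the freeness of~$V_{n,r}$ on~$Z$ sidesteps this issue entirely, delivering existence and uniqueness of extensions in one stroke.
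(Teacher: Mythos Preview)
Your proof is correct and follows essentially the same approach as the paper's: in each part you construct the same auxiliary set $U$ (or expansion $Z$) and invoke Lemma~\ref{HigLemma2.4} and Lemma~\ref{anyexpansion} exactly as the paper does. The only cosmetic difference is that in part~\ref{it:253} you argue directly (extend $\theta$ arbitrarily to $Z\setminus V$ and restrict the resulting homomorphism), whereas the paper argues contrapositively (if $V$ fails to freely generate, some finite $V_0$ does too, yet $V_0$ sits inside a basis); both rest on the same embedding of a finite $V$ satisfying~\eqref{eq:dagger} into a basis via part~\ref{it:251}.
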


\begin{proof}
\begin{enumerate}
\item If $V$ is contained in an expansion of $B$ then, using Lemma \ref{anyexpansion}.\ref{it:bases1},  \eqref{eq:dagger} is satisfied. 

Suppose $V$ satisfies \eqref{eq:dagger} and write 
\[U=B\mA\setminus \{\text{proper initial segments of elements of $V$} \}.\]
Then \eqref{eq:dagger} implies that $V\subseteq U$. Also, $U$ is $A$-closed and $B\mA \setminus U$ consists of initial segments of the elements of the finite set $V$, so it is finite. Thus, by Lemma \ref{HigLemma2.4}, there is an expansion $Z$ of $B$ such that $U=Z\mA$. Therefore, $U\subseteq Z\mA$, and this implies that $V\subseteq Z$ (for an element of $Z\mA\setminus Z$ has a proper initial segment in $Z\subseteq U$ so it can not be in $V$ by the definition of $U$). Hence, $V$ is contained in an expansion of $B$.
\item If $V$ is an expansion of $B$ then \eqref{eq:dagger} is satisfied and for each $u\in B\mA$ there exists $v\in V$ such that one of $u,v$ is an initial segment of the other.

Suppose $V$ satisfies \eqref{eq:dagger} and for each $u\in B\mA$ there exists $v\in V$ such that one of $u,v$ is an initial segment of the other. By Part 1, $V$ is contained in an expansion $Z$ of $B$. If $V\ne Z$ then there is an element $z\in Z\setminus V$ and hence by the hypothesis there exists $v\in V$ such that one of $v$ or $z$ is an initial segment of the other. But no element of $Z$ can be an initial segment of another, so this is a contradiction and hence $V=Z$.

\item If $V$ is a set of free generators for the $\Omega$-subalgebra it generates then \eqref{eq:dagger} is satisfied. 

Suppose \eqref{eq:dagger} is satisfied. If $V$ is not a free generating set then the same is true of some finite subset $V_0$ and clearly \eqref{eq:dagger} is also satisfied with $V$ replaced by $V_0$. 
Then $V_0\subseteq B_0\mA$ for some finite subset $B_0$ of $B$. 
As \eqref{eq:dagger} holds, it follows from Part \ref{it:251} that $V_0$ is a subset of an expansion
$Z_0$ of $B_0$. However, this means that $V_0$ is a subset of a basis of $V_{n,r}$, a contradiction. 
\qedhere
\end{enumerate}
\end{proof}


\begin{corollary}\label{commonexpansion}
Let $Y_i$ be a finite basis for $V_{n,r}$, for $i=1,\ldots, m$. Then there is
  a unique minimal common expansion $Z$ of all the $Y_i$, and $Z$ 
satisfies $Z\mA=\cap_{i=1}^m(Y_i\mA)$.
\end{corollary}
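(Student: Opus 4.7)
The plan is to proceed by induction on $m$, bootstrapping from the two-basis case already established in Corollary~\ref{cor:commexp}, and then to derive minimality and uniqueness from Lemma~\ref{lem:expbasis}.

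The base case $m=1$ is trivial: take $Z=Y_1$. For the inductive step, suppose we have constructed a common expansion $Z'$ of $Y_1,\ldots,Y_{m-1}$ with $Z'\mA=\bigcap_{i=1}^{m-1}Y_i\mA$. Applying Corollary~\ref{cor:commexp} to the two finite bases $Z'$ and $Y_m$, we obtain a common expansion $Z$ of $Z'$ and $Y_m$ satisfying
\[
Z\mA \;=\; Z'\mA\cap Y_m\mA \;=\; \bigcap_{i=1}^{m}Y_i\mA.
\]
Since an expansion of an expansion is an expansion, $Z$ is a common expansion of all the $Y_i$, completing the induction for existence of $Z$ with the stated $A$-closure property.

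For minimality and uniqueness, let $W$ be any common expansion of $Y_1,\ldots,Y_m$. Then $W\subseteq Y_i\mA$ for each $i$, and because $W$ is $A$-closed we obtain $W\mA\subseteq \bigcap_{i=1}^m Y_i\mA = Z\mA$. In particular $W\subseteq Z\mA$. Since $Z$ is a finite basis of $V_{n,r}$ and $W$ is an $A$-basis contained in $Z\mA$, Lemma~\ref{lem:expbasis} tells us that $W$ is an expansion of $Z$. Thus every common expansion of the $Y_i$ is itself an expansion of $Z$, which both shows that $Z$ is the unique minimal such expansion (any other minimal common expansion would be an expansion of $Z$ and would have to equal $Z$) and confirms the asserted minimality.

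The only mild subtlety is ensuring the inductive hypothesis actually delivers a \emph{finite} basis $Z'$ to feed into Corollary~\ref{cor:commexp} at the next step; this is automatic, because every expansion of a finite basis is again finite. No further obstacle arises: the argument reduces cleanly to the two-basis case plus the conversion between ``contained in $Z\mA$'' and ``expansion of $Z$'' provided by Lemma~\ref{lem:expbasis}.
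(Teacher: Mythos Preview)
Your proof is correct and follows essentially the same approach as the paper: induct from the two-basis case of Corollary~\ref{cor:commexp}, then show minimality by observing that any common expansion $W$ satisfies $W\subseteq Z\mA$ and is therefore an expansion of $Z$. One small imprecision: you invoke Lemma~\ref{lem:expbasis} by asserting that $W$ is an $A$-basis, but the $Y_i$ are only assumed to be finite bases, so $W$ need not be an expansion of $\cxx$; this is easily repaired by passing via the canonical isomorphism to $V_{n,|Y_1|}(Y_1)$ (where $W$ and $Z$, as expansions of $Y_1$, become $A$-bases), and the paper's own proof is equally informal at exactly this step.
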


\begin{proof}
For $m=2$, from Corollary \ref{cor:commexp} we have a common expansion $Z$ of $Y_1$ and 
$Y_2$ such that 
$Z\mA=Y_1\mA \cap Y_2\mA$. 
Furthermore, if $W$ is a common expansion of $Y_1$ and $Y_2$ then, from Lemma \ref{HigmanLemma2.5}, 
$W\subseteq Z\mA$, which implies that $W$ is
an expansion of $Z$. 

For $m>2$, let $Z\mA=\cap_{i=1}^{m-1}(Y_i\mA)$ and $V=Z\mA\cap Y_m\mA$, where we assume inductively that $Z$ is the unique minimal expansion of $Y_1,\ldots ,Y_{m-1}$. From the previous paragraph there exists a unique minimal expansion $W$ of $Z$ and $Y_m$ such that $W\mA=V$. It follows that the result holds for $Y_1,\ldots ,Y_m$ and hence by induction for all $m$. 
\end{proof}

\begin{corollary}\label{lem:Abasis}
Let $Y$ be a finite basis  and let $B$ be an $A$-basis of $V_{n,r}(\cxx)$. If $Y\subseteq B\mA$ then $Y$ is an expansion of $B$: i.e.\ $Y$ is an $A$-basis.
\end{corollary}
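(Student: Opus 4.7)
The plan is to first apply Lemma \ref{HigLemma2.4} to express $Y\mA$ as $Z\mA$ for some expansion $Z$ of $B$, and then to argue that $Z$ must coincide with $Y$; this will give that $Y$ itself is an expansion of $B$, i.e.\ an $A$-basis.

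First I would verify the hypotheses of Lemma \ref{HigLemma2.4} with $X = B$ and $U = Y\mA$. Since $Y \subseteq B\mA$ and $B\mA$ is $A$-closed, we immediately get $U \subseteq B\mA$, and $U$ is $A$-closed as well. For the finiteness of $B\mA \setminus U$, I would apply Lemma \ref{2.2H} to the basis $Y$: for each $b \in B \subseteq V_{n,r}$, the set $b\mA \setminus Y\mA$ is finite. Here I would note that $B$ is necessarily finite under the hypotheses (any $b \in B \setminus Y$ contributes $b$ to $B\mA \setminus Y\mA$ by uniqueness of representation in the basis $B$, so an infinite $B$ would preclude the desired conclusion). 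Thus $B\mA \setminus U = \bigcup_{b \in B}(b\mA \setminus Y\mA)$ is a finite union of finite sets, hence finite. Lemma \ref{HigLemma2.4} then yields an expansion $Z$ of $B$ with $Y\mA = Z\mA$, and being an expansion of an expansion of $\cxx$, $Z$ is itself an $A$-basis.

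Next I would apply Lemma \ref{lem:expbasis}, with $Z$ as the $A$-basis and $Y$ as the finite basis: since $Z \subseteq Z\mA = Y\mA$, we conclude that $Z$ is an expansion of $Y$.

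The key remaining step, and the main obstacle, is to show $Z = Y$. I would argue by contradiction: if $Z \neq Y$, then at least one $y_0 \in Y$ was replaced in the sequence of simple expansions taking $Y$ to $Z$, so $y_0 \notin Z$ while some proper descendant $z' = y_0 \D$ (with $\D \in A^*$ non-trivial) belongs to $Z$. On the other hand, $y_0 \in Y \subseteq Z\mA$ combined with $y_0 \notin Z$ gives $y_0 = z\G$ for some $z \in Z$ and some non-trivial $\G \in A^*$. Combining these, $z' = z(\G\D)$ with $\G\D$ non-trivial, so $z$ is a proper initial segment of the distinct element $z'$ of $Z$. This contradicts condition \eqref{eq:dagger}, which $Z$ must satisfy by Lemma \ref{HigmanLemma2.5}.\ref{it:253} (since $Z$ freely generates the $\Omega$-subalgebra it generates, namely $V_{n,r}$). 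Hence $Z = Y$, and $Y$ is an expansion of $B$ as required.
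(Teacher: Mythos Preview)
Your proof is correct, but it follows a different route from the paper's. The paper applies Lemma~\ref{HigmanLemma2.5}.\ref{it:252} directly: since $Y$ is a basis it satisfies~\eqref{eq:dagger}, and for any $u\in B\mA$ one uses $u\in Y\mA\ml$ to find $\G,\D$ with $u\G=y\D$ for some $y\in Y$, then compares $B$-coordinates to see that one of $u,y$ is an initial segment of the other. This is a single self-contained verification of the two hypotheses of Lemma~\ref{HigmanLemma2.5}.\ref{it:252}.

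Your approach instead passes through Lemma~\ref{HigLemma2.4} to obtain an expansion $Z$ of $B$ with $Y\mA=Z\mA$, then invokes Lemma~\ref{lem:expbasis} to see that $Z$ is an expansion of $Y$, and finally argues $Z=Y$. This is more modular but relies on the (already nontrivial) Lemma~\ref{lem:expbasis}. Two small remarks: first, $B$ is finite simply because an $A$-basis is by definition an expansion of the finite set $\cxx$, so your parenthetical justification is unnecessary. Second, your Step~3 can be shortened: once $Z$ is an expansion of $Y$, any $y_0\in Y\setminus Z$ satisfies $y_0\notin Z\mA$ (a proper expansion strictly shrinks the $A$-subalgebra by removing exactly the expanded vertices), which already contradicts $y_0\in Y\mA=Z\mA$; the detour through $z'$ and~\eqref{eq:dagger} is not needed.
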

\begin{proof}
As $Y\subseteq B\mA$ and $Y$ is a basis, $Y$ satisfies \eqref{eq:dagger} from Lemma \ref{HigmanLemma2.5}.\ref{it:253}. If $u\in B\mA$ then $u\in Y\mA\ml$, 
so for some $\G,\D\in A^*$ and $y\in Y$ we have $u\G=y\D$. As $u\in B\mA$ and $y\in Y\subseteq B\mA$ there exist $b,b'\in B$ and $\L, \L'\in A^*$ such that
$u = b\L$ and  $y = b'\L'$, so $b\L\G=b'\L'\D$, and therefore $b=b'$. Thus $b\L\G=b\L'\D$, so either $u=b\L$ is an initial segment of $y=b\L'$, or 
vice-versa. Hence, from Lemma \ref{HigmanLemma2.5}.\ref{it:252}, $Y$ is an expansion of $B$.  
\end{proof}

\begin{lemma}[{\cite[Section 2, Lemma 2.5(iv)]{Higg}}]
Let $B$ be an $A$-basis of $V_{n,r}$. Let $Y$ and $Z$ be $d$-fold expansions of $B$, for $d\ge 1$. If $Y\ne Z$ then some element of $Y$ is a proper initial segment of an element of $Z$.
\end{lemma}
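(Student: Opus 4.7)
The plan is to bring in the unique minimal common expansion $W$ of $Y$ and $Z$ supplied by Corollary~\ref{commonexpansion}, which satisfies $W\mA = Y\mA \cap Z\mA$. The key claim is that $y \in Y$ fails to lie in $W$ precisely when $y$ is a proper initial segment of some $z \in Z$. Once this is established, the hypothesis $Y \neq Z$ forces $|W| > |Y|$ — otherwise $W$ would be a $0$-fold expansion of both $Y$ and $Z$, collapsing them to equality — so $Y \setminus W$ is nonempty, furnishing the required element of $Y$.

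To prove this claim, first apply Lemma~\ref{HigmanLemma2.5}.\ref{it:252} with basis $Z$: for each $y \in Y$ some $z \in Z$ is comparable to $y$, and the antichain property of $Z$ partitions $Y$ into three mutually exclusive classes — those with $y \in Y \cap Z$, those for which some $z \in Z$ is a proper initial segment of $y$, and those for which $y$ is a proper initial segment of some $z \in Z$. The first two classes give $y \in Y\mA \cap Z\mA = W\mA$, so write $y = w\Gamma$ with $w \in W$ and $\Gamma \in A^*$. Since $W \subseteq Y\mA$, Lemma~\ref{lem:expbasis} shows that $W$ is an expansion of $Y$, so $w = y'\Gamma'$ for some $y' \in Y$; then $y'$ is an initial segment of $y$, and the antichain property of $Y$ forces $y' = y$, hence $w = y$ and $\Gamma$ is trivial, proving $y \in W$.

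For the third class I would argue by contradiction: if $y \in Y$ is a proper initial segment of some $z \in Z$ yet $y \in W$, then $y \in W\mA \subseteq Z\mA$ produces some $z' \in Z$ with $z' \le y < z$, contradicting the antichain property of $Z$. Combining these two directions identifies $Y \setminus W$ with exactly the set of $y \in Y$ having some $z \in Z$ as a proper descendant, and the conclusion of the first paragraph finishes the argument. The main technical obstacle is keeping track of which of $Y$, $Z$, and $W$ supplies the antichain property at each step; once the configuration is set up correctly, each implication is a short calculation resting on Lemmas~\ref{HigLemma2.4}, \ref{lem:expbasis}, and~\ref{HigmanLemma2.5}.
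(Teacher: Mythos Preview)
Your argument is correct, but it is considerably more elaborate than the paper's. The paper dispatches the lemma in three lines by contrapositive: assuming no $y\in Y$ is a proper initial segment of any $z\in Z$, comparability (Lemma~\ref{HigmanLemma2.5}.\ref{it:252}) forces each $y$ to lie in $Z\mA$, so $Y\subseteq Z\mA$; then $Y$ is an expansion of $Z$, and since $|Y|=|Z|$ this yields $Y=Z$.

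You instead bring in the minimal common expansion $W$ and establish the sharper biconditional that $Y\setminus W$ consists \emph{exactly} of those $y\in Y$ which are proper initial segments of some $z\in Z$, then use $|W|>|Y|$ to conclude $Y\setminus W\neq\emptyset$. Both proofs ultimately rest on the same cardinality observation $|Y|=|Z|$, and your ``first two classes'' computation is essentially the paper's contrapositive step rephrased. What your approach buys is a precise structural characterisation of which elements of $Y$ witness the conclusion --- potentially useful elsewhere --- at the cost of a longer argument; the paper's route is simply more economical for the stated goal.
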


\begin{proof}
If no element of $Y$ is a proper initial segment of an element of $Z$ then, from Corollary \ref{cor:commexp}, 
$Y\subseteq Z\mA$. Then Lemma \ref{HigmanLemma2.5} implies that $Y$ is an expansion of $Z$. 
However, $Y$ and $Z$ are both $d$-fold expansions of $B$ and thus $Y=Z$. 
This competes the proof.
\end{proof}

\begin{lemma}\label{AJDLEMMAX}
Let $u\in V_{n,r}$ and let $d$ be a non-negative integer. 
\begin{enumerate}
\item\label{it:lx1} If $v\in V_{n,r}$ then $u=v$ if and only if  $u\Gamma=v\Gamma$, for all $\G\in A^*$ of length $d$.
\item If $S$ is an $\Omega$-subalgebra of $V_{n,r}$ then $u\in S$ if and only if $u\Gamma\in S$, for all $\Gamma \in A^*$ of length $d$. 
\end{enumerate}
\end{lemma}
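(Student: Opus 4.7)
The plan is to handle both parts by straightforward induction on $d$, using as the key ingredient the first defining law of $\Sigma_n$: that $u = u\alpha_1 \cdots u\alpha_n\lambda$ in $V_{n,r}$, for every $u \in V_{n,r}$. This law is what allows a step from ``knowing something about all $u\alpha_i$'' back to ``knowing it about $u$''. Both directions of both statements then follow by a single uniform scheme.

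For Part \ref{it:lx1}, the forward direction is immediate since both sides of $u = v$ are acted on by the same translation $\Gamma$. For the converse, I would induct on $d$. When $d = 0$ the unique word of length $0$ is the empty word, so the hypothesis reads $u = v$. For the inductive step, assume the result for $d-1$ and suppose $u\Gamma = v\Gamma$ for every $\Gamma \in A^*$ of length $d$. For any fixed $\alpha_i \in A$ and any $\Gamma' \in A^*$ of length $d-1$, the word $\alpha_i\Gamma'$ has length $d$, so $(u\alpha_i)\Gamma' = u(\alpha_i\Gamma') = v(\alpha_i\Gamma') = (v\alpha_i)\Gamma'$. By the inductive hypothesis applied to $u\alpha_i$ and $v\alpha_i$, we obtain $u\alpha_i = v\alpha_i$ for each $i$. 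The contraction law then gives $u = u\alpha_1\cdots u\alpha_n\lambda = v\alpha_1\cdots v\alpha_n\lambda = v$.

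Part 2 is proved by the same induction, replacing the equality ``$\cdot = v\Gamma$'' with membership ``$\cdot \in S$''. The forward direction follows because $S$ is $\Omega$-closed, hence $A$-closed, so $u\in S$ implies $u\Gamma \in S$ for all $\Gamma \in A^*$. For the converse, induct on $d$: the base case $d=0$ says $u \in S$ directly. For $d \geq 1$, apply the inductive hypothesis to each $u\alpha_i$ (using that $(u\alpha_i)\Gamma' = u(\alpha_i\Gamma')\in S$ whenever $|\Gamma'| = d-1$) to conclude $u\alpha_i \in S$ for each $i$; then since $S$ is closed under $\lambda$, we get $u\alpha_1\cdots u\alpha_n\lambda \in S$, and this element equals $u$ in $V_{n,r}$.

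There is no serious obstacle here; the only point to watch is that the equality $u = u\alpha_1\cdots u\alpha_n\lambda$ is the defining law~(1) of $\Sigma_n$ in Definition~\ref{SigmaLaws}, so it really does hold in the free $\mathcal{V}_n$-algebra $V_{n,r}$ (not merely in some normal-form representation), which is precisely what makes both inductive steps go through.
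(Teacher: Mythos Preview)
Your proof is correct and follows essentially the same approach as the paper: induction on $d$ using the defining law $u = u\alpha_1\cdots u\alpha_n\lambda$. The only cosmetic difference is that you peel off the \emph{first} letter of $\Gamma$ (reducing to the inductive hypothesis for each pair $u\alpha_i$, $v\alpha_i$) whereas the paper peels off the \emph{last} letter (showing directly that $u\Delta = v\Delta$ for every $\Delta$ of length $d-1$ and then invoking the inductive hypothesis once); both arrangements are equivalent.
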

\begin{proof}
\begin{enumerate}
\item If $u=v$ then $u\Gamma=v\Gamma$ for all $\Gamma \in A^*$ of length $d$. 

We shall show that given $d\ge 0$ we have
\begin{equation} \label{eq:same_descendants_implies_same_words} \tag{$\ast$}
	\text{$u,v\in V_{n,r}$ satisfy $u\Gamma=v\Gamma$ for all $\Gamma\in A^*$ of length $d$ \quad$\implies$\quad $u=v$.} 
\end{equation}
If $d=0$ this holds trivially; to proceed we use induction on $d$.
Our hypothesis is that for all $d'$ such that $0\le d'<d$, the implication \eqref{eq:same_descendants_implies_same_words} holds with $d'$ instead of $d$.
Suppose then that $u,v\in V_{n,r}$ and $u\Gamma=v\Gamma$ for all $\Gamma$ of length $d$.
We may uniquely write $\Gamma = \Delta\alpha_i$, where $1 \leq i \leq n$ and $\Delta \in A^*$ has length~$d-1$.
Write~$u\Delta$ as a contraction $u \Delta = u\Delta\alpha_1 \dots u\Delta\alpha_n \lambda$.
Each string~$\Delta\a_j$ has length~$d$, so $u\Delta\alpha_j = v\Delta\alpha_j$ for each~$j$.
Then the contraction above is equal to $v\Delta\a_1 \dots v\Delta\a_n\lambda=v\Delta$, and so $u\Delta = v\Delta$.

Now apply this argument to all strings~$\Gamma$ of length~$d$.
In doing so we will use every length~$d-1$ string~$\Delta$ ($n$ times), and so $u\Delta = v\Delta$ for every $\Delta$ of length~$d-1$.
By the inductive hypothesis we conclude $u=v$.

\item 
The proof is similar to that of part \ref{it:lx1}. \qedhere
\end{enumerate}
\end{proof}

\section{The Higman-Thompson groups \GnrHeading}\label{HigmanThompsonG21}
In this section we define the groups which form the object of study in this paper.
Throughout the remainder of the paper, we assume that $n\ge 2$, 
and that  $V_{n,r}=V_{n,r}(\cxx)=W_\Om(\cxx)/\qf$, 
where $\cxx=\{x_1,\ldots, x_r\}$.  When $r=1$ we let $\cxx=\{x\}$.

When we discuss automorphisms of $V_{n,r}$ we assume that they are 
given by listing the images of a (finite) basis of $V_{n,r}$. 
For instance, let $\psi \in V_{n,r}$ be defined by the bijection $\psi:Y\to Z$, where $Y$ and $Z$ are bases of $V_{n,r}$. 
If we expand $y\in Y$ to form $Y'=Y\setminus \{y\}\cup\{y\alpha_1,\ldots, y\alpha_n\}$, 
the result~$Y'$ is also a basis by Lemma~\ref{lem:expbasis}.
As $y\alpha_i\psi=y\psi\alpha_i=z\alpha_i$ for $i=1,\ldots, n$, we see that the automorphism $\psi$ 
induces an expansion $Z'$ of $Z$ such that $Y'\psi=Z'$.
Thus, if $Y$ and $Z$ are not expansions of $\cxx$, we can find $Y'$ and $Z' = Y'\psi$ contained in $\cxx\mA$ and redefine $\psi$ in terms of $Y'$ and $Z'$.   
In other words, we may always describe an automorphism by a bijection
between $A$-bases.

As bijections between bases are not particularly easy to read, we represent automorphisms using pairs of rooted forests.
An $n$\emph{-ary rooted tree} is a tree with a single distinguished \emph{root} vertex of degree $n$, such that all other vertices
have degree $n+1$ or $1$. If a vertex $v$ is at distance $d\ge 1$ from the root then the $n$ vertices incident to $v$ and not on the
path to the root are its \emph{children}. Vertices of degree $1$ are called \emph{leaves}. 
An  $n$-ary rooted tree is said to be $A$\emph{-labelled} if the edges joining a vertex $v$ to its $n$ children are labelled 
with the elements $\a_i\in A$, so that two edges joining $v$ to different children are labelled differently.    
An $A$-labelled, $r$-rooted, $n$-ary forest is a disjoint union
of $r$ rooted, $A$-labelled, $n$-ary trees. 

Let $T$ be such a forest consisting of trees $T_1, \dotsc, T_r$.
For each $1 \leq i \leq r$, we identify the root of~$T_i$ with the generator $x_i \in \cxx$ of $V_{n,r}(\cxx)$.
We proceed by recursively identifying vertices of $T_i$ with elements of $\{x_i\}\mA\subseteq V_{n,r}$.  
Suppose that $v \in T_i$ is not a leaf, and that~$v$ has been identified with~$x_i\Gamma$ for some $\Gamma \in A^*$.
Then~$v$ has~$n$ children $c_1, \dotsc, c_n$, where~$c_j$ is the child connected to~$v$ by an edge labelled~$a_j$.
For each~$1 \leq j \leq n$ we identify~$c_j$ with~$x_i\Gamma\alpha_j$;
this identifies each vertex of $T$ with a uniquely determined element of $\cxx\mA$. Furthermore, by construction,
the  leaves of $T$ correspond to an expansion of $\cxx$. We use such trees to represent automorphisms, as in the following example.

\begin{example}\label{snf0}
Let $n=2$, $r=1$, $\cxx=\{x\}$ and let $\psi$ be the element of $G_{2,1}$ corresponding to the bijective map between $A$-bases 
 $Y=\{x\alpha_1^2, x\alpha_1\alpha_2,x\alpha_2\}$ and 
$Z=Y\psi=\{x\alpha_1, x\alpha_2\alpha_1,x\alpha_2^2\}$ given by
\[x\alpha_1^2\psi=x\alpha_1, \, x\alpha_1\alpha_2\psi=x\alpha_2\alpha_1, \, 
x\alpha_2\psi=x\alpha_2^2.\]
The $A$-labelled binary trees corresponding to these bases are shown below. The labelling of edges is not shown,
 but edges from a vertex to its children are always ordered from left to right in the order $\a_1,\ldots ,\a_n$. Thus the leaves of the left hand tree 
correspond to $Y$ and the leaves of the right hand tree to $Z$. The numbering below the leaves determines the mapping $\psi$; by taking 
leaf labelled $j$ on the left to leaf labelled $j$ on the right.  
\begin{center}
$\psi:$
\begin{minipage}[b]{0.3\linewidth}
\centering
\Tree  [  [. [.1  ] [.2  ] ]. [.3  ] ] \quad $\longrightarrow$\Tree  [ 1  [ 2 3 ] ]  
\end{minipage}
\end{center}
\end{example}

\begin{definition}[\cite{Higg}]
The \emph{Higman-Thompson group $G_{n,r}$} is  the group of $\Omega$-algebra automorphisms of $V_{n,r}$.
\end{definition}
Note that the largest Thompson group~$V$ is isomorphic to $G_{2,1}$, because the $A$-labelled trees we have described are exactly the tree-pair diagrams used to represent elements of~$V$.

\begin{lemma}[{\cite[Lemma 4.1]{Higg}}]\label{4.1H}
If $\{\psi_{1},\ldots ,\psi_{k}\}$ is a finite subset of $G_{n,r}$ and $X$ is an $A$-basis of $V_{n,r}$, 
then there is a unique minimal expansion $Y$ of $X$ such that $Y\psi_{i} \subseteq X \langle A \rangle$, for $i=1,\ldots ,k$. 
That is, any other expansion of $X$ with this property is an expansion of $Y$.
\end{lemma}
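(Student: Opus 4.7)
The plan is to recast the condition ``$Y\psi_i\subseteq X\mA$'' as a containment condition for $Y$, and then apply the common-expansion machinery already in place, in particular Corollary~\ref{commonexpansion} and Lemma~\ref{lem:expbasis}.

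First, since each $\psi_i\in G_{n,r}$ is an $\Omega$-algebra automorphism, $\psi_i$ (and hence $\psi_i^{-1}$) commutes with every descending operation $\alpha_j\in A$; in particular $(X\psi_i^{-1})\mA = X\mA\psi_i^{-1}$. Thus an expansion $Y$ of $X$ satisfies $Y\psi_i\subseteq X\mA$ if and only if $Y\subseteq (X\psi_i^{-1})\mA$. Note also that $X\psi_i^{-1}$ is a finite basis for $V_{n,r}$ (the image of a basis under an automorphism), though not in general an $A$-basis.

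The construction of $Y$ is now direct: apply Corollary~\ref{commonexpansion} to the finite collection of bases
\[X,\; X\psi_1^{-1},\; \ldots,\; X\psi_k^{-1}\]
to obtain a unique minimal common expansion $Y$ of all of them, satisfying $Y\mA = X\mA\cap\bigcap_{i=1}^k (X\psi_i^{-1})\mA$. Because $Y$ is in particular an expansion of the $A$-basis $X$, it is itself an $A$-basis. Since $Y\subseteq Y\mA \subseteq (X\psi_i^{-1})\mA$ for each $i$, we have $Y\psi_i\subseteq X\mA$ for each $i$, so $Y$ has the required property.

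For minimality, suppose $Y'$ is any expansion of $X$ with $Y'\psi_i\subseteq X\mA$ for every $i$. Then $Y'\subseteq X\mA$ and $Y'\subseteq (X\psi_i^{-1})\mA$ for each $i$, so $Y'\subseteq Y\mA$. Since $Y'$ is an $A$-basis and $Y$ is a finite basis, Lemma~\ref{lem:expbasis} applied with $B=Y'$ and ``$Y$''$=Y$ yields that $Y'$ is an expansion of $Y$, giving the required minimality.

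The main point of the argument is the translation between ``$Y\psi_i\subseteq X\mA$'' and ``$Y\subseteq (X\psi_i^{-1})\mA$'', which depends only on $\psi_i$ being an $\Omega$-algebra automorphism; once this is in hand, Corollary~\ref{commonexpansion} and Lemma~\ref{lem:expbasis} do all the work and there is no genuine obstacle.
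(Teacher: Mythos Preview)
Your proof is correct and follows essentially the same approach as the paper: translate $Y\psi_i\subseteq X\mA$ into $Y\subseteq (X\psi_i^{-1})\mA$, then use the common-expansion machinery. The only cosmetic difference is that the paper first applies Lemma~\ref{HigLemma2.4} to each pair $(X,X\psi_i^{-1})$ to obtain an expansion $Y_i$ of $X$ with $Y_i\mA=X\mA\cap X\psi_i^{-1}\mA$, and then invokes Corollary~\ref{commonexpansion} on the $Y_i$; you instead apply Corollary~\ref{commonexpansion} directly to the full collection $X,X\psi_1^{-1},\ldots,X\psi_k^{-1}$ in one step, which is slightly more streamlined but not materially different.
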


\begin{proof} 
For each $i$, $X\psi_i^{-1}$ is a generating set for $V_{n,r}$, but may not be a subset of $\XA$.  
Let $U_i=\XA \cap X\psi_i^{-1}\mA$. Then, by Lemma \ref{HigLemma2.4}, $U_i$ is $A$-closed and there exists an expansion $Y_i$ of $X$ such that $U_i=Y_i\mA$.
Now, Corollary \ref{commonexpansion} gives a unique minimal common expansion $Y$,
of the $Y_i$'s, 
and  $Y\mA=\cap_{i=1}^k(Y_i\mA)$.
Then, for all $i$, $Y\subseteq Y_i\mA=U_i\subseteq X\psi_i^{-1}\mA$,
so $Y\psi_i\subseteq \XA$. 

Let $Z$ be an expansion of $X$. If $Z\psi_i\subseteq \XA$, for all $i$, then (by the definition of $U_i$)  
$Z\subseteq U_i=Y_i\mA$, so $Z\subseteq \cap_{i=1}^k(Y_i\mA)=Y\mA$. Hence, from Lemma \ref{HigLemma2.4},  
$Z$ is an expansion of $Y$.
\end{proof}
\begin{definition}\label{def:ume}
Let $\{\psi_{1},\ldots ,\psi_{k}\}$ be a finite subset of $G_{n,r}$ and let $X$ be an $A$-basis of $V_{n,r}$. The expansion $Y$ 
of $X$ given by Lemma \ref{4.1H} is 
called the \emph{minimal expansion of} $X$ \emph{associated to} $\{\psi_{1},\ldots ,\psi_{k}\}$.
\end{definition}

\subsection{Semi-normal forms} \label{SNF}

Let $\psi\in G_{n,r}$, let $X$ be an $A$-basis of $V_{n,r}$, 
and $y\in V_{n,r}$. The $\psi$\emph{-orbit} of $y$ is the set $\mathcal{O}_y = \{y\psi^n \mid n\in \ZZ\}$. 
We consider how $\psi$-orbits intersect the $A$-subalgebra $\XA$.  
To this end an $X$\emph{-component} of the $\psi$-orbit of $y$ is  a maximal subsequence   $\cC$ 
of the sequence  $(y\psi^i)_{i=-\infty}^{i=\infty}$   
such that all elements of $\cC$ 
 are in $\XA$. More precisely, $\cC$ must satisfy
\be
\item\label{it:Acomp1} 
if $y\psi^p$ and $y\psi^q$ belong to $\cC$, where $p<q$ then $y\psi^k$ belongs to $\XA$, for all $k$ such that $p\le k\le q$; and 
\item\label{it:Acomp2} 
$\cC$ is a maximal subset of the $\psi$-orbit of $y$ for which statement~\ref{it:Acomp1} holds.
\ee
Note: Higman \cite[Section~9]{Higg} refers to $X$-components as ``orbits in $X\mA$''. 

First we distinguish the 
 five possible types of $X$-component of $\psi$ by giving them names.
\begin{enumerate}
\item \emph{Complete infinite $X$-components.} For any $y$ in such an $X$-component, $y\psi^{i}$ belongs to $X\mA$ for all $i\in \mathbb{Z}$, and the elements $y\psi^{i}$ are all different.
\item \emph{Complete finite $X$-components.} For any $y$ in such an $X$-component, $y\psi^{i}=y$ for some positive integer $i$, and $y, y\psi, \ldots  , y\psi^{i-1}$ all belong to $X\mA$. 
\item \emph{Right semi-infinite $X$-components.} For some $y$ in the $X$-component, $y\psi^{i}$ belongs to $X\mA$ for all $i\geq 0$, but $y\psi^{-1}$ does not. The elements $y\psi^{i}$, $i\geq 0$, are then necessarily all different.
\item \emph{Left semi-infinite $X$-components.}  For some $y$ in the $X$-component, $y\psi^{-i}$ belongs to $X\mA$ for all $i\geq 0$, but $y\psi$ does not. The elements $y\psi^{-i}$, $i\geq 0$, are then necessarily all different.
\item \emph{Incomplete finite $X$-components.} For some $y$ in the $X$-component and some non-negative integer $i$ we have $y, y\psi, \ldots , y\psi^{i}$ belonging to $X\mA$ but $y\psi^{-1}$ and $y\psi^{i+1}$ do not.
\end{enumerate}

\begin{example}\label{orbiteg}
Let $n=2$, $r=1$, $\cxx=\{x\}$. Let our bases be
\[Y=\{x\alpha_1^3,x\alpha_1^2\alpha_2,x\alpha_1\alpha_2,x\alpha_2\alpha_1,x\alpha_2^2\}
\quad \text{and} \quad
Z=\{x\alpha_1^2,x\alpha_1\alpha_2\alpha_1,x\alpha_1\alpha_2^2,x\alpha_2^2,x\alpha_2\alpha_1\}. \]
Define the automorphism $\psi$ by $Y\psi=Z$, with the  ordering given above.
\begin{center}
$\psi:$
\begin{minipage}[b]{0.5\linewidth}
\centering
\Tree [ [ [ [.1  ] [.2  ] ] [.3  ] ]   [ 4 5  ] ]  \quad $\longrightarrow$\Tree [  [   [.1  ] [ [.2  ] [.3  ] ]   ] [ 5 4  ] ]  
\end{minipage}
\end{center}
Then $Y$ is the minimal expansion of $\cxx$ associated to $\psi$.
Take the basis~$X$ to be just $X = \cxx$.  
The $X$-component of  $x\alpha_1^3$ is left semi-infinite
\[\cdots \mapsto x\alpha_1^4\mapsto x\alpha_1^3\mapsto x\alpha_1^2,\]
and the $X$-component of $x\alpha_1\alpha_2$ is  right semi-infinite:
\[x\alpha_1\alpha_2  \mapsto x\alpha_1\alpha_2^2\mapsto x\alpha_1\alpha_2^3\mapsto \cdots.\]
 The $X$-component of $x\alpha_1^2\alpha_2$ is complete infinite
\[ \cdots\mapsto x\alpha_1^4\alpha_2\mapsto x\alpha_1^3\alpha_2 \mapsto x\alpha_1^2\alpha_2  \mapsto x\alpha_1\alpha_2\alpha_1  \mapsto x\alpha_1\alpha_2^2\alpha_1\mapsto\cdots , \]
 and $(x\alpha_2\alpha_1, x\alpha_2^2)$ is a  complete finite $X$-component. 
We have $x\a_2=x\a_2\a_1xa_2^2\l$, so $x\a_2\psi= xa_2^2x\a_2\a_1\l$ and $x\a_2\psi^2=x\a_2$; therefore
$(x\a_2)$ is an incomplete finite $X$-component.
\end{example}

Let $\psi\in G_{n,r}$, let $X$ be an $A$-basis of $V_{n,r}$, let 
$Y$ be the minimal expansion of $X\mA$ associated to $\psi$ and let $Z=Y\psi$. Then, as discussed above,
$Y$ and $Z$ are both expansions of $X$. 
From Lemma \ref{HigLemma2.4}, both $X\mA\setminus Z\mA$ 
and 
$X\mA\setminus Y\mA$ 
are finite. Furthermore, as $|Y|=|Z|$, both $X$ and $Y$ are $d$-fold expansions, for some $d$, so  
$|X\mA\setminus Z\mA|= |X\mA\setminus Y\mA|$. 

By definition 
$Y\mA= \XA\cap \XA\psi^{-1}$, and moreover
$\psi$ maps no proper contraction of $Y$ into $X\mA$. Hence 
\[Z\langle A \rangle= Y\langle A \rangle\psi =X\langle A \rangle\psi \cap X\langle A \rangle.\]
Thus, if $u\in X\mA\setminus Z\mA$ then $u\not\in X\mA\psi$, so $u\psi^{-1}\not\in X\mA$ and hence $u$ is an initial element either of an incomplete finite $X$-component or of a right semi-infinite $X$-component \emph{i.e.} in an $X$-component of type (3) or (5). Similarly,  if $v\in X\mA\setminus Y\mA$ then $v\not\in X\mA\psi^{-1}$, so $v\psi\not\in X\mA$ and hence $v$ is a terminal element either of an incomplete finite $X$-component or of a left semi-infinite $X$-component \emph{i.e.} in an $X$-component of type (4) or (5).

If $\mathcal{C}$ is an $X$-component of type (3) or (5), then by definition $\mathcal{C}$  has an initial element $u$: that is $u\psi^{-1}\not\in X\mA$. Then $u\not\in X\mA\psi$, and so $u\in X\mA\setminus Z\mA$. 
Similarly, if $\mathcal{C}$ is an $X$-component of type (4) or (5), then  $\mathcal{C}$ has a terminal element  $v$: 
that is  $v\psi\not\in X\mA$. Again,  $v\not\in X\mA\psi^{-1}$ 
and so $v\in X\mA\setminus Y\mA$.

Let $u$ be an initial element of an incomplete finite $X$-component $\mathcal{C}$. By the above, $u\in X\mA\setminus Z\mA$ and by definition of an incomplete finite $X$-component, there is some non-negative integer $k$ such that $u,u\psi,\ldots ,u\psi^k$ all belong to $\XA$ but $u\psi^{k+1}$ does not. Since $u\psi^{k}$ is the terminal element of the incomplete finite $X$-component $\mathcal{C}$, 
we have  $u\psi^{k}\in X\mA\setminus Y\mA$. Therefore, the initial elements of incomplete finite $X$-components in $X\mA\setminus Z\mA$ and terminal elements of incomplete finite $X$-components in $X\mA\setminus Y\mA$ pair up. 

Given that the initial and terminal elements of the incomplete finite $X$-components must be in one-to-one correspondence, all other elements of
 $|X\mA\setminus Z\mA|$ (respectively $ |X\mA\setminus Y\mA|$) 
are initial (respectively terminal) elements in right (respectively left) semi-infinite $X$-components. 
Hence there are as many right semi-infinite $X$-components  as left semi-infinite $X$-components. 

The above is  summarised in a lemma.
\begin{lemma}[{\cite[Lemma 9.1]{Higg}}]\label{9.1H}
Let $\psi$ be an element of $G_{n,r}$ and let $X$ be an $A$-basis of $V_{n,r}$. There are only finitely many $X$-components of $\psi$ of types (3--5) 
and there are as many of type (3) as of type (4). If $Y$ is the minimal expansion of $X\mA$ associated to $\psi$ and $Z=Y\psi$
then 
\begin{itemize}
\item $Y\mA=X\mA\cap X\mA\psi^{-1}$ and $Z\mA=X\mA\psi\cap X\mA$;
\item $X\mA\bh Z\mA$ is exactly the set of initial elements of $X$-components of types (3) or (5); and
\item $X\mA\bh Y\mA$ is exactly the set of terminal elements of $X$-components of types (4) or (5).
\end{itemize}
\end{lemma}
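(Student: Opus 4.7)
The plan is to assemble the observations made in the discussion immediately preceding the lemma into a clean proof, proceeding in three stages: first identify $Y\mA$ and $Z\mA$ with the claimed intersections; second, translate the set differences $X\mA\setminus Z\mA$ and $X\mA\setminus Y\mA$ into statements about $X$-components; finally, count these sets to deduce finiteness and the matching of types (3) and (4).

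For the first stage, I will establish $Y\mA = X\mA \cap X\mA\psi^{-1}$. The inclusion $Y\mA \subseteq X\mA \cap X\mA\psi^{-1}$ is immediate since $Y$ is an expansion of $X$ (giving $Y\mA \subseteq X\mA$) and by construction $Y\psi \subseteq X\mA$, whence $Y \subseteq X\mA\psi^{-1}$ and so $Y\mA \subseteq X\mA\psi^{-1}$ by $A$-closure of the latter. For the reverse, note that $X\mA \cap X\mA\psi^{-1}$ is $A$-closed and contained in $X\mA$, and by Lemma~\ref{HigLemma2.4} its complement in $X\mA$ is finite (since $X\mA\setminus X\mA\psi^{-1}$ is finite), so this intersection has the form $W\mA$ for some expansion $W$ of $X$. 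Since $W\psi \subseteq X\mA$, the minimality of $Y$ (Lemma~\ref{4.1H}) forces $W$ to be an expansion of $Y$, hence $W\mA \subseteq Y\mA$. Applying $\psi$ then gives $Z\mA = Y\mA\psi = X\mA\psi \cap X\mA$.

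For the second stage, I will characterise the set differences via the component types. An element $u$ lies in $X\mA \setminus Z\mA = X\mA \setminus (X\mA\psi)$ precisely when $u \in X\mA$ but $u\psi^{-1}\notin X\mA$; by the definition of $X$-components, this is exactly the condition that $u$ be an initial element of an $X$-component of type (3) or (5). Symmetrically, $X\mA \setminus Y\mA$ consists precisely of the terminal elements of $X$-components of type (4) or (5).

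For the third stage, Lemma~\ref{HigLemma2.4} tells us that both $X\mA\setminus Y\mA$ and $X\mA\setminus Z\mA$ are finite, so the number of $X$-components of types (3), (4), and (5) is finite. For the equality of the counts of types (3) and (4), I observe that $Y$ and $Z$ are both $d$-fold expansions of $X$ for the same value of $d$ (because $\psi$ induces a bijection $Y \to Z$ and expansions are determined up to size by the number of simple expansion steps), so $|X\mA\setminus Y\mA| = |X\mA\setminus Z\mA|$. Each type (5) component contributes exactly one element to each difference (its initial to $X\mA\setminus Z\mA$ and its terminal to $X\mA\setminus Y\mA$), while type (3) components contribute only to $X\mA\setminus Z\mA$ and type (4) components only to $X\mA\setminus Y\mA$; cancelling the type (5) contributions yields the equality of counts. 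The main delicacy is the first stage: one must be careful not to circularly invoke the characterisation being proved, and instead to derive the intersection formula purely from the minimality in Lemma~\ref{4.1H} together with Lemma~\ref{HigLemma2.4}.
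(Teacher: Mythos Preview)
Your proposal is correct and follows essentially the same approach as the paper: the paper's proof is precisely the discussion preceding the lemma (which the lemma explicitly summarises), and you have faithfully reorganised that discussion into three clean stages. The only cosmetic point is that in Stage~1 the finiteness of $X\mA\setminus X\mA\psi^{-1}$ comes from Lemma~\ref{2.2H} rather than Lemma~\ref{HigLemma2.4} (the latter consumes finiteness rather than providing it), but your parenthetical indicates you are aware of this and are invoking Lemma~\ref{HigLemma2.4} only for the conclusion $U=W\mA$.
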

\begin{example}\label{orbiteg1}
In Example \ref{orbiteg}, we have  $\XA\setminus Z\mA=\{x, x\a_1, x\a_1\a_2, x\a_2\}$ and  
$\XA\setminus Y\mA=\{x, x\a_1, x\a_1^2,x\a_2\}$. The incomplete finite $X$-components are 
$(x)$, $(x\a_1)$ and $(x\a_2)$, while $x\a_1\a_2$ is an initial element of a right semi-infinite $X$-component and
$x\a_1^2$ is a terminal element of a left semi-infinite $X$-component. All other $X$-components of elements of $\XA$ are 
complete.
\end{example}

\begin{definition}[{\cite[Section 9]{Higg}}]
An element $\psi$ of $G_{n,r}$ is in \emph{semi-normal form} with respect to the $A$-basis $X $ if
no element of $\XA$ 
is in an incomplete finite  
$X$-component of $\psi$.
\end{definition}

\begin{lemma}[{\cite[Lemma 9.2]{Higg}}]\label{9.9H}
Let $\psi\in G_{n,r}$ and let $X$ be an $A$-basis of $V_{n,r}$. There exists an expansion of $X$ with respect to which $\psi$ is in semi-normal form.
\end{lemma}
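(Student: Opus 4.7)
By Lemma~\ref{9.1H} there are only finitely many $X$-components of $\psi$ of types (3)--(5), so in particular only finitely many incomplete finite $X$-components, each of finite length; let $K$ denote the maximum of these lengths (with $K=0$ if none, in which case $\psi$ is already in semi-normal form with respect to $X$ and we may take $Y=X$). The plan is to invoke Lemma~\ref{4.1H} with a sufficiently long family of powers of $\psi$ to produce an expansion $Y$ whose $A$-closure excludes every incomplete finite $X$-component while preserving the complete and semi-infinite ones.

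Concretely, fix any integer $N\ge K$ and apply Lemma~\ref{4.1H} to the finite subset $\{\psi^j : 1\le|j|\le N\}\subseteq G_{n,r}$, obtaining an expansion $Y$ of $X$ with $Y\psi^j\subseteq X\mA$ for every such~$j$. Unwinding the proof of Lemma~\ref{4.1H}---which combines Lemma~\ref{HigLemma2.4} (each set $X\mA\cap X\mA\psi^{-j}$ equals $Y_j\mA$ for a unique expansion $Y_j$ of $X$) with Corollary~\ref{commonexpansion} (the minimal common expansion has $A$-closure equal to the intersection of the individual $A$-closures)---will then give the explicit characterisation
\[
	Y\mA \;=\; \bigcap_{|j|\le N} X\mA\psi^{-j} \;=\; \{\,u\in V_{n,r}:u\psi^j\in X\mA\text{ for all }|j|\le N\,\}.
\]

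To confirm that $\psi$ is in semi-normal form with respect to $Y$, take any $u\in Y\mA$. The characterisation above places the orbit segment $u\psi^{-N},\dots,u\psi^{N}$ inside $X\mA$, so the $X$-component of~$u$ has length at least $2N+1>K$ and therefore cannot be incomplete finite. A short case analysis through the remaining types then shows that complete infinite and complete finite $X$-components survive unchanged in $Y\mA$, while a right semi-infinite $X$-component with initial element $u\psi^a$ yields a right semi-infinite $Y$-component with initial element $u\psi^{a+N}$ (and dually on the left). Since $Y\mA\subseteq X\mA$, no $Y$-component can straddle a gap in $X\mA$, so no new incomplete finite $Y$-components can appear. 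The main obstacle I foresee is extracting the intersection characterisation of $Y\mA$, which is slightly stronger than what Lemma~\ref{4.1H} states directly; once it is in hand, the case analysis is routine.
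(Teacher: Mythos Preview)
Your argument is correct and gives a genuinely different proof from the paper's. The paper proceeds by induction on the number of elements of $X\mA$ lying in incomplete finite $X$-components: if $y\in X$ is in such a component, the simple expansion at $y$ removes exactly $y$ from $X\mA$ and decrements the count by one. Your approach instead constructs a semi-normal form in one step, by applying Lemma~\ref{4.1H} to $\{\psi^j:1\le|j|\le N\}$ for $N$ at least the maximal length of an incomplete finite $X$-component; the key identity $Y\mA=\bigcap_{|j|\le N}X\mA\psi^{-j}$ does indeed follow from the proofs of Lemmas~\ref{4.1H} and~\ref{HigLemma2.4} together with Corollary~\ref{commonexpansion}, since $(X\psi^{-j})\mA=X\mA\psi^{-j}$ for any automorphism $\psi$. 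Your case analysis is then sound: complete $X$-components are preserved verbatim in $Y\mA$, a right (resp.\ left) semi-infinite $X$-component yields a right (resp.\ left) semi-infinite $Y$-component shifted by $N$, and nothing new can arise because $Y\mA\subseteq X\mA$ forces every $Y$-component to sit inside a single $X$-component. The paper's inductive proof is more elementary and uses only simple expansions, while yours buys an explicit closed-form description of the resulting $Y\mA$ and a uniform bound on how far one needs to expand.
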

\begin{proof}
Let $\psi\in G_{n,r}$. 
We prove the lemma by induction on the number of elements  in $X\mA$ which belong to an incomplete finite $X$-component.
Note that Lemma \ref{9.1H} shows us that this number is finite.
If there are no such elements then we are done. 

Suppose then that there exists an element $u$ in $X\mA$ which belongs to an 
incomplete finite $X$-component. Thus, there exist $y\in X$ and  $\G\in A^*$ such that $u=y\G$ and 
  some minimal $m,k\in \mathbb{N}_0$ such that $u\psi^{-(m+1)},u\psi^{k+1}\not\in X\mA$.
It follows that $y\psi^{-(m+1)},y\psi^{k+1}\not\in X\mA$, so that $y$ is also
in an incomplete finite $X$-component. Let $X'$ be the simple expansion $X' =X\backslash \{y\} \cup\{y\alpha_1,\ldots ,y\alpha_n\}$. 
Then $X'$ is a $A$-basis for $V_{n,r}$ and $\XA\setminus X'\mA=\{y\}$.
Thus the number of elements of $X''\mA$ in an incomplete finite $X''$-component is one less than 
the number of elements of  $X\mA$ in an incomplete finite $X$-component. Hence, by induction, there exists an expansion of $X$ with respect to 
which $\psi$ is in semi-normal form. 
\end{proof}

\begin{remark} \label{rem:snf_start_point}
Continuing the discussion above Lemma \ref{9.1H}, observe that if $u\in \XA$ and $u\notin Y\mA\cup Z\mA$ then
$u$ is both the initial and terminal element of an $X$-component of $\psi$; so $(u)$ constitutes an incomplete finite
$X$-component.  Therefore, when implementing the argument of Lemma \ref{9.9H} to find a semi-normal form for $\psi$,
we may pass immediately to a minimal expansion containing no elements of $\XA\setminus (Y\mA\cup Z\mA)$: that is an expansion
minimal amongst those contained in $Y\mA\cup Z\mA$. 
\end{remark}
\begin{example}\label{ex:snf1}
Let $n=2$, $r=1$, $\cxx=\{x\}$ and let $\psi$ be the automorphism of Example \ref{snf0}.
Here $Y=\{x\alpha_1^2, x\alpha_1\alpha_2,x\alpha_2\}$ is the minimal expansion of $\cxx$ associated to $\psi$ and 
$Z=Y\psi=\{x\alpha_1, x\alpha_2\alpha_1,x\alpha_2^2\}$. 
In this example, $\xA\setminus (Y\mA\cup Z\mA)=\{x\}$ and 
the minimal expansion of $\cxx$ not containing $x$ is $X=\{x\a_1,x\a_2\}$. Then $Y$ remains the minimal expansion
of $X$ associated to $\psi$,  
$X\mA\setminus Z\mA=\{x\a_2\}$ and $X\mA\setminus Y\mA=\{x,x\a_1\}$.  As $x\a_1$ is the terminal element
of a  left semi-infinite $X$-component, while  $x\a_2$ is the initial element
of a  right semi-infinite $X$-component it follows that $\psi$ 
is in semi-normal form with respect to $X$. 
\end{example}
\begin{example}\label{ex:snf2}
Let $n=2$, $r=1$, $\cxx =\{x\}$ and let $\psi$ be the element of $G_{2,1}$ corresponding to the bijective map:
\[x\alpha_1^2\psi=x\alpha_2^2 ,\ x\alpha_1\alpha_2\psi=x\alpha_2\alpha_1 , \ x\alpha_2\psi=x\alpha_1.\]
\begin{center}
$\psi:$
\begin{minipage}[b]{0.3\linewidth}
\centering
\Tree  [  [. [.1  ] [.2  ] ]. [.3  ] ] \quad $\longrightarrow$\Tree  [ 3  [ 2 1 ] ]  
\end{minipage}
\end{center}
Again, $Y=\{x\alpha_1^2, x\alpha_1\alpha_2,x\alpha_2\}$ is the minimal expansion of $\cxx$ associated to $\psi$ and setting 
$Z=Y\psi=\{x\alpha_1, x\alpha_2\alpha_1,x\alpha_2^2\}$, 
the minimal expansion of $\cxx$ contained in $Y\mA\cup Z\mA$ is  $X_1=\{x\a_1,x\a_2\}$; and 
$Y$ is still the minimal expansion of $X_1$ associated to $\psi$. 
 However   
$(x\a_2, x\a_1)$ is an incomplete finite $X_1$-component, so $\psi$ is not in semi-normal form with respect to $X_1$. 
As $x\a_1$ is in an incomplete finite $X_1$-component, we first take the simple expansion of $X_1$ at $x\a_1$, giving  $X_2=Y$. 
As  $x\a_2\psi=x\a_1\notin X_2\mA$, $(x\a_2)$ is now an incomplete finite $X_2$-component, so $\psi$ is not in semi-normal form with respect to $X_2$.
 We take a further  
simple expansion of $X_2$ at $x\alpha_2$,  to obtain a new $A$-basis $X_3=\{x\alpha_1^2,  x\alpha_1\alpha_2, 
x\alpha_2\alpha_1, x\alpha_2^2\}$.  Then $\psi$ maps $X_3$ to itself: 
\[x\alpha_1^2\psi=x\alpha_2^2 ,\ x\alpha_1\alpha_2\psi=x\alpha_2\alpha_1 , \ x\alpha_2\alpha_1\psi=x\alpha_1^2,x\alpha_2^2=x\alpha_1\alpha_2.\]
\begin{center}
$\psi:$
\begin{minipage}[b]{0.4\linewidth}
\centering
\Tree  [  [. [.1  ] [.2  ] ]. [3 4  ] ] \quad $\longrightarrow$\Tree [ [ [.3  ] [.4  ] ] [2 1 ]  ]  
\end{minipage}
\end{center}
As all elements of $X_3$ are in the same complete  finite $X_3$-component, $\psi$ is in semi-normal form with respect to $X_3$. 
The minimal expansion of $X_3$ associated to $\psi$ is just $X_3$.
\end{example}
\begin{example}\label{orbiteg2}
The automorphism $\psi$ of Example \ref{orbiteg1} is not in semi-normal form with respect to $X$ or 
$X_1=\{x\a_1,x\a_2\}$, as both  $x\a_1$ and $x\a_2$ are in incomplete finite $X$-components. However, $\psi$ is in 
semi-normal form with respect to $X_2=\{x\alpha_1^2,  x\alpha_1\alpha_2, 
x\alpha_2\alpha_1, x\alpha_2^2\}$. The minimal expansion of $X_2$ associated to $\psi$ is the $A$-basis $Y$ of 
Example \ref{orbiteg}.
\end{example}
The following, which follows directly from the definitions, 
summarises the possibilities for the intersection with $X\mA$ of the orbit of an element under 
an automorphism in semi-normal form.
\begin{corollary}\label{cor:orbit-type}
Let $\psi$ be an element of $G_{n,r}$ in semi-normal form with respect to the $A$-basis $X$, let $v\in V_{n,r}$ and let 
$\cO_v$ be the $\psi$-orbit of $v$. Then $\cO_v$ has one of the following six types.
\be
\item\label{it:orbit1} $\mathcal{O}_v \cap X\mA=\emptyset$.
\item\label{it:orbit2} $\mathcal{O}_v$ is finite and $O_v\subseteq X\mA$, so $\mathcal{O}_v$ is a complete finite $X$-component.
\item\label{it:orbit3} $\mathcal{O}_v$ is infinite and $O_v\subseteq X\mA$, so $\mathcal{O}_v$ is a complete infinite $X$-component.
\item\label{it:orbit4} $\mathcal{O}_v \cap X\mA$ consists of a unique left semi-infinite $X$-component. 
\item\label{it:orbit5} $\mathcal{O}_v \cap X\mA$ consists of a unique right semi-infinite $X$-component.
\item\label{it:orbit6}  $\mathcal{O}_v \cap X\mA$ is the disjoint union of a left semi-infinite $X$-component and a right semi-infinite $X$-component.
\ee
\end{corollary}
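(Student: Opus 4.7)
The plan is to parametrise $\mathcal{O}_v$ by the indicator function $\chi\colon\ZZ\to\{0,1\}$ given by $\chi(i)=1$ precisely when $v\psi^i\in X\mA$, and to show that the five types of $X$-component enumerated before Example~\ref{orbiteg} correspond to the possible shapes of maximal runs of $1$'s in $\chi$. The semi-normal form hypothesis forbids type~(5), i.e.\ a finite run bounded by $0$'s on both sides, because such a run would give an incomplete finite $X$-component all of whose elements lie in $X\mA$. With type~(5) excluded, the only possibilities left are a complete (one-sided unbounded on neither side, but all $1$'s), right semi-infinite, or left semi-infinite run, and these correspond to the allowed $X$-component types.

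First I would treat the finite-orbit case. If $\mathcal{O}_v$ is finite of period $p$, then $\chi$ is $p$-periodic. If $\chi\equiv 0$ we are in case~\ref{it:orbit2}'s alternative, namely case~\ref{it:orbit1}; if $\chi\equiv 1$ the whole orbit is a single complete finite $X$-component and we are in case~\ref{it:orbit2}. Otherwise $\chi$ takes both values within a single period, and so any maximal run of $1$'s is flanked by $0$'s on both sides, yielding an incomplete finite $X$-component lying in $X\mA$ — contradicting semi-normal form. So for finite orbits only cases \ref{it:orbit1} and \ref{it:orbit2} occur.

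Next I would treat the infinite-orbit case, where the elements $v\psi^i$ are pairwise distinct, so maximal runs of $1$'s in $\chi$ biject with the $X$-components of $\mathcal{O}_v$. If $\chi\equiv 0$ we land in case~\ref{it:orbit1}; if $\chi\equiv 1$ the unique maximal run is all of $\ZZ$, a complete infinite $X$-component, giving case~\ref{it:orbit3}. Otherwise $\chi$ attains both values, so every maximal run of $1$'s is proper in $\ZZ$; by semi-normal form none of them is two-sided finite, so each is either of the form $[a,\infty)$ (right semi-infinite) or $(-\infty,b]$ (left semi-infinite). Two maximal runs that are both unbounded above would overlap and hence coincide, so at most one right semi-infinite $X$-component occurs, and symmetrically at most one left semi-infinite one; since at least one $X$-component exists, the three surviving possibilities are exactly cases~\ref{it:orbit4}, \ref{it:orbit5}, and~\ref{it:orbit6}.

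There is no genuine obstacle; the whole argument is a tabulation of the allowed index patterns of an orbit inside $X\mA$, exploiting only the definitions of the five $X$-component types and of semi-normal form. The only point worth highlighting in the write-up is the pigeonhole observation in the finite-orbit case — that a non-constant periodic $0/1$ sequence always contains a maximal finite run of $1$'s bounded by $0$'s on both sides — since this is what actually uses the semi-normal form hypothesis to kill the finite-mixed case outright.
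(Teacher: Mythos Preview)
Your proof is correct and is precisely the elementary case analysis the paper has in mind: the paper offers no argument beyond the remark that the corollary ``follows directly from the definitions,'' and your indicator-function bookkeeping is exactly the way to make that remark rigorous. The only point to keep clean in the write-up is the finite-orbit case, where the sequence $(v\psi^i)$ is periodic rather than injective; you handle this correctly by observing that a non-constant periodic $0/1$ sequence must contain a finite maximal run of $1$'s bounded by $0$'s on both sides, which is exactly an incomplete finite $X$-component and hence excluded by semi-normal form.
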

\begin{remark}\label{rem:pondexist}
As can be seen from Example \ref{ex:pond} below,
there are automorphisms for which orbits of the final type in this list exist. 
In fact we shall show in Example~\ref{cannot_expand_to_hide_pond} that there exist automorphisms which have such
orbits with respect to \emph{every} semi-normal form. 
This means that \cite[Lemma~9.6]{Higg} is false.
Consequently, the algorithms \cite[Lemma~9.7]{Higg}
for determining if two elements of~$V_{n,r}$ belong to a single orbit, 
and  \cite[Theorem~9.3]{Higg} for conjugacy of automorphisms are incomplete. 
\end{remark}

\begin{definition}
Let $\psi$ be an element of $G_{n,r}$ in semi-normal form with respect to the $A$-basis $X$, and let   $\cO$ be a $\psi$-orbit of type \ref{it:orbit6}, as given in Corollary \ref{cor:orbit-type}. Then $\cO$ is called a  
\emph{\pond* orbit} with respect to $X$. The subsequence~$P \subset \cO$ of elements not in $X\mA$ is 
called a \emph{\pond*}.
The \emph{width of~$P$} is one more than number of elements in~$P$; this is the number of times we need to apply $\psi$ to get from the endpoint of one semi-infinite $X$-component to the other.
\end{definition}

\begin{example}\label{ex:pond}
Let $n=2$, $r=1$ and $V_{2,1}$ be free on $\cxx = \{x\}$. Let
 \[ Y = \{ x\a_1^4, x\a_1^3\a_2, x\a_1^2\a_2\a_1, x\a_1^2\a_2^2, x\a_1\a_2, x\a_2\a_1, x\a_2^2 \}, \] 
 \[ Z = \{ x\a_1^2, x\a_1\a_2\a_1^2, x\a_1\a_2\a_1\a_2, x\a_1\a_2^2\a_1, x\a_1\a_2^3, x\a_2\a_1, x\a_2^2 \} \] 
and let $\psi \in G_{2,1}$ be determined by the bijection $Y \to Z$ illustrated below.

\begin{center}
$\psi:$
\begin{minipage}[b]{0.7\linewidth}
\centering
\Tree [ [ [ [1 2  ] [ 3 4  ] ] [.5  ] ]   [ 6 7  ] ]  \quad $\longrightarrow$\Tree [  [   [.1  ] [ [  2  5  ] [7 6  ] ]   ] [ 4 3  ] ]  
\end{minipage}
\end{center}
As usual, $Y$ is the minimal expansion of $\cxx$ associated to $\psi$ and $Z = Y\psi$. The minimal expansion of $\cxx$ contained in 
$Y\mA \cup Z\mA$ is $X = \{ x\a_1^2, x\a_1\a_2, x\a_2\a_1, x\a_2^2 \}$. Two of these elements are endpoints of semi-infinite $X$-components, 
whereas the other two belong to complete infinite $X$-components. 
\begin{equation} \label{eqn:pond_comp_LSI}
	\cdots    \mapsto x\a_1^4        \mapsto x \a_1^2
\end{equation}
\begin{equation} \label{eqn:pond_comp_RSI}
	x\a_1\a_2 \mapsto x(\a_1\a_2)^2  \mapsto \cdots
\end{equation}
\begin{equation} \label{eqn:pond_comp_complete_inf_1}   
	\cdots \mapsto x\a_1^4\a_2^2  \mapsto x\a_1^2\a_2^2 \mapsto x\a_2\a_1 \mapsto x\a_1\a_2^3 \mapsto x(\a_1\a_2)^2\a_2^2 \mapsto \cdots
\end{equation}
\begin{equation} \label{eqn:pond_comp_complete_inf_2}
\cdots \mapsto x\a_1^4\a_2\a_1 \mapsto x\a_1^2\a_2\a_1 \mapsto x\a_2^2 \mapsto x\a_1\a_2^2\a_1 \mapsto x(\a_1\a_2)^2\a_2\a_1 \mapsto \cdots
\end{equation}
Thus $\psi$ is in semi-normal form with respect to $X$. Now let us compute the $\psi$-orbit of the element $x\a_1^2\a_2$.
\begin{equation} \label{eqn:pond_example}
	\cdots x\a_1^6\a_2 \mapsto x\a_1^4\a_2 \mapsto x \a_1^2 a_2 \mapsto x\a_2^2 x\a_2\a_1 \lambda \mapsto x\a_1\a_2^2 \mapsto x(\a_1\a_2)^2\a_2 \mapsto \cdots
\end{equation}
Figure~\ref{fig:pond_orbit_illustration} illustrates the orbit~\eqref{eqn:pond_example}, which consists of two semi-infinite $X$-components and a single element $x\a_2^2 x\a_2\a_1\lambda$ (the \pond*) outside of $X\mA$.
In this case, the \pond* has width $1 + 1 = 2$.

\begin{figure}
\centering
\includegraphics{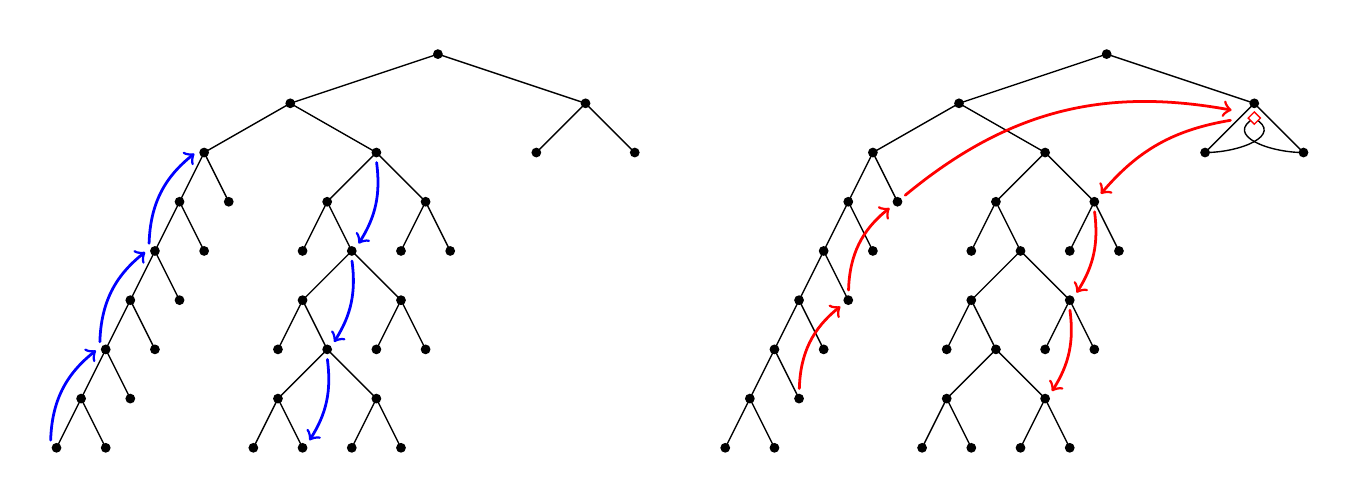}
\caption[An illustration of the \pond* orbit and the semi-infinite components above it.]{
The binary trees above represent a finite subset of $\cxx\mA$, as described in the introduction to Section~\ref{HigmanThompsonG21}.
On the left we have annotated this tree, highlighting the semi-infinite $X$-components \eqref{eqn:pond_comp_LSI} and \eqref{eqn:pond_comp_RSI}.
Below these components sit the \pond* orbit~\eqref{eqn:pond_example}, which is shown on the right tree.
Note that the element $w = x\alpha_2^2x\alpha_2\alpha_1\lambda \notin \cxx\mA$ does not correspond to a vertex of this tree; we have represented it as a `phantom' vertex
\tikz \node[inner sep=1.8, fill=white, draw=red, rotate=45]{};
below $x\alpha_2$ whose left child is~$x\alpha_2^2$ and whose right child is~$x\alpha_2\alpha_1$---a `twisted' version of $x\alpha_2$.
}
\label{fig:pond_orbit_illustration}
\end{figure}
\end{example}

\begin{lemma}[{\cite[Lemma 9.3]{Higg}}]\label{ABC}
Let $\psi$ be an element of $G_{n,r}$ in semi-normal form with respect to the $A$-basis $X $. 
Suppose that $x$ is an element of $X $. Exactly one of the following holds.
\be[(A)]
\item There exists $\Gamma\in A^*$ such that $x\Gamma$ is in a complete finite $X$-component. In this case 
$x$ itself belongs to a complete finite $X$-component, which consists of elements of $X $,  and we say $x$ is of type (A).
\item There exist  $\Gamma, \Delta \in A^*$, with $\Gamma\neq \Delta$, such that 
$x\Gamma$ and $x\Delta$ belong to the same $X$-component. In this case there exists $\Lambda\in A^*$ and $m\in \ZZ \setminus \{0\}$ with $|m|$ 
minimal, 
such that  $x\psi^{m}=x\Lambda$; we say $x$ is of type (B). If $m > 0$ then the $X$-component containing $x$ is right semi-infinite; 
if $m < 0$  then the $X$-component containing $x$ is left semi-infinite.
\item
$x$ is not of type (A) or (B) above and there exists some $z \in X $ of type (B) and non-trivial 
$\Delta \in \langle A \rangle$ such that $x\psi^{i}=z\Delta$. In this case the $X$-component containing $x$ is infinite; and
 we say $x$ is of type (C).
\ee 
\end{lemma}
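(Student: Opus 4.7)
The plan is to analyze the $\psi$-orbit $\cO_x = \{x\psi^i : i \in \ZZ\}$ of $x \in X$. Since $\psi$ is in semi-normal form and $x \in X \subseteq X\mA$, Corollary~\ref{cor:orbit-type} restricts $\cO_x$ to types~(\ref{it:orbit2})--(\ref{it:orbit6}). Throughout the proof I repeatedly use the unique decomposition $w = y\Gamma$ with $y \in X$ and $\Gamma \in A^*$ of each $w \in X\mA$; this follows from Lemma~\ref{HigmanLemma2.5}(\ref{it:253}) applied to the basis~$X$, since no element of $X$ is a proper initial segment of another, making the sets $\{y\}A^*$ pairwise disjoint as $y$ ranges over $X$.

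For mutual exclusivity, I first note that if (A) holds with $x\psi^k = x$ (proved below), then given $\Gamma \ne \Delta$ with $x\Gamma$ and $x\Delta$ in the same $X$-component, we would have $x\Delta = (x\Gamma)\psi^m = x\psi^m\Gamma$ for some $m$, and uniqueness of the $X\mA$-decomposition applied to $x\psi^m\Gamma = x\Delta$ forces $x\psi^m = x$ and $\Gamma = \Delta$, a contradiction. Since (C) is defined to exclude (A) and (B), the three cases are pairwise exclusive.

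For case (A), suppose $x\Gamma$ lies in a complete finite $X$-component of period $k$, so $(x\Gamma)\psi^k = x\Gamma$ and $x\psi^j\Gamma \in X\mA$ for all $j \in \ZZ$. The identity $x\psi^k\Gamma = x\Gamma$ together with uniqueness yields $x\psi^k = x$ provided $x\psi^k \in X\mA$. Type~(\ref{it:orbit3}) is immediate: all $x\psi^j \in X\mA$, so uniqueness gives $x\psi^k = x$, forcing a finite orbit and contradicting ``complete infinite''. For types~(\ref{it:orbit4})--(\ref{it:orbit6}) I exploit $(x\psi^j)\Gamma \in X\mA$ for all $j$ together with the semi-normal form hypothesis to derive a contradiction; concretely, if $x\psi^j \notin X\mA$ then its standard form is a $\lambda$-composition whose $\Gamma$-descendant nevertheless lies in $X\mA$, and propagating this structure along the orbit will conflict with the absence of incomplete finite $X$-components. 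I expect this to be the main obstacle of the proof, since the plain implication ``$u\Gamma \in X\mA \Rightarrow u \in X\mA$'' is false and one must use the semi-normal form globally. Once $x\psi^k = x$ is established, to show each $x\psi^i \in X$ I write $x\psi^i = y_i\Gamma_i$ uniquely and argue using the minimal expansion $Y$ of $X$ associated to $\psi$ from Lemma~\ref{4.1H}: choosing $i_0$ so that $|\Gamma_{i_0}|$ is maximal, tracking the decomposition of $(y_{i_0}\Gamma_{i_0})\psi = y_{i_0}\psi \cdot \Gamma_{i_0}$ against $y_{i_0+1}\Gamma_{i_0+1}$ forces $y_{i_0}\psi \in X$; iterating shows $\{y_i\}$ is itself a $\psi$-cycle in $X$, and one can then strictly contract $Y$ by replacing the $\Gamma_i$-expansions of the $y_i$ by the $y_i$ themselves to produce a smaller expansion whose $\psi$-image still lies in $X\mA$, contradicting the minimality of $Y$.

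For case (B), given $\Gamma \ne \Delta$ with $x\Gamma, x\Delta$ in the same $X$-component, write $x\Delta = x\psi^m\Gamma$ for some $m$; uniqueness gives $x\psi^m = x\Sigma$ with $\Sigma\Gamma = \Delta$, and setting $\Lambda = \Sigma$ yields $x\psi^m = x\Lambda$, with $m \ne 0$ (else $\Gamma = \Delta$). Choosing $m$ of minimal absolute value satisfying this equation gives the stated minimality. Iterating produces $x\psi^{qm} = x\Lambda^q \in X\mA$ for every positive integer $q$, placing infinitely many elements of $\cO_x$ in $X\mA$ on the side of $\operatorname{sign}(m)$; simultaneously, $x\psi^{-m}\Lambda = x$ together with uniqueness shows $x\psi^{-m} \notin X\mA$, so the $X$-component containing $x$ is bounded on the opposite side and hence semi-infinite, right if $m > 0$ and left if $m < 0$. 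Finally for case (C), assume $x$ is neither (A) nor (B), so $\cO_x$ is infinite by the conclusion of (A) and the $X$-component of $x$ is therefore infinite. Writing $x\psi^i = y_i\Gamma_i$ uniquely for $i$ ranging over this $X$-component and applying pigeonhole to the finite set $X$, some $z \in X$ occurs as $y_{i_1} = y_{i_2}$ with $i_1 \ne i_2$; then $z\Gamma_{i_1}$ and $z\Gamma_{i_2}$ lie in the same $X$-component with $\Gamma_{i_1} \ne \Gamma_{i_2}$ (distinctness follows since $\cO_x$ is infinite), so $z$ is of type (B). We have $z \ne x$ (else $x$ itself would be of type (B)), and among the infinitely many indices $i$ with $y_i = z$ at most one can have $\Gamma_i = \emptyset$ (else $\cO_x$ would be finite); selecting any other such $i$ gives the required decomposition $x\psi^i = z\Gamma_i$ with $z \in X$ of type (B) and $\Gamma_i$ non-trivial.
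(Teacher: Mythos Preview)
There are two substantive gaps.

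\textbf{Case (A), first part.} You flag the passage from ``$x\Gamma$ lies in a complete finite $X$-component'' to ``$x$ does'' as the main obstacle and propose a case-by-case elimination of orbit types (\ref{it:orbit3})--(\ref{it:orbit6}) that you leave only sketched. This is neither needed nor completed. The contrapositive is one line: if the $X$-component of $x$ is infinite, then for every $j$ with $x\psi^j\in X\mA$ one has $(x\Gamma)\psi^j=x\psi^j\Gamma\in X\mA$, and these elements are pairwise distinct, so the $X$-component of $x\Gamma$ is infinite as well. Since semi-normal form rules out incomplete finite components, the $X$-component of $x$ must be complete finite.

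\textbf{Case (B).} From $x\psi^m\Gamma=x\Delta$ you deduce $x\psi^m=x\Sigma$ with $\Sigma\Gamma=\Delta$ ``by uniqueness''. This is unjustified: the decomposition $w=y\Pi$ with $y\in X$ is available only for $w\in X\mA$, and you have not shown $x\psi^m\in X\mA$ (knowing $x\psi^m\Gamma\in X\mA$ does not give this; $x\psi^m$ could well be a $\lambda$-word). The paper's fix is to first use the conclusion of (A) to see that the $X$-component of $x$ is infinite, so that either all $x\psi^k$ with $k\ge 0$ lie in $X\mA$, or all with $k\le 0$ do. In the first case (taking $i>0$ with $x\Gamma\psi^i=x\Delta$) one has $x\psi^i=v\Lambda\in X\mA$, and $v\Lambda\Gamma=x\Delta$ forces $v=x$; the second case is symmetric via $\psi^{-i}$. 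If both held one would obtain $x=x\Lambda\Lambda'$, forcing $\Gamma=\Delta$; this simultaneously yields the right/left semi-infinite dichotomy.

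As a lesser point, your argument for the second claim in (A) via the minimal expansion $Y$ is unclear and again implicitly assumes $y_{i_0}\psi\in X\mA$. The paper instead reapplies the first part to each $z$ with $x\psi^i=z\Delta$ (so $z$ too lies in a complete finite $X$-component), writes $z\psi^{d-i}=y\Gamma\in X\mA$, and reads $x=x\psi^d=z\psi^{d-i}\Delta=y\Gamma\Delta$, whence $y=x$ and $\Gamma=\Delta=\e$, giving $x\psi^i=z\in X$ directly. Your treatment of (C) is fine and matches the paper's pigeonhole argument.
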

\begin{proof}
\be[(A)]
\item
If $x$ belongs to an infinite $X$-component of $\psi$ (of types (1), (3) or (4) that is), then so does $x\G$, a contradiction. 
As $\psi$ is in semi-normal form with respect to $X $ it follows that $x$ is in a complete finite $X$-component. Let $d$ be 
the smallest positive integer such that $x\psi^d=x$.
For each $1\le i\le d-1$ write $x\psi^i =z \D$ for some $z\in X $ and $\D\in A^*$.  
Then $z$ must also belong to a complete finite $X$-component, so we can write $z\psi^{d-i}=y\G$ for some  $y\in X $ and $\G\in A^*$. 
Then $x =x\psi^d=z\D\psi^{d-i}=z\psi^{d-i}\D=y\G\D$. From Lemma \ref{HigmanLemma2.5}, we have $y=x$ and $\G=\D=\e$, so 
$x\psi^i=z\in X $, as claimed.
\item
If $x$ belongs to a finite $X$-component then, from (A), the $X$-component of $x\G$ consists of elements
$z\G$, where $z\in X $, contrary to the hypotheses of (B). Therefore $x$ belongs to an infinite $X$-component of $\psi$. 
Without loss of generality we may assume that there is  $i>0$ such that $x\G\psi^i=x\D$. 
Suppose first that $x\psi^k\in X\mA$, for all $k\ge 0$. Then 
$x\psi^i=v\L$, for some $v\in X $ and $\L\in A^*$, and thus  $x\D=x\G\psi^i=v\L\G$;  so $v=x$ and $\D=\L\G$, and 
we obtain $x\psi^i=x\L$.

Similarly, if $x\psi^{-k}\in X \mA$, for all $k\ge 0$, then $x\psi^{-i}=x\L'$, 
for some $\L'\in A^*$, with $\G=\L'\D$. Note that if $x\psi^k\in X \mA$ for all $k$, then  $x=x\L\L'$, which forces  
 $\L=\L'=\e$, so $\G=\D$, a contradiction. Hence the final statement of (B) holds.
\item
In this case $x$ must belong to an infinite $X$-component, as (A) does not hold. As $X $ is finite there is $z\in X $ such that 
$z\G$ and $z\D$ belong to the $X$-component of $x$, for distinct $\G$ and $\D$ in $A^*$; and then $z$ is of type (B), as required.
\ee
\end{proof}

\begin{definition}\label{Proper}
Let $u\in V_{n,r}$ and $\psi \in G_{n,r}$. If $u\psi^d=u\G$ for some $d\in \mathbb{Z}\setminus \{0\}$ and some $\G\in A^*\setminus \{1\}$,  
then $u$ is a 
\emph{characteristic element} for $\psi$.  
If $u$ is a characteristic element for $\psi$ 
then the \emph{characteristic} of $u$ is the pair $(m,\Gamma)$ such that 
$m\in \mathbb{Z}\setminus \{0\}$, $\Gamma\in A^*$ with
\begin{itemize}
\item $u\psi^m=u\Gamma$ and
\item for all $i$ such that $0<|i|<|m|$, $u\psi^i\not\in u\mA$.
\end{itemize}
In this case $\Gamma$ is called the \emph{characteristic multiplier} and $m$ is the \emph{characteristic power} 
for $u$, with respect to $\psi$. 
\end{definition}

From the definition, 
if $\psi$ is in semi-normal form with respect to $X$ then an element $x\in X$ is of type (B) if and only if $x$ is a characteristic element: 
in which case it 
follows from Lemma \ref{AJDLEMMA2} below that the $\psi$-orbit of $x$ is of type \ref{it:orbit4} or \ref{it:orbit5} in Corollary \ref{cor:orbit-type}.
On the other hand, if $x\in X$ has type (C) then the $\psi$-orbit of $x$ may be of types \ref{it:orbit3}, \ref{it:orbit4}, \ref{it:orbit5}  or \ref{it:orbit6}
in Corollary \ref{cor:orbit-type}.
\begin{example}
In Example \ref{orbiteg2}, the automorphism $\psi$  is in semi-normal form with respect to
an $A$-basis $X$. The elements $x\a_2\a_1$ and $x\a_2^2$ of $X$ are of type (A). 
The element $x\alpha_1^2\in X$ is of type (B) with characteristic 
$(-1,\a_1)$, while  $x\alpha_1\alpha_2\in X$ is of type (B) with characteristic $(1,\alpha_2)$; and both of these elements are endpoints of their semi-infinite $X$-components.

In Example \ref{ex:pond} the elements $x\a_2\a_1$ and $x\a_2^2$ of $X$ are of type (C), are not characteristic and belong to complete infinite $X$-components. The elements $x\a_1^2\a_2$ and $x\a_1\a_2^2$ in the \pond* orbit \eqref{eqn:pond_example} are also type (C) and non-characteristic, but belong to semi-infinite $X$-components.
\end{example}

\begin{lemma}\label{lem:char}
If $u\in V_{n,r}$ is a characteristic element  for $\psi \in G_{n,r}$ then
\begin{enumerate}
\item\label{it:char1} the characteristic $(m,\Gamma)$ is uniquely determined, and 
\item\label{it:char2} if $v$ is in the same $\psi$-orbit as $u$ then $v$ is a characteristic element with the same characteristic as $u$. 
\end{enumerate}
\end{lemma}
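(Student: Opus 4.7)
The plan is to reduce both parts to two basic facts. The first is that $\{u\}$ trivially satisfies condition~\eqref{eq:dagger} (a single element is not a proper initial segment of itself), so Lemma~\ref{HigmanLemma2.5}.\ref{it:253} says $\{u\}$ freely generates the $\Omega$-subalgebra it generates. I will use this in the concrete form: the map $A^* \to V_{n,r}$, $\Lambda \mapsto u\Lambda$, is injective; in particular $u\Lambda = u$ forces $\Lambda = 1$. The second fact is that $\psi$ is an $\Omega$-algebra homomorphism, so it commutes with every $\alpha_i$; iterating gives $(w\Lambda)\psi^t = (w\psi^t)\Lambda$ for all $w \in V_{n,r}$, $\Lambda \in A^*$, and $t \in \ZZ$.

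For part~\ref{it:char1}, suppose $(m_1,\Gamma_1)$ and $(m_2,\Gamma_2)$ both meet the definition. The minimality clauses on $|m_j|$ force $|m_1| = |m_2|$, so either $m_1 = m_2$ or $m_1 = -m_2$. In the first case $u\Gamma_1 = u\psi^{m_1} = u\Gamma_2$, and the injectivity above yields $\Gamma_1 = \Gamma_2$. In the second case, assuming WLOG that $m := m_1 > 0$, I apply $\psi^m$ to $u\psi^{-m} = u\Gamma_2$ and use the commutation property to obtain $u = u\Gamma_1\Gamma_2$; since $\Gamma_1\Gamma_2 \neq 1$ this contradicts freeness, so the second case is impossible.

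For part~\ref{it:char2}, given $v = u\psi^k$ in the $\psi$-orbit of $u$, the commutation property gives
\[
v\psi^m = u\psi^{k+m} = (u\psi^m)\psi^k = (u\Gamma)\psi^k = (u\psi^k)\Gamma = v\Gamma,
\]
so $v$ is characteristic and $(m,\Gamma)$ is a candidate for its characteristic. To verify the minimality clause I would suppose, for contradiction, that $v\psi^i = v\Lambda$ for some $i$ with $0 < |i| < |m|$ and some $\Lambda \in A^* \setminus \{1\}$. Applying $\psi^{-k}$ and using the commutation property again transports this to $u\psi^i = u\Lambda$, contradicting the minimality of $|m|$ in the characteristic of $u$. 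Hence $(m,\Gamma)$ is also the characteristic of $v$.

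I do not expect significant obstacles. The only delicate points are the two preliminary facts flagged at the outset: that $\psi$ commutes with each $A$-operation (immediate from the definition of an $\Omega$-homomorphism) and that Lemma~\ref{HigmanLemma2.5}.\ref{it:253} genuinely applies to the singleton $\{u\}$. Once these are in place, both parts reduce to one-line calculations as above.
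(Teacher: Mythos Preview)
Your approach matches the paper's, and your treatment of part~\ref{it:char2} is essentially identical to it. There is, however, a gap in your justification of the injectivity of $\Lambda \mapsto u\Lambda$: Lemma~\ref{HigmanLemma2.5} carries the standing hypothesis $V \subseteq B\mA$ for some $A$-basis $B$, but a characteristic element $u$ need not lie in $\cxx\mA$ (for instance, in Example~\ref{snf0} the element $x\alpha_2\psi^{-1} = x\alpha_1\alpha_2\, x\alpha_2\,\lambda$ is characteristic yet not in $\cxx\mA$). More to the point, the injectivity you assert is \emph{false} for arbitrary $u \in V_{n,r}$: take $u = x\alpha_1\, x\alpha_1\, \lambda$ in $V_{2,1}$, so that $u\alpha_1 = u\alpha_2 = x\alpha_1$.

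The repair uses the very hypothesis that $u$ is characteristic. From $u\psi^m = u\Gamma$ one gets $u\psi^{mq} = u\Gamma^q$, and by Lemma~\ref{2.2H} this lies in $\cxx\mA$ for all large enough $q$. Given $u\Lambda_1 = u\Lambda_2$, apply $\psi^{mq}$ and commute to obtain $(u\Gamma^q)\Lambda_1 = (u\Gamma^q)\Lambda_2$; writing $u\Gamma^q = x\Delta$ with $x \in \cxx$ and $\Delta \in A^*$, uniqueness of standard forms forces $\Delta\Lambda_1 = \Delta\Lambda_2$, hence $\Lambda_1 = \Lambda_2$. With this in hand your proof goes through unchanged. (The paper's own proof, incidentally, only invokes the weaker fact $u = u\Lambda \Rightarrow \Lambda = 1$, which does hold for every $u$, and leaves the uniqueness of $\Gamma$ once $m$ is fixed entirely implicit.)
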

\begin{proof}
To see part~\ref{it:char1},  suppose that $u$ has characteristic $(m,\Gamma)$. 
If $u\psi^{m'}=u\Delta$ and for all $0< |k| < |m'|$ we have 
 $u\psi^k\not\in u\mA$, then $|m'|\ge |m|$ by Definition \ref{Proper}, so  $m=\pm m'$. 
If $u\psi^{-m}=u\Delta$ then $u=u\psi^m\Delta=u\Gamma\Delta$, which cannot happen as $\Gamma \neq 1$. 

For part~\ref{it:char2}, let $u\psi^r=v$. For all $k$ such that $u\psi^k=u\Delta$ with $\Delta\in A^*$, we have 
\[v\psi^k=u\psi^r\psi^k=u\psi^k\psi^r=u\Delta\psi^r=u\psi^r\Delta=v\Delta.\]
Interchanging $u$ and $v$ we see also that whenever $v\psi^k=v\Delta$ then $u\psi^k=u\Delta$.
\end{proof}
From Lemma \ref{lem:char}, if a $\psi$-orbit has a characteristic element, then every $X$-component of this $\psi$-orbit contains 
a characteristic element, and all these elements have the same characteristic. Bearing this in mind we make the following
definition. 

\begin{definition}\label{charOrbit}
Let $\psi \in G_{n,r}$ have an $X$-component~$\mathcal{C}$ containing a characteristic element $u$. 
Then we define the \emph{characteristic} of $\mathcal{C}$ to be equal to the characteristic of $u$.
\end{definition}


\begin{theorem}[{\cite[Theorem 9.4]{Higg}}]\label{T9.4H}
Let $\psi\in G_{n,r}$ be in semi-normal form with respect to $X$. Then $\psi$ 
is of infinite order if and only if it has a characteristic element~$u$. 
Moreover, if $\psi$ is of infinite order then we may assume that~$u \in X$.
\end{theorem}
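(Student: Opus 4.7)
The plan is to handle the two implications separately, using Lemma~\ref{ABC} to locate a characteristic element inside $X$ for the nontrivial direction.

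For the easy direction, suppose $u \in V_{n,r}$ is a characteristic element of $\psi$ with characteristic $(m,\Gamma)$, so $u\psi^m = u\Gamma$ with $m \ne 0$ and $\Gamma \in A^* \setminus \{1\}$. Then for every $k \ge 1$ we have $u\psi^{km} = u\Gamma^k$. Since $|\Gamma^k| = k|\Gamma|$ strictly increases with $k$, the elements $u\Gamma^k$ (read in standard form, or via Lemma~\ref{HigmanLemma2.5}.\ref{it:253} applied to $\{u\}$) are pairwise distinct, so in particular $\psi^{km} \ne \mathrm{id}$ for every $k \ge 1$. Hence $\psi$ has infinite order.

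For the converse, assume $\psi$ has infinite order; we prove that some $x \in X$ is of type (B) in the sense of Lemma~\ref{ABC}, which by definition means $x$ is a characteristic element. Suppose for contradiction no element of $X$ is of type (B). Then no element of $X$ can be of type (C) either, because type (C) explicitly requires the existence of an element $z \in X$ of type (B). Since Lemma~\ref{ABC} asserts that each $x \in X$ has exactly one of the three types, every $x \in X$ must then be of type (A). By the description of type (A), each such $x$ lies in a complete finite $X$-component consisting entirely of elements of $X$, so there exists a positive integer $d_x$ with $x\psi^{d_x} = x$. Let $d$ be the least common multiple of the finitely many integers $d_x$ ($x \in X$); then $x \psi^d = x$ for every $x \in X$. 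Since $X$ is a basis of $V_{n,r}$ and an automorphism is determined by its action on a generating set, this gives $\psi^d = \mathrm{id}$, contradicting the hypothesis that $\psi$ has infinite order.

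Hence some $x \in X$ is of type (B), which furnishes a characteristic element $u = x \in X$, proving both the reverse implication and the final ``moreover'' clause.
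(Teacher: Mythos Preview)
Your proof is correct and follows essentially the same route as the paper's, pivoting on the trichotomy of Lemma~\ref{ABC}. One small imprecision: to conclude that the elements $u\Gamma^k$ are pairwise distinct you invoke Lemma~\ref{HigmanLemma2.5}.\ref{it:253}, but that lemma assumes $\{u\}\subseteq B\mA$ for some $A$-basis $B$, which need not hold for an arbitrary characteristic element $u\in V_{n,r}$. The paper sidesteps this by first passing (via Lemma~\ref{ABC}) from an arbitrary characteristic element to a type~(B) element of $X$, so that the $u\Gamma^k$ are visibly distinct standard forms in $\cxx\mA$, and only then deducing infinite order; this is also how the paper obtains the ``moreover'' clause, rather than via the converse as you do. Your argument is easily patched: if $u\Gamma^j=u\Gamma^k$ with $j<k$ then $v=v\Delta$ for $v=u\Gamma^j$ and $\Delta=\Gamma^{k-j}\ne 1$, so $v=v\Delta^N$ for all $N$, and for $N$ large Lemma~\ref{2.2H} gives $v=v\Delta^N\in\cxx\mA$, where $v=v\Delta$ is impossible.
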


\begin{proof}
If $u$ is a   characteristic element for $\psi$ with characteristic $(m,\Gamma)$ then $u\psi^{m}=u\Gamma$, so $u\psi^{mq}=u\G^q$. So for sufficiently large $q$, $u\psi^{mq}\in X\mA$. Then $u\psi^{mq}$ also has characteristic $(m, \Gamma)$ by Lemma~\ref{lem:char}.
Write $u\psi^{mq}=x\D$, for
some $x\in X$ and $\D\in A^*$. Now $x\D\G=u\psi^{mq}\G=u\psi^{m(q+1)}=x\D\psi^m$, so from Lemma \ref{ABC}, $x$ has type (B). Thus we may
assume  
$u\in X$.  Now 
\begin{align*}
u\psi^{mj}&=u\psi^m\psi^{m(j-1)}=u\Gamma\psi^{m(j-1)}=u\Gamma\psi^m\psi^{m(j-2)}\\
&=u\psi^m\Gamma\psi^{m(j-2)}=u\Gamma^2\psi^{m(j-2)}=\cdots =u\Gamma^j,
\end{align*}
for $j\in \mathbb{N}$. Since $\Gamma$ is a characteristic multiplier, 
the elements $u\Gamma^{j}$ are all different for $j\in \mathbb{N}$, so $\psi$ has infinite order.

Conversely, if $\psi$ has no  characteristic element, then certainly there are none in $X$,  so $X$ 
has no elements of type (B) nor type (C). Thus all elements of $X$ are of type (A), 
as $\psi$ is in semi-normal form; whence $\psi$ is a permutation of $X$ and has finite order. 
\end{proof}

\begin{lemma}\label{AJDLEMMA2} Let $\psi$ be in semi-normal form with respect to an $A$-basis $X$ and let $u\in V_{n,r}$. If 
$u$ has characteristic $(m,\Gamma)$ then
the $\psi$-orbit of $u$ has precisely one $X$-component, which 
is semi-infinite (right semi-infinite if $m>0$ and left semi-infinite if $m<0$) and 
consists of elements of the form $x\L$, where $x\in X$ is of type (B) and $\L\in A^*$.

Furthermore, if $x\L$ belongs to
the $X$-component  of the $\psi$-orbit of  $u$, where  $x\in X$ and $\L\in A^*$, then $x$ has characteristic $(m,\G_1\G_0)$,
where $\G=\G_0\G_1$, $\L=(\G_1\G_0)^p\G_1=\G_1\G^p$, $p\ge 0$, and  $\G_0$ is non-trivial. 
\end{lemma}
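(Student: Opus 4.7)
The plan is to analyse the orbit via its intersections with $X\mA$, reducing first to the case $u \in X\mA$ and then extracting the decomposition by a conjugation argument in the free monoid $A^*$. It suffices to treat $m>0$: for $m<0$, the element $u$ has characteristic $(|m|,\Gamma)$ under $\psi^{-1}$, and the ensuing right semi-infinite $X$-component for $\psi^{-1}$ is exactly a left semi-infinite $X$-component for $\psi$.

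First I would establish that the orbit meets $X\mA$. By the second assertion of Lemma~\ref{2.2H}, the set $u\mA \setminus X\mA$ is finite; since $\Gamma$ is non-trivial, the descendants $u\Gamma^q$ $(q\ge 0)$ are pairwise distinct, so $u\Gamma^{q_0}=u\psi^{mq_0}\in X\mA$ for some $q_0\ge 0$. By Lemma~\ref{lem:char} this iterate inherits the characteristic $(m,\Gamma)$, so I may assume $u=x\L$ with $x\in X$ and $\L\in A^*$. Since $\psi$ commutes with the descending operations, the relation $u\psi^m=u\Gamma$ becomes $(x\psi^m)\L = x\L\Gamma$.

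The central step, and the main obstacle I foresee, is to establish that $x$ is of type (B); in particular $x\psi^m\in x\mA$. Using Lemma~\ref{ABC} I would rule out the alternatives. Type (A) forces $x\psi^d=x$ for some $d>0$, whence $y\psi^d=y$, contradicting the infinite orbit of the characteristic element $y=x\L$. Type (C) provides $z\in X$ of type (B) with $z\ne x$ and a non-trivial $\D\in A^*$ with $x\psi^i=z\D$; then the orbit of $y$ would visit both $x\mA$ (at positions $jm$, since $y\psi^{jm}=y\Gamma^j\in x\mA$) and $z\mA$ (at positions $i+km_z$, since $y\psi^{i+km_z}=z\Gamma_z^k\D\L$, where $(m_z,\Gamma_z)$ is the characteristic of $z$). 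Since distinct elements of an $A$-basis have disjoint $A$-subalgebras, these visits cannot coincide; combining this with the right semi-infinite structure of the orbit established below and with the minimality of $m$ yields a contradiction. Hence $x$ is of type (B) and we may write $x\psi^m=x\Gamma'$ for some $\Gamma'\in A^*$.

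Granted this, the equation $x\Gamma'\L=x\L\Gamma$ reduces, by uniqueness of standard forms in $\cxx\mA$, to the word equation $\Gamma'\L=\L\Gamma$ in $A^*$. This is a classical conjugation equation: writing $|\L|=p|\Gamma|+r$ with $0\le r<|\Gamma|$, take $\Gamma_1$ to be the suffix of $\Gamma$ of length $r$ and $\Gamma_0$ the complementary prefix; one checks that $\Gamma=\Gamma_0\Gamma_1$, $\Gamma'=\Gamma_1\Gamma_0$, and $\L=\Gamma_1\Gamma^p=(\Gamma_1\Gamma_0)^p\Gamma_1$, with $\Gamma_0$ non-trivial by construction. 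Minimality of $|m|$ for $u$ transfers to $x$ (any smaller characteristic exponent for $x$ would, via the same conjugation, produce a smaller one for $u$), so $x$ has characteristic exactly $(m,\Gamma_1\Gamma_0)$, establishing the ``Furthermore'' statement. For the global orbit structure, the forward chain $u,u\Gamma,u\Gamma^2,\ldots$ lies in $x\mA$, so the $X$-component of $u$ is infinite; a length argument in $\cxx\mA$ (any ancestor $u\Gamma^{-q}=x'\L'\in X\mA$ would give $x\L=x'\L'\Gamma^q$, which fails for large $q$ by comparison of standard-form lengths) rules out both a complete infinite orbit and a pond orbit, leaving exactly one right semi-infinite $X$-component.
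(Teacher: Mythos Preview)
Your argument has two real gaps. First, the exclusion of type~(C) is circular: you invoke ``the right semi-infinite structure of the orbit established below'', but that structure is part of what you are proving, and even granting it you never spell out the promised contradiction from the interlaced visits to $x\mA$ and $z\mA$. The paper avoids this detour entirely. Lemma~\ref{ABC} characterises type~(B) \emph{positively}: $x$ is of type~(B) as soon as two distinct words $x\Lambda$ and $x\Lambda\Gamma$ lie in the same $X$-component. This is arranged by choosing the replacement $u\leftarrow u\Gamma^{q}$ with $q$ large enough that every intermediate iterate $u\Gamma^{q}\psi^{i}=(u\psi^{i})\Gamma^{q}$, $0\le i\le m$, already lies in $X\mA$; then $x\Lambda$ and $x\Lambda\Gamma$ share an $X$-component and type~(B) is immediate.

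Second, even once $x$ is of type~(B), you cannot yet write $x\psi^{m}=x\Gamma'$. Type~(B) only yields $x\psi^{m_x}=x\Omega$ for the \emph{characteristic} power $m_x$ of $x$, not for your given $m$; a priori $m_x$ could even be negative, placing $x$ in a left semi-infinite component and leaving $x\psi^{m}$ outside $X\mA$. The paper closes this by running the length argument \emph{before} the decomposition: first one shows the $X$-component $\mathcal C$ of $u$ is right semi-infinite and unique, then deduces the same handedness for $x$, and only then obtains $x\psi^{m}\in X\mA$ (whence $x\psi^{m}=x\Gamma'$ by comparing with $x\Lambda\Gamma$). Your conjugation-in-$A^*$ treatment of $\Gamma'\Lambda=\Lambda\Gamma$ is a pleasant alternative to the paper's direct analysis of the characteristic $(k,\Omega)$ of $x$, but it cannot start until this ordering problem is fixed. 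Likewise, your minimality transfer (``a smaller exponent for $x$ would give a smaller one for $u$'') is not automatic: from $x\psi^{k}=x\Omega$ you get $u\psi^{k}=x\Omega\Lambda$, and this lies in $u\mA=x\Lambda\mA$ only if $\Omega\Lambda\in\Lambda A^{*}$, which is exactly the structural fact $\Omega=\Lambda\Omega_1$ that the paper extracts and that your argument has not yet produced for the characteristic power of $x$.
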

\begin{proof}
As $u\psi^m=u\G$ we have $u\psi^{mq}=u\G^q$, for all integers $q \geq 0$, and choosing $q$ sufficiently large $u\G^q\in \XA$.
Thus we may assume that $u\in \XA$.
Let $\mathcal C$ denote the $X$-component containing $u$ and write $u=x\Lambda$, where $x\in X$ and $\Lambda\in A^*$.

Assume first that $m>0$. As $u$ has characteristic $(m,\G)$, both $x\L$ and $x\L\G$ belong to $\mathcal{C}$, so
$x$ is of type (B) by Lemma \ref{ABC}. 
Suppose there is an integer $K\ge 0$ such that $u\psi^{-k}\in \XA$, for all $k\ge K$. 
(That is, suppose that the $\psi$-orbit of $u$ contains a left semi-infinite $X$-component.)
Let $\Lambda=\Lambda_0\G^t$, where $\Lambda_0$ 
has no terminal segment equal to $\G$. Then, for $j$ such that $m(j+1)\ge K$ and $j\ge t$, 
$u\psi^{-m(j+1)}\in \XA$, so $u\psi^{-m(j+1)}=z\Xi$ for some $z\in X$ and $\Xi\in A^*$.
From Lemma \ref{lem:char} we see that $z\Xi$ has characteristic $(m,\G)$. Hence 
\[z\Xi\G^{j+1}=z\Xi\psi^{m(j+1)}=u=x\Lambda_0\G^t,\]
which implies  $z=x$ and $\Xi\G^{j-t+1}=\Lambda_0$, a contradiction. As $\psi$ is in semi-normal form with
respect to $X$ and $\mathcal C$ is not a complete $X$-component, the $\cC$ must be right semi-infinite. 
We have just shown  the $\psi$-orbit of $u$ contains no left semi-infinite $X$-component, so $\cC$ is the unique $X$-component
of this $\psi$-orbit. 

For the second part of the lemma,
suppose $x$ has characteristic $(k,\Omega)$. The $X$-component of $x$ cannot be left semi-infinite, or else 
$x\Lambda\psi^{-i}\in \XA$ for all $i \ge 0$; this would mean that $\cC$ is not right semi-infinite. Hence 
$x$ is in a right semi-infinite $X$-component and $k>0$. 
If $\Lambda=\Omega^j\Lambda_1$ then $x\Lambda_1\psi^{kj}=x\Omega^j\Lambda_1=u$ and so $\cC$ contains $x\L_1$; and it suffices to prove 
the Lemma under the assumption that 
that $\Lambda$ has no initial segment equal to $\Omega$.

Suppose that 
 $m=kp+r$, where $0\le r<k$. 
Then $x\Lambda\psi^{kp}= x\Omega^p\Lambda$ and $x\Omega^p\Lambda\psi^r=x\Lambda\psi^{kp+r}=x\Lambda\psi^m
=x\Lambda\G$. However, as $x$ is in a right semi-infinite $X$-component, $x\psi^r=z\Xi$, for some $z\in X$ and 
$\Xi\in A^*$. Thus $x\Lambda\G=x\Omega^p\Lambda\psi^r=x\psi^r\Omega^p\Lambda=z\Xi\Omega^p\Lambda$, 
which implies that $z=x$ and $\Lambda\G=\Xi\Omega^p\Lambda$. Now, as  $x\psi^r=x\Xi$, $0\le r<k$ and
$x$ has characteristic $(k,\Omega)$, it must be that $r=0$, $m=kp$ and $\Xi=\e$. 
We have now  $\Lambda\G=\Omega^p\Lambda$, and as $\Lambda$ has no initial 
segment equal to $\Omega$ it follows that $\Omega=\Lambda\Omega_1$. 
Now  
$u\psi^k=x\L\psi^k=x\psi^k\L=x\Om\L=x\L\Om_1\L=u\Om_1\L$, so $k\ge m$, by definition of characteristic. Therefore $k=m$ and $\G=\Om_1\L$, 
completing the proof
 in the case $m>0$. 

In the case when $m<0$ the result follows from the above on replacing $\psi$ by $\psi^{-1}$.
\end{proof}

An element $w$ of the free monoid $A^*$ is said to be \emph{periodic with period} $i$ 
if $w=a_1\cdots a_n$, where $a_j\in A$, and $a_{k}=a_{k+i}$, for $1\le k\le n-k$. In this sense, in Lemma \ref{AJDLEMMA2} above,  
$\L=(\G_1\G_0)^p\G_1$ is periodic of period $m$. 
\begin{lemma}\label{AJDLEMMACM}
Let $\psi \in G_{n,r}$ and $u\in V_{n,r}$ such that
$u\psi^k=u\D$, where $\D\neq \e$. Then 
 $u$ has characteristic $(m,\G)$ with respect to $\psi$, where 
 $k=mq$ and $\D=\G^q$, for some positive integer $q$. 
 \end{lemma}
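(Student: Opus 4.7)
The plan is to turn the hypothesis into a word equation in the free monoid $A^*$ and then exploit Proposition~\ref{Lothaire1} together with the minimality of $|m|$.

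First I would normalise signs by replacing $\psi$ with $\psi^{-1}$ if necessary, so as to assume the characteristic power satisfies $m>0$; the target conclusions $k=mq$, $\Delta=\Gamma^q$ are invariant under this substitution. Next I would reduce to the case $u\in\cxx\mA$: Higman's standard forms from Remark~\ref{rem:Higman_std_form} show that applying any $\a_j$ to a standard form of type~(ii) strictly decreases its $\lambda$-length, while applications of $A^*$ preserve membership in $\cxx\mA$ once there; hence $u\Xi\in\cxx\mA$ whenever $|\Xi|$ exceeds the $\lambda$-length of the standard form of $u$. Taking $N$ large enough that $u\Gamma^N\in\cxx\mA$ and replacing $u$ by $u':=u\psi^{Nm}=u\Gamma^N$ preserves both the characteristic (Lemma~\ref{lem:char}.\ref{it:char2}) and the equation $u\psi^k=u\Delta$, since $\psi$ commutes with each $\a_i$. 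Writing the standard form $u=y\Phi$ with $y\in\cxx$ and $\Phi\in A^*$, uniqueness of standard forms (Lemma~\ref{lem:stdform}) gives that the map $\Xi\mapsto u\Xi$ from $A^*$ to $V_{n,r}$ is injective, because $u\Xi$ has standard form $y\Phi\Xi$ of Higman type~(i).

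With this in place I would rule out $k<0$. From $u\psi^m=u\Gamma$ one deduces $u\psi^{-jm}\Gamma^j=u$ for every $j\ge 0$, so taking $j=-k$ gives $u\psi^{km}\Gamma^{-k}=u$; iterating $u\psi^k=u\Delta$ forward $m$ times gives $u\psi^{km}=u\Delta^m$. Substituting forces $u\Delta^m\Gamma^{-k}=u$, and by injectivity the non-empty word $\Delta^m\Gamma^{-k}\in A^*$ would have to be empty, a contradiction. Hence $m,k>0$, and the same two iterations now give $u\psi^{mk}=u\Gamma^k=u\Delta^m$, so by injectivity we obtain the word equation $\Gamma^k=\Delta^m$ in $A^*$. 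Proposition~\ref{Lothaire1} then produces a primitive $\Lambda'\in A^*$ and positive integers $s,t$ with $\Gamma=(\Lambda')^t$, $\Delta=(\Lambda')^s$, and $sm=tk$. Writing $s=tq+r$ with $0\le r<t$, it remains to show $r=0$. If instead $r>0$ then $\Delta=\Gamma^q(\Lambda')^r$ with $(\Lambda')^r$ non-trivial, and, using $u\psi^{-mq}\Gamma^q=u$ together with the fact that $\psi$ commutes with $A^*$,
\[u\psi^{k-mq}\;=\;(u\Delta)\psi^{-mq}\;=\;u\psi^{-mq}\Delta\;=\;u\psi^{-mq}\Gamma^q(\Lambda')^r\;=\;u(\Lambda')^r.\]
Solving $sm=tk$ for $k-mq$ yields $k-mq=rm/t$, a positive integer strictly less than $m$. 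Thus $u\psi^{k-mq}\in u\mA$ with $0<|k-mq|<|m|$, contradicting the minimality of $|m|$ in Definition~\ref{Proper}. Therefore $r=0$, giving $\Delta=\Gamma^q$ and $k=mq$.

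The main obstacle is purely bookkeeping: the sign normalisation, the reduction to $\cxx\mA$, and the translation of the algebra equation $u\Gamma^k=u\Delta^m$ into a word equation via injectivity. Once these are in hand, Proposition~\ref{Lothaire1} combined with the minimality of $|m|$ closes the argument almost automatically.
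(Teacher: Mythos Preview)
Your argument is correct, and it takes a genuinely different route from the paper. The paper's proof passes through the semi-normal form machinery: it invokes Lemma~\ref{AJDLEMMA2} to place $u$ in a semi-infinite $X$-component, writes $u=x\G_1$ with $x\in X$ of characteristic power $m$, performs Euclidean division $k=mq+s$, and then uses the basis property of $X$ (via $x\psi^s=y\L'$ and comparing initial letters in $X$) to force $s=0$. Your approach bypasses semi-normal forms entirely: you reduce only to $u\in\cxx\mA$, extract the word equation $\G^k=\D^m$ in $A^*$ by iterating both relations to the common power $mk$, and then let Proposition~\ref{Lothaire1} do the structural work, finishing with a direct appeal to the minimality clause in Definition~\ref{Proper}. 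This is more self-contained (it does not need Lemma~\ref{AJDLEMMA2} or any $X$-component analysis) and makes transparent that the result is really a statement about the free monoid $A^*$; the paper's version, by contrast, stays within the orbit-geometry framework already set up and avoids introducing the primitive root $\L'$. A small stylistic point: you might make explicit that $k=0$ is excluded immediately by injectivity before asserting ``hence $m,k>0$'', and note that the identity $u\psi^{-jm}\G^j=u$ you use with $j=q$ is valid because $q\ge 0$ in the division $s=tq+r$.
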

\begin{proof}
Let $\psi$ be in semi-normal form with respect to $X$, and let $(m,\G)$ be the characteristic of $u$. 
Suppose first that $k>0$.
As in the proof of Lemma \ref{AJDLEMMA2}, we may assume that $u\in X\mA$, the $X$-component of $u$ is right semi-infinite 
and that there  exist $x\in X$ and $\G_1\in A^*$ such that $u=x\G_1$ and   $x$ has
characteristic power $m$. 
 Then $k\ge m$, say $k=mq+s$, where $0\le s<m$ and $q\ge 1$. Let $x\psi^s=y\L'$, where $y\in X$ and $\L'\in A^*$. 
Now   
 $x\G_1\D=u\D=u\psi^k=u\psi^{mq+s}=u\G^q\psi^s=x\psi^s\G_1\G^q=y\L'\G_1\G^q$.  Hence  $x=y$ and so $s=0$ and 
 $k=mq$. 
 Moreover $x\L\D=u\D=u\psi^k=u\psi^{mq}=u\G^q=x\L\G^q$, so $\L\D=\L\G^q$, 
from which $\D=\G^q$, as required.

If $k<0$, replace $\psi$ with $\psi^{-1}$ in the argument above. We have $u\psi^{-k}=u\D$, so from the
previous part of the proof, $u$ has characteristic $(m,\G)$, with respect to $\psi$, 
where $-k=mq$, $q>0$, 
and $\D=\G^q$. If follows that $u$ has characteristic $(-m,\G)$, with respect to 
$\psi$,  
and $-m=kq$, completing the proof. 
\end{proof}

\begin{corollary}\label{cor:allininf}
 Let $\psi$ be in semi-normal form with respect to an $A$-basis $X$ and let $u\in V_{n,r}$. Then there exists 
an element $\L\in A^*$ such that $u\L$ belongs to a complete $X$-component of $\psi$. 
\end{corollary}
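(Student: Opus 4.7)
The plan is to reduce to the case $u \in \XA$ via a descent argument, dispatch the easy case using Lemma \ref{ABC}, and in the remaining cases apply a counting argument on descendants.

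Apply Lemma \ref{2.2H} with $Y = X$: the set $u\mA \setminus \XA$ is finite whereas $u\mA$ is infinite, so some $\L_0 \in A^*$ satisfies $u\L_0 \in \XA$. Replacing $u$ by $u\L_0$, assume $u = x\G$ for some $x \in X$ and $\G \in A^*$. By Lemma \ref{ABC}, $x$ is of type (A), (B), or (C). In case (A), $x\psi^j \in X$ for every $j \in \ZZ$, so $u\psi^j = x\psi^j\G \in \XA$ for all $j$ and the $\psi$-orbit of $u$ is itself a complete finite $X$-component; take $\L = \varepsilon$.

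In cases (B) and (C), I argue by counting that some descendant $u\L_1 = x\G\L_1$ lies in no semi-infinite $X$-component; by Corollary \ref{cor:orbit-type} this forces $u\L_1$ into a complete $X$-component. By Lemma \ref{9.1H} there are only finitely many semi-infinite $X$-components, say $\cC_1, \dotsc, \cC_s$. For a characteristic $\cC_i$ with characteristic multiplier $\tilde\G_i$, Lemma \ref{AJDLEMMA2} shows that its elements of the form $y\L$ with $y \in X$ lie in at most $|\tilde\G_i|$ arithmetic progressions $\{\G_1^{(i)} \tilde\G_i^p : p \geq 0\}$ (one for each factorisation $\G_0^{(i)}\G_1^{(i)} = \tilde\G_i$ with $\G_0^{(i)} \neq \varepsilon$). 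Uniqueness of standard forms forces $y = x$, so the number of $\L_1$ of length $k$ with $x\G\L_1 \in \cC_i$ is at most $|\tilde\G_i|$. For a pond $X$-component an analogous periodic description holds by a pigeonhole argument on the finite set $X$ applied along an infinite tail of the component, combined with Lemma \ref{AJDLEMMACM}. Summing yields a constant $B$ (independent of $k$) bounding the number of $\L_1 \in A^k$ with $u\L_1$ in some semi-infinite $X$-component, whereas descendants of $u$ of depth $k$ are distinct standard forms $x\G\L_1$ and number $n^k$. For $k$ large, $n^k > B$, so the desired $\L_1$ exists.

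The main obstacle is the treatment of pond $X$-components, since Lemma \ref{AJDLEMMA2} strictly applies only to characteristic orbits. Here one uses the pigeonhole principle on $X$ along an infinite tail of a pond component to locate repeated $X$-representatives $y$ of pond-component elements, then invokes Lemma \ref{AJDLEMMACM} to extract the characteristic-like relation $y\psi^k = y\D$ that drives the periodic parametrisation.
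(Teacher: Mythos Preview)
Your argument is essentially correct and takes a genuinely different route from the paper.  Both proofs begin by reducing to $u\in\XA$ and both rely on the periodic structure of semi-infinite $X$-components coming from Lemma~\ref{AJDLEMMA2}.  The paper then \emph{constructs} a single $\Xi$ by choosing $\G\Xi$ long enough, not periodic, and not of the form (periodic word)$\cdot\D$ for any of the finitely many exceptional suffixes~$\D$; such a $\Xi$ then cannot land in any semi-infinite component.  You instead \emph{count}: for each fixed depth~$k$ the number of $\L_1\in A^k$ with $u\L_1$ lying in a given semi-infinite component is bounded by a constant independent of~$k$, so summing over the finitely many such components gives a bound~$B$, while the $n^k$ descendants are distinct.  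Your approach is slightly more robust (no need to exhibit an explicit aperiodic word avoiding finitely many suffix patterns), whereas the paper's approach is more constructive.

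One correction: in your treatment of pond components, the citation of Lemma~\ref{AJDLEMMACM} is backwards.  That lemma \emph{takes} a relation $u\psi^k=u\D$ as hypothesis and returns the characteristic; it does not manufacture such a relation from a repeated $X$-letter.  What you actually need is Lemma~\ref{ABC}(B): from $y\L_j$ and $y\L_{j'}$ in the same $X$-component you conclude $y$ is of type~(B), hence characteristic, with $y\psi^m=y\L$ for some $m\neq 0$.  One then checks (as in the paper's proof) that the direction of $y$'s semi-infinite component matches that of the pond component, so that all but finitely many elements of the pond component are of the form $x\L\D$ with $\L$ periodic; this is exactly what your counting needs.  With this corrected citation your argument goes through.
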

\begin{proof}

Multiplying by a sufficiently long element of $A^*$ we may, as usual, assume that $u\in X\mA$, so 
$u$ belongs to either a complete or a semi-infinite $X$-component of $\psi$. 
There are finitely many semi-infinite  $X$-components (Lemma~\ref{9.1H}). If $S$ is a characteristic semi-infinite 
$X$-component with characteristic $(m,\G)$ then, from Lemma \ref{AJDLEMMA2}, 
elements of $S$ have the form $x\L$ where $x\in X$, $\L\in A^*$ and, for  all but finitely many elements  
of $S$, $\L$ is periodic
of period $m$.

Let $F_S$ be the finite subset of elements of $A^*$ such that $\L\in F_S$ only if 
$x\L\in S$ and $\L$ is not periodic of period $m$. Let $F_0$ be the union of the $F_S$ over all characteristic semi-infinite
$X$-components. 
 If $S$ is  non-characteristic then, from Lemma \ref{ABC}, $S$ contains an element $z\D$, where 
$z\in X$ of type (B), with characteristic $(m',\G')$, say. 
It follows,   from Lemma \ref{AJDLEMMA2} again, that all but finitely many elements
of $S$  have the form $x\L\D$ where $x\in X$, $\L\in A^*$ and $\L$ is periodic
of period $m'$. 
 This time, let $F_S$ be the finite subset of elements of $A^*$ such that $\L\D\in F_S$ only if 
$x\L\D\in S$ and $\L$ is not periodic of period $m'$. Let $F_1$ be the union of the $F_S$ over all non-characteristic semi-infinite
$X$-components. 

Let $M$ be the maximum of lengths of elements of $F_0\cup F_1$ 
and assume  $u=x\G$, where $x\in X$, $\G\in A^*$. 
Choose element $\Xi$ of $A^*$ such that $\G\Xi$ has length greater than $M$, 
is  not periodic and does not factor as $\L\D$, where $\L$ is periodic and $\D\in F_1$. 
Then $u\Xi=x\G\Xi$ cannot belong to  a semi-infinite $X$-component, so must belong to a complete $X$-component.
\end{proof}

\subsection{Quasi-normal forms}\label{sec:qnf}
Quasi-normal forms are particular semi-normal forms 
 which give representations of automorphisms minimising the number of elements in \pond* orbits. In \cite[Section 9]{Higg} it is claimed
that if an automorphism is given with respect to a quasi-normal form, then it has no \pond* orbits. In this section we shall see that this
is not the case.  
\begin{definition}[{\cite[Section 9]{Higg}}]
An element $\psi$ of $G_{n,r}$ is in \emph{quasi-normal} form with respect to the $A$-basis $X$ if it is in semi-normal form with respect to $X$, but not with respect to any proper contraction of $X$.
\end{definition}

It follows from Lemma \ref{4.1H} that for $\psi \in G_{n,r}$ there exists an $A$-basis $X$ with respect to 
which $\psi$ is in quasi-normal form. 
For instance, the automorphisms $\psi$ in Examples~\ref{ex:snf1}, \ref{orbiteg2} and \ref{ex:pond} are in quasi-normal form with respect to the bases $X$ in those examples. Additionally, the automorphism $\psi$ of Example \ref{ex:snf2} is in quasi-normal form with respect to the basis $X_3$.   

\begin{lemma}[cf. {\cite[Lemma 9.7]{Higg}}] \label{lem:qnf}
Given an element $\psi \in G_{n,r}$ there exists a unique $A$-basis, denoted $X_\psi$, 
with respect to which $\psi$ is in quasi-normal form. 
Furthermore $X_\psi$ may   be effectively constructed. 

\end{lemma}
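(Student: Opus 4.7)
\emph{Existence and effective construction.} Apply Lemma~\ref{9.9H}, rendered effective via Remark~\ref{rem:snf_start_point} and Lemma~\ref{4.1H}, to obtain an $A$-basis $X_0$ with respect to which $\psi$ is in semi-normal form. If $X_0$ is quasi-normal we are done; otherwise the definition provides a proper contraction $X_1$ of $X_0$ with respect to which $\psi$ is still in semi-normal form, and we iterate. The cardinalities strictly decrease and remain $\geq r$, so the procedure terminates at a quasi-normal basis $X_\psi$. Each step is computable: a finite $A$-basis has only finitely many contractions, and semi-normality of each candidate is decidable by using Lemma~\ref{9.1H} to enumerate and examine the finitely many $A$-components of types (3)--(5).

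\emph{Uniqueness -- setup and easy case.} Let $X$ and $X'$ both be quasi-normal for $\psi$ and let $Z$ be their common expansion from Corollary~\ref{commonexpansion}, so that $Z\mA = X\mA \cap X'\mA$. If $Z=X$ then $X \subseteq X'\mA$, so by Corollary~\ref{lem:Abasis} $X$ is an expansion of $X'$; any proper such expansion would exhibit $X'$ as a proper semi-normal contraction of $X$, contradicting the quasi-normality of $X$. Hence $X = X'$, and symmetry dispatches the case $Z = X'$.

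\emph{Uniqueness -- hard case and obstacle.} Otherwise $Z$ properly expands both bases, and we pick $y \in X \setminus Z$ deepest with every child $y\alpha_j \in Z$. Then $y \notin Z\mA$, so $y \in X \setminus X'\mA$, while $y\alpha_j \in X'\mA$ for each $j$. Classify $y$ via Lemma~\ref{ABC}. In type~(A), $y$'s finite $\psi$-orbit lies in $X$; any element of it in $X'\mA$ would start an $X'$-component forced by semi-normality of $X'$ to be complete finite, putting the whole orbit in $X'\mA$ and contradicting $y \notin X'\mA$. In types~(B) and (C) the orbit has a characteristic $(m,\Gamma)$ and $y\psi^m = y\Gamma$ lies in $y\alpha_{i_1}\mA \subseteq X'\mA$ while $y \notin X'\mA$; Lemma~\ref{AJDLEMMA2} applied with respect to $X'$ then forces the unique $X'$-component of this orbit to be semi-infinite with initial element $y\psi^{j^*}=x'\Lambda$ for some $x'\in X'$ of type~(B), and the forced placement of $x'$ and $\Lambda$ is to be shown incompatible with $X'$ admitting no proper semi-normal contraction at the ancestor of $x'$ corresponding to $y$. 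The main obstacle is precisely this last reconciliation: the same characteristic orbit admits two different characteristic descriptions (one from $X$, one from $X'$), and the tree-position constraints these impose via Lemma~\ref{AJDLEMMA2} must be combined with the contractibility characterisation of quasi-normality to extract the contradiction.
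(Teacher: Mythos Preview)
Your existence and effective-construction paragraph is fine and matches the paper's approach.

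For uniqueness, however, there is a genuine error and (as you yourself flag) an unfinished case. The error is in your treatment of type~(C): you write ``In types~(B) and~(C) the orbit has a characteristic $(m,\Gamma)$ and $y\psi^m = y\Gamma$''. But the remark following Definition~\ref{Proper} states that an element of $X$ is of type~(B) \emph{if and only if} it is characteristic; a type~(C) element $y$ never satisfies $y\psi^m = y\Gamma$. Nor need its orbit contain any characteristic element at all: by Corollary~\ref{cor:orbit-type} the orbit may be complete infinite (type~\ref{it:orbit3}) or a \pond* orbit (type~\ref{it:orbit6}), as in Example~\ref{ex:pond}. So your type~(C) argument does not start, and your type~(B) argument is left as an ``obstacle''. (A smaller issue: in type~(A) you assume some element of the finite orbit lies in $X'\mA$, which you have not yet shown.)

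The paper's route sidesteps Lemma~\ref{ABC} altogether. Given two quasi-normal $A$-bases $X_1\neq X_2$, consider the unique $A$-basis $W$ with $W\mA = X_1\mA\cup X_2\mA$ (take the minimal elements of the union and check with Lemma~\ref{HigmanLemma2.5}); then $W$ is a common contraction of $X_1$ and $X_2$. Every $u\in W\mA$ lies in $X_1\mA$ or in $X_2\mA$, so its $W$-component contains its $X_1$-component or its $X_2$-component and hence is not incomplete finite. Thus $\psi$ is semi-normal with respect to $W$, contradicting quasi-normality of $X_1$ (and of $X_2$) unless $W=X_1=X_2$. The paper carries this out as a single simple contraction $X_1' $ of $X_1$ whose new element $y$ lies in $X_2$, but the idea is the same: enlarge $X_1\mA$ by elements already controlled by semi-normality with respect to $X_2$, so no new incomplete finite components can appear. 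No case analysis on types is required.
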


\begin{proof} 
Assume $\psi$ is given by listing the images of elements of $X$, where $X$ is an $A$-basis of $V_{n,r}$. 
We  modify $X$ to find an $A$-basis $X'$ with respect to which $\psi$ is in semi-normal form. 
For each $y\in X$ we can list elements of the $\psi$-orbit of $y$.
\[\ldots ,y\psi^{-3}, y\psi^{-2}, y\psi^{-1}, y, y\psi, y\psi^{2}, y\psi^{3},\ldots \]
We enumerate the forward sequence $(y\psi^{m})_{m \geq 0}$, until we reach $m\ge 0$ such that
\begin{description}
\item[(1F)] either $y\psi^{m}\in X\mA$ with $y\psi^{m+1}\not\in X\mA$, or
\item[(2F)] for some $0\le l<m$, $\hat y \in X$ and $\Gamma, \Delta \in A^*$ we have 
$y\phi^l=\hat y\Gamma$ and  $y\phi^m=\hat y\Delta$.
\end{description}
Similarly, we enumerate the backwards sequence $(y\psi^{-k})_{k \geq 0}$ until we reach $k\ge0$ such that
\begin{description}
\item[(1B)] either $y\psi^{-k}\in X\mA$ with $y\psi^{-({k+1)}}\not\in X\mA$ or,
\item[(2B)] for some $0\le l<k$, $\hat y \in X$ and $\Gamma, \Delta \in A^*$ we have 
$y\phi^{-l}=\hat y\Gamma$ and  $y\phi^{-k}=\hat y\Delta$.
\end{description}
\item 
Given $y\in X$, the forward part of the process above produces a sequence of elements of $X\mA$, until it halts. As 
$X$ is finite, if it does not halt at step (1F) then it halts at step (2F); so always halts. 
Similarly, the backward part of the process always halts.  

If some $y$ satisfies \textbf{(1F)} and \textbf{(1B)}, then $y$ is in an incomplete $X$-component, so $\psi$ is not in semi-normal form with respect to $X$. In this case  
we take a simple expansion $X'$ of   $X$ at the element $y$.
Next, use the proof of Lemma \ref{9.9H} to find an expansion~$X''$ of~$X'$ with respect to which $\psi$ is 
in semi-normal form.
We now replace $X$ with $X''$ and return to the start of this proof.
Repeating as necessary, eventually we shall find $X$ such that
 no $y\in X$ satisfies both \textbf{(1F)} and \textbf{(1B)}.
The repetition terminates because the number of elements $x'' \in X''$ belonging to incomplete $X''$ components is strictly smaller than the corresponding number for~$X$.

At this stage, every $y \in X$ satisfies one of \textbf{(2F)} and \textbf{(2B)}, so $\psi$ is in semi-normal form with respect to $X$ by Lemma \ref{ABC}.
We can now test all the contractions of the $A$-basis $X$ to find an
expansion of $\cxx$ with respect to which $\psi$ is in a quasi-normal form. 

For uniqueness, we will argue by contradiction. Let $\psi$ be in quasi-normal form 
with respect to $X_1$ and $X_2$, with $X_1\ne X_2$. 
Since $X_1,X_2$ are expansions of $\cxx$, (without loss of generality) there exists a simple contraction $X_1'$ of 
$X_1$ which contains an element $y$ of $X_2\setminus X_1$. Then $X_1'\mA=X_1\mA\cup \{y\}\mA$
and, as 
$\psi$ is in semi-normal form with respect to $X_2$, it is also in semi-normal form with respect to $X_1'$, contrary to the
definition of quasi-normal form. 
\end{proof}

\begin{remark} \label{snfs_are_expansions}
Let $\psi \in G_{n,r}$ be in quasi-normal form with respect to $X$. The proof of this lemma illustrates that if $\psi$ is in semi-normal form with respect to $X'$, then $X'$ is an expansion of $X$. The converse is false: it is not true in general that $\psi$ is in semi-normal form with respect to all expansions of $X$.
\end{remark}

\begin{lemma}\label{lem:compcheck}
Let $\psi \in G_{n,r}$ be in semi-normal form with respect to an $A$-basis $X$ and let  
 $u,v \in X\mA$. Then we can effectively decide whether or not $u,v$ are in the same $X$-component, 
and if so, find the integers $m$ for which $u\psi^{m}=v$. 
\end{lemma}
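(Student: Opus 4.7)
The plan is to enumerate the $X$-component of $u$ by iterating $\psi^{\pm 1}$ and testing at each step whether $v$ is reached; termination is ensured by detecting one of several structural halting conditions, using the type classification of Lemma~\ref{ABC} and the characteristic extraction of Lemma~\ref{AJDLEMMACM}.

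As a preliminary I would compute, for each $y \in X$, its type (A), (B), or (C) and (when $y$ is of type (B)) its characteristic $(m_y, \Gamma_y)$. This is done by iterating $y\psi^{\pm k}$ while remaining in $X\mA$, writing each image as $y\psi^k = y_k \Theta_k$ with $y_k \in X$; since $|X|$ is finite, some leading element must repeat, whereupon Lemma~\ref{AJDLEMMACM} either exhibits a characteristic for $y$ directly (type (B)), or shows that the orbit closes up entirely within $X$ (type (A)), or that $y$ maps into the orbit of a previously-classified type (B) element (type (C)).

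For the main computation I would iterate $u\psi^i$ forward, recording each image in standard form as $u\psi^i = x_i \Lambda_i$ with $x_i \in X$, $\Lambda_i \in A^*$. At step $i$ we halt if (a) $u\psi^i = v$, recording $m = i$; (b) $u\psi^i = u$, identifying a complete finite $X$-component of period $i$; (c) $u\psi^i \notin X\mA$, identifying the right endpoint of the $X$-component; or (d) $x_i$ is of type (B) with characteristic $(\widehat m, \widehat \Gamma)$ satisfying $\widehat m > 0$, and for the index $i - \widehat m \geq 0$ we have $x_{i - \widehat m} = x_i$ and $\Lambda_i = \widehat \Gamma\, \Lambda_{i - \widehat m}$. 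In case (d), the $\Omega$-homomorphism property $(w\Lambda)\psi = (w\psi)\Lambda$ together with $x_i \psi^{\widehat m} = x_i \widehat \Gamma$ yields the explicit tail formula
\[u\psi^{i + s + k \widehat m} = (x_i \psi^s)\, \widehat \Gamma^{\,k}\, \Lambda_i, \qquad k \geq 0,\ 0 \leq s < \widehat m,\]
so the forward tail is a finite union of arithmetic progressions of standard forms, and checking whether $v$ lies in any of them reduces to a finite comparison. The backward direction is handled symmetrically using $\psi^{-1}$ in place of $\psi$. Combining both directions yields the complete solution set: an arithmetic progression $\{m_0 + kd \colon k \in \mathbb{Z}\}$ when the $X$-component is complete finite of period $d$, and otherwise a single integer or the empty set.

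The main obstacle is bounding the number of steps before case (d) (or one of (a)--(c)) triggers in an infinite trajectory. If $x_i$ is ever of type (A), then Lemma~\ref{ABC}(A) gives $x_{i+1} = x_i\psi \in X$ with $\Lambda_{i+1} = \Lambda_i$, so the trajectory becomes periodic and case (b) triggers within the period of $x_i$. If $x_i$ is of type (C), the precomputed data supplies $j \geq 1$ and $z \in X$ of type (B) with $x_i \psi^j = z \Delta$, so after $j$ further iterations the leading element becomes the type (B) element $z$. Once the leading element at some index $j$ is a type (B) element $\widehat x$ (with positive characteristic power $\widehat m$), the computation $u\psi^{j+\widehat m} = (\widehat x\psi^{\widehat m})\Lambda_j = \widehat x\,\widehat \Gamma\,\Lambda_j$ shows directly that (d) triggers at index $j + \widehat m$. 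Finiteness of $X$ together with this case analysis supplies the required effective iteration bound.
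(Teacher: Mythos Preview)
Your approach is essentially the same as the paper's: enumerate the $X$-component of $u$ by iterating $\psi^{\pm 1}$, and use the type classification of Lemma~\ref{ABC} together with characteristics to bound the enumeration. The paper, however, \emph{normalises first}: it replaces $u$ by an element of the same $X$-component whose leading letter $y\in X$ is of type~(B), and then (when the characteristic power of $y$ is positive) strips leading copies of the characteristic multiplier $\Gamma$ from the tail $\Lambda$. Only after this does it enumerate, obtaining the short finite list~\eqref{eq:uorb}; it performs the same normalisation on $v$ and argues that $v$ shares the $X$-component of $u$ if and only if the normalised $v$ appears on that list.

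Your termination argument has a gap precisely where the paper's normalisation does work you have skipped. You assert that once a leading element $x_i$ of type~(B) appears in the forward iteration it has \emph{positive} characteristic power $\widehat m$, so that condition~(d) triggers at step $i+\widehat m$. But Lemma~\ref{ABC}(B) allows either sign: a type~(B) element lying in a left semi-infinite $X$-component has negative characteristic power, and such an element can occur as a leading element $x_i$ even when the forward trajectory of $u$ stays in $X\mA$ indefinitely. The point is that $x_i\psi^{K+1}\notin X\mA$ does not force $u\psi^{i+K+1}=x_i\psi^{K+1}\Lambda_i\notin X\mA$, since a sufficiently long $\Lambda_i$ can resolve the $\lambda$'s in the standard form of $x_i\psi^{K+1}$. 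Your treatment of type~(C) leading elements has the same defect: Lemma~\ref{ABC}(C) only gives $x_i\psi^j=z\Delta$ for \emph{some} integer $j$, not for some $j\ge 1$, and even when such a positive $j$ exists the element $z$ need not have positive characteristic power.

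The paper avoids both issues because, after normalisation, $u=y\Lambda$ with $y$ of type~(B) and (say) characteristic $(m,\Gamma)$, $m>0$; then the forward leading elements are exactly those of $y,y\psi,\dots,y\psi^{m-1}$, and Lemma~\ref{AJDLEMMA2} guarantees that each of these is of type~(B) with characteristic power $m$. You can close your gap the same way, or alternatively replace~(d) by the weaker pigeonhole halting condition ``$x_i=x_{i'}$ for some $i'<i$'' and then argue more carefully from the repetition; either way, the missing step is the one the paper's normalisation handles.
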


\begin{proof}
As $u\in X\mA$, we have $u=y\Lambda$ where $y\in X$ and $\Lambda\in A^*$. 
We now run the process of Lemma \ref{lem:qnf} on $y$. If the process halts with $y\psi^m=y$, for some $m$ then we 
may list the elements $u\psi^i=y\psi^i\Lambda$, $i=0,\ldots ,m-1$, of the (complete finite) $\psi$-orbit of $u$. In this case  $v$ is in the
same $\psi$-orbit as $u$ if and only if it appears in the list,  so we are done.

Otherwise the process halts at least one of the states (2F) and (2B). 
We obtain $\widetilde y\in X$ and integers $k\neq l$ such that
$y\phi^k=\widetilde y \Lambda_1$ and $y\phi^l=\widetilde y \Lambda_2$, where~$\Lambda_1$ and~$\Lambda_2$ are distinct elements of~$A^*$. It follows from Lemma \ref{ABC}
that $\widetilde y$ is of type (B). As~$u$ and $u\phi^k=y\Lambda\phi^k=\widetilde y \Lambda_1\Lambda$ are~$k$ steps apart in the same $X$-component, we may replace 
$u=y\Lambda$ with $\widetilde u = \widetilde y \Lambda_1\Lambda$. Therefore we now assume that $u=y\Lambda$, where 
$y$ is of type (B).

Now, when we run the process of Lemma \ref{lem:qnf} on $y$ it halts either at (2F) and (1B) or else at (1F) and (2B). 
Suppose first the forward part halts at  (2F). Then $y$ is in a right semi-infinite $X$-component and 
there is a minimal positive integer $m$ such that $y\psi^m=y\Gamma$, with $\Gamma\neq 1$.  
That is $y$ has characteristic $(m,\G)$, with $m>0$. 
Set $u_{0}=y\Lambda_{0}$.
If $\Lambda=\Gamma^{i}\Lambda_{0}$ where $\Lambda_{0}$ 
has no initial segment $\Gamma$, then
\[u_{0}\psi^{m i}=y\Lambda_{0}\psi^{m i}=y\psi^{m i}\Lambda_{0}=y\Gamma^{i}\Lambda_{0}=y\Lambda=u,\]
 so $u_{0}$ is $mi$ steps away from $u$ in the $\psi$-orbit of~$u$. We may 
  replace $u=y\Lambda$ by $u_{0}=y\Lambda_0$.
  This allows us to assume from now on that $\Lambda$ has no initial 
segment equal to the 
characteristic multiplier $\Gamma$ of~$y$. 

Next we run the process of  Lemma \ref{lem:qnf} on $u$ instead of $y$. As $y$ is in a right semi-infinite $X$-component the forward part of the
process halts at (2F). 
We obtain a list of elements of the $X$-component of $u$ of the form 
\begin{equation}\label{eq:uorb}
z_r\Phi_r,\,\cdots \,,z_1\Phi_1,\,u=y\Lambda,\, y_1\Gamma'_1\Lambda,\,\ldots\, ,y_{m-1}\Gamma'_{m-1}\Lambda,\, y\Gamma\Lambda,
\end{equation} 
where $y_j, z_j\in X$, $\G_j', \Phi_j\in A^*$, $z_j\Phi_j=u\psi^{-j}$, for $1\le j\le r$ and for some $r\ge 0$, and
$y\psi^s=y_s\G_s'$, for $0<s<m$. (The $y_i$'s must be distinct otherwise $u$ would have characteristic power less than $m$.)
We proceed differently based on which state the backwards enumeration finishes in.

\textbf{Case (1B).}
If  the backward part  of the process halts at (1B) then $z_r\Phi_r\psi^{-1}=u\psi^{-r-1}\notin X\mA$. In this case, 
the entire $X$-component of $u$ consists of the elements on this list together with elements 
\[y_i\Gamma'_i\G^q\Lambda,\quad \text{with $q>0$ and $0< i\le m$,}\]  
where we set $y_0=y$, $\G_0'=\G$. 

As $v\in X\mA$ we also have $z\in X$ and $\Delta$ in $A^*$ such that 
$v=z\Delta$. If $z$ is in a finite $X$-component then $v$ cannot belong to the same $X$-component as $u$, so we assume
$z$ is in an infinite $X$-component. As in the case of $u$, we may adjust $v$ so that $z$ is of type (B). 
As before we find a characteristic multiplier $\Phi$ for $z$ and,  
replacing $\Delta$ with a shorter element if necessary, we may assume that $\Delta$ has no initial segment 
equal to $\Phi$.   

If $v=u\psi^d$, where $d\ge 0$, then $v=y_i\Gamma'_i\G^q\Lambda$, for some $q\ge 0$ and $i$ with $0\le i< m$.
In this case, $z=y_i$ and by Lemma \ref{AJDLEMMA2} and our assumption
on $v$ we have $q=0$,  so $v=y_i\Gamma'_i\Lambda$, which appears on  list \eqref{eq:uorb}. Assume then that 
$v=u\psi^d$, where $d< 0$.  
As the backward part  of the enumeration of the $\psi$-orbit of $u$ halts at (1B), 
the $X$-component of $u$ has initial element $z_r\Phi_r$, and $v$ must appear on   list \eqref{eq:uorb}. 

\textbf{Case (2B).} On the other hand, 
if the backward part of the process stops at (2B) then $u$ is in a complete infinite $X$-component and, for some $s$ with $0\le s\le r$, we 
have $z_r=z_s$ (and $r$ is minimal with this property).  It follows that $z_s$ is of type (B) and in a left semi-infinite $X$-component. 
Again, we may assume that $v=z\Delta$, where $\Delta \in A^*$,  $z\in X$ is of type (B) and has characteristic multiplier $\Phi$, such that
$\Delta$ has no initial segment equal to $\Phi$. As before, if $v=u\psi^d$ with $d\ge 0$, then $v$ appears on  list \eqref{eq:uorb}. 
Assume then that 
$v=u\psi^d$, where $d< 0$. 
Repeating the argument above, using the left semi-infinite $X$-component of $z_s$ instead of the 
right semi-infinite $X$-component of $y$, it follows again that $v$ appears on  list \eqref{eq:uorb}.

Therefore, in the case where $y$ is in a right semi-infinite $X$-component we have $v$ in the $X$-component of $u$ if and only if 
$v$ lies on the list \eqref{eq:uorb}; and we may compute $m$ such that $u\psi^m=v$, if this is the case. 
Finally, if the enumeration of the  $X$-component of $y$  halts at steps 
(1F) and (2B) then the process is essentially the same, except that we deal with a left, rather than a right, semi-infinite
$X$-component of $y$. 
\end{proof}

This procedure allows us to decide if two given words in $X\mA$ belong to the same $X$-component so, 
if there are no pond orbits, we may decide if two such words belong to the same $\psi$-orbit.
On the other hand, as  the enumeration of components always stops once we fall outside $X\mA$, 
we  cannot detect when a pair of elements lie in the same $\psi$-orbit but on opposite sides of a pond.
We demonstrate below 
that there exist automorphisms for which every semi-normal form has a \pond*; thus we require a strategy to deal with \pond*s.

\begin{lemma} \label{cannot_expand_to_hide_pond}
Let $\psi \in G_{n,r}$ be in semi-normal form with respect to $X$, and suppose that some $\psi$-orbit $\mathcal O$ contains a \pond* with respect to $X$. If $\psi$ is in semi-normal form with respect to an expansion $X'$ of $X$, then $\mathcal O$ is also a pond-orbit with respect to $X'$.
\end{lemma}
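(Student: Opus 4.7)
The plan is to parametrize $\cO$ as $a_i = u\psi^i$ ($i\in\ZZ$) for some fixed $u\in \cO$, chosen so that $\cO\cap X\mA = \{a_i : i\le m\}\cup\{a_i : i\ge m+w\}$, where $w\ge 2$ is the width of the pond and the intermediate positions $a_{m+1},\ldots,a_{m+w-1}$ lie in the pond, outside $X\mA$.

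Since $X'$ is an expansion of $X$, Lemma~\ref{HigLemma2.4} applied to the $A$-closed subset $X'\mA\subseteq X\mA$ shows that $F:=X\mA\setminus X'\mA$ is finite; hence $\cO\cap X'\mA = (\cO\cap X\mA)\setminus F$. Only finitely many elements are removed from each semi-infinite $X$-component of $\cO$, while the pond elements remain outside $X'\mA$, so $\cO$ is not of type~\ref{it:orbit3} in Corollary~\ref{cor:orbit-type}. To locate the two semi-infinite $X'$-components, I would set $N_1 = \sup\{i\in\ZZ : a_j\in X'\mA \text{ for all } j\le i\}$. Finiteness of $F$ makes this set non-empty, and $a_{m+1}\notin X\mA\supseteq X'\mA$ forces $N_1\le m$, so $N_1$ is a well-defined integer, $a_{N_1+1}\notin X'\mA$, and $\{a_i : i\le N_1\}$ is a left semi-infinite $X'$-component. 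Symmetrically, defining $N_2\ge m+w$ yields a right semi-infinite $X'$-component $\{a_i : i\ge N_2\}$ with $a_{N_2-1}\notin X'\mA$.

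The crux, and the main obstacle, is to rule out any $a_j\in X'\mA$ with $N_1 < j < N_2$. Such an $a_j$ would generate an $X'$-component contained in the integer interval $[N_1+2,\,N_2-2]$ (its left endpoint cannot be at or below $N_1+1$, since $a_{N_1+1}\notin X'\mA$, and likewise for the right endpoint), hence finite. As $\cO$ is infinite, this finite $X'$-component would be incomplete, contradicting the hypothesis that $\psi$ is in semi-normal form with respect to $X'$. This hypothesis is essential: without it a pond orbit could a priori fragment upon expansion into several smaller incomplete components. We therefore conclude $\cO\cap X'\mA = \{a_i : i\le N_1\}\sqcup\{a_i : i\ge N_2\}$, so $\cO$ is a pond orbit with respect to $X'$, of width $N_2-N_1\ge w$.
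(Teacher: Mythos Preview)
Your proof is correct, and it takes a somewhat different route from the paper's. The paper argues as follows: using the characteristic-element structure (Lemma~\ref{ABC}), it shows that sufficiently far along each semi-infinite $X$-component one lands in $X'\mA$, then observes that the original pond elements remain outside $X'\mA$, and finally appeals to the classification of Corollary~\ref{cor:orbit-type} to conclude that the only remaining orbit type is~\ref{it:orbit6}.

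Your argument is more elementary: you bypass the characteristic machinery entirely and use only the finiteness of $X\mA\setminus X'\mA$ (Lemma~\ref{HigLemma2.4}) to see that each semi-infinite $X$-component loses only finitely many elements when passing to $X'$. You then explicitly isolate the two surviving semi-infinite $X'$-components via $N_1$ and $N_2$, and the key step---ruling out any stray $a_j\in X'\mA$ strictly between them---is handled directly by observing that such an element would generate an incomplete finite $X'$-component, contradicting semi-normal form. This makes the role of the semi-normal-form hypothesis on $X'$ completely transparent, whereas in the paper it enters implicitly through the appeal to Corollary~\ref{cor:orbit-type}. Your approach also immediately yields the width bound $N_2-N_1\ge w$, which the paper records separately after the proof.
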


\begin{proof}
Let us write $\mathcal O$ as
\[ \mathcal O\colon \qquad 
	\dotsc \: l\psi^{-t}, \: \dotsc \: l\psi^{-1}, \: l, \: p_1, \: \dotsc \: p_k, \: r, \: r\psi, \: \dotsc \: r\psi^s, \: \dotsc \]
where $l, r \in X\mA$ are endpoints of semi-infinite $X$-components and the $p_i \notin X\mA$ form a pond of length $k$.
To begin with we claim that, for sufficiently large $s\ge 0$, we have $r\psi^s\in X'\mA$. Indeed, 
because $r$ belongs to a semi-infinite $X$-component, Lemma \ref{ABC} implies that 
there is some $s' \geq 0$ for which $r\psi^{s'} = r'\D$, where $\D \in A^*$ and $r' \in X$ has characteristic $(m, \G)$. 
Therefore, for all $q\ge 0$,
\[
	r\psi^{s'+mq} = r\psi^{s'}\psi^{mq} = r'\D\psi^{mq} = r'\psi^{mq}\D = r'\G^q\D.
\]
By taking $q$ sufficiently large, we can ensure that $r\psi^{s'+mq} \in X'\mA$.
This works because the difference $X\mA \setminus X'\mA$ is finite.
So we can find $s \geq 0$ such that $r\psi^s \in X'\mA$.
Similarly, there is some $t \geq 0$ for which $l\psi^{-t} \in X'\mA$.

Since $X'\mA \subset X\mA$, it follows that each $p_i \notin X'\mA$.
Appealing to Corollary~\ref{cor:orbit-type}, the only possibility is that $\mathcal O$ is a \pond*-orbit with respect to $X'$.
\end{proof}
Notice that the \pond* width with respect to $X'$ is at least the previous width~$(k + 1)$ with respect to~$X$. Additionally, if $\psi$ was in \emph{quasi}-normal form with respect to $X$, this (with Remark~\ref{snfs_are_expansions}) shows that every semi-normal form $X'$ for $\psi$ contains the \pond* given above.
Example~\ref{ex:pond} shows that this possibility does occur.

\begin{lemma} \label{lem:can_find_ponds}
Given an element $\psi \in G_{n,r}$ in semi-normal form with respect to an $A$-basis $X$ we may 
effectively construct the set $P(\psi)$ of the triples $(l, k, r)$ such that $r$ (resp.~$l$) is the initial (resp.\ terminal) word in a right (resp.\ left) semi-infinite $X$-component, and $k$ is the width of the pond between them.   
\end{lemma}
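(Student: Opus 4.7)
The plan is to reduce the problem to deciding, for each initial element $r$ of a right semi-infinite $X$-component, whether the $\psi$-orbit of $r$ is a pond orbit and, if so, locating its paired left endpoint and computing its width.
First, compute the minimal expansion $Y$ of $X$ associated to $\psi$, which is effective by the proof of Lemma~\ref{4.1H}, and set $Z = Y\psi$.
By Lemma~\ref{9.1H}, the finite sets $R = X\mA \setminus Z\mA$ and $L = X\mA \setminus Y\mA$ are precisely the sets of initial elements of right semi-infinite $X$-components and terminal elements of left semi-infinite $X$-components, respectively.
Every triple in $P(\psi)$ has first coordinate in $L$ and third coordinate in $R$, so it suffices to loop over $R$.

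For each $r \in R$ we first eliminate the possibility that its orbit is not a pond orbit.
By Corollary~\ref{cor:orbit-type} the orbit of $r$ is either of type~\ref{it:orbit5} (a single right semi-infinite $X$-component) or type~\ref{it:orbit6} (a pond orbit).
By Lemma~\ref{AJDLEMMA2}, if $r$ is a characteristic element then its orbit has only one $X$-component and is therefore not a pond orbit.
This test is effective: apply the forward-iteration procedure from the proof of Lemma~\ref{lem:compcheck} to $r$ to detect a common $\hat y \in X$ with $r\psi^l = \hat y \Gamma$ and $r\psi^m = \hat y \Delta$ for some $0 \le l < m$ and $\Gamma,\Delta\in A^*$; then check whether $r\psi^{m-l}$ lies in $r\mA$ in standard form, which is equivalent to $r$ being characteristic.

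If $r$ is not characteristic, we attempt to locate the pond by iterating backwards: compute $u_t := r\psi^{-t}$ for $t = 1,2,\dotsc$ by writing $u_{t-1}$ as a $\{\lambda\}$-expression over the basis $Z$ and substituting the $\psi^{-1}$-images in $Y$ leaf-by-leaf, reducing to standard form.
At each step we test whether $u_t \in L$; if so, we add $(u_t,\, t-1,\, r)$ to $P(\psi)$ and move on to the next element of~$R$.

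The main obstacle is to bound this backwards search, so that we can decide in finite time that $r$'s orbit is of type~\ref{it:orbit5} (in which case the iterates $u_t$ never return to $X\mA$).
The key observation is that each pond element $p_i$ lies in the $\{\lambda\}$-subalgebra $X\mA\ml$: sufficiently deep $A$-descendants of $p_i$ lie in $X\mA$, so the standard form of $p_i$ is a $\lambda$-tree whose leaves are elements of $X\mA$.
A combinatorial analysis of how $\psi^{-1}$ transforms such trees---using the finiteness of $L$, $R$, $Y$ and~$Z$, together with the fact that the leaves of the tree of $u_t$ are obtained from those of $u_{t-1}$ via the bijection $Z\to Y$---yields a computable bound $T$ on the width of any pond orbit containing~$r$; if $t$ exceeds $T$ without $u_t$ entering $L$, we conclude that $r$'s orbit is of type~\ref{it:orbit5} and contributes nothing to $P(\psi)$.
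Applying this procedure to every $r \in R$ produces the complete set $P(\psi)$.
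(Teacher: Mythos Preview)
Your approach diverges from the paper's at the crucial termination step, and that step contains a genuine gap. You assert that ``a combinatorial analysis of how $\psi^{-1}$ transforms such trees \dots\ yields a computable bound $T$ on the width of any pond orbit containing~$r$'', but you supply no such analysis. This is precisely the heart of the matter: if the orbit of $r$ is of type~\ref{it:orbit5} in Corollary~\ref{cor:orbit-type}, the backward iterates $r\psi^{-t}$ stay outside $X\mA$ forever, and you need an \emph{a priori} stopping criterion to distinguish this from a pond that simply has not been crossed yet. Bounding the $\lambda$-length of the iterates is not straightforward: to apply $\psi^{-1}$ you must express $u_{t-1}$ over the basis $Z$, which may require further expansions of the leaves (they lie in $X\mA$, not necessarily in $Z\mA$), so the tree size need not behave monotonically in any obvious way. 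Without making this bound explicit and proving it, the procedure is not effective.

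The paper sidesteps this difficulty entirely. Rather than bounding pond widths, for each candidate pair $(l,r)\in L\times R$ it passes to a descendant $l\Gamma$ lying in a \emph{complete infinite} $X$-component (such $\Gamma$ exists by Corollary~\ref{cor:allininf}), and then invokes the already-established Lemma~\ref{lem:compcheck} to decide whether $r\Gamma$ lies in the same $X$-component and, if so, to extract the unique $k$ with $l\Gamma\psi^k=r\Gamma$. One then checks directly whether $l\psi^k=r$. This reduces the pond-detection problem to the component-sharing test, which terminates for reasons already proved; no bound on pond width is needed. (As a minor point, your triple $(u_t,t-1,r)$ has an off-by-one error: if $u_t=r\psi^{-t}\in L$ then $r=u_t\psi^t$, so the pond width is $t$, not $t-1$.)
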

\begin{proof}
Let $Y$ be the minimal expansion of~$X$ associated to~$\psi$ and let $Z=Y\psi$.
Since there are no incomplete $X$-components, Lemma~\ref{9.1H} tells us that the set of initial elements of 
right semi-infinite $X$-components is $R=X\mA\bh Z\mA$.
This is finite, so we may enumerate this effectively.
The same is true for the set $L=X\mA\bh Z\mA$ of terminal elements of left semi-infinite $X$-components. 
To enumerate $P(\psi)$, for each $(l,r)\in L\times R$ 
we need to solve the equation $r=l\psi^k$ for some $k$, or to determine that there is no solution.
A solution exists if and only if $r\G=l\G\psi^k$, for all $\G\in A^*$.

With this in mind, first find $\G\in A^*$ such that 
$l\G$ is in a complete infinite $X$-component.
We do this by enumerating the words $\G$ of length $1, 2, \dotsc$ and applying the process of Lemma \ref{lem:qnf} to each element $l\Gamma$ in turn.
We stop when we find $\G$ such that the process halts at (2F) and (2B).
Now use Lemma \ref{lem:compcheck} to determine whether $r\G$ and $l\G$ are in the same $X$-component.
If not, then there cannot exist an element of the form $(l, k, r) \in P(\psi)$; that is~$l$ and~$r$ are not joined by a \pond*. 
 
Assume then that $r\G=l\G\psi^k$, for some $k$.
We can test to see if $r=l\psi^k$ directly, which holds if and only if $(l, k, r) \in P(\psi)$.
If the equality were false, is it possible that $(l, k', r) \in P(\psi)$ for a different $k' \neq k$?
This would mean that $r=l\psi^{k'}$, so 
$l\psi^k\G=r\G=l\G\psi^{k'}$ and thus $l\G\psi^{k-k'}=l\G$.
As $l\G$ belongs to a complete infinite $X$-component, this 
means $k=k'$; so the answer to our previous question is `no'.
In this situation there are no elements of the form $(l, k', r)$ in~$P(\psi)$.
\end{proof}
In practice, when enumerating the sets $L$ and $R$ in the proof above, we need consider only non-characteristic elements, as Lemma 
\ref{AJDLEMMA2} implies that no characteristic element belongs to a pond orbit.

\begin{example} \label{ex:pond_no_other_ponds}
Let $\psi$ and $X$ be the automorphism and basis described in Example~\ref{ex:pond}; we noted above that $\psi$ is in quasi-normal form with respect to $X$.
 We claim that this $\psi$-orbit is the only \pond* orbit with respect to $X$.

The endpoints of semi-infinite $X$-components are precisely
\begin{equation*}
	L = X\mA \setminus Y\mA = \{ x\a_1^2, x\a_1^3, x\a_1^2\a_2 \}    
	\quad\text{and}\quad
	R = X\mA \setminus Z\mA = \{ x\a_1\a_2, x\a_1\a_2\a_1, x\a_1\a_2^2 \}.
\end{equation*}
The four endpoints $x\a_1^2$, $x\a_1^3$, $x\a_1\a_2$ and $x\a_1\a_2\a_1$ have characteristics $(-1, \a_1^2)$, $(-1, \a_1^2)$, $(1, \a_1\a_2)$ and $(1, \a_2\a_1)$ respectively.
Are the two remaining endpoints $l = x\a_1^2\a_2$ and $r = x\a_1\a_2^2$ separated by a pond? (We saw before in computation \eqref{eqn:pond_example} that they are, but to illustrate Lemma~\ref{lem:can_find_ponds} we'll remain ignorant of this.)

Multiplying by~$\Gamma=\alpha_1$ we obtain $l\Gamma = x\a_1^2\a_2\a_1$, which is in a complete infinite $X$-component. 
We also see that $r\Gamma = l\psi^2\Gamma = x\a_1\a_2^2\a_1$ is in this component, so we have a candidate \pond* width of $k = 2$.
Fortunately we directly computed that $r = l\psi^2$ in \eqref{eqn:pond_example}, so $P(\psi) = \{(x \a_1^2 a_2, 2, x\a_1\a_2^2)\}.$
\end{example}

\begin{lemma}[cf. {\cite[Lemma 9.7]{Higg}}]\label{9.7H}
Let $\psi \in G_{n,r}$ and 
 $u,v \in V_{n,r}$. Then we can effectively decide whether or not $u,v$ are in the same $\psi$-orbit, 
and if so, find the integers $m$ for which $u\psi^{m}=v$. 
\end{lemma}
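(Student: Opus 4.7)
The plan is to reduce the orbit-membership question for arbitrary $u, v \in V_{n,r}$ to the analogous question for elements of $X\mA$, which is handled by Lemma \ref{lem:compcheck} together with the pond list from Lemma \ref{lem:can_find_ponds}. First I would use Lemma \ref{lem:qnf} to compute the $A$-basis $X = X_\psi$ with respect to which $\psi$ is in quasi-normal form, and then apply Lemma \ref{lem:can_find_ponds} to precompute the finite set $P(\psi)$ of pond triples $(l, k, r)$ for $\psi$.

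Next, I would choose $d \geq 0$ large enough so that $u\Gamma, v\Gamma \in X\mA$ for every $\Gamma \in A^*$ of length $d$; this is possible because writing $u$ and $v$ in standard form and taking $d$ to exceed both the $\lambda$-depths of $u, v$ and the maximum depth of an element of $X$ below $\cxx$ suffices. For each such $\Gamma$, set $S_\Gamma = \{m \in \ZZ : u\Gamma\psi^m = v\Gamma\}$. Lemma \ref{AJDLEMMAX}.\ref{it:lx1} then yields the identity
\[ \{m \in \ZZ : u\psi^m = v\} = \bigcap_{|\Gamma|=d} S_\Gamma, \]
since $u\psi^m = v$ if and only if $u\psi^m\Gamma = v\Gamma$ for every $\Gamma$ of length $d$.

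To compute each $S_\Gamma$ I appeal to Corollary \ref{cor:orbit-type}: the elements $u\Gamma, v\Gamma \in X\mA$ can lie in a common $\psi$-orbit only if either they share an $X$-component or they lie in the two distinct $X$-components of a pond orbit. I would first apply Lemma \ref{lem:compcheck} to $u\Gamma$ and $v\Gamma$; if they share an $X$-component the lemma returns all valid $m$, which is an arithmetic progression $m_0 + p\ZZ$ when the component is a complete finite $X$-component of period $p$, and a singleton in all other (infinite) cases. Otherwise I would iterate over $(l, k, r) \in P(\psi)$ and use Lemma \ref{lem:compcheck} to test whether $u\Gamma$ lies in the $X$-component of $l$ with $v\Gamma$ in that of $r$, or the reverse; a successful match yields integers $a, b \geq 0$ with $u\Gamma\psi^a = l$ and $r\psi^b = v\Gamma$, giving the candidate $m = a + k + b$ (with reversed sign in the swapped case). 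If no configuration applies then $S_\Gamma = \emptyset$.

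Finally, intersecting the finitely many sets $S_\Gamma$ (each empty, a singleton, or an arithmetic progression) is routine and returns exactly the set of $m$ with $u\psi^m = v$. The conceptual obstacle is the existence of pond orbits, which is why Higman's quasi-normal-form argument alone fails; the remedy is the explicit precomputation of $P(\psi)$ using Lemma \ref{lem:can_find_ponds}, which allows the two semi-infinite sides of a pond to be matched up directly.
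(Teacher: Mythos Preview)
Your proposal is correct and follows essentially the same approach as the paper's proof: reduce to $u,v\in X\mA$ via Lemma~\ref{AJDLEMMAX}, then combine the component-sharing test of Lemma~\ref{lem:compcheck} with the precomputed pond list $P(\psi)$ from Lemma~\ref{lem:can_find_ponds} to cover the two cases of Corollary~\ref{cor:orbit-type}. Your treatment is in fact slightly more explicit than the paper's, spelling out that each $S_\Gamma$ is empty, a singleton, or an arithmetic progression, and that the final answer is the intersection over all $\Gamma$ of length $d$.
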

\begin{proof}
For a fixed integer $s\ge 0$ we have 
$u\psi^{m}=v$ if and only if $(u\Gamma)\psi^{m}=u\psi^m\Gamma=v\Gamma$
for all $\Gamma\in A^*$ of length $s$ (using Lemma \ref{AJDLEMMAX}). Now, suppose 
that we have an algorithm $\mathcal{A}$ 
 to decide whether $v'=u'\psi^m$ for some $m$, given elements $u',v'$ of $X\mA$ (and to return $m$, if so).
Then if $u,v$ are 
arbitrary elements of $V_{n,r}$ 
  we may
choose $s$ such that $u\Gamma$ and $v\Gamma$ belong to $X\mA$, for all $\Gamma \in A^*$ of length $s$, and  input all
these elements to 
the algorithm $\mathcal{A}$ in turn. In  the light of the previous remark, this allows us to determine 
whether or not $u$ and 
$v$ belong to the same $\psi$-orbit (and to return  appropriate $m$, if so). Hence  we may assume $u,v\in X\mA$.

By Corollary \ref{cor:orbit-type},  $u$ and $v$ belong to the same $\psi$-orbit if and only if 
either they belong to the same $X$-component of a $\psi$-orbit, or they belong to different $X$-components of a single 
\pond* orbit.  We may use Lemma \ref{lem:compcheck} to decide whether or not $u$ and $v$ both belong to the same $X$-component.
If so we are finished. If not, and both belong to semi-infinite $X$-components, then for each triple $(l, k, r)$ in $P(\psi)$ we check whether~$u$ belongs to the same component as~$l$ or~$r$.

If~$u$ belongs to neither component then $u$ is not in a \pond* orbit, and thus~$u$ and~$v$ do not share a $\psi$-orbit.
Otherwise if~$u$ and~$l$ (resp.~$r$) share an $X$-component, we run the same check on~$v$ and~$r$ (resp.~$l$).
If the check determines that~$v$ is not in the $X$-component in question, then~$u$ and~$v$ do not share a $\psi$-orbit.
Else we have $l = u\psi^a$ and $v = r\psi^b$ for some~$a$ and~$b$, so $v = u\psi^{a + k + b}$.
\end{proof}

\begin{example} \label{ex:ost}
Let $\psi$ be the automorphism of Examples~\ref{ex:pond} and~\ref{ex:pond_no_other_ponds}, which is in quasi-normal form with respect to $X = \{ q_1 = x\a_1^2, q_2 = x\a_1\a_2, q_3 = x\a_2\a_1, q_4 = x\a_2^2 \}$.
The elements $q_1$ and $q_2$ have characteristics $(-1, \a_1^2)$ and $(1, \a_1\a_2)$ respectively, whereas $q_3$ and $q_4$ belong to complete infinite $X$-components such that $q_3\psi = q_2\a_2^2$ and $q_4\psi^{-1} = q_1\a_2\a_1$.

\begin{enumerate}
\item 
We wish to test if $u = x\a_1\a_2^2\a_1^2\a_2 = q_2\a_2\a_1^2\a_2$ and $v = x\a_2\a_1^2 = q_3\a_1$ belong to the same $\psi$-orbit.
Because $q_3$ is not characteristic, Lemma~\ref{lem:compcheck} first replaces $v=q_3\a_1$ with $v' = v\psi = q_3\psi\a_1 = q_2\a_2^2\a_1$, which begins with the
characteristic element $q_2$ of $X$. 
Enumerating the $X$-component containing $u$ gives us a specific instance of list~(\ref{eq:uorb})
\[\label{eq:uorb_example}
	x\a_1^4\a_2\a_1^2\a_2     \mapsto x\a_1^2\a_2\a_1^2\a_2       \mapsto x\a_2^2\a_1\a_2 \mapsto
     \underbrace{x\a_1\a_2^2\a_1^2\a_2}_u \mapsto x(\a_1\a_2)^2\a_2\a_1^2\a_2  \tag{\ref{eq:uorb}'}
\]
once the enumeration has halted at stages (2F) and~(2B). Since $v'$ does not lie on this list, we conclude that $v'$ does not belong to the $X$-component of $u$, 
so neither does $v$.

We now need to check if $u$ and $v$ are separated by a \pond*.
In Example~\ref{ex:pond_no_other_ponds} we showed that $\psi$ has only one \pond*-orbit, and referring to the computation~\eqref{eqn:pond_example} we see that neither $u$ nor $v$ belong to this orbit.
Hence $u$ and $v'$ do not share a $\psi$-orbit.

\item
Now let us test if $u$ and $w = x\a_1^4\a_2\a_1^2\a_2 = q_1\a_1^2\a_2\a_1^2\a_2$ share a $\psi$-orbit.
We remove the characteristic multiplier $\a_1^2$ of $q_1$ from $w$, obtaining $w' = q_1\a_2\a_1^2\a_2$ where $w'\psi^{-1} = w$.
From list~(\ref{eq:uorb_example}) we notice that $u\psi^{-2} = w'$, so $u\psi^{-3} = w$.

\item
Let $u=x\a_1^8\a_2$, $v = x\a_1^4\a_2\a_1$ and $w=x(\a_1\a_2)^3\a_2$.
In terms of $X$, these are $u = q_1\a_1^6\a_2$, $v = q_1\a_1^2\a_2\a_1$, and $w = q_2(\a_1\a_2)^2\a_2$.
Since $q_1$ and $q_2$ are characteristic, we remove copies of the characteristic multipliers.
We obtain $u' = q_1\a_2 = u\psi^3$, $v' = q_1\a_2\a_1 = v'\psi$ and $w' = q_2\a_2 = w\psi^{-2}$.
Enumerating the $X$-component of $u'$ gives us
\[
	\dots \mapsto x\a_1^4\a_2 \mapsto x\a_1^2\a_2 = u',
\]
(halting at stages (1F) and~(2B)) and we see that neither $v'$ nor $w'$ are in this list.
However, $u'$ is adjacent to a \pond*.
Referring once more to Example~\ref{ex:pond}, we see that the corresponding endpoint is $\bar u = u'\psi^2 = x\a_1\a_2^2$.
Its $X$-component begins 
\[
	\bar u = x\a_1\a_2^2 \mapsto x(\a_1\a_2)^2\a_2 \mapsto \dots
\]
Since this list does not contain $v'$, we conclude that $u$ and $v$ do not share a $\psi$-orbit.
On the other hand, we note that $w' = \bar u$ belongs to the list.
Hence $u$ and $w$ belong to the same $\psi$-orbit, and having kept track of the various powers, we calculate that
\[
	w\psi^{-2} = w' = \bar u = u'\psi^2 = u\psi^3\psi^2 \implies w = u\psi^7 .	
\]
\end{enumerate}
\end{example}


\section{The Conjugacy problem}\label{VPVRI}
For a group with presentation $G = \langle X \mid R\rangle$, the conjugacy problem is to determine, 
 given words $g, h\in \FF(X)$ whether or not $g$ is conjugate to $h$ in $G$; denoted $g\sim h$. 
The strong form, which we consider here, requires us to produce a conjugator $c\in \FF(X)$ when $g$ is conjugate to $h$, i.e.\ an element~$c$ such that
$c^{-1}gc=_Gh$.  We say the conjugacy problem is \emph{decidable} if there is an algorithm which for inputs
$g$ and $h$ outputs ``yes'' if they're conjugate and ``no'' otherwise. The stronger form is decidable if there is an algorithm which produces a conjugator~$c$ in the ``yes'' case.
Note that the word problem is the special case of the conjugacy problem where $h=1$.

As pointed out at the beginning of Section \ref{HigmanThompsonG21}, an element $\psi$ of $G_{n,r}$ may be uniquely represented by the 
triple $(Y,Z,\psi_0)$, where $Y$ is the minimal expansion of $\psi$,  $Z=Y\psi$ and $\psi_0$ is a bijection between  $Y$ and $Z$, 
namely $\psi_0=\psi|_Y$.  
This triple is called a  \emph{symbol} for $\psi$.  
In \cite[Section~4]{Higg} a finite presentation of $G_{n,r}$ is given, with generators the symbols $(Y,Z,\psi_0)$ such that 
$Y$ is a $d$-fold expansion of $\cxx$, for $d\le 3$. As we may effectively enumerate symbols and 
effectively construct the symbol for $\psi_1\psi_2$, from the 
symbols for $\psi_1$ and $\psi_2$, words in Higman's generators effectively determine symbols and vice-versa. 
 Therefore when we consider algorithmic problems in $G_{n,r}$ we may work with symbols for automorphisms, 
and leave the presentation in the background. That is, we always assume that automorphisms are given as
maps between bases of $V_{n,r}$ (from which a symbol may be computed). 
As minimal expansions are unique it follows immediately that the word
problem is solvable in $G_{n,r}$. In this section we give an algorithm for the conjugacy problem
in $G_{n,r}$, based on (a  complete version of) Higman's solution. 

\subsection{Higman's \texorpdfstring{$\psi$}{psi}-invariant subalgebras}\label{sec:pri}

Let $\psi$ be an element of $G_{n,r}$. Higman defined two $\Omega$-subalgebras of $V_{n,r}$ determined by $\psi$,
namely
\begin{itemize}
\item
the $\Omega$-subalgebra  $V_{P,\psi}$ generated by the set of 
elements of $V_{n,r}$ which belong to finite $\psi$-orbits.  
\item 
the $\Omega$-subalgebra  $V_{RI,\psi}$ generated by the set of  characteristic elements for $\psi$.
\end{itemize} 
Where there is no ambiguity, we will write $V_{P}$ for $V_{P,\psi}$ and $V_{RI}$ for $V_{RI,\psi}$.

If $u\in V_{n,r}$ then the $\psi$-orbit of $u$ is identical to the $\psi$-orbit of $u\psi$; so $u$ is in a finite
$\psi$-orbit if and only if $u\psi$ is in a finite $\psi$-orbit. 
 From Lemma \ref{lem:char}, an element $u$ is a  characteristic element for $\psi$ if and only if
$u\psi$ is a  characteristic element for $\psi$. Therefore $V_{P,\psi}$ and $V_{RI,\psi}$ are $\psi$-invariant 
subalgebras of $V_{n,r}$. (A subalgebra $S$ is $\psi$\emph{-invariant} if $S\psi=S$.) Hence  $\psi_{P}=\psi|_{V_{P,\psi}}$
is an automorphism of $V_{P,\psi}$ and $\psi_{RI}=\psi|_{V_{RI,\psi}}$ is an automorphism of $V_{RI,\psi}$.

If $\psi$ and $\phi$ are conjugate elements of $G_{n,r}$ and $\rho^{-1}\psi\rho=\phi$ for some conjugator $\rho\in G_{n,r}$, then 
for all $\G\in A^*$ we have $u\phi^m=u\G$ if and only if $u \rho^{-1}\psi^m\rho=u\G$ if and only
if $(u \rho^{-1})\psi^m=(u\rho^{-1})\G$. Thus $u$ is in a finite $\phi$-orbit if and only if $u\rho^{-1}$ is 
in a finite $\psi$-orbit (taking $\G=\e$) and $u$ is a  characteristic element for $\phi$ if and 
only if $u\rho^{-1}$ is a  characteristic element for $\psi$ ($\G\neq \e$). It follows that  
the restriction  $\rho|_{V_{P,\psi}}$ of $\rho$ to $V_{P,\psi}$ maps $V_{P,\psi}$ isomorphically to $V_{P,\phi}$, and similarly
  $\rho|_{V_{RI,\psi}}$ is an isomorphism from $V_{RI,\psi}$ to $V_{RI,\phi}$.

Now suppose that $\psi$ is in semi-normal form with respect to an $A$-basis $X$.  Partition $X$ into 
\[X_P=X_{P,\psi}=\{y\in X \mid\text{$y$ is of type (A)} \}\]
and 
\[X_{RI}=X_{RI,\psi}=\{y\in X \mid \text{$y$ is of type (B) or (C)}  \}.\]
\begin{theorem}[{\cite[Theorem 9.5]{Higg}}] \label{decomp}
Let $\psi$ be an element of $G_{n,r}$, in semi-normal form with respect to $A$-basis $X$. Then, with the notation above,
the following statements hold. 
\begin{enumerate}
\item\label{it:decomp1} $V_{n,r}=V_{P}\ast V_{RI}$, the free product of the $\psi$-invariant subalgebras $V_{P}$ and $V_{RI}$.
\item\label{it:decomp2} $V_P=X_P\mA \langle \lambda\rangle$ and $V_{RI}=X_{RI}\mA \langle \lambda\rangle$; that is, $V_P$ ($V_{RI}$) is generated by~$X_P$ ($X_{RI}$).
\item\label{it:decomp3} Given $\psi, \varphi, \rho \in G_{n,r}$ define six restrictions as follows.
\begin{align*}
\psi_P   &=\psi|_{V_{P,\psi}}  & \varphi_P   &=\varphi|_{V_{P,\varphi}}  & \rho_P   &=\rho|_{V_{P,\psi}} \\
\psi_{RI}&=\psi|_{V_{RI,\psi}} & \varphi_{RI}&=\varphi|_{V_{RI,\varphi}} & \rho_{RI}&=\rho|_{V_{RI,\psi}}
\end{align*}
We have $\rho^{-1}\psi\rho=\varphi$ if and only if $\rho_{P}^{-1}\,\psi_{P}\,\rho_{P}=\varphi_{P}$ and $\rho_{RI}^{-1}\,\psi_{RI}\,\rho_{RI}=\varphi_{RI}$.
\end{enumerate}
\end{theorem}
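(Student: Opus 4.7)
The plan is to establish part~(2) first, since the generating set descriptions $V_P = X_P\mA\ml$ and $V_{RI} = X_{RI}\mA\ml$ power both the free product decomposition of~(1) and the agreement-on-generators argument for~(3). Both subalgebras are $\psi$-invariant: $u$ has finite $\psi$-orbit iff $u\psi$ does, and by Lemma~\ref{lem:char} the set of characteristic elements is also $\psi$-stable, so each generating set is $\psi$-stable and hence so is the subalgebra it generates. The inclusions $X_P\mA\ml \subseteq V_P$ and $X_{RI}\mA\ml \subseteq V_{RI}$ reduce to $X_P \subseteq V_P$ (immediate, since type~(A) elements have finite orbit) and $X_{RI} \subseteq V_{RI}$. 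Type~(B) elements are characteristic by definition; for a type~(C) element $x$, Lemma~\ref{ABC} produces an integer $i$ and a type~(B) element $z$ with $x\psi^i = z\Delta$ for non-trivial $\Delta$, so $z\Delta$ lies in $V_{RI}$ by $A$-closure and $x = (z\Delta)\psi^{-i}$ lies in $V_{RI}$ by $\psi$-invariance.

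For the reverse inclusions I would use Lemma~\ref{AJDLEMMAX}(2): it suffices to place each descendant $u\Xi$ of length $d$ large enough to lie in $X\mA$ into the appropriate subalgebra. Write $u\Xi = x\Delta$ as a standard form with $x \in X$. The key orbit-inheritance observations are: if $x$ has type~(A) then $x\psi^i \in X$ is periodic, so $x\Delta\psi^i = (x\psi^i)\Delta$ is periodic and $x\Delta$ has finite orbit; if $x$ has type~(B) with characteristic $(m,\G)$ then the standard forms $x\G^q\Delta = x\Delta\psi^{mq}$ are pairwise distinct in $X\mA$, so $x\Delta$ has infinite orbit, and the same follows for type~(C) $x$ via Lemma~\ref{ABC}. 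Hence if $u$ has finite $\psi$-orbit, $u\Xi$ also has finite orbit, forcing $x \in X_P$; if $u$ is characteristic with $(m,\G)$, Lemma~\ref{AJDLEMMA2} gives $u\psi^{mq} = u\G^q \in X\mA$ for $q$ large, and the distinct standard forms $u\G^q\Xi = u\Xi\psi^{mq}$ show $u\Xi$ has infinite orbit, forcing $x \in X_{RI}$.

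With part~(2) in hand, the free product statement~(1) follows directly from the universal property reviewed in Section~\ref{omegaAlgebra}. Given any $\cV_n$-algebra $\cB$ and homomorphisms $f_P \colon V_P \to \cB$ and $f_{RI} \colon V_{RI} \to \cB$, their restrictions to $X_P$ and $X_{RI}$ combine into a map on the basis $X = X_P \sqcup X_{RI}$, which extends uniquely to $f \colon V_{n,r} \to \cB$ by Theorem~\ref{theoC3.2}; since $V_P$ is generated by $X_P$ and $V_{RI}$ by $X_{RI}$, the restrictions $f|_{V_P}$ and $f|_{V_{RI}}$ agree with $f_P$ and $f_{RI}$ on generating sets and hence coincide with them. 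For part~(3), the forward direction uses that $\rho^{-1}\psi\rho = \varphi$ gives $u\rho\varphi^k = u\psi^k\rho$ for every $u$ and $k$, so $u$ has finite $\varphi$-orbit (resp.\ is characteristic for $\varphi$) iff $u\rho^{-1}$ has finite $\psi$-orbit (resp.\ is characteristic for $\psi$); hence $\rho$ restricts to $\Om$-algebra isomorphisms $V_{P,\psi} \to V_{P,\varphi}$ and $V_{RI,\psi} \to V_{RI,\varphi}$, and the restricted conjugacy equations follow by unwinding definitions. Conversely, those equations say $\rho^{-1}\psi\rho$ and $\varphi$ agree on both $V_P$ and $V_{RI}$; by~(1) these jointly generate $V_{n,r}$, so the two automorphisms coincide on a generating set and hence on all of $V_{n,r}$.

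The main obstacle is the orbit-inheritance step inside part~(2): while the descent $u \mapsto u\Xi$ trivially preserves finite orbits, it need not preserve infinite orbits, so for characteristic $u$ one has to exploit uniqueness of standard forms in $X\mA$ together with the characteristic structure of Lemma~\ref{AJDLEMMA2} to see that $u\Xi$ still has infinite orbit. Once that step is in place, the remaining work is a formal deployment of the universal property of $X$ as a basis together with the principle that $\Om$-algebra homomorphisms are determined by their restrictions to a generating set.
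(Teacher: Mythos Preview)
Your proof is correct and follows the same overall architecture as the paper's: establish part~(2) first, deduce~(1) from the partition $X = X_P \sqcup X_{RI}$ of a basis, and read off~(3) from the orbit-transport discussion preceding the theorem. The one substantive difference is in the inclusion $V_{RI} \subseteq X_{RI}\mA\ml$. The paper first proves that $W_{RI} := X_{RI}\mA\ml$ is itself $\psi$-invariant---by pushing each $x \in X_{RI}$ through the minimal expansion $Y$ of $X$ associated to $\psi$ to see that every long enough descendant $x\G$ lands in $Y\mA$, hence $x\G\psi \in X\mA$ with a head of type (B) or (C)---and then, for a characteristic $u$, invokes Lemma~\ref{AJDLEMMA2} to place some $u\psi^i$ in $X_{RI}\mA$ and pulls back via $\psi$-invariance of $W_{RI}$. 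You bypass this: you argue directly that every sufficiently deep descendant $u\Xi \in X\mA$ of a characteristic $u$ has infinite $\psi$-orbit (witnessed by the distinct elements $u\G^q\Xi$), hence its head $x$ cannot be of type~(A), so $u\Xi \in X_{RI}\mA$ and Lemma~\ref{AJDLEMMAX} finishes. Your route is more symmetric with the $V_P$ case and avoids the minimal-expansion step; the paper's route yields $\psi$-invariance of $W_{RI}$ as an explicit byproduct, which is reused shortly afterwards (e.g.\ to see $Y_{RI}\psi = Z_{RI}$ in the discussion following the theorem).
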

\begin{proof}
Write $W_P=X_P\mA\ml$ and $W_{RI}=X_{RI}\mA\ml$. As $X$ is the disjoint union
of $X_P$ and $X_{RI}$, we have  $V_{n,r}=W_P\ast W_{RI}$, using Lemma \ref{2.2H}. We shall show that 
$V_P=W_P$ and $V_{RI}=W_{RI}$. By definition, $W_P\subseteq V_P$. If $x\in X_{RI}$ is of type (B) then $x\in V_{RI}$, by definition.
If $x\in X_{RI}$ is of type (C) then there exists $z\in X_{RI}$, of type (B), and $\D\in A^*$,  such that $x\psi^i=z\D$. As $z\in V_{RI}$, 
so is $z\D$, and as $V_{RI}$ is $\psi$-invariant we have $x=z\D\psi^{-i}\in V_{RI}$. Hence $W_{RI}\subseteq V_{RI}$. 

To see that $V_P\subseteq W_P$, let $u\in V_{n,r}$ have a finite $\psi$-orbit. Choose $d\in \NN$ such that, $u\G\in X\mA$, 
for all $\G\in  A^*$ of length $d$. For each such $\G$ write $u\G=x\D$, where $x\in X$ and $\D\in A^*$.
As $u$ is in a finite $\psi$-orbit so is $u\G$, so $x\in X_P$ and thence $u\G=x\D \in W_P$. As this holds for all $\G$ in $A^*$ of 
length $d$, we have $u\in W_P$, by Lemma \ref{2.2H}. Hence $V_P\subseteq W_P$. 

To see that $V_{RI}\subseteq W_{RI}$, we first show that $W_{RI}$ is $\psi$-invariant. 
Let $Y$ be the  minimal expansion of $X$ associated to $\psi$ and let $x\in X_{RI}$. 
Then choose $d$ such that $x\G \in Y\mA$, for all $\G\in A^*$ of length $d$. 
Given such a $\G$, write $x\G=y\D$ for $y\in Y$ and $\D\in A^*$. 
Then  $x\G\psi=y\psi\D\in X\mA$, so  
$x\G\psi =z\L$, for some $z\in X$ and $\L\in A^*$. 
Moreover, $z$ must have type (B) or (C), as $x$ does, so $x\G\psi \in X_{RI}\mA\subseteq W_{RI}$. This holds for all
$\G$ of length $d$, so again $x\psi \in  W_{RI}$. It follows that $W_{RI}\psi \subseteq W_{RI}$.

Repeating the same argument, using 
$Z=Y\psi$ instead of $Y$ and $\psi^{-1}$ instead of $\psi$ gives $W_{RI}\psi^{-1} \subseteq W_{RI}$; so $W_{RI}$ is $\psi$-invariant as claimed. 
Now 
 let $u\in V_{n,r}$ be a characteristic element for $\psi$. Then, from Lemma \ref{AJDLEMMA2}, we have $u\psi^i=x\L$, for 
some integer $i$, $x\in X_{RI}$ and $\L\in A^*$. Thus $u=x\L\psi^{-i}\in W_{RI}$, as $W_{RI}$ is $\psi$-invariant; and we have $V_{RI}\subseteq W_{RI}$.
  This proves \ref{it:decomp1} and \ref{it:decomp2} of the Theorem, and \ref{it:decomp3} then follows from the discussion preceding the
statement of the Theorem.
\end{proof}
Note that in the case that $\rho^{-1}\psi\rho=\varphi$ in the theorem above we have $\rho=\rho_{P}\ast\rho_{RI}$ an isomorphism 
from $V_{P,\psi}\ast V_{RI,\psi}$ to $ V_{P,\varphi}\ast V_{RI,\varphi}$, both of which are isomorphic to $V_{n,r}$. 

\begin{example}
Let $\psi$ be as in Example \ref{orbiteg}. Then 
$X_P=\{x\a_2\a_1,x\a_2^2\}$ and $X_{RI}= \{x\a_1^2,x\a_1\a_2\}$.  
Thus 
$\psi_P$ is the automorphism of $V_P=X_P\mA\lp \lambda\rp$ defined by 
\[x\alpha_2\alpha_1\mapsto x\alpha_2^2, \qquad x\alpha_2^2\mapsto x\alpha_2\alpha_1. \]
Let $Y_{RI}=\{x\alpha_1^3,x\alpha_1^2\alpha_2,x\a_1\a_2\}$ and $Z_{RI}=\{x\alpha_1^2,x\alpha_1\alpha_2\alpha_1,x\alpha_1\alpha_2^2\}$, both
of which are expansions of $X_{RI}$. 
Then 
 $\psi_{RI}$ is the automorphism of $V_{RI}=X_{RI}\mA\lp \lambda\rp$ defined by 
\[x\alpha_1^3\mapsto x\alpha_1^2, \qquad  x\alpha_1^2\alpha_2\mapsto x\alpha_1\alpha_2\alpha_1, \qquad x\alpha_1\alpha_2\mapsto x\alpha_1\alpha_2^2.\]
\end{example}

Theorem \ref{decomp} allows us to decompose the conjugacy problem for $(\psi, \phi)$ into 
conjugacy problems for $(\psi_{P}, \phi_P)$ and $(\psi_{RI}, \phi_{RI}$).
Indeed, $V_{P}\cong V_{n,|X_P|}$ and $V_{RI}\cong V_{n,|X_{RI}|}$, 
and we  regard $\psi_{P}$  and $\psi_{RI}$ as automorphisms of $V_{n,|X_P|}$ 
and $V_{n,|X_{RI}|}$, respectively. It turns out that $\psi_{P}$  and $\psi_{RI}$
are each of particularly simple types; so if we 
can solve the conjugacy problem for these simple types of automorphism, 
then we can solve it in general. In the remainder
 of this subsection we describe in detail how this decomposition works.  

First consider a single automorphism $\psi\in G_{n,r}$, where $\psi$ is
in semi-normal form with respect to an $A$-basis $X$. 
As before, we take $V_{n,r}$ to be the free $\cV_n$ algebra on a set 
$\cxx$ of size $r$, so that $X$ is an expansion of $\cxx$. Let
$X_P$ and $X_{RI}$ be defined as above, let $Y$
be the minimal expansion of $X$ associated to $\psi$ and let $Z=Y\psi$.
As $Y$ is an expansion of $X$, for all $x\in X$ the set
$Y_x=Y\cap \{x\}\mA$ is an expansion of $\{x\}$, by Lemma \ref{HigmanLemma2.5}.
Therefore $Y_{P}=Y\cap X_{P}\mA$  is an expansion of $X_{P}$, and 
 $Y_{RI}=Y\cap X_{RI}\mA$ is an expansion of $X_{RI}$. Similarly,
$Z_{P}=Z\cap X_{P}\mA$ and 
 $Z_{RI}=Z\cap X_{RI}\mA$ are expansions of $X_{P}$ and $X_{RI}$, respectively.
In fact, as $\psi$ permutes the elements of $X$ with type~(A), $\psi_P$ permutes
the elements of $X_P$, so $X_P=Y_P=Z_P$. 
Therefore $\psi_P$ is an automorphism of $V_P=X_P\mA\ml$, which permutes
the elements of $X_P$.

For all $y\in Y_{RI}$ we have $y\psi=z\in Z$; moreover $z \in X_{RI}\mA$ because $V_{RI}$ is $\psi$-invariant, so 
$Y_{RI}\psi=Z_{RI}$. Now $\psi_{RI}$ is an
automorphism of $V_{RI}$, where $V_{RI}$ is freely generated by $X_{RI}$, 
and $Y_{RI}$ is the minimal expansion of $X_{RI}$ associated to 
$\psi_{RI}$ (as $Y$ is the minimal expansion of $X$ associated to $\psi$).
Furthermore $Y_{RI}\psi_{RI}=Z_{RI}$ and 
if  $u$ is an element of $X_{RI}\mA$ such that $u\psi\in \XA$ then
$u\psi\in \XA\cap V_{RI}=X_{RI}\mA$; so 
no element of $X_{RI}\mA$ is in an incomplete finite $X_{RI}$-component of $\psi_{RI}$. 

To summarise, let $|X_P|=a$, $|X_{RI}|=b$ and let 
$X_P=\{x_1,\ldots, x_a\}$ and $X_{RI}=\{x_{a+1},\ldots, x_{a+b}\}$, where
$x_i\in \cxx\mA$. 
 Then, regarding the $x_i$ as new generators, we may
view $V_P$ as $V_{n,a}$, the free $\cV_n$ algebra on $\{x_1,\ldots, x_a\}$,
and $V_{RI}$ as $V_{n,b}$, the free $\cV_n$ algebra on $\{x_{a+1},\ldots, x_{a+b}\}$.
We regard $\psi_P$ and 
$\psi_{RI}$ as elements of $G_{n,a}$ and $G_{n,b}$, respectively.
 In this case,  
 $\psi_P$ (resp.~$\psi_{RI}$) is in quasi-normal form with respect to the 
$A$-basis $X_P$ (resp.~$\psi_{RI}$)
We write all elements of $Y$ and $Z$ in terms of the $x_i$, rather
than as expansions of elements of $\cxx$.)   

\begin{example}\label{ex:sub2full}
Let $n=2$, $r=1$ and $V_{2,1}$ be free on $\cxx = \{x\}$. Let
 \[Y=\{x\a_1^4,x\a_1^3\a_2,x\a_1^2\a_2,x\a_1\a_2\a_1,x\a_1\a_2^2,x\a_2\a_1,x\a_2^2\a_1,x\a_2^3\}\] 
and 
\[Z=\{x\a_1^3,x\a_1^2\a_2\a_1,x\a_1^2\a_2^2,x\a_1\a_2\a_1,x\a_1\a_2^2,x\a_2\a_1^2,x\a_2\a_1\a_2,x\a_2^2\}\]
and let $\psi$ be the element of $G_{n,r}$ determined by the bijection illustrated below.
\begin{center}
$\psi:$
\begin{minipage}[b]{0.8\linewidth}
\centering
\Tree [ [ [ [. 1 2   ] [.3  ] ] [. 4 5  ] ]    [.  6  [ 7 8  ] ].  ]    \quad $\longrightarrow$ \Tree [ [ [ 1   [ 2 3  ] ] [. 5 4  ] ]    [. [ 6 7  ] [.8  ] ].  ]  
\end{minipage}
\end{center}
Then $Y$ is the minimal expansion of $\cxx$ associated to $\psi$. The minimal expansion of $\cxx$ contained in 
 $Y\mA\cup Z\mA$ is 
\[X=\{x\a_1^3,x\a_1^2\a_2,x\a_1\a_2\a_1,x\a_1\a_2^2,x\a_2\a_1,x\a_2^2\}.\] 
Then $X\mA\setminus (Y\mA\cap Z\mA)=\{ x\a_1^3,x\a_1^2\a_2,x\a_2\a_1,x\a_2^2\}$.
The $X$-components of these elements are
\begin{align*}
\cdots \mapsto x\a_1^4 &\mapsto x\a_1^3    &    x\a_1^2\a_2 &\mapsto x\a_1^2\a_2^2 \mapsto \cdots \\
\cdots \mapsto x\a_2^3 &\mapsto x\a_2^2    &    x\a_2\a_1   &\mapsto x\a_2\a_1^2  \mapsto \cdots,
\end{align*}
so $\psi$ is in quasi-normal form with respect to $X$. 
Introduce new generators  $x_1=x\a_1^3$, $x_2=x\a_1^2\a_2$, $x_3=x\a_1\a_2\a_1$, $x_4=x\a_1\a_2^2$, $x_5=x\a_2\a_1$ and  $x_6=x\a_2^2$. 
Then $X_P=\{x_3,x_4\}$  and  $X_{RI}=\{x_1,x_2,x_5,x_6\}$.

Let $V_{2,2}$ be free on $\{x_3,x_4\}$. Then, as an element of $G_{2,2}$ the map $\psi_P$ is the map sending 
$x_3$ to $x_4$ and $x_4$ to $x_3$. 
Let $V_{2,4}$ be free on $\{x_1,x_2,x_5,x_6\}$. We have 
\[Y_{RI}=\{x\a_1^4,x\a_1^3\a_2,x\a_1^2\a_2,x\a_2\a_1,x\a_2^2\a_1,x\a_2^3\}=\{x_1\a_1,x_1\a_2,x_2,x_5,x_6\a_1,x_6\a_2\}\]
and 
\[Z_{RI}=\{x\a_1^3,x\a_1^2\a_2\a_1,x\a_1^2\a_2^2,x\a_2\a_1^2,x\a_2\a_1\a_2,x\a_2^2\}=\{x_1,x_2\a_1,x_2\a_2,x_5\a_1,x_5\a_2,x_6\},\]
so as an element of $G_{2,4}$ the map $\psi_{RI}$ is given by the following forest diagram.
\begin{center}
\begin{tikzpicture}[root/.style={circle, inner sep=1pt, fill=black, draw=none}]
\node[left] at (0, 0){$\psi_{RI}:$};

\foreach \i[count=\x] in {1, 2, 5, 6}{
	\node[root, label=above:$x_\i$] (L\i) at (    \x, 0) {};
	\node[root, label=above:$x_\i$] (R\i) at (6 + \x, 0) {}; 
}
\def\caret#1#2#3{
	\draw (#1) -- +(-0.25, -0.4) node [label=below:#2] {};
	\draw (#1) -- +(+0.25, -0.4) node [label=below:#3] {};
}
\def\leaf#1#2{ \path (#1) node [below=1.5mm] {#2}; }
\caret{L1}{1}{2}
\leaf{L2}{3};
\leaf{L5}{4};
\caret{L6}{5}{6}

\draw[->] (5, -0.2) -- (6, -0.2);

\leaf{R1}{1};
\caret{R2}{2}{3}
\caret{R5}{4}{5}
\leaf{R6}{6};

\node[right] at (10, 0){\phantom{$\psi_{RI}:$}};
\end{tikzpicture}
\end{center}
\end{example}

\begin{definition}\label{infiniteelement}
Let $\psi$ be an element of $G_{n,r}$. Then $\psi$ is called \emph{periodic} if $V_{RI}=\emptyset$ and  $\psi$ is called 
\emph{regular infinite} if $V_{P}=\emptyset$.
\end{definition}
\begin{lemma}\label{TorFixY}
Let $\psi$ be an element of $G_{n,r}$ in semi-normal form with respect to an $A$-basis $X$.  
\be
\item
$\psi$ is periodic if and only if $\psi$ permutes the elements of $X$. 
\item $\psi$ is regular infinite if and only if no element of $X$ is of type (A).
\ee
\end{lemma}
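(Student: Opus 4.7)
The plan rests on combining Theorem~\ref{decomp} with Lemma~\ref{ABC}. By Theorem~\ref{decomp}(\ref{it:decomp2}) we have $V_P = X_P\mA\ml$ and $V_{RI} = X_{RI}\mA\ml$, so the conditions ``$V_P$ is empty'' and ``$V_{RI}$ is empty'' are equivalent to $X_P = \emptyset$ and $X_{RI} = \emptyset$, respectively. Since $X_P$ is by definition the set of type~(A) elements of $X$ and $X_{RI}$ the set of type~(B) or (C) elements, both parts reduce to a statement about which types occur among the elements of $X$.

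Part~2 is then immediate: $\psi$ is regular infinite iff $V_P$ is empty iff $X_P = \emptyset$ iff $X$ contains no type~(A) element.

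For Part~1, one direction is straightforward: if $\psi$ is periodic then $X_{RI} = \emptyset$, so every $x \in X$ is of type~(A), and Lemma~\ref{ABC}(A) tells us the $X$-component of each such $x$ is a complete finite $X$-component that consists entirely of elements of $X$. Taking the union over all $x \in X$ shows $X\psi = X$, so $\psi$ permutes $X$. For the converse, suppose $\psi$ permutes the finite set $X$. Then for any $x \in X$ there is some $m > 0$ with $x\psi^m = x$, so the $\psi$-orbit of $x$ is finite and in particular the $X$-component of $x$ is finite. But Lemma~\ref{ABC} forces $x$ to be of one of the three types, while the descriptions in parts~(B) and~(C) both make the $X$-component of $x$ infinite (semi-infinite in case~(B), infinite in case~(C)). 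Hence every $x \in X$ is of type~(A), so $X_{RI} = \emptyset$ and $\psi$ is periodic.

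There is no real obstacle: the single observation to make is that the ``empty subalgebra'' hypotheses translate, via Theorem~\ref{decomp}(\ref{it:decomp2}), into ``empty generating set'' statements about the partition $X = X_P \sqcup X_{RI}$; the type classification in Lemma~\ref{ABC} then settles both parts immediately.
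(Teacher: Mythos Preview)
Your proof is correct and follows essentially the same approach as the paper's: both arguments use Theorem~\ref{decomp}(\ref{it:decomp2}) to translate emptiness of $V_P$ or $V_{RI}$ into emptiness of $X_P$ or $X_{RI}$, and then appeal to the type classification of Lemma~\ref{ABC} to finish. Your treatment of the converse in Part~1 is slightly more explicit (observing that types~(B) and~(C) force infinite $X$-components), while the paper simply asserts that a permuted basis contains no type~(B) or~(C) elements; but the underlying reasoning is identical.
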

\begin{proof}
\be
\item
If $\psi$ permutes the elements of $X$ then $X$ contains no element of type (B) or (C); so $X=X_P$ and $V_{n,r}=V_P$, by Theorem \ref{decomp}. 
As $V_{n,r}$ is the free product of $V_P$ and $V_{RI}$ it follows that 
$V_{RI}=\emptyset$, so $\psi$ is periodic.

If $\psi$ is 
periodic then $X_{RI}\subseteq V_{RI}=\emptyset$, 
so $X=X_P$. Thus $X$ consists of elements of type (A), which are permuted by $\psi$, by Lemma \ref{ABC}.
\item
If $\psi$ is regular infinite then $V_{P}=\emptyset$, so $X_p=\emptyset$; i.e.\  no element of $X$ is of type (A).  
If $X$ contains no element of type (A) then $X_P=\emptyset$, and  therefore  $V_P=\emptyset$ by Theorem \ref{decomp},
so $\psi$ is regular infinite.
\qedhere
\ee
\end{proof}

It follows that, in the notation established above Example \ref{ex:sub2full},  the automorphism $\psi_{P}\in G_{n,a}$ is periodic and 
$\psi_{RI}\in G_{n,b}$ is regular infinite. Thus, the decomposition of Theorem \ref{decomp} may be viewed as 
factoring $\psi$ into a product of a periodic and a regular infinite automorphism. It remains to see 
how to regard a pair of  automorphisms in this way, simultaneously in the same algebra. 

To this end suppose that $\psi_i\in G_{n,a_i}$  is in semi-normal form with respect to an $A$-basis $X_i$,
where $|X_i|=a_i$, for $i=1,2$. 
If there exists an isomorphism $\rho:V_{n,a_1}\maps V_{n,a_2}$
with the property that $\rho^{-1}\psi_1\rho=\psi_2$ then,  from  Corollary \ref{cor:H2}, $a_1\equiv a_2\mod {n-1}$. 
Also, if  $a_1\equiv a_2 \mod n-1$ then $V_{n,a_i}$  is isomorphic to $V_{n,s}$ where $1\le s\le n-1$ and 
$s\equiv a_i$.    If this is the case then we may take an $A$-basis $\cxx_s$ of $s$ elements of $V_{n,s}$ and choose expansions
$X'_1$ and $X'_2$ of $\cxx_s$ of $a_1$ and $a_2$ elements respectively. Now let $f_i$ be the map taking $X_i$ to $X'_i$. 
Then there exists an isomorphism $\rho:V_{n,a_1}\maps V_{n,a_2}$ such that $\rho^{-1}\psi_1\rho=\psi_2$ 
 if and only if  $a_1\equiv a_2\mod n-1$ and, setting $\widehat\psi_i=f_i^{-1}\psi_if_i\in G_{n,s}$, we have 
$\rho^{-1}f_1\widehat\psi_1f_1^{-1}\rho=f_2\widehat\psi_2f_2^{-1}$: that is 
$\theta^{-1}\widehat\psi_1\theta = \widehat\psi_2$, where $\theta = f_1^{-1}\rho f_2\in G_{n,s}$.  (See Figure \ref{fig:comm_diag}.)

\begin{figure}
\begin{center}
\[
\begin{tikzcd}
V_{n,a_1} \arrow{rrr}{\psi_1}\arrow{rd}{f_1}\arrow{ddd}{\rho}
&
&
&V_{n,a_1}  \arrow{ld}{f_1}\arrow{ddd}{\rho}\\
&V_{n,s}\arrow{r}{\widehat\psi_1}\arrow{d}{\theta}& V_{n,s}\arrow{d}{\theta}&\\
&V_{n,s}\arrow{r}{\widehat\psi_2}& V_{n,s}&\\
V_{n,a_2}  \arrow{rrr}{\psi_2}\arrow{ru}{f_2}
& 
&
&V_{n,a_2}  \arrow{lu}{f_2}
\end{tikzcd}
\]
\end{center}
\caption{Isomorphisms of $V_{n,a_i}$ and $V_{n,s}$}\label{fig:comm_diag}
\end{figure}

Combining this with Theorem~\ref{decomp}.\ref{it:decomp1} gives a decomposition of the conjugacy problem into
the conjugacy problem for periodic and for regular infinite elements, separately.
Let $\psi$ and $\phi$ be elements of $G_{n,r}$, write $V_{n,a_1}= V_{RI,\psi}$, $\psi_1=\psi_{RI}$, $V_{n,a_2}=V_{RI,\phi}$ 
and $\psi_2=\phi_{RI}$. Using the 
procedure above, if $\rho_{RI}$ exists (in the notation of Theorem \ref{decomp}) then we may 
regard $\psi_i$, $i=1,2$,  as a regular infinite element of $G_{n,s}$, namely $\widehat \psi_i$, for appropriate $s$.  
Similarly, we may regard $\psi_P$ and $\phi_P$ as periodic automorphisms of a single algebra.  

We can now outline the algorithm for the conjugacy problem.
\subsection{The conjugacy algorithm}\label{algorithmone}

\begin{algorithm}\label{conjAlgorithm}
Let $\psi$ and $\phi$ be an elements of $G_{n,r}$. 
\be[\bfseries Step 1:]
\item\label{it:ca1} Find $A$-bases $X_\psi$ and $X_\phi$ such that $\psi$ and $\phi$ are in quasi-normal form with respect to $X_\psi$ and $X_\phi$,
respectively, as in Lemma \ref{lem:qnf}.  The sets 
$X_{P,\psi}$, $X_{RI,\psi}$, $X_{P,\phi}$ and  $X_{RI,\phi}$ are obtained as part of this process. 

If $|X_{P,\psi}|\equiv |X_{P,\phi}|\mod n-1$ and $|X_{RI,\psi}|\equiv |X_{RI,\phi}|\mod n-1$; continue. Otherwise output ``No'' and stop. 
\item \label{it:ca2}
Find the minimal expansion $Y_\psi$  of $X_\psi$ associated to $\psi$  and the minimal expansion 
$Y_\phi$  of $X_\phi$ associated to $\phi$.  (See Lemma \ref{4.1H}.)
Construct $Y_{RI,\psi}$ and  $Y_{RI,\phi}$; the sets elements of $Y_\psi$ and $Y_\phi$ which are not in finite orbits (as in the 
the discussion following Theorem \ref{decomp}). Construct $Z_{RI,\psi}=Y_{RI,\psi}\psi$ and $Z_{RI,\phi}=Y_{RI,\phi}\phi$.
\item \label{it:ca3}
For $T=P$ and for $T=RI$ carry out the following.
Find the integer $s_T$ such that $1\le s_T\le n-1$ and $s_T\equiv |X_{T,\psi}|$. Let $\cxx_T$ be a set of $s_T$ elements,
let $V_{n,s_T}$ be free on $\cxx_T$ and 
find expansions $W_{T,\psi}$ and $W_{T\phi}$ of $\cxx_T$ of sizes $|X_{T,\psi}|$ and $|X_{T,\phi}|$, respectively. Construct a map
$f_{T,\psi}$  mapping $X_{T,\psi}$ bijectively to $W_{T,\psi}$ and  $f_{T,\phi}$ mapping $X_{T,\phi}$ bijectively to $W_{T,\phi}$. 
Write $\psi_T$ and $\phi_T$ as elements of $G_{n,s_T}$, using these maps.
\item\label{it:ca4} Input $\psi_P$ and $\phi_P$ into Algorithm \ref{alg:periodic} below for conjugacy of periodic elements of  $G_{n,r}$. 
If  $\psi_P$ and $\phi_P$ are not conjugate, return ``No'' and stop. Otherwise obtain 
a conjugating element $\rho_P$. 
\item\label{it:ca5} Input $\psi_{RI}$ and $\phi_{RI}$ into Algorithm \ref{alg:reginf} below for conjugacy of regular infinite elements of  
$G_{n,s_{RI}}$. If  $\psi_{RI}$ and $\phi_{RI}$ are not conjugate, return ``No'' and stop. Otherwise obtain 
a conjugating element $\rho_{RI}$. 
\item\label{it:ca6} Return the conjugating element $\rho_P\ast \rho_{RI}$. 
\ee
\end{algorithm}

Given this algorithm we have the following theorem. 
\begin{theorem}[{\cite[Theorem 9.3]{Higg}}] \label{CPS}
The conjugacy problem is soluble in $G_{n,r}$.
\end{theorem}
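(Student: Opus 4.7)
The plan is to verify that Algorithm~\ref{conjAlgorithm} correctly decides conjugacy in $G_{n,r}$, assuming the sub-algorithms \ref{alg:periodic} and \ref{alg:reginf} solve the conjugacy problem in the periodic and regular infinite cases respectively. The entire theorem is essentially a bookkeeping argument on top of the decomposition of Theorem~\ref{decomp}, so the only content is showing that each step of the reduction is both effective and logically justified.

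First I would check Step~\ref{it:ca1}. By Lemma~\ref{lem:qnf}, the quasi-normal bases $X_\psi, X_\phi$ and hence the partitions $X = X_P \sqcup X_{RI}$ are effectively computable. Next, if $\rho^{-1}\psi\rho = \phi$, then the discussion preceding Theorem~\ref{decomp} shows that $\rho$ restricts to isomorphisms $V_{P,\psi} \to V_{P,\phi}$ and $V_{RI,\psi} \to V_{RI,\phi}$. By Theorem~\ref{decomp}.\ref{it:decomp2}, these subalgebras are freely generated by $X_{P,\psi}, X_{P,\phi}$ (respectively $X_{RI,\psi}, X_{RI,\phi}$), so they are isomorphic to $V_{n,|X_{P,\psi}|}, V_{n,|X_{P,\phi}|}$ (respectively $V_{n,|X_{RI,\psi}|}, V_{n,|X_{RI,\phi}|}$). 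By Corollary~\ref{cor:H2}, isomorphism of these free $\mathcal{V}_n$-algebras forces the congruence conditions $|X_{P,\psi}| \equiv |X_{P,\phi}| \pmod{n-1}$ and $|X_{RI,\psi}| \equiv |X_{RI,\phi}| \pmod{n-1}$. Thus the ``No'' answer in Step~\ref{it:ca1} is valid, and if we proceed past it, the congruences hold.

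Steps~\ref{it:ca2} and~\ref{it:ca3} are purely constructive: Lemma~\ref{4.1H} provides the minimal expansions, and Lemma~\ref{anyexpansion} together with Corollary~\ref{cor:H2} guarantee that the relabelling isomorphisms $f_{T,\psi}: V_{T,\psi} \to V_{n,s_T}$ and $f_{T,\phi}: V_{T,\phi} \to V_{n,s_T}$ exist and are effectively describable as bijections between bases. Under these isomorphisms, $\psi_T$ becomes $\widehat\psi_T = f_{T,\psi}^{-1}\psi_T f_{T,\psi} \in G_{n,s_T}$ and similarly for $\phi_T$, and (as in the commutative diagram of Figure~\ref{fig:comm_diag}) $\psi_T$ is conjugate to $\phi_T$ via some $\rho_T$ if and only if $\widehat\psi_T$ is conjugate to $\widehat\phi_T$ via $\theta_T = f_{T,\psi}^{-1}\rho_T f_{T,\phi}$. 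By Lemma~\ref{TorFixY}, $\widehat{\psi_P}, \widehat{\phi_P}$ are periodic and $\widehat{\psi_{RI}}, \widehat{\phi_{RI}}$ are regular infinite, so they are legitimate inputs to the two sub-algorithms called in Steps~\ref{it:ca4} and~\ref{it:ca5}.

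Finally, combining the outputs is justified by Theorem~\ref{decomp}.\ref{it:decomp3}: $\psi$ is conjugate to $\phi$ in $G_{n,r}$ if and only if both $\psi_P \sim \phi_P$ and $\psi_{RI} \sim \phi_{RI}$, in which case the conjugator is precisely $\rho_P * \rho_{RI}$, the unique homomorphism from $V_{P,\psi} * V_{RI,\psi} = V_{n,r}$ to $V_{P,\phi} * V_{RI,\phi} = V_{n,r}$ induced by the universal property of the free product (after pulling back $\theta_P, \theta_{RI}$ along $f_{T,\psi}, f_{T,\phi}$). Conversely, the ``No'' outputs in Steps~\ref{it:ca4}, \ref{it:ca5} are justified by the same theorem: failure in either factor rules out conjugacy of $\psi$ and $\phi$. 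The main obstacle is not this reduction, which is essentially mechanical once Theorem~\ref{decomp} is in hand, but rather the construction and verification of Algorithms~\ref{alg:periodic} and~\ref{alg:reginf}, which are carried out in the next two subsections; modulo those, the proof of Theorem~\ref{CPS} is complete.
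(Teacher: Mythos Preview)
Your proposal is correct and follows the same approach as the paper; indeed the paper's own proof is the single line ``Apply Algorithm~\ref{conjAlgorithm}'', relying on the reader to have absorbed exactly the justifications you spell out (Lemma~\ref{lem:qnf} for effectiveness of Step~\ref{it:ca1}, Corollary~\ref{cor:H2} for the congruence check, the discussion around Figure~\ref{fig:comm_diag} for Steps~\ref{it:ca2}--\ref{it:ca3}, and Theorem~\ref{decomp}.\ref{it:decomp3} for the recombination). Your write-up is thus a faithful expansion of what the paper leaves implicit, with the same deferral of the real work to Algorithms~\ref{alg:periodic} and~\ref{alg:reginf}.
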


\begin{proof}
Apply Algorithm \ref{conjAlgorithm}.
\end{proof}

\subsection{Conjugacy of periodic elements}\label{subsectionP}
Let $\psi\in G_{n,r}$ be  a periodic element. For $u\in V_{n,r}$ the \emph{size} of the $\psi$-orbit of $u$ is the least positive integer $d$ such that 
$u\psi^d=u$. 
\begin{definition}\label{cycletypes}
Let $\psi$ be a periodic element of $G_{n,r}$ in semi-normal form with respect to the $A$-basis $X$. The \emph{cycle type} of $\psi$ is the set 
\[T_\psi(X)=\{d\in \NN \mid \text{some $x \in $ has a $\psi$-orbit of size $d$} \}.\]
For $d\in \NN$, define the $\psi$\emph{-multiplicity} of $d$ to be $m_\psi(d,X)=D/d$, where $D$ is the number of elements of 
$X$ which belong to a $\psi$-orbit of size $d$.
\end{definition}
Note that, as $\psi$ is periodic and in semi-normal form with respect to $X$, all $X$-components of $\psi$ are (ordered) $\psi$-orbits and all
$\psi$-orbits of elements of $\XA$ are $X$-components (once ordered appropriately). Also,  
 $d\in T_\psi(X)$ if and only if $m_\psi(d,X)\neq 0$; the size of the set $X$ is $|X|=\sum_{d\in T_\psi(X)} dm_\psi(d,X)$;  if $d\in T_\psi(X)$ then $X$ contains  
$m_\psi(d,X)$ disjoint $\psi$-orbits of size $d$; and $\psi$ is a torsion element of order equal to the least common multiple of elements of $T_\psi(X)$. 

\begin{example}
Let $n=2$, $r=1$ and $V_{2,1}$ be free on $\cxx = \{x\}$. Let  
\[X=\{x\alpha_1^3,x\alpha_1^2\alpha_2,x\alpha_1\alpha_2,x\alpha_2\alpha_1^2,x\alpha_2\alpha_1\alpha_2,x\alpha_2^2\alpha_1,x\alpha_2^3\}\]
and let $\psi$ be the periodic element of $G_{2,1}$ defined by the tree pair diagram below.
\begin{center}
$\psi:$
\begin{minipage}[b]{0.7\linewidth}
\centering
\Tree  [  [. [1 2  ] [.3  ] ]. [ [. [.4  ] [.5  ] ].  [. [.6  ] [.7  ] ].  ] ] \quad $\longrightarrow$
\Tree  [  [. [3 1  ] [.2  ] ]. [ [. [.5  ] [.4  ] ].  [. [.7  ] [.6  ] ].  ] ]  
\end{minipage}
\end{center}
Then the cycle type of $\psi$ is $\{2,3\}$ with multiplicites $m_\psi(2,X)=2$ and $m_\psi(3,X)=1$.
\end{example}

\begin{lemma}\label{lem:perex}
Let $\psi$ be a periodic element of $G_{n,r}$ in semi-normal form with respect to the $A$-basis $X$ and the $A$-basis  $Z$, where $Z$ is a
$q$-fold expansion of $X$.  Then $T_\psi(X)=T_\psi(Z)$ and $m_\psi(d,X)\equiv m_\psi(d,Z) \mod{n-1}$, for all $d\in T_\psi(X)$. 
\end{lemma}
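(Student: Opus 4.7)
The plan is to analyze how each $\psi$-orbit of $X$ relates to orbits of $Z$ by examining the subtrees of $Z$ that sit below each orbit. Since $\psi$ is periodic and in semi-normal form with respect to $X$, Lemma~\ref{ABC} says each $x \in X$ is of type (A), and its orbit $O = \{x_1,\ldots,x_d\} \subseteq X$ (with $x_{j+1} = x_j\psi$, indices mod $d$) is a complete $X$-component. Because $\psi$ commutes with the descending operations, $(x_j\Gamma)\psi = x_{j+1}\Gamma$ for any $\Gamma \in A^*$, so every element of $\bigcup_j \{x_j\}\mA$ also lies in a $\psi$-orbit of size exactly $d$.

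For each such orbit $O$, I would set $\cL_j = \{\Gamma \in A^* \mid x_j\Gamma \in Z\}$. Since $Z$ is a basis, $Z \cap \{x_j\}\mA$ is an expansion of $\{x_j\}$ (by Lemma~\ref{HigmanLemma2.5}.\ref{it:252}), so each $\cL_j$ is a finite antichain with $|\cL_j| \equiv 1 \pmod{n-1}$. The crucial step is to prove $\cL_1 = \cL_2 = \cdots = \cL_d$. Given $\Gamma \in \cL_j$, the whole $\psi$-orbit $\{x_i\Gamma : 1 \le i \le d\}$ is a complete $Z$-component (using SNF of $\psi$ with respect to $Z$ and Corollary~\ref{cor:orbit-type}), so each $x_i\Gamma \in Z\mA$, meaning there exists $\Gamma_i' \in \cL_i$ which is an initial segment of $\Gamma$. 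Applying the same argument to $\Gamma_i' \in \cL_i$ produces $\Gamma_j'' \in \cL_j$ with $\Gamma_j''$ an initial segment of $\Gamma_i'$, hence of $\Gamma$; since $\cL_j$ is an antichain, $\Gamma_j'' = \Gamma$, forcing $\Gamma_i' = \Gamma$ and thus $\Gamma \in \cL_i$. Symmetry yields $\cL_i = \cL_j$.

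Given this equality, the $\psi$-orbits of size $d$ in $Z$ coming from below $O$ are in bijection with $\cL_1$, so they number $|\cL_1| \equiv 1 \pmod{n-1}$. Summing over the $m_\psi(d,X)$ orbits of size $d$ in $X$ yields $m_\psi(d,Z) \equiv m_\psi(d,X) \pmod{n-1}$. Moreover, each $|\cL_j| \ge 1$, so every orbit of size $d$ in $X$ contributes at least one orbit of size $d$ to $Z$ (and conversely every $z \in Z$ has the form $x\Gamma$ with $x \in X$), yielding $T_\psi(X) = T_\psi(Z)$. The main obstacle is establishing the uniformity $\cL_1 = \cdots = \cL_d$: without the SNF hypothesis on $Z$, the subtrees below different elements of a single orbit could have genuinely different sizes, as illustrated by the failed expansion in the analysis preceding this lemma, and it is precisely SNF on $Z$ that transports the leaf-data between elements of a common orbit.
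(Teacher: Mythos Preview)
Your argument is correct and complete. It is, however, genuinely different from the paper's proof, so a brief comparison is worthwhile.

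The paper reduces to the case where $Z$ is a \emph{simple} expansion of $X$ at some $w\in X$, observing that if $w$ lies in a $\psi$-orbit of size $d$ then each $w\alpha_i$ also has orbit size exactly $d$; thus one orbit of size $d$ is replaced by $n$ such orbits, giving $m_\psi(d,Z)=m_\psi(d,X)+(n-1)$, while all other multiplicities are unchanged. Iterating this handles an arbitrary $q$-fold expansion. This is shorter, but note that the intermediate simple expansions need not leave $\psi$ in semi-normal form (expand at $w$ but not at $w\psi$ and the singleton $(w\psi)$ becomes an incomplete $Z$-component); the paper's argument is really computing orbit sizes, which make sense for a periodic $\psi$ regardless of semi-normal form, so the reduction is harmless even if the formal definition of $m_\psi(d,\cdot)$ is only stated under the SNF hypothesis.

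Your route avoids this reduction entirely: you work with the given $Z$ directly and exploit the SNF hypothesis on $Z$ in an essential way, via Corollary~\ref{cor:orbit-type}, to transport the leaf-set $\mathcal L_j$ around the orbit and obtain the uniformity $\mathcal L_1=\cdots=\mathcal L_d$. The antichain sandwich argument for this equality is clean, and once you have it the counting is immediate. Your approach thus makes transparent exactly where the SNF of $Z$ enters, at the cost of a slightly longer argument; the paper's approach is quicker but uses the SNF of $Z$ only implicitly (indeed, not at all at the intermediate stages).
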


\begin{proof}
It suffices to prove the lemma in the case where~$Z$ is a \emph{simple} expansion of~$X$, because any expansion is obtained by a finite sequence of simple expansions. Suppose the expansion happens at~$w \in X$, so that $Z = (X \setminus \{ w \}) \cup \{ w\alpha_1, \dotsc, w\alpha_n \}$. To compute $T_\psi(Z)$ we need to break~$Z$ into a union of $\psi$-orbits.

Let~$d$ be the size of the $\psi$-orbit of~$w$, so that $\mathcal{O}_w = \{ w, w\psi, \dotsc, w\psi^{d-1} \}$.
For each $1 \leq i \leq n$ the orbit of~$w\alpha_i$ is $\mathcal{O}_{w\alpha_i} = \{ w\alpha_i, w\psi\alpha_i, \dotsc, w\psi^{d-1}\alpha_i\}$, which is of size at most $d$.
In fact its size is exactly~$d$: if there are integers $0 \leq j < k < d$ for which $w\psi^j\alpha_i = w\psi^k\alpha_i$, we would have $w\psi^j = w\psi^k$, which cannot occur.

Thus, in moving from~$X$ to~$Z$ we have lost~$1$ and gained~$n$ $\psi$-orbits of size~$d$; all other $\psi$-orbits inside~$Z$ are $\psi$-orbits inside~$X$.
Therefore $T_\psi(X) = T_\psi(Z)$.
In terms of multiplicities this means $m_\psi(d, Z) = m_\psi(d, X) + n - 1$ and $m_\psi(e, Z) = m_\psi(e, X)$, for every positive integer $e \neq d$; whence the result.
\end{proof}

Note that it follows from this lemma that if $\psi$ is in semi-normal form with respect to both $X$ and $X'$ then $T_\psi(X)=T_\psi(X')$, since we
may take a common expansion of both $X$ and $X'$ and then expand this to an $A$-basis $Z$ with respect to which $\psi$ is in semi-normal form. 
So from now on,  we refer to the cycle type $T_\psi$ without reference to an $A$-basis $X$.

\begin{proposition}\label{conjTorsion}
Let $\psi$ and $\varphi$ be periodic elements of $G_{n,r}$
 in semi-normal form with respect to the $A$-bases $X_\psi$ and $X_\phi$, respectively. Then 
$\psi$ is conjugate to $\varphi$ if and only if 
\be
\item\label{it:ct1} $T_\psi=T_\phi$ and 
\item\label{it:ct2} $m_\psi(d,X_\psi)\equiv m_\phi(d,X_\phi)\mod n-1$, for all $d\in \NN$. 
\ee
\end{proposition}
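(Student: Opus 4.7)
The proof splits into the two implications, both relying on a careful interaction between conjugation, semi-normal form, and basis expansion.

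For the forward direction, suppose $\rho \in G_{n,r}$ satisfies $\rho^{-1}\psi\rho = \phi$. Then $X_\psi\rho$ is a basis of $V_{n,r}$, and for any $x \in X_\psi$ we have $(x\rho)\phi = x\rho \cdot \rho^{-1}\psi\rho = x\psi\rho \in X_\psi\rho$, so $\phi$ permutes $X_\psi\rho$. Hence $\phi$ is in semi-normal form with respect to $X_\psi\rho$ by Lemma~\ref{TorFixY}, and the orbit structure of $\phi$ on $X_\psi\rho$ is identical to that of $\psi$ on $X_\psi$; thus $T_\phi(X_\psi\rho) = T_\psi$ and $m_\phi(d, X_\psi\rho) = m_\psi(d, X_\psi)$ for every $d$. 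Now $\phi$ is in semi-normal form with respect to both $X_\phi$ and $X_\psi\rho$. Take a common expansion $Z$ of $X_\phi$ and $X_\psi\rho$ via Corollary~\ref{cor:commexp}, and (if $\phi$ fails to be in semi-normal form with respect to $Z$) apply Lemma~\ref{9.9H} to expand further to a basis $Z'$ for which $\phi$ is in semi-normal form. Two applications of Lemma~\ref{lem:perex} give $T_\phi(X_\phi) = T_\phi(Z') = T_\phi(X_\psi\rho)$ and $m_\phi(d, X_\phi) \equiv m_\phi(d, Z') \equiv m_\phi(d, X_\psi\rho) \pmod{n-1}$. Combining yields \ref{it:ct1} and \ref{it:ct2}.

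For the reverse direction, the plan is to expand $X_\psi$ and $X_\phi$ to bases $X'_\psi$ and $X'_\phi$ (with $\psi$ and $\phi$ still in semi-normal form) whose orbit counts agree exactly, and then lift any orbit-respecting bijection between them to a conjugating automorphism. The key observation is that expanding every element of a single $\psi$-orbit of size $d$ simultaneously (a $d$-fold expansion) keeps $\psi$ permuting the new basis, and replaces that one orbit of size $d$ with $n$ new orbits of size $d$; the effect is to increase $m_\psi(d, \cdot)$ by exactly $n-1$ and leave all other multiplicities fixed. This is the mechanism underlying Lemma~\ref{lem:perex}.

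Given hypotheses \ref{it:ct1} and \ref{it:ct2}, for each $d \in T_\psi = T_\phi$ the difference $m_\psi(d, X_\psi) - m_\phi(d, X_\phi)$ is an integer multiple of $n-1$. Applying the orbit-wise expansion above finitely many times, on whichever side has the smaller multiplicity for a given $d$, produces bases $X'_\psi \supseteq X_\psi$ and $X'_\phi \supseteq X_\phi$ with $\psi$, $\phi$ in semi-normal form and $m_\psi(d, X'_\psi) = m_\phi(d, X'_\phi)$ for every $d$; in particular $|X'_\psi| = |X'_\phi|$. Pair off the $\psi$-orbits in $X'_\psi$ with the $\phi$-orbits in $X'_\phi$ of equal size, and for each paired pair of orbits $(u, u\psi, \dots, u\psi^{d-1})$ and $(v, v\phi, \dots, v\phi^{d-1})$, set $f(u\psi^k) = v\phi^k$ for $0 \le k < d$. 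This defines a bijection $f\colon X'_\psi \to X'_\phi$ satisfying $f(x\psi) = f(x)\phi$ for all $x \in X'_\psi$. Since $X'_\psi$ and $X'_\phi$ are both bases of $V_{n,r}$, $f$ extends uniquely to an automorphism $\rho \in G_{n,r}$, and the identity $(x\rho)\phi = (x\psi)\rho$, proved on the generating set $X'_\psi$, propagates to all of $V_{n,r}$, yielding $\rho^{-1}\psi\rho = \phi$.

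The main obstacle is the bookkeeping around expansions in the reverse direction: each expansion must preserve semi-normal form for $\psi$ (respectively $\phi$), shift the chosen multiplicity by exactly $n-1$, and disturb no other multiplicity. The orbit-wise $d$-fold expansion achieves all three simultaneously, and this is essentially what Lemma~\ref{lem:perex} already encodes.
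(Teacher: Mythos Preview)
Your argument follows essentially the same approach as the paper's proof, and the reverse direction is carried out in the same way (orbit-wise expansion to equalise multiplicities, then an orbit-respecting bijection). There is, however, one technical slip in your forward direction: the basis $X_\psi\rho$ need not be an $A$-basis, since $\rho$ need not carry $X_\psi$ into $\cxx\mA$. The definitions of semi-normal form, cycle type, and multiplicity (Definitions preceding Lemma~\ref{lem:perex}), as well as Lemmas~\ref{TorFixY} and~\ref{lem:perex} themselves, are all stated only for $A$-bases, so your citations of these results for $X_\psi\rho$ are not literally justified.

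This is easy to repair in one of two ways. You can observe that the proofs of the cited lemmas use only that $\phi$ permutes the basis in question, not that it lies in $\cxx\mA$, so everything extends verbatim. Alternatively, and this is what the paper does, you can first pass to the minimal expansion $Y$ of an $A$-basis $X_\rho$ associated to $\rho$, take a common expansion $W$ of $X_\psi$ and $Y$, and then expand $W$ to an $A$-basis $X'_\psi$ with respect to which $\psi$ is in semi-normal form. Because $X'_\psi \subseteq Y\mA$, its image $X'_\psi\rho \subseteq (Y\rho)\mA \subseteq \cxx\mA$, so $X'_\phi = X'_\psi\rho$ is a genuine $A$-basis and the cited lemmas apply directly. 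Either route closes the gap; the underlying idea of your proof is correct.
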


\begin{proof}
Assume that $\psi$ and $\phi$ are conjugate and let $\rho\in G_{n,r}$ be such that $\rho^{-1}\psi\rho=\phi$. Let $\rho$ be in semi-normal 
form with respect to $X_\rho$, let $Y$ be the minimal expansion of $X_\rho$ associated to $\rho$ and let $Z=Y\rho$. Let $W$ be a common
expansion of $X_\psi$ and $Y$ and let $\psi$ be in semi-normal form with respect to an expansion $X'_\psi$ of $W$. (Such an expansion
of $W$ exists, by Lemma \ref{9.9H}.)  As $\psi$ is periodic and in semi-normal form it permutes the elements of  $X'_\psi$, so
for all $x\in  X'_\psi$ we have $x'\in  X'_\psi$ such that $x\rho\phi=x\psi\rho=x'\rho\in X\rho$. Therefore $\phi$ permutes the elements
of $X'_\psi\rho$, so $\phi$ is in semi-normal form with respect to $X'_\phi=X'_\psi\rho$. As $X'_\psi$ is an expansion of $Y$ and $Z=Y\rho$ it follows
that $X'_\phi$ is an expansion of $Z$.

Now if $x\in X'_\psi$ and $i\in \ZZ$ then $x\rho\phi^i=x\psi^i\rho$, so we have $x\psi^d=x$ if and only if $x\rho\phi^d=x\rho$;
in other words, $x$ and $x\rho$ have orbits of equal size.
This applies to any~$x$, so $T_\psi=T_\phi$ and 
both  $X'_\psi$ and  $X'_\phi$ have 
the same 
number of elements with an orbit of size $d$. 
Therefore $m_\psi(d,X'_\psi)=m_\phi(d,X'_\phi)$, for all $d\in T_\psi=T_\phi$.
Statement~\ref{it:ct2} follows, from Lemma \ref{lem:perex} and the 
fact that $X'_\psi$ and $X'_\phi$ are expansions of $X_\psi$ and $X_\phi$, respectively. 

Conversely, suppose that statements~\ref{it:ct1} and~\ref{it:ct2} hold.
Let $T_\psi=T_\phi=\{d_1,\ldots, d_k\}$ and write $m_j=m_\psi(d_j,X_\psi)$ and $m'_j=m_\phi(d_j,X_\phi)$.
Fix $j\in \{1,\ldots ,k\}$.
Assume first that $m_j>m'_j$.
Then, by hypothesis, $m_j=m'_j+q_j(n-1)$ for some positive integer $q_j$.
Select an element $x\in X_\phi$ whose $\phi$-orbit $\cO_x$ has size $d_j$.
Let $Y_x$ be a $q_j$-fold expansion of $\{x\}$ and set $E=\{\G\in A^* \mid x\G\in Y_x\}$, so that $Y_x=xE$.
Then for each $0 \leq i < d_j$, $x\phi^iE$ is a $q_j$-fold expansion of $\{x\phi^i\}$.

For every string $\Gamma \in E$, the set $\{ x\Gamma, x\phi\Gamma, \dotsc, x\phi^{d-1}\Gamma \}$ is a $\phi$-orbit of size~$d$.
(We saw this in Lemma~\ref{lem:perex} for $\Gamma = \alpha_i$.)
Hence the set $\cO_x E=\{x\phi^i\G \mid \G\in E, 0\le i < d_j \}$ is a $q_jd_j$-fold expansion of $\cO_x$; more precisely it is a disjoint union of~$|E| = q_j(n-1)$ $\phi$-orbits of size~$d_j$.
After $\cO_x$ is expanded to $\cO_xE$, the resulting expansion $X_\phi$ has exactly $m'_j + q_j(n-1) = m'_j$ size~$d_j$ $\phi$-orbits. 

For each $j$ such that $m_j>m'_j$ apply this process to a single element of $X_\phi$ with $\phi$-orbit size $d_j$.
Dually, for each  $j$ such that $m'_j>m_j$ apply the process to an element of $X_\psi$ with $\psi$-orbit size $d_j$, interchanging the roles of $\phi$ and $\psi$.
The result is an expansion $X'_\psi$ of $X_\psi$ and
an expansion $X'_\phi$ of $X_\phi$ such that $m_\phi(d, X'_\phi) = m_\psi(d,X'_\psi)$ for every positive integer~$d$.

Now define 
$\rho:X'_\psi\maps X'_\phi$ by mapping orbits of size $d$ to each other, preserving the order within each orbit.
In detail, for each~$d$ set $m = m_\psi(d, X_\psi) = m_\phi(d, X_\phi)$.
Let $\cO_1,\ldots, \cO_m$ be the size~$d$ $\psi$-orbits (in any order) in $X'_\psi$ and let $\cO'_1,\ldots, \cO'_m$ be the size~$d$ $\phi$-orbits in $X'_\phi$ (also in any order).
Select a representative $o_i \in \cO_i$ and $o'_i \in \cO'_i$ for each of these $2m$ orbits.
We define $\rho$ by the rule $o_i\psi^j \rho = o'_i \phi^j$.
By construction we have $x\psi\rho = x\rho\phi$, for all $x\in X'_\psi$.
Hence $\rho^{-1}\psi\rho=\phi$.
\end{proof}

\begin{example}\label{periodicConjEx}
Let $n=2$, $r=1$ and $V_{2,1}$ be free on $\cxx = \{x\}$. Let 
\[X=\{x\alpha_1^4,x\alpha_1^3\alpha_2,x\alpha_1^2\alpha_2,x\alpha_1\alpha_2,x\alpha_2\alpha_1, x\alpha_2^2\}\]
and let 
$\psi$ be the periodic element of $G_{2,1}$ 
given by the tree pair diagram below. 
\begin{center}
$\psi:$
\begin{minipage}[b]{0.55\linewidth}
\centering
\Tree [ [ [ [. 1 2   ] [.3  ] ] [.4  ] ]   [. 5 6  ] ] \quad $\longrightarrow$\Tree [ [ [ [. 2 1   ] [.4  ] ] [.3  ] ]   [. 6 5  ] ]  
\end{minipage}
\end{center}
Then $\psi$ has cycle type $T_\psi=\{2\}$ and multiplicity $m_\psi(2,X)=3$. The $\psi$-orbits of elements of $X$ are 
$\cO_1=\{x\alpha_1^4, x\alpha_1^3\alpha_2\}$, $\cO_2=\{x\alpha_1^2\alpha_2,x\alpha_1\alpha_2 \}$ and $\cO_3=\{x\alpha_2\alpha_1,x\alpha_2^2\}$. 

Let $Y=\{x\alpha_1,x\alpha_2\}$ and let $\varphi$ be the periodic element of $G_{2,1}$ which swaps the elements of~$Y$.
\begin{center}
$\varphi:$
\begin{minipage}[b]{0.2\linewidth}
\centering
\Tree  [   1 [.2  ] ] \quad $\longrightarrow$\Tree [   2 [.1  ] ] 
\end{minipage}
\end{center}
Then  $\phi$ has cycle type $T_\phi=\{2\}$ and $m_\phi(2,Y)=1$.
From Proposition \ref{conjTorsion}, $\psi$ is conjugate to~$\varphi$. We can construct a conjugator 
by applying the process of the proof. 
We take the same  $2$-fold expansion of both $x\a_1$ and $x\a_2$ to give a $4$-fold expansion 
 \[Y'=\{x\alpha_1^3,x\alpha_1^2\alpha_2,x\alpha_1\alpha_2,x\alpha_2\alpha_1^2,x\alpha_2\alpha_1\alpha_2,x\alpha_2^2\}\]
of $Y$ 
such that $\phi$ is in semi-normal form with respect to $Y'$.
The $\phi$-orbits of elements of $Y'$ are $\cO'_1=\{x\a_1^3, x\a_2\a_1^2\}$, $\cO'_2=\{x\a_1^2\a_2, x\a_2\a_1\a_2\}$ and $\cO'_3=\{x\a_1\a_2,x\a_2^2\}$, so $m_\phi(2,Y')=3$.
Take the representative of each orbit to be the first element listed in its description.
The corresponding conjugator $\rho$ is the element of $G_{2,1}$ which sends $\cO_i$ to $\cO'_i$ via 
 $x\alpha_1^4\rho=x\alpha_1^3$, $x\alpha_1^3\alpha_2\rho=x\alpha_2\alpha_1^2$, $x\alpha_1^2\alpha_2\rho=x\alpha_1^2\alpha_2$, $x\alpha_1\alpha_2\rho=x\alpha_2\alpha_1\alpha_2$, $x\alpha_2\alpha_1\rho=x\alpha_1\alpha_2$ and $x\alpha_2^2\rho=x\alpha_2^2$.
\begin{center}
$\rho:$
\begin{minipage}[b]{0.55\linewidth}
\centering
\Tree [ [ [ [. 1 2   ] [.3  ] ] [.4  ] ]   [. 5 6  ] ] \quad $\longrightarrow$\Tree  [  [. [1 3  ] [.5  ] ]. [ [. [.2  ] [.4  ] ]. 6  ] ]  
\end{minipage}
\end{center}
Then $\rho^{-1}\psi \rho=\phi$. 
\end{example}

From the proof of Theorem \ref{conjTorsion} we extract the following algorithm for the conjugacy of periodic elements of $G_{n,r}$. 
\begin{algorithm}\label{alg:periodic}
Let $\psi$ and $\phi$ be periodic elements of $G_{n,r}$.
 \be[\bfseries Step 1:]
\item Construct $A$-bases $X_\psi$ and $X_\phi$ with respect to which $\psi$ and $\phi$ are in semi-normal form (Lemma \ref{9.9H}).
\item Compute  the cycle types $T_\psi$ and $T_\phi$. If $T_\psi\neq T_\phi$, output ``No'' and stop.
\item Compute $m_\psi(d,X_\psi)$ and $m_\phi(d,X_\phi)$, for all $d\in T_\psi$. If $m_\psi(d,X_\psi)\not\equiv m_\phi(d,X_\phi)\mod n-1$, output ``No'' and stop.
\item Construct $A$-bases $X'_\psi$ and $X'_\phi$ as described in the proof of Theorem \ref{conjTorsion}.
\item Choose a map $\rho$ sending $\psi$-orbits of elements  of $X'_\psi$ to $\phi$-orbits of elements of $X'_\phi$, as in the proof of the theorem, and output $\rho$. 
\ee
\end{algorithm}
\subsection{Conjugacy of regular infinite elements}\label{subsectionRI}

We begin with a necessary condition for two regular infinite elements to be conjugate. 
Let $\psi$ be a regular infinite element of $G_{n,r}$ in semi-normal form with respect to $X$. 
By Lemma \ref{9.1H}, $\psi$ has finitely many semi-infinite $X$-components, each of which has 
a characteristic element~$u$ with some characteristic~$(m, \G)$ (see Definition \ref{charOrbit}).  
If $\psi$ is also in semi-normal form with respect to~$Y$, the $\psi$-orbit of~$u$ has precisely one $Y$-component, which is again semi-infinite of characteristic $(m,\G)$. 
Therefore, the set of pairs $(m,\G)$ which are characteristics of semi-infinite $X$-components is independent of the 
choice of a basis for a semi-normal form.
With this in mind, we make the following definition.  

\begin{definition}\label{setMultipliers}
Let $\psi$ be a regular infinite element of $G_{n,r}$ in semi-normal form with respect to $X$. 
Define 
\[\cM_\psi=\{(m,\Gamma) \mid \text{$(m,\Gamma)$ is the characteristic of a semi-infinite $X$-component of $\psi$} \}.\]
\end{definition}

\begin{example} \label{lookingintheorbit} \label{ex:lio1}
We refer to the following example through the remainder of this section. 
Let $n=2$, $r=1$, $\cxx=\{x\}$ and $\varphi\in G_{2,1}$ be determined by the bijection from $A$-basis 
\[Y=\{x\alpha_1,x\alpha_2\alpha_1,x\alpha_2^2\alpha_1^2,x\alpha_2^2\alpha_1\alpha_2,x\alpha_2^3\}\]
to the  $A$-basis
\[Z=\{x\alpha_1^3,x\alpha_1^2\alpha_2,x\alpha_1\alpha_2,x\alpha_2\alpha_1,x\alpha_2^2\}\]
as illustrated below.
\begin{center}
$\varphi:$
\begin{minipage}[b]{0.5\linewidth}
\centering
\Tree [. 1 [ 2  [  [3 4 ] 5 ] ] ] \quad $\longrightarrow$\Tree [ [ [ [.1  ] [.2  ] ] [.4  ] ]   [. 5 3  ] ] 
\end{minipage}
\end{center}
Then $Y$ is  the minimal expansion of $\cxx$ associated to $\varphi$ and $Z=Y\varphi$. The elements of $\xA\setminus (Y\mA\cup Z\mA)$ are 
$x$ and $x\a_2$, so we start the search for a quasi-normal form by taking the unique minimal expansion 
$X=\{x\a_1,x\a_2\a_1,x\a_2^2\}$ of $\cxx$ not containing either of these elements.

The $X$-component of $x\a_1$ is 
\[
x\a_1\mapsto x\a_1^3\mapsto x\a_1^5\mapsto \cdots,
\]
which is right semi-infinite of characteristic $(1,\a_1^2)$.
Next, $x\a_2\a_1$ belongs to a complete infinite $X$-component: 
\[
\cdots x\a_2^5 \mapsto x\a_2^4 \mapsto x\a_2^3  \mapsto x\a_2\a_1\mapsto x\a_1^2\a_2 \mapsto x\a_1^4\a_2\mapsto x\a_1^6\a_2\mapsto\cdots
\]
Finally, the $X$-component of $x\a_2^2$ is 
\[
\cdots \mapsto x\a_2^2\a_1^4  \mapsto x\a_2^2\a_1^2 \mapsto x\a_2^2,
\]
which is left semi-infinite of characteristic $(-1,\a_1^2)$. Thus $\varphi$ is in quasi-normal form with respect to $X$.

To determine $\cM_\varphi$, we compute the sets $X\mA\setminus Y\mA=\{x\a_2^2,x\a_2^2\a_1\}$ and $X\mA\setminus Z\mA=\{x\a_1,x\a_1^2\}$. The $X$-components we have yet to calculate are those of $x\a_2^2\a_1$ and $x\a_1^2$; these are the sets $\{x\a_2^2\a_1^{2i-1} \mid i\ge 1 \}$ and $\{x\a_1^{2i} \mid i\ge 1\}$ with characteristics $(1, \a_1^2)$ and $(-1, \a_1^2)$ respectively. Hence
\[
	\cM_\phi=\{(1,\a_1^2),(-1,\a_1^2)\}. 
\]
\end{example}

\begin{lemma}\label{conjugateMultipliers}
Let $\psi$ and $\varphi$ be regular infinite elements of $G_{n,r}$ in semi-normal form with 
respect to $A$-bases $X$ and $Y$ respectively. 
Suppose that the elements are conjugate via $\rho\in G_{n,r}$ with $\rho^{-1}\psi\rho=\varphi$.
Then the sets $\mathcal{M}_{\psi}$ and $\mathcal{M}_{\varphi}$ coincide. 
Moreover, $\rho$ maps a semi-infinite $X$-component of $\psi$ into a $\phi$-orbit which contains a (unique) semi-infinite $Y$-component with the same characteristic.
\end{lemma}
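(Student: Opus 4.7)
The plan is to use the fact that characteristics transform naturally under $\Omega$-algebra automorphisms, and then apply Lemma~\ref{AJDLEMMA2} to conclude that characteristic elements give rise to unique semi-infinite components.

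First I would record the key identity: since $\rho \in G_{n,r}$ is an $\Omega$-algebra automorphism, it commutes with every descending operator, so $(v\Gamma)\rho = (v\rho)\Gamma$ for all $v \in V_{n,r}$ and $\Gamma \in A^*$. Combined with $\rho^{-1}\psi\rho = \varphi$, which gives $\psi^m\rho = \rho\varphi^m$ for every $m \in \mathbb{Z}$, we obtain that $u\psi^m = u\Gamma$ holds if and only if $(u\rho)\varphi^m = (u\rho)\Gamma$. Applying this twice (once to each of $u$ and $u\rho$) shows that $u$ is a characteristic element of $\psi$ with characteristic $(m,\Gamma)$ precisely when $u\rho$ is a characteristic element of $\varphi$ with the same characteristic $(m,\Gamma)$, including the minimality condition on $|m|$.

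Next I would establish the inclusion $\mathcal{M}_\psi \subseteq \mathcal{M}_\varphi$. Given $(m,\Gamma) \in \mathcal{M}_\psi$, let $\mathcal{C}$ be a semi-infinite $X$-component of $\psi$ with characteristic $(m,\Gamma)$ and pick a characteristic element $u$ lying in~$\mathcal{C}$ (which exists by Definition~\ref{charOrbit}). Then $u\rho$ is a characteristic element for $\varphi$ with characteristic $(m,\Gamma)$ by the previous paragraph. Since $\varphi$ is in semi-normal form with respect to $Y$, Lemma~\ref{AJDLEMMA2} applies to $u\rho$ and tells us that the $\varphi$-orbit of $u\rho$ has precisely one $Y$-component, which is semi-infinite and has characteristic $(m,\Gamma)$. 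In particular $(m,\Gamma) \in \mathcal{M}_\varphi$. This argument also proves the ``moreover'' statement: the $\psi$-orbit of $u$ is mapped by $\rho$ to the $\varphi$-orbit of $u\rho$, and that orbit contains a unique semi-infinite $Y$-component whose characteristic is $(m,\Gamma)$, matching that of~$\mathcal{C}$.

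Finally, applying the same argument with $\psi$ and $\varphi$ interchanged and with $\rho$ replaced by $\rho^{-1}$ (which satisfies $\rho\varphi\rho^{-1} = \psi$) yields $\mathcal{M}_\varphi \subseteq \mathcal{M}_\psi$, and hence equality. There is no real obstacle here; the main subtlety to guard against is ensuring that the minimality condition in the definition of characteristic is genuinely preserved under $\rho$, which is handled uniformly by the biconditional in the first paragraph.
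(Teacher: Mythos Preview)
Your proof is correct and follows essentially the same approach as the paper: both establish the biconditional $u\psi^m=u\Gamma \iff (u\rho)\varphi^m=(u\rho)\Gamma$ via the identities $\psi^m\rho=\rho\varphi^m$ and $(u\Gamma)\rho=(u\rho)\Gamma$, then invoke Lemma~\ref{AJDLEMMA2} to obtain the unique semi-infinite $Y$-component with the same characteristic, and finish by symmetry. Your version is slightly more explicit about preserving the minimality of $|m|$ in the definition of characteristic, which the paper handles implicitly via the same biconditional.
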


\begin{proof}
If $u$ is an element of  $X\mA$ 
such that $u\psi^m=u\G$, for some $m$ and $\G$, then
\[u\rho\varphi^m=u\psi^m\rho=u\Gamma\rho=u\rho\Gamma.\]
The same argument can be applied starting with an element $v\in Y\mA$  and interchanging $\psi$ and $\varphi$. 
Hence if $u$ belongs to a  $\psi$-orbit of characteristic $(m,\G)$ then 
$u\rho$ belongs to an $\phi$-orbit of  characteristic $(m,\G)$. Thus, from Lemma \ref{AJDLEMMA2}, a $\psi$-orbit that 
contains a semi-infinite
$X$-component of 
 characteristic $(m,\G)$ is mapped by $\rho$ to a $\phi$-orbit which has a semi-infinite $Y$-component of the same  characteristic.
\end{proof}

\begin{definition}\label{EqCl}
Let $\psi$ be in semi-normal form with respect to $X$. The equivalence relation $\equiv$ on  $X$, is  that generated by the relation 
$x \equiv x'$, whenever $x\Gamma$ and $x'\Delta$ are in the same $\psi$-orbit, for some $\Gamma, \Delta \in A^*$.
\end{definition}

\begin{example}\label{ex:lio2}
Let $\phi$ be as in Example \ref{lookingintheorbit}. Then $x\a_2\a_1\phi=(x\a_1)\a_1\a_2$, so $x\a_2\a_1\equiv x\a_1$. Also, 
$x\a_2\a_1\phi^{-1}=(x\a_2^2)\a_2$, so $x\a_2\a_1\equiv x\a_2^2$. Therefore all elements of $X$ are related by $\equiv$. 
\end{example}

\begin{proposition}\label{propsplitY} Let $\psi$ be a regular infinite element in semi-normal form with respect to $X$. 
Let $X=\coprod_{i=1}^{m}\mathcal{X}_i$, where the $\mathcal{X}_i$ are the equivalence classes of $\equiv$ defined on $X$ under the action of $\psi$. 
Then $V_{n,r}$ is the free product of the $\psi$-invariant $\Omega$-subalegbras $V_1,\ldots ,V_m$, where $V_i$ is the $\Omega$-subalgebra generated by $\mathcal{X}_i$.
\end{proposition}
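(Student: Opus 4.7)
The plan is to split the proof into two parts: first establish that each $V_i$ is $\psi$-invariant, then verify the universal property of the free product.

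For $\psi$-invariance, I will adapt the argument used in Theorem~\ref{decomp}.\ref{it:decomp2} to show $V_P$ and $V_{RI}$ are $\psi$-invariant. Let $Y$ be the minimal expansion of $X$ associated to $\psi$ (Definition~\ref{def:ume}) and set $Z = Y\psi$. Fix $x\in \mathcal{X}_i$ and choose $d\in \mathbb{N}$ so that $x\Gamma\in Y\mA$ for every $\Gamma\in A^*$ of length $d$, which is possible by Lemma~\ref{2.2H}. Each such $x\Gamma$ factors as $y\Delta$ with $y\in Y$ and $\Delta\in A^*$, whence
\[
(x\psi)\Gamma \;=\; (x\Gamma)\psi \;=\; (y\psi)\Delta \;\in\; X\mA.
\]
Write $(x\psi)\Gamma = z\Lambda$ with $z\in X$ and $\Lambda\in A^*$. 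Then $x\Gamma$ and $z\Lambda$ are consecutive elements of a single $\psi$-orbit, so the very definition of $\equiv$ gives $x\equiv z$ and therefore $z\in\mathcal{X}_i$. Thus $(x\psi)\Gamma\in\mathcal{X}_i\mA\subseteq V_i$ for every such $\Gamma$, and applying the subalgebra form of Lemma~\ref{AJDLEMMAX} gives $x\psi\in V_i$. Since $\mathcal{X}_i$ generates $V_i$ and $\psi$ is a homomorphism it follows that $V_i\psi\subseteq V_i$. The symmetric argument using $Z$ in place of $Y$ and $\psi^{-1}$ in place of $\psi$ yields $V_i\psi^{-1}\subseteq V_i$, so $V_i\psi = V_i$.

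For the free product decomposition, note that $X$ is an $A$-basis and hence a free $\mathcal{V}_n$-basis of $V_{n,r}$ by Lemma~\ref{anyexpansion}. The property $(\dagger)$ from Lemma~\ref{HigmanLemma2.5} is inherited by each subset $\mathcal{X}_i\subseteq X$, so Lemma~\ref{HigmanLemma2.5}.\ref{it:253} gives that $\mathcal{X}_i$ freely generates $V_i$. To verify the universal property with respect to the inclusions $\mu_i:V_i\hookrightarrow V_{n,r}$, let $\mathcal{B}$ be a $\mathcal{V}_n$-algebra and let $f_i:V_i\to\mathcal{B}$ be homomorphisms. Restricting to $\mathcal{X}_i$ and combining via the partition $X=\coprod_{i=1}^m \mathcal{X}_i$ produces a map $X\to\mathcal{B}$; by freeness of $V_{n,r}$ on $X$ this extends uniquely to a homomorphism $f':V_{n,r}\to\mathcal{B}$. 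Since $f'$ and $f_i$ agree on the generating set $\mathcal{X}_i$ of $V_i$, they agree on all of $V_i$, so $\mu_i f' = f_i$; uniqueness of $f'$ follows from freeness.

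I expect the $\psi$-invariance step to be the main obstacle: it is where the definition of $\equiv$ is used in an essential way, and it depends on Lemma~\ref{AJDLEMMAX} to reduce the question of membership in $V_i$ to a condition on sufficiently deep $A$-descendants, where the minimal expansion $Y$ makes the action of $\psi$ explicit and controllable.
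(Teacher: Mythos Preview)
Your proof is correct and follows essentially the same approach as the paper. The paper's argument for $\psi$-invariance is marginally more direct---it simply chooses $d$ large enough that $x\psi\Gamma\in X\mA$ for all $\Gamma$ of length $d$ (via Lemma~\ref{2.2H}) rather than routing through the minimal expansion $Y$---and it dispatches the free-product claim in a single sentence (``the sets $\mathcal{X}_i$ partition $X$, so $V_{n,r}$ is the free product of the $V_i$''), whereas you spell out the universal property explicitly; but the core ideas and the appeal to Lemma~\ref{AJDLEMMAX} are identical.
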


\begin{proof}
As $\psi$ is regular infinite, the sets $\mathcal{X}_i$ partition $X$, so $V_{n,r}$ is the free product of the $V_{i}$'s. 
To show that $V_i$ is $\psi$-invariant it suffices to show that
if $x\in \cX_i$ then $x\psi$ and $x\psi^{-1}$ are in $V_i$. To this end, choose $d\ge 0$ such that $x\psi \G$ and $x\psi^{-1}\G$
belong to $X\mA$, for all $\G\in A^*$ of length $d$.
Then for all such~$\G$ we have 
$x\psi\G=y\D$ and $x\psi^{-1}\G=z\Lambda$, for some $y,z\in X$ and $\D,\Lambda \in A^*$. By definition then $y\equiv x\equiv z$,
so $x,y,z\in \cX_i$. This implies that $x\psi\G=y\D\in V_i$ and $x\psi^{-1}\G=z\Lambda \in V_i$. This holds for 
all $\G$ of length $d$, so from Lemma \ref{AJDLEMMAX}, $x\psi$ and $x\psi^{-1}$ belong to $V_i$, as required. Hence
$V_i$ is $\psi$-invariant. 
\end{proof}

\begin{lemma}\label{lem:effsplit}
Let $\psi$ be a regular infinite element in semi-normal form with respect to $X$ and let  
 $\cX_i$, $i=1,\ldots ,m$, be the equivalence classes of $\equiv$ defined on $X$ under the action of $\psi$.
We may effectively construct the $\cX_i$.
\end{lemma}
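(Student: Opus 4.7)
The plan is to compute $\equiv$ by listing a finite set of ``elementary'' relations on $X$ and then taking the equivalence relation they generate. Two sources of relations will suffice. First, let $Y$ be the minimal expansion of $X$ associated to $\psi$, constructible by Lemma~\ref{4.1H}, and set $Z = Y\psi \subseteq X\mA$. Each $y\in Y$ admits a unique factorisation $y = x\Gamma$ with $x\in X$ and $\Gamma\in A^*$ (since $Y$ is an expansion of $X$), and similarly $y\psi = x'\Delta$ uniquely; for each such $y$ we record the pair $x\equiv x'$. Second, for each triple $(l,k,r)$ in the set $P(\psi)$ computed by Lemma~\ref{lem:can_find_ponds}, we uniquely decompose $l = x_l\Lambda_l$ and $r = x_r\Lambda_r$ over the $A$-basis $X$ and record $x_l\equiv x_r$. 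The output is the equivalence relation on $X$ generated by all such recorded pairs.

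Soundness of each recorded pair is immediate from Definition~\ref{EqCl}. For completeness, suppose $x\Gamma$ and $x'\Delta$ lie in a common $\psi$-orbit with $x,x'\in X$; after applying Lemma~\ref{AJDLEMMAX} to replace $\Gamma$ and $\Delta$ by sufficiently long common descendants, we may assume that $x\Gamma, x'\Delta\in X\mA$. Since $\psi$ is regular infinite, Corollary~\ref{cor:orbit-type} restricts the shared orbit to types \ref{it:orbit3}--\ref{it:orbit6}: either both elements lie in a single $X$-component, or they lie in the two distinct semi-infinite $X$-components of a pond orbit. In the former case, for any consecutive pair $u, u\psi \in X\mA$ we have $u\in X\mA\cap X\mA\psi^{-1} = Y\mA$ by Lemma~\ref{9.1H}, hence $u = y\Xi$ for some $y\in Y$ and $\Xi\in A^*$, and the base elements of $u$ and $u\psi$ in $X$ coincide with those of $y$ and $y\psi$, which are related by a recorded pair; traversing the finite $\psi$-path between $x\Gamma$ and $x'\Delta$ inside the $X$-component then yields $x\equiv x'$. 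In the latter case, the same argument connects $x$ to $x_l$ and $x'$ to $x_r$ (or vice versa), where $(l,k,r)$ is the pond in question, and the recorded pond pair closes the remaining gap.

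Effectiveness then follows easily: $Y$ and $P(\psi)$ are computable by Lemmas~\ref{4.1H} and~\ref{lem:can_find_ponds}; the unique base-element decomposition of a member of $X\mA$ over the $A$-basis $X$ is immediate; and the equivalence relation generated by a finite list on the finite set $X$ is obtained by transitive closure. The one conceptual obstacle is the treatment of pond orbits, since the $Y$-based pairs alone leave the two semi-infinite $X$-components of a pond unlinked; this is precisely the gap that the $P(\psi)$-pairs, supplied by Lemma~\ref{lem:can_find_ponds} and motivated by Lemma~\ref{cannot_expand_to_hide_pond}, are designed to close.
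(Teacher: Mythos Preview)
Your proof is correct, and it takes a somewhat different route from the paper's.

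The paper does not invoke the pond data $P(\psi)$ at all. Instead it enumerates, for each $v\in X\cup Y\cup Z$, a finite subsequence $C_v$ of the $X$-component of $v$ using the procedure of Lemma~\ref{lem:qnf}, defines $\equiv_0$ from coincidences of base elements within these $C_v$, and then proves $\equiv_0=\equiv$ by a minimal-counterexample argument on the exponent~$p$ in $x\Phi=y\Delta\psi^p$. Your approach replaces this with a smaller and more transparent generating set: the single-step pairs $(\text{base of }y,\ \text{base of }y\psi)$ for $y\in Y$, together with the pond-endpoint pairs extracted from $P(\psi)$ via Lemma~\ref{lem:can_find_ponds}. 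Your completeness argument is then a direct case split on the orbit types of Corollary~\ref{cor:orbit-type}, which makes the role of ponds explicit---exactly the phenomenon the paper elsewhere stresses must not be overlooked. What you gain is modularity and clarity about where pond orbits enter; what the paper's route gains is that it reuses the enumeration machinery of Lemma~\ref{lem:qnf} already in hand, without needing the separate construction of $P(\psi)$.

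One small quibble: your appeal to Lemma~\ref{AJDLEMMAX} to ``descend'' so that $x\Gamma,x'\Delta\in X\mA$ is unnecessary, since $x,x'\in X$ already forces $x\Gamma,x'\Delta\in X\mA$. The step is harmless but can simply be deleted.
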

\begin{proof}
From Lemmas \ref{lem:qnf} and \ref{4.1H}, we may effectively construct $X$, the minimal expansion
$Y$ of $\psi$ with respect to $X$, and the basis $Z=Y\psi$. For each $v\in X\cup Y\cup Z$ 
we may enumerate a finite subsequence~$C_v$ of the $X$-component of $v$
using the procedure of Lemma \ref{lem:qnf}.
Let $\equiv_0$ be the equivalence relation on $X$ generated by $y\equiv_0 z$ if $y\G$ and $z\D$ belong to $C_v$, 
for some $v\in X\cup Y \cup Z$ and $\G, \D\in A^*$. We claim that $\equiv_0 \,= \,\equiv$. 

By definition, $\equiv_0\subseteq \equiv$.   
To prove the opposite inclusion, we suppose that there exist $p\in \ZZ$, $x, y\in X$ and  $\D, \Phi\in A^*$ such that 
$x\Phi=y\D\psi^p$ and $x$ and $y$ are not related under the relation $\equiv_0$. In this case we may assume, interchanging
$x$ and $y$ if necessary, that $p>0$. Let $p$ be a minimal positive integer for which such $x,y$ exist. 
As $y\D\psi^p=x\Phi$ it follows that $y\D\psi^{p'}\in X\mA$, for $p'=1,\ldots, p-1$. Let $y\D\psi =y'\D'$, so 
$y'\D'\psi^{p-1}=x\Phi$. By
minimality of $p$ we have $y'\equiv_0 x$.

Let $\D_0$ be an initial subword of $\D$ of maximal length such that
$y\D_0\psi\in X\mA$, say $\D=\D_0\D_1$. Then $y\D_0\in Y$ and $y\D_0\psi=y''\D_0''$, for some $y''\in X$ and $\D_0''\in A^*$. 
 Now $y'\D'=y\D_0\D_1\psi=y''\D_0''\D_1$, so $y''=y'$ and $\D'=\D_0''\D_1$. Thus $y\D_0\psi=y'\D_0''$ and, as $y\D_0\in Y$,  
$y'\D_0''\in Z$ we have $y\equiv_0 y'$. Therefore 
$y\equiv_0 x$, a contradiction. We conclude that no such $p$, $x$ and $y$ exist and so $\equiv\subseteq \equiv_0$,
as required. Thus $\equiv_0=  \equiv$, and as we may effectively  compute the sets $C_v$, it follows
that we may compute the equivalence classes $\cX_i$.  
\end{proof}

\begin{lemma}\label{thetas}
Let $\psi$ be a regular infinite element in semi-normal form with respect to $X$ and let  
 $\cX_i$, $i=1,\ldots ,m$ be the equivalence classes of $\equiv$ defined on $X$ under the action of $\psi$. 
Define  
\[x\theta_{i} = \left\{ \begin{array}{rl}
 x\psi &\mbox{ if $x \in \mathcal{X}_{i}$,} \\
  x &\mbox{ if $x\in \mathcal{X}_{j}$ for $i\ne j$,}
       \end{array} \right.\]
 for $i=1,\ldots ,m$. 
Then $\theta_i$ extends to an element of $G_{n,r}$ which commutes with $\psi$ and with $\theta_j$, for all $j = 1, \dotsc, m$.
\end{lemma}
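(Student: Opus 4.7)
The plan is to exploit the free product decomposition $V_{n,r} = V_1 \ast \cdots \ast V_m$ established in Proposition~\ref{propsplitY}, and to build each $\theta_i$ factor-by-factor using the universal property of the free product recalled at the end of Section~\ref{omegaAlgebra}.

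First, since each $V_j$ is $\psi$-invariant, the restriction $\psi_j := \psi|_{V_j}$ is an automorphism of $V_j$. I would then define $\theta_i$ to be the homomorphism $\psi_1' \ast \cdots \ast \psi_m' \colon V_{n,r} \to V_{n,r}$, where $\psi_i' = \psi_i$ and $\psi_j' = \mathrm{id}_{V_j}$ for $j \neq i$. On the generating set $X = \coprod_j \cX_j$ this map acts as $x \mapsto x\psi$ when $x \in \cX_i$ and as $x \mapsto x$ otherwise, matching the formula for $\theta_i$ in the statement. A parallel construction with $\psi_i^{-1}$ in place of $\psi_i$ yields a two-sided inverse, so $\theta_i \in G_{n,r}$.

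For the commutativity assertions I would use the fact (immediate from the universal property of the free product) that two endomorphisms of $V_{n,r}$ coincide once their restrictions to each free factor $V_k$ agree. Each $V_k$ is both $\psi$- and $\theta_i$-invariant, so $(\theta_i\psi)|_{V_k}$ and $(\psi\theta_i)|_{V_k}$ make sense as compositions of the restricted maps; both equal $\psi_i^2$ when $k=i$ and $\psi_k$ when $k \neq i$. Hence $\theta_i\psi = \psi\theta_i$. The same strategy handles $\theta_i\theta_j = \theta_j\theta_i$ for $i \neq j$: on $V_i$ both sides equal $\psi_i$, on $V_j$ both equal $\psi_j$, and on any other $V_k$ both are the identity.

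The substantive work has essentially already been done in Proposition~\ref{propsplitY}; given the free product decomposition, the remaining arguments are formal consequences of the universal property. The only step requiring a moment's care is confirming that $\theta_i$, as produced by the free product construction, really does agree with the set-theoretic formula in the statement when restricted to $X$---but this is immediate because the elements of $\cX_j$ lie in the factor $V_j$ and the homomorphism $\psi_j' \ast \cdots$ restricts by construction to $\psi_j'$ on that factor.
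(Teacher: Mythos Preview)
Your proposal is correct and follows essentially the same approach as the paper: both use the free product decomposition $V_{n,r}=V_1\ast\cdots\ast V_m$ from Proposition~\ref{propsplitY}, define $\theta_i$ as $1_{V_1}\ast\cdots\ast\psi_i\ast\cdots\ast 1_{V_m}$, and deduce the commutation relations by working factor-by-factor. Your version is slightly more explicit about constructing the inverse and checking commutativity on each factor, but the underlying argument is the same.
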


\begin{proof}
Let $V_i$ be the $\Omega$-subalgebra generated by $\cX_i$, $i=1,\ldots ,m$. 
Since $V_{n,r}=V_1\ast\cdots\ast V_m$ and  the $V_i$ are $\psi$ invariant,  we have 
$\psi=\psi_1 \ast\cdots \ast \psi_m$, where $\psi_i=\psi|_{V_i}$. Moreover $\psi_i$ is an automorphism of $V_i$. By definition, 
$\psi_i|_{\cX_i}=\psi|_{\cX_i}=\theta_i|_{\cX_i}$, so $\theta_i|_{\cX_i}$ extends to the automorphism $\psi_i$ of $V_i$. Thus
(the extension to $V_{n,r}$ of)  $\theta_i=1_{V_1}\ast \cdots \ast \psi_i\ast \cdots \ast 1_{V_m}$ is an automorphism of $V_{n,r}$. 
For $i<j$ we have $\theta_i\theta_j=1_{V_1}\ast \cdots \ast \psi_i\ast\cdots \ast \psi_j \cdots \ast 1_{V_m}=\theta_j\theta_i$, and
it follows that $\theta_i$ commutes with $\psi$.
\end{proof}

\begin{lemma}\label{firstCharEl}
Let $\psi$ and $\varphi$ be regular infinite elements of $G_{n,r}$, in semi-normal form with respect to the $A$-bases $X$ and $Y$ respectively.
Let $\cX_1, \dotsc, \cX_m$ be the equivalence classes of $\equiv$ defined on $X$ under the action of $\psi$.
Choose a representative $x_i \in \cX_i$ of type~(B) for each~$i$.
If $\psi$ and $\varphi$ are conjugate, there  
exists a conjugator $\rho$ such that $x_i\rho$ is a terminal or initial element in a semi-infinite $Y$-component of $\varphi$.
\end{lemma}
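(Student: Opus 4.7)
My plan is to start with any conjugator $\rho_0$ satisfying $\rho_0^{-1}\psi\rho_0=\varphi$ (which exists by hypothesis) and then modify it on the left by a product of the commuting automorphisms $\theta_i$ supplied by Lemma \ref{thetas}. The key property of the $\theta_i$ is that $\theta_i$ acts as $\psi$ on $\cX_i$ but as the identity on $\cX_j$ for $j\ne i$, and that each $\theta_i$ commutes with $\psi$. Consequently, pre-composing $\rho_0$ by $\theta_i^{k_i}$ shifts the image of $\cX_i$ by a power of $\varphi$ without disturbing the images of the other equivalence classes and without destroying the conjugacy.

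The first step is to analyse $x_i\rho_0$. Because $x_i$ is of type~(B), it is a characteristic element, say with characteristic $(m,\Gamma)$. By Lemma~\ref{AJDLEMMA2}, its $\psi$-orbit has precisely one $X$-component, which is semi-infinite (right if $m>0$, left if $m<0$). Lemma~\ref{conjugateMultipliers} then ensures $x_i\rho_0$ is a characteristic element for $\varphi$ with the same characteristic $(m,\Gamma)$. Applying Lemma~\ref{AJDLEMMA2} a second time---this time to $\varphi$ in semi-normal form with respect to $Y$---the $\varphi$-orbit of $x_i\rho_0$ has exactly one $Y$-component, which is semi-infinite with a uniquely determined endpoint $v_i$ (terminal if $m<0$, initial if $m>0$). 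In particular this orbit is not a pond orbit, so $v_i$ is unambiguous. Since $v_i$ and $x_i\rho_0$ lie in the same $\varphi$-orbit, there is a unique integer $k_i$ with $v_i=(x_i\rho_0)\varphi^{k_i}$.

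The second step is to set $\rho=\theta_1^{k_1}\theta_2^{k_2}\cdots\theta_m^{k_m}\rho_0$. Since each $\theta_j$ commutes with $\psi$, so does the product, yielding $\rho^{-1}\psi\rho=\rho_0^{-1}\psi\rho_0=\varphi$. For the image of $x_i$, the factors with $j\ne i$ act trivially and the factor with $j=i$ acts as $\psi^{k_i}$, so
\[ x_i\rho = (x_i\psi^{k_i})\rho_0 = (x_i\rho_0)\varphi^{k_i} = v_i, \]
which is the desired endpoint of a semi-infinite $Y$-component of $\varphi$.

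The only genuine obstacle is ensuring in the first step that the $\varphi$-orbit of $x_i\rho_0$ really does contain a unique, well-defined semi-infinite $Y$-endpoint to aim for---rather than, say, being a pond orbit with two competing endpoints, or no endpoint at all. This is handled by pairing the preservation of characteristics under conjugation (Lemma~\ref{conjugateMultipliers}) with the structural fact that a characteristic element has a single orbit component (Lemma~\ref{AJDLEMMA2}); once this is settled, assembling $\rho$ from the $\theta_i$'s is a routine manipulation using that these automorphisms commute with each other and with $\psi$, and act independently on the equivalence classes $\cX_i$.
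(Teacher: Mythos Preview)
Your proof is correct and follows essentially the same approach as the paper's: start from an arbitrary conjugator, locate the endpoint of the semi-infinite $Y$-component in the $\varphi$-orbit of each $x_i\rho_0$, and then left-multiply by a suitable product of the commuting $\theta_i$'s to slide each $x_i$ onto that endpoint. Your extra care in invoking Lemma~\ref{AJDLEMMA2} to guarantee that the $\varphi$-orbit of $x_i\rho_0$ has a \emph{unique} semi-infinite $Y$-component (hence a well-defined endpoint) is a nice touch not made explicit in the paper, though strictly speaking the lemma only requires $x_i\rho$ to land on \emph{some} endpoint, so existence alone would suffice.
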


\begin{proof}
Let~$\rho' \in G_{n,r}$ be a conjugator with $\rho'^{-1}\psi\rho'=\varphi$.
We will explain how to modify~$\rho'$ to form another conjugator~$\rho$ satisfying the requirements of the lemma.
Lemma \ref{conjugateMultipliers} asserts that $x_i\rho'$ belongs to a  $\varphi$-orbit containing a semi-infinite $Y$-component, which has 
the same characteristic as $x_i$. 
Let $y_{i}\in Y\mA$ be the initial or terminal element of this $Y$-component.
Then there exists $j_i$ such that $x_i\rho'=y_i\varphi^{j_i}$, meaning that
\[y_i=y_i\varphi^{j_i}\varphi^{-j_i}=x_i\rho'\varphi^{-j_i}=x_i\psi^{-j_i}\rho'.\]
For each equivalence class $\mathcal{X}_i$, define  $\theta_i$ as in Lemma \ref{thetas} and $\rho\in G_{n,r}$ by 
\[\rho = \left( \prod_{i=1}^n \theta_i^{-j_i} \right) \rho'.\]
Then  $\theta=\prod_{i=1}^n\theta_i^{-j_i}$ commutes with $\psi$, so
$\rho^{-1}\psi\rho=\rho^{\prime -1}\theta^{-1}\psi\theta\rho'=\rho'^{-1}\psi\rho'=\phi$;
furthermore for each chosen $x_i\in \mathcal{X}_i$ we have
\[x_i\rho=x_i \left( \prod_{i=1}^n\theta_i^{-j_i} \right) \rho'=x_i\theta_i^{-j_i}\rho'=x_i\psi^{-j_i}\rho'=y_i.\]
Thus $\rho$ is the required conjugator.
\end{proof}

\begin{definition} \label{def:potential_image_endpoints}
Let $\psi$ and $\varphi$ be regular infinite elements in semi-normal form with respect to~$X$ and~$Y$ and let $\mathcal{X}_1, \dotsc, \mathcal{X}_m$, be
 the equivalence classes  of $\equiv$ 
defined on $X$ under the action of $\psi$.
We define $\cR_i(\psi,\phi)$ to be the set of pairs $(x,y)$, where  $x \in \mathcal{X}_i$ is of type (B)
and $y$ is an initial or terminal element of a semi-infinite $Y$-component of $\phi$ with the same characteristic as $x$.

Given a choice of elements
$(x_i,y_i)\in \cR_i(\psi,\phi)$ for each $1 \leq i \leq m$, let 
$\rho_0$ be the map from $\{x_1,\ldots ,x_m\}$ to $\{y_1,\ldots ,y_m\}$ given 
by $x_i\rho_0=y_i$ for each~$i$.
We define $\mathcal{R}(\psi, \varphi)$ to be the set of all such maps~$\rho_0$ constructed in this way.
\end{definition}

The set $\cR_i(\psi,\phi)$ is finite since the number of elements of type (B) 
in $X$ and the number of semi-infinite $Y$-components of $\varphi$ is finite, so $\mathcal{R}(\psi,\varphi)$ is also finite.

\begin{lemma}\label{otherCharEl}
Given $\rho_0\in R(\psi,\varphi)$,  there are finitely many ways of extending  $\rho_0$ to an element $\rho$ of $G_{n,r}$ 
such that $\varphi=\rho^{-1}\psi\rho$. Moreover the existence of such an extension $\rho$ can be effectively determined, 
and if such $\rho$ exists then the
images $y\rho$ can be  effectively determined, for all $y\in X$. 
\end{lemma}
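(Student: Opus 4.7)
The plan is to construct $\rho$ class by class along the free product decomposition $V_{n,r} = V_1 \ast \cdots \ast V_m$ from Proposition \ref{propsplitY}, using the data $x_i\rho = y_i$ to compute the image of every element of $\mathcal{X}_i$ under any valid extension. The key identity is that conjugation $\psi\rho = \rho\phi$, together with the fact that $\rho$ commutes with the operators of $A^*$, gives
\[
	(v\Gamma\psi^k)\rho = (v\rho)\phi^k\Gamma
\]
for every $v \in V_{n,r}$, $\Gamma \in A^*$, and $k \in \ZZ$. The strategy is to propagate the known values $x_i\rho = y_i$ outwards along the equivalence relation $\equiv$.

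First I would enumerate the classes $\mathcal{X}_i$ using Lemma \ref{lem:effsplit}. For each $z \in \mathcal{X}_i\setminus\{x_i\}$, I would invoke the orbit-sharing test of Lemma \ref{9.7H} (which properly handles pond orbits) on pairs $(z\Gamma, x_i\Delta)$ for $\Gamma,\Delta \in A^*$ of bounded length, searching for equations of the form $z\Gamma = x_i\Delta\psi^k$. Each such equation forces $(z\rho)\Gamma = y_i\phi^k\Delta$, determining a single descendant of $z\rho$. Once such equations are obtained for enough strings $\Gamma$ to cover every word of a fixed length $d$, Lemma \ref{AJDLEMMAX}.\ref{it:lx1} determines $z\rho$ uniquely. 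When $z$ is linked to $x_i$ only through a longer equivalence chain, I would iterate the same construction through the intermediate elements of the chain.

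Because $\phi$ has only finitely many semi-infinite $Y$-components and pond orbits (Lemma \ref{lem:can_find_ponds}) and $X$ is finite, for each $z$ there are finitely many candidate tuples $(k,\Delta)$, hence finitely many candidate values of $z\rho$, and finitely many candidate extensions $\rho$ in total. Each candidate would be verified by checking (a) that $X\rho$ is a basis of $V_{n,r}$ via Lemma \ref{HigmanLemma2.5}, and (b) that $\rho^{-1}\psi\rho = \phi$, which reduces to comparing the two maps on a common expansion of their symbols. Candidates that fail either check are discarded; those that survive constitute the desired extensions.

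The hard part will be bounding the search depth $d$ for which the orbit-sharing equations become sufficient to force $z\rho$, thereby ensuring termination of the search. I expect $d$ to be controlled by the finite set of pond widths, by the characteristic powers $m_i$, and by the lengths of the minimal equivalence chains connecting each $z$ to $x_i$, all of which can be computed in advance. With such a bound in hand, the candidate enumeration becomes a finite search, each step is decidable via the machinery already developed, and both the existence test and the explicit description of $y\rho$ for all $y \in X$ follow effectively.
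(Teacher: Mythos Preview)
Your overall strategy—propagate the value $x_i\rho = y_i$ through the class $\mathcal{X}_i$ using the conjugacy identity—is the same as the paper's, but the mechanism you propose has a real gap, and the ``hard part'' you flag is not a loose end but the whole difficulty.

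A single link $z\Gamma = x_i\Delta\psi^k$ in the chain determines exactly one descendant $(z\rho)\Gamma$ (and all of its further descendants), not every descendant of $z\rho$ at depth~$d$. For a different $\Gamma'$ of the same length, the $\psi$-orbit of $z\Gamma'$ need not meet $x_i\mA$ at all: it might only return to $z\mA$ (a self-referential constraint $(z\rho)\Gamma' = (z\rho)\Delta'\phi^{-k'}$) or to $z'\mA$ for some other $z'\in\mathcal{X}_i$ whose $\rho$-image you do not yet know. So there is no reason the system of constraints you can harvest should cover all $\Gamma$ of any fixed length, and hence no route to invoking Lemma~\ref{AJDLEMMAX}. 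Your claim that ``there are finitely many candidate tuples $(k,\Delta)$'' does not follow from the setup either: each equation you find is a \emph{constraint}, not a candidate, and you have not shown how to enumerate the solutions of the (possibly underdetermined, possibly circular) constraint system.

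The paper sidesteps this entirely via a structural observation you do not use: by Lemma~\ref{conjugateMultipliers}, if $z\in X$ is of type~(B) with characteristic $(m,\Lambda)$ then $z\rho$ must lie in a $\phi$-orbit of the same characteristic, hence $z\rho = w\phi^l$ with $w$ ranging over the finitely many endpoints of semi-infinite $Y$-components of that characteristic. Now a \emph{single} chain-link $x_i\Gamma\psi^k = z\Delta$ yields the decidable equation $w\Delta\phi^{l-k} = y_i\Gamma$ of~\eqref{eq:wexist}, which Lemma~\ref{9.7H} solves for~$l$ (at most one solution per~$w$). This is what produces the finite candidate list; type~(C) elements are then handled by pushing them onto type~(B) elements with a power of~$\psi$. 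Without this a~priori restriction on where $z\rho$ can live, your descendant-matching scheme does not terminate in any way you have justified.
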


\begin{proof}
Throughout the proof, when we say $\rho$ exists we mean that an extension $\rho$ of $\rho_0$ to an element of $G_{n,r}$ 
exists and satisfies $\varphi=\rho^{-1}\psi\rho$. 
From Lemma \ref{lem:effsplit}, 
we may effectively 
construct the equivalence classes $\cX_i$, and so also the sets $R_i(\psi, \varphi)$. 
First consider a single equivalence class $\mathcal{X}_i$. 
We are given a representative element $x_i \in \mathcal{X}_i$ of type (B) and an element $y_i$ 
such that $x_i\rho_0=y_i$,
where $y_i$ is an initial or terminal element of a semi-infinite $Y$-component of $\varphi$ with the same characteristic as~$x_i$. 

Let $x\in X$ of type (B). 
Then,  by definition of $\equiv$,  we have $x\in \cX_i$ if and only if 
 there exist elements
$x_i=u_0,\ldots ,u_t=x$ of $X$,  elements  
 $\G_j, \D_j\in A^*$ and $k_j\in \ZZ$ with 
$u_{j+1}\D_{j+1}=u_{j}\G_j\psi^{k_j}$, for $j=0,\ldots ,t-1$. 
Before going any further, we show that we may assume that $u_j$ is of type (B),
for all $j$. Suppose not, say $u_j$ is of type (C). Then, by Lemma \ref{ABC}, 
there exist $k'_j\in\ZZ$, $\Gamma'_j\in A^*$ and $u'_j\in X$ of
 type (B) such that $u_j\psi^{k'_j}=u'_j\G'_j$. 
Now 
\[u_{j-1}\G_{j-1}\psi^{k_{j-1}+k'_j}=u_j\D_j\psi^{k'_j}=u'_j\G'_j\D_j\]
and
\[u'_j\G'_j\G_j\psi^{k_j-k'_j}=u'_j\G'_j\psi^{-k'_j}\G_j\psi^{k_j}
=u_j\G_j\psi^{k_j}=u_{j+1}\D_{j+1},\]
so we may replace $u_j$ by $u'_j$. Continuing this way, eventually all $u_j$ will be
of type (B).

We show, by induction on $t$, that there are finitely many possible values of $x\rho$, for a conjugator $\rho\in G_{n,r}$
such that 
$x\varphi=x\rho^{-1}\psi\rho$ which extends~$\rho_0$. (That is, where $x_i\rho=x_i\rho_0=y_i$.)
We also describe an effective procedure to enumerate
the set of all such elements.  
 Suppose first that $t=1$, so $x=u_1$ and we have $\G=\G_0$, $\D=\D_1$ and $k=k_0$ 
such that $x_i\G\psi^k=x\D$. Given that $\rho$ exists, from Lemma \ref{conjugateMultipliers}, 
$x\rho$ belongs to a semi-infinite $Y$-component $\cC$ of $\varphi$ with the same characteristic
as $x$.  Therefore (if $\rho$ exists) there exists an element $(x,w)\in R_i(\psi,\varphi)$ such that 
$w$ is the initial or terminal element of $\cC$, as well as an integer $l$
such that $w\varphi^l=x\rho$. This implies that 
\[w\Delta\varphi^l=(x\Delta)\rho=x_i\G\psi^k\rho=x_i\G\rho\varphi^k=
x_i\rho_0\varphi^k\G,\]
so
\begin{equation}\label{eq:wexist}
w\Delta\varphi^{l-k}=x_i\rho_0\G=y_i\G.
\end{equation}

Lemma \ref{9.7H} gives an  effective procedure to  determine whether  an 
integer $l$ satisfying \eqref{eq:wexist} exists, and if so find it.
Given $\rho_0$ and $x$, the integer $k$ and the elements $\G$ and $\D$ are uniquely determined
so,  to decide whether an appropriate value $x\rho$ exists, we may check each pair $(x,w)$ in
the  set $R_i(\psi,\varphi)$ to see if \eqref{eq:wexist} holds for some $l$ or not. 
For each such  $w$ there is at most one $l$ such that \eqref{eq:wexist} has a solution
 and, as  $R_i(\psi,\varphi)$ is finite, we may effectively enumerate the  values $w\D\varphi^{l-k}$ that could be assigned
to  $x\rho$. Hence the result holds if $t=1$.  

Now assume
that $t>1$ and the result holds for all $x$ related to $x_i$ by a chain of length
at most $t-1$. Then $u_{t-1}$ is of type (B) and by assumption $u_{t-1}\rho$ may be given
 one of finitely many values, and we have a procedure to enumerate these values.
  Suppose then that $u_{t-1}\rho=v$. Now 
$x=u_m$ and we have $\G_{t-1}, \D_t \in A^*$ and $k_{t-1}\in \ZZ$ such that
$u_{t-1}\G_{t-1}\psi^{k_{t-1}}=x\D_t$. Applying the argument of the case $m=1$ 
with $u_{t-1}$, $\G_{t-1}$, $\D_t$ and $v$ in place of $x_i$, $\G$, $\D$ and $y$, we
see that a finite set of possible values for $x\rho$ may be effectively determined. 
Therefore, by 
induction, the result holds for all $x\in \cX_i$ of 
type (B).

Finally, if $x\in \cX_i$ is of type (C), then by Lemma \ref{ABC} 
there is a $z\Sigma$ in the $X$-component of $x$, for some $z$ of type (B) and 
$\Sigma\in A^*$, \emph{i.e.} $x\psi^p=z\Sigma$ for some integer $p$.
Since we have already determined the possible images of all the type (B) elements in 
$\mathcal{X}_i$, if $\rho$ exists we have, for each choice of $z\rho$, 
\[x\rho=z\Sigma\psi^{-p}\rho=z\rho\Sigma\varphi^{-p}\]
 and this determines the image of the type (C) element under $\rho$ 
(uniquely once we have made our initial choice for the image of $z\rho$).

We carry out this process on each equivalence class in turn.
If the process results in  
at least one  possible value for each element of $X$,
we obtain a potential extension $\rho$ of $\rho_0$.
For such a $\rho$ to be a genuine extension, we need to check if $\rho$ defines an automorphism of $V_{n,r}$.
This is the case if and only if the image $X\rho$ of the $A$-basis~$X$ is itself a basis for~$V_{n,r}$,
which we can effectively determine using Lemma~\ref{HigmanLemma2.5}.
(Note that $X\rho$ need not be an $A$-basis---see Example~\ref{ex:infinite_conjugacy_test} below.)
\end{proof}

We are now able to state the main result of this section. 

\begin{proposition}\label{conjInfinite}
Let $\psi$ and $\varphi$ be regular infinite elements of $G_{n,r}$ in quasi-normal form with respect to $X$ and $Y$ respectively. 
Then $\psi$ is conjugate to $\varphi$ if and only if there exists a map $\rho_0\in \mathcal{R}(\psi, \varphi)$ which extends to an element $\rho$ of $G_{n,r}$ with $\rho^{-1}\psi\rho=\varphi$.
\end{proposition}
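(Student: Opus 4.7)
The plan is to prove the two directions separately, with the backward direction essentially trivial and the forward direction reducing to a controlled modification of an arbitrary conjugator.

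First I would dispose of the backward implication: if some $\rho_0 \in \mathcal{R}(\psi,\varphi)$ extends to $\rho \in G_{n,r}$ with $\rho^{-1}\psi\rho = \varphi$, then by definition $\psi$ and $\varphi$ are conjugate in $G_{n,r}$. Nothing more is required.

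For the forward direction, suppose $\rho' \in G_{n,r}$ satisfies $\rho'^{-1}\psi\rho' = \varphi$. Let $\mathcal{X}_1, \ldots, \mathcal{X}_m$ be the $\equiv$-equivalence classes on $X$ under $\psi$. I would first note that each $\mathcal{X}_i$ contains at least one element $x_i$ of type (B): by Lemma \ref{ABC} every type (C) element $x \in X$ satisfies $x\psi^i = z\Delta$ for some type (B) element $z \in X$, and this forces $x \equiv z$; since $\psi$ is regular infinite there are no type (A) elements, so every class contains some type (B) representative. Fix such a choice $x_1, \ldots, x_m$.

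Next I would apply Lemma \ref{conjugateMultipliers} to each $x_i$: the $\varphi$-orbit of $x_i\rho'$ contains a unique semi-infinite $Y$-component whose characteristic equals that of $x_i$, and hence a unique initial or terminal endpoint $y_i \in Y\langle A\rangle$ of that component. By construction $(x_i, y_i) \in \mathcal{R}_i(\psi,\varphi)$, and there exists an integer $j_i$ with $x_i\rho' = y_i\varphi^{j_i}$, equivalently $x_i\psi^{-j_i}\rho' = y_i$. Now invoke the commuting automorphisms $\theta_i$ from Lemma \ref{thetas}: each $\theta_i$ commutes with $\psi$, acts as $\psi$ on $\mathcal{X}_i$, and fixes the other classes. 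Setting
\[
\rho = \left(\prod_{i=1}^{m} \theta_i^{-j_i}\right)\rho',
\]
the commuting property gives $\rho^{-1}\psi\rho = \rho'^{-1}\psi\rho' = \varphi$, while for each $i$ we have $x_i\rho = x_i\theta_i^{-j_i}\rho' = x_i\psi^{-j_i}\rho' = y_i$. Thus the restriction $\rho_0$ of $\rho$ to $\{x_1, \ldots, x_m\}$ lies in $\mathcal{R}(\psi,\varphi)$ and extends to the conjugator $\rho$, as required.

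The argument is essentially a repackaging of Lemma \ref{firstCharEl} together with the definition of $\mathcal{R}(\psi,\varphi)$; the only mildly subtle point is checking that the equivalence classes always contain type (B) representatives so that the set $\mathcal{R}(\psi,\varphi)$ is non-vacuously defined, but this falls out of Lemma \ref{ABC} and the regular infinite hypothesis. No step presents a serious obstacle once the machinery of Lemmas \ref{conjugateMultipliers}, \ref{thetas}, and \ref{firstCharEl} is in hand.
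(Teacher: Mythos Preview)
Your proof is correct and follows essentially the same approach as the paper: the backward direction is trivial, and for the forward direction the paper simply invokes Lemma~\ref{firstCharEl} to produce the conjugator $\rho$ sending chosen type~(B) representatives to endpoints, then defines $\rho_0$ as its restriction. You have effectively inlined the proof of Lemma~\ref{firstCharEl} (and added the observation that each $\mathcal{X}_i$ contains a type~(B) element, which the paper tacitly assumes), but the structure and key idea are identical.
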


\begin{proof}
If $\rho_0$ extends to an element $\rho \in G_{n,r}$ with $\rho^{-1}\psi\rho=\varphi$, then $\psi$ is certainly conjugate to $\varphi$.

Assume that $\psi$ is conjugate to $\varphi$. Lemma \ref{firstCharEl} tells us that there exists a conjugator $\rho$ such that, for 
each equivalence class $\mathcal{X}_i$, there exists an element $x_i$ of type (B) 
in $\mathcal{X}_i$ with $y_i= x_i\rho$  an initial or terminal element of a semi-infinite $Y$-component of $\varphi$. 
We define $\rho_0$ to be the map $x_1\mapsto y_1,\ldots ,x_m\mapsto y_m$, where $y_i=x_i\rho$ for each $i=1,\ldots ,m$. Thus, $\rho_0$ is an element of the finite set $\mathcal{R}(\psi;\varphi)$. 
Now $\rho_0$  is the restriction of $\rho$ to $\{x_1,\ldots ,x_m\}$, 
so it certainly extends to $\rho$, as required.
\end{proof}

\begin{example} \label{ex:infinite_conjugacy_test}
Let $n=2$, $r=1$ and $V_{2,1}$ be free on $\cxx = \{x\}$. Let 
\[
Y=\{x\alpha_1,x\alpha_2\alpha_1^2,x\alpha_2\alpha_1\alpha_2,x\alpha_2^2\}
\quad\text{and}\quad
Z=\{x\alpha_1^3,x\alpha_1^2\alpha_2,x\alpha_1\alpha_2,x\alpha_2\}
\]
determine the automorphism~$\psi$ as illustrated below.
\begin{center}
$\psi:$
\begin{minipage}[b]{0.4\linewidth}
\centering
 \Tree [. 1 [  [. [.2  ] [.3  ] ]. [.4  ] ].  ]    \quad $\longrightarrow$\Tree [ [ [ [.1  ] [.3  ] ] [.4  ] ]   [.2  ] ] 
\end{minipage}
\end{center}
Then $Y$ is the minimal expansion of $\cxx$ associated to $\psi$ and $Z=Y\psi$.
The only element of $\cxx\mA$ not in $Y\mA\cup Z\mA$ is $x$, so we take $X=\{x\a_1,x\a_2\}$ to be our candidate basis for a quasi-normal form. 
Then $X\mA\setminus Y\mA=\{x\a_2,x\a_2\a_1\}$ and $X\mA\setminus Z\mA=\{x\a_1,x\a_1^2\}$. 
The $X$-components of the first two elements are
\begin{align*}
	x\alpha_2 &\in \{x\alpha_2\alpha_1^{2k}\}_{k \geq 0}
	&
	x\alpha_2\alpha_1 &\in \{x\alpha_2\alpha_1^{2k+1}\}_{k \geq 0},
\intertext{both left semi-infinite with characteristic $(-1,\alpha_1^2)$.
The latter two elements' $X$-components are}
	x\alpha_1 &\in \{x\alpha_1^{2k+1}\}_{k \geq 0}
	&
	x\alpha_1^2 &\in \{x\alpha_1^{2k+2}\}_{k \geq 0},
\end{align*}
both right semi-infinite with characteristic $(1,\alpha_1^2)$.
Hence $\psi$ is in quasi-normal form with respect to $X$, both elements of $X$ are of type (B) 
 and $\cM_\psi=\{(1,\alpha_1^2), (-1,\alpha_1^2)\}$. 
As $(x\a_2)\a_2\psi=(x\a_1)\a_2$ there is one equivalence class of $\equiv$, that is $\cX_1=X$. 

Let $\varphi$ be automorphism of Examples~\ref{ex:lio1} and~\ref{ex:lio2}.  
Then $\phi$ is in quasi-normal form with respect to the $A$-basis $X_\phi=\{x\a_1,x\a_2\a_1,x\a_2^2\}$ and
$\cM_\phi=\cM_\psi$. The initial elements of right semi-infinite $X_\phi$-components are $x\a_1$ and $x\a_1^2$ and the 
terminal elements of left semi-infinite $X_\phi$-components are $x\a_2^2$ and $x\a_2^2\a_1$.

The set $\cR_1(\psi,\varphi)$ consists of the pairs $(x\a_1,x\a_1),(x\a_1,x\a_1^2),(x\a_2,x\a_2^2)$ and $(x\a_2,x\a_2^2\a_1)$.
Let us choose $x\alpha_1$ as our type (B) representative in $\cX_1$. 
We have two choices for the image of $x\a_1$ under $\rho_0$, corresponding to the two pairs $(x\a_1,x\a_1),(x\a_1,x\a_1^2)\in \cR_1$. 
 Denote these by $\rho_1$ and $\rho_2$, where  
\[x\alpha_1\rho_1=x\alpha_1 \textrm{ and }  x\alpha_1\rho_2=x\alpha_1^2.\]
Next we 
 determine the images of the other type (B) element $x\a_2$ of $X$ under the action of $\rho_1$ and $\rho_2$,
 following the proof of Lemma \ref{otherCharEl}.
 
As noted above, $x\a_1\equiv x\a_2$ because $(x\a_1)\a_2\psi^{-1}=(x\a_2)\a_2$, so in
the notation of the proof  of Lemma \ref{otherCharEl} we have $\G=\a_2$, $\D=\a_2$ and $k=-1$. Substituting these values into 
equation \eqref{eq:wexist}, we wish to find $l$ such that 
\[w\a_2\phi^{l+1}=(x\a_1)\rho_i\a_2,\]
where $i=1$ or $2$, and $w=x\a_2^2$ or $x\a_2^2\a_1$. 
Whenever we find such  an $l$ then we set $x\a_2\rho_i=w\phi^l$ and check to see if $\rho_i$ determines an automorphism.
If so, we  check if $\rho_i$ is a conjugator i.e.\ if $\rho_i^{-1}\psi\rho=\phi$.
\begin{description}
 \item[Case $\mathbf{i=1}$: ${x\a_1\rho_1=x\a_1}$.] ~
  \begin{enumerate}[(i)]
   \item When $w=x\alpha_2^2$ we have
   \begin{equation*}
   x\a_2^3\varphi^{l+1} =x\a_1\a_2
   \quad \iff \quad 
   x\a_2\a_1\varphi^{l} =x\a_1\a_2,
   \end{equation*}
   which has no solutions, as may be verified using the process of Lemma \ref{9.7H}. 
     \item When $w=x\alpha_2^2\alpha_1$ we have
   \begin{equation*}
   x\a_2^2\a_1\a_2\varphi^{l+1} =x\a_1\a_2
   \quad\iff\quad
   x\a_1\a_2\varphi^{l} =x\a_1\a_2,
   \end{equation*}
   which has solution $l=0$.
   Therefore we set $x\a_2\rho_1=x\a_2^2\a_1.$
   Now $\rho_1$ now maps $X$ to $\{x\a_1,x\a_2^2\a_1\}$, which is 
   not a basis of $V_{2,1}$ (see Lemma \ref{HigmanLemma2.5}). 
   So the set map~$\rho_1$ extends to an endomorphism which is not an automorphism of $V_{2,1}$.
  \end{enumerate}
  Neither value of $w$ results in a potential conjugator~$\rho_1$.
  
 \item[Case $\mathbf{i=2}$: ${x\a_1\rho_2=x\a_1^2}$.] ~
   \begin{enumerate}[(i)]
     \item When $w=x\alpha_2^2$ we have
       \begin{equation*}
         x\a_2^3\varphi^{l+1} =x\a_1^2\a_2
         \quad\iff\quad
         x\a_2\a_1\varphi^{l} =x\a_1^2\a_2
       \end{equation*}
       which has solution $l=1$. Therefore we set 
       \begin{align}\label{eq:findc}
       \begin{split}
         x\a_2\rho_2
         &=   x\alpha_2^2\phi\\
         &= ( x\a_2^2\a_1^2 )( x\a_2^2\a_1\a_2 ) \lambda  (x\a_2^3)  \lambda \phi \\
         &= ( x\a_2^2\a_1^2 \phi )( x\a_2^2\a_1\a_2 \phi ) \lambda (x\a_2^3 \phi)  \lambda  \\
         &= ( x\a_2^2)( x\a_1\a_2 )      \lambda (x\a_2\a_1) \lambda
       \end{split}
       \end{align}
       In this case $x\alpha_2^2$ is in $X_\phi\mA\setminus W\mA$, where $W$ is the minimal expansion associated to  $\phi$;
       this is why the standard form of $x\a_2\rho_2$ is written using contraction operations~$\lambda$.
       
       To define $\rho_2$ in terms of $X\mA$, we must take an expansion of $X$ at $x\a_2$.
       We take the minimal expansion which allows us to define the map into $\cxx\mA$;
       namely $\{x\a_2\a_1^2, x\a_2\a_1\a_2, x\a_2^2\}$. 
       From \eqref{eq:findc} we obtain
       \begin{align*}
         x\a_2\a_1^2\rho_2&=(x\alpha_2^2)\a_1^2\phi=x\a_2^2\\
         x\a_2\a_1\a_2\rho_2&=(x\alpha_2^2)\a_1\a_2\phi=x\a_1\a_2 \\
         x\a_2^2\rho_2&=(x\alpha_2^2)a_2\phi=x\a_2\a_1.
       \end{align*}
       We see that $\rho_2$ maps the expansion $\{x\a_1, x\a_2\a_1^2, x\a_2\a_1\a_2, x\a_2^2\}$ of $X$ to 
       $\{x\a_1^2, x\a_2^2,x\a_1\a_2,x\a_2\a_1\}$ which is a basis for $V_{2,1}$; so $\rho_2$ determines an element of $G_{2,1}$. 
       It can be verified $\rho_2^{-1}\psi\rho_2=\phi$, so $\rho_2$ is a conjugator. At this point we could stop
       but we give the final case for completeness.
       
     \item When $w=x\alpha_2^2\alpha_1$ we have 
       \begin{equation*}
         x\a_2^2\a_1^3\a_2\psi^{l+1} =x\a_1^2\a_2
         \quad \iff \quad
         x\a_1\a_2 =x\a_1^2\a_2,
       \end{equation*}
       which has no solutions.
   \end{enumerate}
\end{description}
We find one conjugating element $\rho_2$ and we see that $\psi$ and $\phi$ are conjugate via~$\rho_2$.
\end{example}
The algorithm for the conjugacy of regular infinite  elements of $G_{n,r}$ is as  follows. 
\begin{algorithm}\label{alg:reginf}
Let $\psi$ and $\phi$ be regular infinite elements of $G_{n,r}$.
 \be[\bfseries Step 1:]
\item Construct $A$-bases $X_\psi$ and $X_\phi$ with respect to which $\psi$ and $\phi$ are in quasi-normal form (Lemma \ref{lem:qnf}). 
\item Construct   the equivalence classes $\cX_i$, $i=1,\ldots, m$, of $\equiv$ on  $X_\psi$ (Lemma \ref{lem:effsplit}). 
\item Find the initial and terminal elements of semi-infinite $X_\phi$-components of $\phi$, by finding the minimal expansion of $X_\phi$ associated to $\phi$ (Lemma \ref{9.9H}).
\item Construct the sets $\cR_i(\psi, \phi)$. 
\item For each equivalence class $\cX_i$ of $\equiv$ on $X_\psi$ choose an element $x_i\in \cX_i$, of type (B). 
\item For each $i$ and each pair $(x_i,y)$ of $\cR_i(\psi,\phi)$, construct a map $\rho_i:\cX_i\mapsto X_\phi$, using equation \eqref{eq:wexist}, as in the
proof of Lemma \ref{otherCharEl}, if possible. In each case check that $\rho_i$ is an automorphism.
\item For each $m$ tuple $\rho_1,\ldots,\rho_m$ of automorphisms, from the previous step, check whether the map $\rho=\rho_1\ast\cdots \ast \rho_m $ conjugates
$\psi$ to $\phi$. 
\ee
\end{algorithm}

\section{The power conjugacy problem}\label{PowerConj}
For a group with presentation $\langle X\,|\,R\rangle$, the \emph{power conjugacy problem} is to determine, 
 given words $g, h\in \FF(X)$ whether or not there exist non-zero integers $a$ and $b$ such that $g^a$ 
is conjugate to $h^b$ in $G$. 
We may in addition require  that, if the answer to this question is ``yes'', then integers 
$a$ and $b$, and an element $c\in \FF(X)$, are found, such that
$c^{-1}g^ac=_Gh^b$.  We say the power conjugacy problem is \emph{decidable} if there is an algorithm which, 
given $g$ and $h$ outputs ``yes'' if they're conjugate and ``no'' otherwise. Again, the stronger form entails the 
obvious extra requirements. As before, in $G_{n,r}$ we work entirely with symbols for automorphisms, ignoring
the presentation. 

As in the case of  the conjugacy problem, we break the power conjugacy problem down into two cases; 
one for periodic elements and one for regular infinite elements. 
Then, we  construct an algorithm that  combines the two 
parts.

\subsection{The power conjugacy for periodic elements}\label{torPower}
Let $\psi$ and $\varphi$ be periodic elements of $G_{n,r}$, of order $k$ and $m$ respectively, in quasi-normal form with respect to the $A$-bases $X$ and $Y$. 
To test whether there exist $a$, $b\in \ZZ$ such that $\psi^{a}$ is conjugate to $\varphi^{b}$, we apply Proposition 
\ref{conjTorsion} to the pair $\psi^{c}$, 
$\varphi^{d}$, for all $c\in\{1,\ldots ,k\}$ and all $d\in\{1,\ldots ,m\}$. 

\subsection{Regular infinite elements}\label{regPower}

The first step is to compare the sets $\cM_\psi$ and $\cM_{\psi^a}$, $a\in \ZZ$, $|a|>1$, 
for a regular infinite automorphism $\psi$.

\begin{lemma}\label{AJDLEMMACMa} 
Let $\psi$ be a regular infinite element of $G_{n,r}$ and let $a$ be a non-negative integer. 
Then 
\begin{equation}\label{eq:pcM}
\cM_{\psi^a}=\{(m/d,\G^q)\,|\,(m,\G)\in \cM_\psi,\; \gcd(m,a)=d \textrm{ and } |a|=qd\}.
\end{equation}
\end{lemma}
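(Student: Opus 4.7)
Fix an $A$-basis $X$ with respect to which $\psi$ is in quasi-normal form (Lemma \ref{lem:qnf}), and an $A$-basis $Y$ with respect to which $\psi^a$ is in semi-normal form (Lemma \ref{9.9H}). The plan is to establish both inclusions by tracking how the characteristic of an individual element transforms when $\psi$ is replaced by $\psi^a$. The two essential tools will be Lemma \ref{AJDLEMMACM}, which forces any equation $u\psi^k = u\D$ with $\D \neq \e$ to have the form $k = mq'$, $\D = \G^{q'}$, $q' > 0$, where $(m,\G)$ is the characteristic of $u$ for $\psi$; and Lemma \ref{AJDLEMMA2}, which asserts that the orbit of a characteristic element contains a unique, semi-infinite component with that same characteristic. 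The case $a = 0$ is vacuous (both sides are empty), so I assume $a > 0$.

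For the right-to-left inclusion, I would take $(m,\G) \in \cM_\psi$ and a characteristic element $u$ for $\psi$ with this characteristic (which exists because $(m,\G)$ is by definition the characteristic of some semi-infinite $X$-component). Setting $d = \gcd(m,a)$ and $q = a/d$, iteration of $u\psi^m = u\G$ gives
\[
u(\psi^a)^{m/d} \;=\; u\psi^{am/d} \;=\; u\psi^{mq} \;=\; u\G^q,
\]
so $u$ is a characteristic element for $\psi^a$. Minimality of $|m/d|$ follows from Lemma \ref{AJDLEMMACM} applied to $\psi$: any equation $u(\psi^a)^k = u\D$ with $\D \neq \e$ forces $ak = mq'$ with $q' > 0$, and from $\gcd(a/d, m/d) = 1$ one deduces $(a/d) \mid q'$, whence $|k| \ge |m|/d$ with equality iff $k = m/d$ and $q' = q$. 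Applying Lemma \ref{AJDLEMMA2} to $\psi^a$ with basis $Y$ then places the $\psi^a$-orbit of $u$ in a semi-infinite component of characteristic $(m/d, \G^q)$, proving $(m/d, \G^q) \in \cM_{\psi^a}$.

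For the reverse inclusion, I would take $(m', \G') \in \cM_{\psi^a}$ with a characteristic element $v$. Since $v\psi^{am'} = v\G'$ and $\G' \neq \e$, $v$ is also characteristic for $\psi$; let $(m, \G)$ be its $\psi$-characteristic. Lemma \ref{AJDLEMMACM} supplies $q > 0$ with $am' = mq$ and $\G' = \G^q$, while Lemma \ref{AJDLEMMA2} applied to $\psi$ puts $v$ in a semi-infinite $X$-component of characteristic $(m,\G)$, so $(m, \G) \in \cM_\psi$. Repeating the minimality calculation (which is symmetric between $k$ and $m'$ once $ak = mq'$ is obtained) then forces $m' = m/d$ and $q = a/d$, so $(m', \G')$ lies in the right-hand side. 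The main subtlety throughout is that $\psi^a$ need not be in semi-normal form with respect to the basis $X$ chosen for $\psi$, which is why a second basis $Y$ must be introduced when applying Lemma \ref{AJDLEMMA2} to $\psi^a$; this is legitimated by the basis-independence of $\cM_{\psi^a}$ noted above Definition \ref{setMultipliers}.
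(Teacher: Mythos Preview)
Your proof is correct and follows the same strategy as the paper: establish both inclusions by translating characteristics between $\psi$ and $\psi^a$ using Lemmas~\ref{AJDLEMMACM} and~\ref{AJDLEMMA2}, with the minimality argument $p\mid qr$, $\gcd(p,q)=1 \Rightarrow p\mid r$ at the core. The one difference is your introduction of a second basis $Y$ for $\psi^a$: the paper avoids this by observing that every $X$-component of $\psi^a$ is a subsequence of an $X$-component of $\psi$, so $\psi^a$ is automatically in semi-normal form with respect to the \emph{same} basis $X$. This lets the paper work with a single basis throughout and sidesteps the appeal to basis-independence, but your route is equally valid and arguably more explicit in invoking Lemma~\ref{AJDLEMMA2} to place the resulting characteristics in $\cM_\psi$ and $\cM_{\psi^a}$.
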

\begin{proof}
Let $\psi$ be in semi-normal form with respect to $X$.
The $X$-components  of $\psi^a$ are sub-sequences of the $X$-components  of $\psi$, so 
 $\psi^a$ is also in semi-normal form with respect to $X$.
  Suppose to begin with that $a>0$.
 First we show that the right hand side of \eqref{eq:pcM} is  contained in the left hand side. 
If $(m,\G)\in \cM_\psi$ then there exists an element $u$ of $V_{n,r}$ in a semi-infinite $X$-component for $\psi$ of characteristic $(m,\G)$;
and we may assume $u\in X\mA$.  If $d=\gcd(m,a)$,  $p=m/d$, $q=a/d$ and $k=ma/d$, 
then $u(\psi^a)^p=u\psi^{mq}=u\G^q$, (as $mq$ has the 
same sign as $m$). If $a<0$ then, from the above, with $d=\gcd(m,-a)$, $p=m/d$,  $q=-a/d$ and $k=-ma/d$, 
we have $u\psi^{-ap}=u\G^{q}$.  In all cases therefore $u$ is a characteristic element of $\psi^{a}$. 
Furthermore,  if $u(\psi^{a})^r=u\D$, with $\D\neq 1$ then, from Lemma \ref{AJDLEMMACM}, $m|ar$, which we can rewrite
as $pd|qdr$, so $p|qr$. As $\gcd(p,q)=1$, this implies $p|r$, so that $|m/d|=|p|\le |r|$. Hence $u$ has 
characteristic  $(m/d,\G^q)$, with respect to $\psi^a$. As $u$ belongs to a semi-infinite $X$-component for $\psi^a$, it
follows that 
 $(m/d,\G^q)$ is in $\cM_{\psi^a}$ and so we have 
\[\cM_{\psi^a}\supseteq\{(m,\G^q)\,|\,(md,\G)\in \cM_\psi, d>0, \gcd(m,q)=1 \textrm{ and } |a|=qd\}.\]

On the other hand, suppose that $(r,\D)\in  \cM_{\psi^a}$. 
 Then again, there exists 
$u\in X\mA$ such that 
$u$ is a characteristic element of $\psi^a$, so $u\psi^{ar}=u\D$. Thus, from Lemma \ref{AJDLEMMACM}, 
$u$ is a characteristic element for 
$\psi$, with characteristic $(m, \G)\in \cM_\psi$, such that 
$m|ar$ and $\D=\G^t$, where $ar=mt$, $t>0$. Let $d=\gcd(a,m)$, $m=pd$ and  $a=qd$. Then 
$dqr=pdt$, so $qr=pt$ and $\gcd(p,q)=1$, so $r=pr'$ and $t=qt'$, for some $r',t'$. However,
we have $u(\psi^a)^p=u\psi^{dpq}=u\psi^{mq}=u\G^q$, and so, by definition of $(r,\D)\in \cM_{\psi^a}$, 
 we see that $|p|\ge |r|$, so $r'=\pm 1$. Since $a>0$, both $m$ and $r$ have the same sign, so $r'=1$. It now follows that $r=p=m/d$ and $\D=\G^q$, so  
$(r,\D)$ belongs 
to the set on the right hand side of \eqref{eq:pcM}. That is 
\[\cM_{\psi^a}\subseteq\{(m,\G^q)\,|\,(md,\G)\in \cM_\psi, d>0, \gcd(m,q)=1 \textrm{ and } |a|=qd\}.\]

If $a<0$ then the lemma follows by applying the result above to $\mathcal{M}_{\psi^{{-1}(-a)}}$, as for all $\theta\in G_{n,r}$ we have $(m,\G)\in \mathcal{M}_\theta$ if and only if $(-m,\G)\in \mathcal{M}_{\theta^{-1}}$.
\end{proof}

\begin{example}\label{pcExample1}
Let $n=2$ and $r=1$ and let  $V_{2,1}$ be free on $\cxx = \{x\}$. 
Let $\phi$ be the regular infinite element of $G_{2,1}$ defined by the bijection from
\[Y=\{x\a_1^3,x\a_1^2\a_2,x\a_1\a_2,x\a_2\},\]
to 
\[Z=\{x\a_1^2,x\a_1\a_2,x\a_2\a_1,x\a_2^2\},\]
given by  the following tree pair diagram. 
\begin{center}
$\phi:$
\begin{minipage}[b]{0.4\linewidth}
\centering
\Tree [ [ [ [.1  ] [.2  ] ] [.3  ] ]   [.4  ] ]   \quad $\longrightarrow$ \Tree [ [ [.3  ] [.1  ] ] [2 4 ]  ] 
\end{minipage}
\end{center}
Then $Y$ is the minimal expansion of $\{x\}$ associated to $\phi$. The minimal expansion of $\{x\}$ 
contained in $Y\mA\cup Z\mA$ is $X=\{x\a_1^2,x\a_1\a_2,x\a_2\}$. 
$X\mA\setminus Y\mA=\{x\a_1^2\}$ and $X\mA\setminus Z\mA=\{x\a_2\}$. The $X$-components of these elements are 
\[
\cdots \mapsto  x\a_1\a_2\a_1\mapsto x\a_1^3\mapsto x\a_1\a_2\mapsto x\a_1^2
\]
with characteristic $(-2,\a_1)$ and 
\[
x\a_2\mapsto x\a_2^2\mapsto x\a_2^3\mapsto x\a_2^4 \mapsto \cdots
\]
with characteristic $(1,\a_2)$. Hence $\phi$ is in quasi-normal form with respect to $X$ and 
 $\mathcal{M}_{\phi}=\{(\text{-}2,\a_1),(1,\a_2)\}$.

The map $\phi^2$ may be defined by the bijection from
\[U=\{x\a_1^3,x\a_1^2\a_2,x\a_1\a_2\a_1,x\a_1\a_2^2,x\a_2\}\]
to
\[V=\{x\a_1^2,x\a_1\a_2,x\a_2\a_1,x\a_2^2\a_1,x\a_2^3\}\]
given by a different tree pair diagram.
\begin{center}
$\phi^2:$
\begin{minipage}[b]{0.5\linewidth}
\centering
\Tree [ [ [ [.1  ] [.2  ] ] [ 3 4  ] ]   . 5  ]   \quad $\longrightarrow$ \Tree [ [ [.1  ] [.3 ] ]  [ 4  [ 2 5 ] ] ]
\end{minipage}
\end{center}
Then $U$ is the minimal expansion of $\{x\}$ associated to $\phi^2$ and the minimal expansion of $\{x\}$ 
contained in  $U\mA\cup V\mA$ is $X$ again. 
$X\mA\setminus U\mA=\{x\a_1^2,x\a_1\a_2\}$ and  $X\mA\setminus V\mA=\{x\a_2,x\a_2^2\}$; the corresponding $X$-components are
\begin{align*}
\cdots &\mapsto x\a_1^3\mapsto x\a_1^2
& 
\cdots &\mapsto x\a_1\a_2\a_1\mapsto x\a_1\a_2
\end{align*}
with characteristic $(-1,\a_1)$ and 
\begin{align*}
x\a_2 &\mapsto x\a_2^3\mapsto \cdots
& 
x\a_2^2 &\mapsto x\a_2^4\mapsto \cdots
\end{align*}
with characteristic $(1,\a_2^2)$.
Hence $\phi^2$ is in quasi-normal form with respect to $X$ and 
 $\mathcal{M}_{\phi^2}=\{(\text{-}1,\a_1),(1,\a_2^2)\}$, as asserted by Lemma \ref{AJDLEMMACMa}. 
\end{example}

Lemma \ref{AJDLEMMAGCD} and Proposition \ref{divisors} will allow us to find ``minimal'' pairs $(a,b)$ such that $\psi^a$ and $\varphi^b$ are conjugate. 

\begin{lemma}\label{AJDLEMMAGCD} Let $\psi$ and $\varphi$ be regular infinite elements of $G_{n,r}$ and let $c$ be an integer, such that
$c$ is coprime to $m$, for all $m\in \ZZ$ such that $(m,\G)\in \cM_\psi\cup \cM_\varphi$. Then $\psi^c\sim \varphi^c$ 
if and only if $\psi\sim \varphi$. 
\end{lemma}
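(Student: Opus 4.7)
The forward direction is trivial: if $\rho^{-1}\psi\rho=\varphi$ then $\rho^{-1}\psi^c\rho=\varphi^c$. For the converse, suppose $\rho^{-1}\psi^c\rho=\varphi^c$ and set $\psi'=\rho^{-1}\psi\rho$, so that $\psi'^c=\varphi^c$ in $G_{n,r}$. Since conjugation preserves both the regular infinite property and the characteristic set (Lemma~\ref{conjugateMultipliers}), $\psi'$ is regular infinite with $\cM_{\psi'}=\cM_\psi$, and in particular every characteristic power appearing in $\cM_{\psi'}\cup\cM_\varphi$ is coprime to $c$. My plan is to prove the stronger claim that $\psi'=\varphi$ outright; this immediately yields $\rho^{-1}\psi\rho=\varphi$.

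The first step is to show that $\psi'$ and $\varphi$ share the same characteristic elements, with identical characteristics. Given $u$ characteristic for $\psi'$ with $(m,\G)\in\cM_{\psi'}$, Lemma~\ref{AJDLEMMACMa} together with $\gcd(m,c)=1$ shows $u$ is characteristic for $\psi'^c=\varphi^c$ with characteristic $(m,\G^{|c|})$. Applying Lemma~\ref{AJDLEMMACMa} in the other direction to $\varphi$, and invoking the coprimality hypothesis so that the unknown $m'\in\cM_\varphi$ also satisfies $\gcd(m',c)=1$, we conclude $u$ is characteristic for $\varphi$ with some $(m,\G')$ where $\G'^{|c|}=\G^{|c|}$. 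Uniqueness of roots in the free monoid $A^*$ (cf.\ Proposition~\ref{Lothaire1}) then forces $\G'=\G$, and the symmetric argument completes this step.

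The second step is to show $u\psi'=u\varphi$ for every such characteristic element $u$. Pick $s,t\in\ZZ$ with $ms+ct=1$ and $s\geq 0$. A short induction using $u\psi'^m=u\G$, combined with Lemma~\ref{lem:char} (so that $u\G^k$ inherits the characteristic of $u$ for every $k\ge 0$), yields $u\psi'^{ms}=u\G^s$, and the same holds for $\varphi$. Since $\psi'^c=\varphi^c$ in $G_{n,r}$, we have $\psi'^{ct}=\varphi^{ct}$, and therefore
\[
u\psi'=(u\psi'^{ms})\psi'^{ct}=u\G^s\varphi^{ct}=(u\varphi^{ms})\varphi^{ct}=u\varphi.
\]
By Theorem~\ref{decomp}, $V_{n,r}=V_{RI,\psi'}$ is generated by the characteristic elements of $\psi'$. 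Since $\psi'$ and $\varphi$ are $\Omega$-algebra homomorphisms agreeing on this generating set, they are equal, so $\psi'=\varphi$.

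The main obstacle I anticipate is the first step: from $\psi'^c=\varphi^c$ alone the characteristics of $\psi'$ and $\varphi$ need not coincide, because different multipliers can have equal $c$-th powers when $c$ shares a factor with $m$. The coprimality hypothesis is precisely what averts this, permitting a clean recovery of each characteristic from its $c$-th power via unique factorisation in $A^*$; once this identification is made the rest of the argument is a straightforward B\'ezout calculation.
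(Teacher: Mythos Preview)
Your proof is correct and follows essentially the same route as the paper: both arguments hinge on the B\'ezout identity $ms+ct=1$ together with Lemma~\ref{AJDLEMMACMa} to recover the action of the first power from the $c$-th power on characteristic elements. The only cosmetic difference is that you conjugate first (replacing $\psi$ by $\psi'=\rho^{-1}\psi\rho$ and then proving $\psi'=\varphi$), whereas the paper works directly with $\psi$ and shows $u\psi\rho=u\rho\varphi$; your finish is arguably tidier, since invoking that $V_{RI}$ is \emph{by definition} generated by characteristic elements lets you conclude immediately, while the paper separately chases type~(C) basis elements back to type~(B) ones before concluding on all of~$X$.
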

\begin{proof}
If $\psi\sim \varphi$ then it is immediate that $\psi^c\sim \varphi^c$. 
For the converse, let $\rho\in G_{n,r}$ be such that  $\varphi^c=\rho^{-1}\psi^c\rho$ and 
observe that 
we may assume, without loss of generality, that $c>0$. Suppose that $\psi$ and 
$\varphi$ are in quasi-normal form with respect to $A$-bases $X$ and $Y$, respectively. From Lemma \ref{AJDLEMMACMa}, 
$\cM_{\psi^c}=\{(m,\G^c)|(m,\G)\in \cM_\psi\}$ and $\cM_{\varphi^c}=\{(m,\D^c)|(m,\D)\in \cM_\varphi\}$. 

Let $u$ be an element of $V_{n,r}$ which is characteristic for $\psi$, with $\psi$-characteristic $(m,\G)$.
Then, from Lemma \ref{AJDLEMMACMa} (and its proof), $u$ has $\psi^c$-characteristic $(m,\G^c)$ and, as  $\varphi^c=\rho^{-1}\psi^c\rho$, 
its image
$u\rho$ has   $\varphi^c$-characteristic $(m,\G^c)$. Hence, from Lemma \ref{AJDLEMMACMa} again,  $u\rho$ has   
$\varphi$-characteristic $(m,\G)$. 
As $\gcd(c,m)=1$, there exist integers $s$ and $t$ such that $ms+ct=1$. Since $\psi^c\rho=\rho\varphi^c$ we have, in 
the case where $s>0$, 
\begin{align*}
u\psi\rho& 
=u\psi^{ms+ct}\rho=(u(\psi^m)^s)\psi^{ct}\rho=u\G^s\psi^{ct}\rho=u\G^s\rho\varphi^{ct}\\
&=(u\rho)\G^s\varphi^{ct}=(u\rho)\varphi^{ms}\varphi^{ct}=(u\rho)\varphi^{ms+ct}\\
&=u\rho\varphi.
\end{align*}
If $s<0$ then we have $m(-s)+c(-t)=-1$, with $-s>0$ and the argument  above implies instead that $u\psi^{-1}\rho
=u\rho\varphi^{-1}$. In this case, let $v=u\psi$, so $v$ also has $\psi$-characteristic $(m,\G)$ and, applying   the argument 
above to $v$ instead of 
$u$, consequently $v\psi^{-1}\rho=v\rho\varphi^{-1}$, from which it follows that  $u\psi\rho=u\rho\varphi$. 
This applies in particular
 to all elements of $X$ of type (B), with respect to $\psi$. 

Let $y'$ be an element of type (C), with respect to $\psi$;
 so there exists an integer $k$ and an
element $y\in X$ of type (B) such that $y'\psi^k=y\Om$.  
Then $y'=y\Om\psi^{-k}$, and $y\psi^{j}$ has the same $\psi$-characteristic as $y$, for all $j$: 
and so is a characteristic element for $\psi$. From the above then $y\psi^{j}\rho=(y\rho)\varphi^{j}$, for all $j$. 
Now 
\begin{align*}
y'\psi \rho&=y\Om\psi^{1-k}\rho= y\psi^{1-k}\rho\Om=y\rho\varphi^{1-k}\Om=y\rho\varphi^{-k}\varphi\Om\\
&=y\psi^{-k}\rho\varphi\Om=y\psi^{-k}\Om\rho\varphi=y'\rho\varphi.
\end{align*}
Therefore, $y\psi\rho=y\rho\varphi$, for all $y\in X$, so $\psi\sim \varphi$. 
\end{proof}

\begin{definition}\label{AJDDEFINITIONMa} Let $\psi$ be a regular infinite element of  $G_{n,r}$ and let $a$ be a positive integer. 
Define a map $\hsa:\Ms\maps \Msa$ by 
$\hsa(m,\G)=(p,\G^\a)$, where  $d=\gcd(m,a)$, $p=m/d$ and $\a=a/d$. 
\end{definition}

\begin{example}
For $\phi$ in Example \ref{pcExample1},  with $a=2$,  the map
$\widehat{\phi}^2:\Mh\maps \mathcal{M}_{\phi^2}$
is given by
\[\widehat{\phi}^2(\text{-}2,\a_1)=(\text{-}1,\a_1) \quad\text{and}\quad \widehat{\phi}^2(1,\a_2)=(1,\a_2^2).\]
\end{example}

From Lemma \ref{AJDLEMMACMa} this is a well defined map, and is surjective. In general
it is not injective. For instance if  $p,s$ and $t$ are pairwise coprime 
positive integers 
and we have $m_1=ps$, $m_2=pt$ and $a=st$, then $d_1=\gcd(m_1,a)=s$ and  
$d_2=\gcd(m_2,a)=t$. 
If, for some 
non-trivial $\L\in A^*$, we have $(m_1,\L^s)$ and $(m_2,\L^t)$ in $\Ms$ then
both these elements are mapped by $\hsa$ to $(p,\L^{st})$.

\begin{proposition}\label{divisors} Let $\psi$ and $\varphi$ be regular infinite elements of $G_{n,r}$, let $a$ and $b$ 
be positive integers and 
let the images of $\hsa$ and $\hhb$ be 
\[\Msa=\{(p_i,\G_i^{\a_i}) \mid i=1,\ldots, M\}\textrm{ and }
\Mhb=\{(q_i,\D_i^{\b_i}) \mid i=1,\ldots, N\}.\]
For $i=1,\ldots, M$, let 
\[(\hsa)^{-1}(p_i,\G_i^{\a_i})=\{(m_{i,j},\G_{i,j}) \mid 1\le j\le M_i\}\]
and, for $i=1,\ldots, N$, let 
\[(\hhb)^{-1}(q_i,\D_i^{\b_i})=\{(n_{i,j},\D_{i,j}) \mid 1\le j\le N_i\}.\]
If $\psi^a\sim \varphi^b$ then 
$M=N$ and, after reordering if necessary,  we have $p_i=q_i$ and $\G_i^{\a_i}=\D_i^{\b_i}$.
Moreover,  there exist positive integers
$\a,\b, g, d_{i,j}, e_{i,k}, s_{i,j,k}, t_{i,j,k},f_{i,j,k}$,  and $\L_{i,j,k}\in A^*$,  
 for $1\le i \le M$, $1\le j \le M_i$ and $1\le k\le N_i$, such that 
\[\a=\frac{a}{g}=d_{i,j}f_{i,j,k}t_{i,j,k} \textrm{ and }  \b=\frac{b}{g}=e_{i,k}f_{i,j,k}s_{i,j,k}, \textrm{ for all } i,j,k, \]
and 
\[\psi^\a\sim \varphi^\b,\]
where $d_{i,j}$ is a positive divisor of $m_{i,j}$, $e_{i,k}$ is a positive 
divisor of $n_{i,k}$, $\G_{i,j}=\L_{i,j,k}^{s_{i,j,k}}$ and $\D_{i,j}=\L_{i,j,k}^{t_{i,j,k}}$, 
and 
\[f_{i',j',k'} \,\left|\, \middle(\prod_{i,j,k}(t_{i,j,k}d_{i,j})\right)/t_{i',j',k'}d_{i',j'},\]
for all $i',j',k'$.  
\end{proposition}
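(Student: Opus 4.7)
The first assertion is immediate from Lemma~\ref{conjugateMultipliers}: the hypothesis $\psi^a\sim\varphi^b$ forces $\cM_{\psi^a}=\cM_{\varphi^b}$, and comparing the two enumerations yields $M=N$ and, after reindexing, $p_i=q_i$ together with $\G_i^{\a_i}=\D_i^{\b_i}$ for each $i$.

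To establish the second assertion I would first extract primitive roots. Fix $i$. By Proposition~\ref{Lothaire1}, the equality $\G_i^{\a_i}=\D_i^{\b_i}$ yields a primitive word $\L_i\in A^*$ with both $\G_i$ and $\D_i$ in $\{\L_i\}^*$. Because $\hat\psi^a(m_{i,j},\G_{i,j})=(p_i,\G_i^{\a_i})$ we have $\G_{i,j}^{a/\gcd(m_{i,j},a)}=\G_i^{\a_i}$, and uniqueness of primitive roots forces $\G_{i,j}\in\{\L_i\}^*$; similarly $\D_{i,k}\in\{\L_i\}^*$. Take $\L_{i,j,k}=\L_i$ and define $s_{i,j,k},t_{i,j,k}$ by $\G_{i,j}=\L_i^{s_{i,j,k}}$ and $\D_{i,k}=\L_i^{t_{i,j,k}}$ (these are in fact independent of $k$ and $j$ respectively, but we retain the full indexing for notational uniformity).

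Next I would choose $g,\a,\b$ and the remaining divisors. The strategy is to pick $g\mid\gcd(a,b)$ so that, setting $\a=a/g$ and $\b=b/g$, the integer $g$ is coprime to every characteristic power appearing in $\cM_{\psi^\a}\cup\cM_{\varphi^\b}$; the characteristic powers there are precisely $m_{i,j}/d_{i,j}$ and $n_{i,k}/e_{i,k}$, where $d_{i,j}=\gcd(m_{i,j},\a)$ and $e_{i,k}=\gcd(n_{i,k},\b)$. A prime-by-prime analysis dictates the recipe
\[
v_p(\a)=\min\bigl(v_p(a),\,\max_{i,j}v_p(m_{i,j})\bigr),\qquad v_p(\b)=\min\bigl(v_p(b),\,\max_{i,k}v_p(n_{i,k})\bigr),
\]
for each prime $p$, and the compatibility requirement $a/\a=b/\b$ amounts to $v_p(a)-v_p(\a)=v_p(b)-v_p(\b)$ for every $p$. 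This last point is the principal technical obstacle, and I expect it to be discharged by unpacking the matching $\cM_{\psi^a}=\cM_{\varphi^b}$: the identity $\G_i^{\a_i}=\D_i^{\b_i}$ together with primitivity of $\L_i$ forces $a\,s_{i,j,k}\,e_{i,k}=b\,t_{i,j,k}\,d_{i,j}$ for all $(i,j,k)$, a relation rigid enough to propagate the prime factorisations of $a$ and $b$ in the required way.

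With $\a,\b,g,d_{i,j},e_{i,k}$ in hand, set $f_{i,j,k}=\a/(d_{i,j}t_{i,j,k})$; the relation derived above ensures $f_{i,j,k}=\b/(e_{i,k}s_{i,j,k})$, so the displayed equalities $\a=d_{i,j}f_{i,j,k}t_{i,j,k}$ and $\b=e_{i,k}f_{i,j,k}s_{i,j,k}$ hold. Arranging the $p$-adic recipe above so that $\a$ coincides with $\operatorname{lcm}_{i,j,k}(d_{i,j}t_{i,j,k})$ guarantees the final divisibility $f_{i',j',k'}\mid\bigl(\prod_{i,j,k}(t_{i,j,k}d_{i,j})\bigr)/(t_{i',j',k'}d_{i',j'})$, since any least common multiple divides the corresponding product. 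Finally, since $\psi^a=(\psi^\a)^g\sim(\varphi^\b)^g=\varphi^b$ and $g$ is coprime to the characteristic powers of $\psi^\a$ and $\varphi^\b$, Lemma~\ref{AJDLEMMAGCD} delivers $\psi^\a\sim\varphi^\b$, completing the proof.
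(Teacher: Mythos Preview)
Your first paragraph is fine and matches the paper. The trouble is in the construction of $g,\a,\b$.

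You take $\L_{i,j,k}=\L_i$ \emph{primitive}; consequently your exponents $s_{i,j,k}$ and $t_{i,j,k}$ need not be coprime, and you lose the lever that makes the paper's argument work. The paper instead sets $d_{i,j}=\gcd(a,m_{i,j})$, $e_{i,k}=\gcd(b,n_{i,k})$ and $\a_{i,j}=a/d_{i,j}$, $\b_{i,k}=b/e_{i,k}$ \emph{before} choosing any $\L$. From the single equation $\G_{i,j}^{\a_{i,j}}=\D_{i,k}^{\b_{i,k}}$ one then picks $\L_{i,j,k}$ (possibly a power of your primitive $\L_i$) so that $\gcd(s_{i,j,k},t_{i,j,k})=1$. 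Coprimality immediately gives an integer
\[
c_{i,j,k}=\frac{\a_{i,j}}{t_{i,j,k}}=\frac{\b_{i,k}}{s_{i,j,k}},
\]
and one simply takes $g=\gcd_{i,j,k}(c_{i,j,k})$. This $g$ divides every $\a_{i,j}$ (hence $a$) and every $\b_{i,k}$ (hence $b$), so $\a=a/g$ and $\b=b/g$ are integers \emph{automatically}; there is nothing to verify prime by prime. The factorisations $\a=d_{i,j}f_{i,j,k}t_{i,j,k}$ with $f_{i,j,k}=c_{i,j,k}/g$ drop out, and the final divisibility follows because $\gcd_{i,j,k}(f_{i,j,k})=1$: for any prime $p$ some $f_{i_0,j_0,k_0}$ has $p$-valuation zero, and the corresponding factor $t_{i_0,j_0,k_0}d_{i_0,j_0}=\a/f_{i_0,j_0,k_0}$ already absorbs $v_p(f_{i',j',k'})$.

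By contrast, your route leaves the central step unproven: you yourself flag ``the compatibility requirement $a/\a=b/\b$'' as ``the principal technical obstacle'' and only say you \emph{expect} it to follow from $a\,s_{i,j,k}\,e_{i,k}=b\,t_{i,j,k}\,d_{i,j}$. But your $d_{i,j},e_{i,k}$ are defined as $\gcd(m_{i,j},\a)$ and $\gcd(n_{i,k},\b)$, which depend on the very $\a,\b$ you are trying to construct, so that relation is not available at this point; the relation actually forced by $\cM_{\psi^a}=\cM_{\varphi^b}$ involves $\gcd(m_{i,j},a)$ and $\gcd(n_{i,k},b)$. Moreover, your $p$-adic recipe $v_p(\a)=\min(v_p(a),\max_{i,j}v_p(m_{i,j}))$ is not the same as your later stipulation $\a=\operatorname{lcm}_{i,j,k}(d_{i,j}t_{i,j,k})$, and neither is shown to satisfy $a/\a=b/\b$. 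The fix is exactly the paper's move: drop the insistence on primitive $\L_i$, normalise so that $\gcd(s_{i,j,k},t_{i,j,k})=1$, and let $g$ be the gcd of the resulting $c_{i,j,k}$.
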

\begin{proof}
Assume $\psi^a\sim \varphi^b$, with $a,b>0$, and that 
$\rho^{-1}\psi^a\rho=\varphi^b$. From Lemma \ref{conjugateMultipliers}, $\Msa$ and $\Mhb$ are equal, so $M=N$, and 
we may order $\Msa$ so that $(p_i,\G_i^{\a_i})=(q_i,\D_i^{\b_i})$, so $p_i=q_i$ and $\G_i^{\a_i}=\D_i^{\b_i}$.
With the notation for $(\hsa)^{-1}(p_i,\G_i^{\a_i})$ and $(\hhb)^{-1}(q_i,\D_i^{\a_i})$ given in the
statement of the proposition, let 
$d_{i,j}=\gcd(a,m_{i,j})$ and  $e_{i,k}=\gcd(b,n_{i,k})$,
so 
\[m_{i,j}/d_{i,j}=p_i=q_i= n_{i,k}/e_{i,k}\] 
and let 
\[\a_{i,j}=a/d_{i,j},\; \b_{i,k}=b/e_{i,k},\]
and 
\begin{equation}\label{eq:div1}
\G_{i,j}^{\a_{i,j}}=\G_i^{\a_i}=\D_i^{\b_i}=\D_{i,k}^{\b_{i,k}},
\end{equation}
by Definition \ref{AJDDEFINITIONMa}, for $1\le i\le M$, $1\le j\le M_i$ and $1\le k\le N_i$.

As $\G_{i,j}^{\a_{i,j}}=\D_{i,k}^{\b_{i,k}}$, by Proposition \ref{Lothaire1}, there exist
$\L_{i,j,k}\in A^*$ and positive integers $s_{i,j,k}, t_{i,j,k}$ such that $\G_{i,j}=\L_{i,j,k}^{s_{i,j,k}}$ 
and $\D_{i,j}=\L_{i,j,k}^{t_{i,j,k}}$. Taking a power of $\L_{i,j,k}$ if necessary, we may assume that 
$\gcd(s_{i,j,k}, t_{i,j,k})=1$. Then 
\begin{equation}\label{eq:div2}
\L_{i,j,k}^{s_{i,j,k}\a_{i,j}} =\G_{i,j}^{\a_{i,j}}=\D_{i,k}^{\b_{i,k}}=\L_{i,j,k}^{t_{i,j,k}\b_{i,k}},
\end{equation}
so $s_{i,j,k}\a_{i,j}=t_{i,j,k}\b_{i,k}$. As $s_{i,j,k}$ and $t_{i,j,k}$ are coprime this implies that
$\a_{i,j}/t_{i,j,k}=\b_{i,k}/s_{i,j,k}=c_{i,j,k}\in \ZZ$, and 
$\a_{i,j}=c_{i,j,k}t_{i,j,k}$ and $\b_{i,k}=c_{i,j,k}s_{i,j,k}$.

Let \[g=\gcd(\{c_{i,j,k}|1\le i \le M,1\le j\le M_i, 1\le k \le N_i\}).\]
Then there exist integers $f_{i,j,k}$ such that $c_{i,j,k}=gf_{i,j,k}$, for all $i,j,k$. 
From Lemma \ref{AJDLEMMACMa}, $\cM_{\psi^{a/g}}$ consists of elements $(m/p,\G^\a)$, where $(m,\G)\in \cM_\psi$, $p=\gcd(m,a/g)$ and 
$\a=a/gp$. Similarly,  elements of $\cM_{\varphi^{b/g}}$ are of the form 
$(n/q,\D^\b)$, where $(n,\D)\in \cM_\varphi$, $q=\gcd(n,b/g)$ and 
$\b=b/gq$. 
Now $g|c_{i,j,k}$ and $c_{i,j,k}|\a_{i,j}$ and $c_{i,j,k}|\b_{i,k}$. Therefore 
 $\gcd(m_{i,j},a/g)=\gcd(m_{i,j},a)=d_{i,j}$ and similarly $\gcd(n_{i,k},b/g)=e_{i,k}$. 
Thus $g$ is coprime to 
\[p_i=\frac{m_{i,j}}{\gcd(m_{i,j},a/g)}=\frac{n_{i,k}}{ \gcd(n_{i,k},b/g)},\] 
for all $i,j,k$. From Lemma \ref{AJDLEMMAGCD}, it follows that $\psi^{a/g}\sim \varphi^{b/g}$. 

Now 
\[a/g=\a_{i,j}d_{i,j}/g=c_{i,j,k}t_{i,j,k}d_{i,j}/g=f_{i,j,k}t_{i,j,k}d_{i,j}\] and similarly
\[b/g=f_{i,j,k}s_{i,j,k}e_{i,k},\] 
for all $i,j,k$. Also 
\[\gcd(\{f_{i,j,k}|1\le i \le M,1\le j\le M_i, 1\le k \le N_i\})=1\] 
so, for fixed $i',j',k'$, 
\[f_{i',j',k'} \,\left|\, \middle(\prod_{i,j,k}(t_{i,j,k}d_{i,j})\right)/t_{i',j',k'}d_{i',j'}.\]

\end{proof}

\begin{corollary}\label{AJDCOROLLARYPC} The power conjugacy problem for regular infinite elements of $G_{n,r}$ 
is solvable. 
\end{corollary}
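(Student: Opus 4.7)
The plan is to reduce the power conjugacy problem for regular infinite elements to finitely many instances of the ordinary conjugacy problem (Algorithm~\ref{conjAlgorithm}), using Proposition~\ref{divisors} to bound the set of candidate exponent pairs $(a,b)$ that need to be tested.

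First I would compute the finite sets $\cM_\psi$ and $\cM_\varphi$ directly from quasi-normal form representations of $\psi$ and $\varphi$, as in Lemma~\ref{lem:qnf} and Definition~\ref{setMultipliers}. Since power conjugacy permits any non-zero integers $a,b$, and since $\cM_{\psi^{-1}} = \{(-m,\G) \mid (m,\G)\in \cM_\psi\}$, it suffices to handle the four sign choices $(\pm \psi, \pm \varphi)$ separately; in each case we look for \emph{positive} exponents such that the corresponding power is conjugate. So from now on we focus on finding positive $a, b$ with $\psi^a \sim \varphi^b$.

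The key observation is that Proposition~\ref{divisors} constrains any such pair very tightly. If $\psi^a \sim \varphi^b$, then by Lemma~\ref{conjugateMultipliers} we must have $\cM_{\psi^a} = \cM_{\varphi^b}$; checking this equality restricts the image multisets under $\widehat{\psi^a}$ and $\widehat{\varphi^b}$. After pairing up elements of $\cM_\psi$ and $\cM_\varphi$ compatibly, Proposition~\ref{divisors} extracts positive integers $d_{i,j}, e_{i,k}, s_{i,j,k}, t_{i,j,k}, f_{i,j,k}$ and words $\L_{i,j,k}\in A^*$ --- all effectively computable from $\cM_\psi$ and $\cM_\varphi$ --- and shows that $\psi^\alpha \sim \varphi^\beta$ for the reduced exponents $\alpha = a/g$, $\beta = b/g$, where
\[\alpha = d_{i,j} f_{i,j,k} t_{i,j,k}, \qquad \beta = e_{i,k} f_{i,j,k} s_{i,j,k},\]
and each $f_{i',j',k'}$ is a divisor of the fixed integer $\bigl(\prod_{i,j,k}(t_{i,j,k}d_{i,j})\bigr)/(t_{i',j',k'}d_{i',j'})$. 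In particular the $f_{i,j,k}$ (and hence $\alpha$ and $\beta$) range over an \emph{effectively computable finite set} $\mathcal{C}(\psi,\varphi)$ of candidate pairs.

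The algorithm is then: for each of the four sign combinations, enumerate the (finitely many) choices of bijective pairings between $\cM_{\psi^a}$ and $\cM_{\varphi^b}$ used in Proposition~\ref{divisors}, compute the associated finite candidate set $\mathcal{C}(\psi,\varphi)$ of reduced pairs $(\alpha,\beta)$, and for each $(\alpha,\beta)\in\mathcal{C}(\psi,\varphi)$ apply Algorithm~\ref{conjAlgorithm} to test whether $\psi^\alpha$ is conjugate to $\varphi^\beta$. (Symbols for $\psi^\alpha$ and $\varphi^\beta$ are easily computed from those of $\psi,\varphi$.) If some test returns a conjugator $\rho$, output $(\alpha,\beta,\rho)$; if all tests fail for all sign choices, output ``no.'' Correctness in one direction is immediate; in the other, it follows from Proposition~\ref{divisors}: any witness $(a,b)$ for power conjugacy gives rise to a reduced witness $(\alpha,\beta)\in \mathcal{C}(\psi,\varphi)$ which the algorithm will detect.

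The only mildly delicate point --- and the main obstacle --- is bookkeeping: verifying that the implicit finite enumeration of pairings, divisors $f_{i,j,k}$, and factorisations $\G_{i,j}=\L_{i,j,k}^{s_{i,j,k}}$, $\D_{i,k}=\L_{i,j,k}^{t_{i,j,k}}$ coming from Proposition~\ref{Lothaire1} really is algorithmic. This is routine once one fixes a canonical primitive root for each characteristic multiplier (Proposition~\ref{Lothaire1}), so the set $\mathcal{C}(\psi,\varphi)$ can indeed be enumerated from the finite data $\cM_\psi,\cM_\varphi$. In the paper proper, this enumeration is made explicit as Algorithm~\ref{powerconjAlgorithm} in Section~\ref{powerconjAlgorithmSec}, and the argument above is precisely what establishes Theorem~\ref{PCG}.
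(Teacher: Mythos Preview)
Your proposal is correct and follows essentially the same approach as the paper: reduce to positive exponents via sign choices, invoke Proposition~\ref{divisors} to show that any witness $(a,b)$ descends to a reduced pair $(\alpha,\beta)$ lying in a finite, effectively enumerable set determined by the data $d_{i,j}, e_{i,k}, s_{i,j,k}, t_{i,j,k}, f_{i,j,k}$, and then test each candidate with the conjugacy algorithm. Two small points: for regular infinite elements the relevant conjugacy test is Algorithm~\ref{alg:reginf} (which Algorithm~\ref{conjAlgorithm} reduces to in this case), and the explicit enumeration you mention is Algorithm~\ref{alg:pcRI} rather than Algorithm~\ref{powerconjAlgorithm}, which handles the general (mixed periodic/regular infinite) case.
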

\begin{proof}
Let $\psi$ and $\varphi$ be regular infinite elements of $G_{n,r}$. Suppose that $\psi^a$ is 
conjugate to $\varphi^b$, for some non-zero $a,b$. Replacing either $\psi$ or $\varphi$ or both by their inverse, 
we may assume that $a, b>0$. Then, in the notation of the proposition above, we have
$\psi^\a\sim \varphi^\b$, where $\a=f_{i,j,k}t_{i,j,k}d_{i,j}$ and $\b=f_{i,j,k}s_{i,j,k}e_{i,k}$. From the 
conclusion of the theorem it is clear that there are finitely many choices for $f_{i,j,k}$, $s_{i,j,k}$, 
$t_{i,j,k}$, $d_{i,j}$ and $e_{i,k}$. Hence there are finitely many possible $\a$ and $\b$, and we 
may effectively construct a list of all possible pairs $(\a,\b)$. Having constructed this list we may 
check whether or not $\psi^\a\sim \varphi^\b$, using Algorithm \ref{alg:reginf}. Hence 
we may decide whether or not there exist $a,b$ such that $\psi^a\sim \varphi^b$.
\end{proof}

The proof of Proposition \ref{divisors} forms the basis for the algorithm for the power conjugacy 
problem. Given regular infinite elements $\psi,\phi\in G_{n,r}$ we construct bounds $\hat a$ and $\hat b$ such that if 
some (positive) power of $\psi$ is conjugate to a (positive) power of $\phi$ then 
$\psi^c\sim \phi^d$, for $0<c\le \hat a$ and $0<d\le \hat b$. Following the proof of the 
proposition, if $\psi^a\sim \phi^b$, for some $a,b>0$, then the inverse images 
$\hat\psi_a$ and $\hat\phi_b$ partition $\cM_\psi$ and $\cM_\phi$, so we have integers
$L, M_i, N_i$ such that 
\[\cM_\psi=\cup_{i=1}^L\{(m_{i,j},\G_{i,j})  \mid  1\le j\le M_i\}\]
and
\[\cM_\phi=\cup_{i=1}^L\{(n_{i,k},\D_{i,k})  \mid  1\le  k\le N_i\}.\]

Given any $\G\in A^*$ there exists unique $\L\in A^*$ and $r\in \NN$ such that 
$\G=\L^r$ and if $\G=\L'^s$ then $s\le r$. We denote  $\L$ by $\sqrt{\G}$ and $r$ by
$m(\G)$. 
From equations \eqref{eq:div1} and \eqref{eq:div2}, it follows that 
\[\sqrt{\L_{i,j,k}}=\sqrt{\G_{i,j}}=\sqrt{\G_i}=\sqrt{\D_i}=\sqrt{\D_{i,k}}\]
and
\[s_{i,j,k}\le m(\G_{i,j})\textrm{ and } t_{i,j,k}\le m(\D_{i,k}),\]
for $1\le i\le L$, $1\le j\le M_i$ and $1\le k\le N_i$. 

From Proposition \ref{divisors} we have $\a=d_{1,1}f_{1,1,1}t_{1,1,1}$ and 
$f_{1,1,1}\le \prod_{(i,j,k)\neq (1,1,1)}d_{i,j}t_{i,j,k}$. As $d_{i,j}\le |m_{i,j}|$ and 
$t_{i,j,k}\le  m(\D_{i,k})$, this means that 
\begin{align}
\a &\le \prod_{i=1}^L \prod_{j=1}^{M_i} \prod_{k=1}^{N_i}  d_{i,j}     t_{i,j,k}                         \notag\\
   &\le \prod_{i=1}^L \prod_{j=1}^{M_i} \prod_{k=1}^{N_i} |m_{i,j}| \, m(\D_{i,k})                       \notag\\
   &\le \prod_{i=1}^L \prod_{j=1}^{M_i} \left( |m_{i,j}|^{N_i} \prod_{k=1}^{N_i}  m(\D_{i,k}) \right) \notag\\
   &\le \prod_{i=1}^L \left( \prod_{j=1}^{M_i} |m_{i,j}|   \right)^{N_i}
                      \left( \prod_{k=1}^{N_i} m(\D_{i,k}) \right)^{M_i}.\label{ag:abnd}
\end{align}
Similarly 
\begin{equation}
\b \le \prod_{i=1}^L\left[\left(\prod_{k=1}^{N_i} |n_{i,k}|\right)^{M_i}\left( \prod_{j=1}^{M_i} m(\G_{i,j}) \right)^{N_i}\right].\label{eq:bbnd}
\end{equation}
Now suppose that a solution $\psi^{a'}\sim \phi^{b'}$ gives rise to  sub-partitions of the partitions of  $\cM_\psi$ and  
$\cM_\phi$ above.
Straightforward calculation shows that in this case, the bounds on $\a$ and $\b$ obtained are again less than or equal 
to the right hand sides of \eqref{ag:abnd} and \eqref{eq:bbnd} (calculated using the original partitions).
Thus, in computing (upper) bounds $\hat a$ and $\hat b$ we may take partitions of $\cM_\psi=P_1\cup \cdots\cup P_L$ and $\cM_\phi=Q_1\cup \cdots \cup Q_L$ with
$L$ as small as possible, subject to the constraint that, for each $i$ such that 
$1\le i\le L$ we have $\sqrt{\G}=\sqrt{\D}$, for 
all $(m,\G)\in P_i$ and $(n,\D)\in Q_i$. 
If these partitions satisfy these properties, and this does not hold for any partition of fewer than $L$ subsets, 
(in other words the partitions are formed by gathering together characteristics with the same root) then the 
bounds $\hat a$ and $\hat b$ are given by 
\begin{equation}\label{eq:acomp}
\hat a=\prod_{i=1}^L\left[\left(\prod_{(m,\G)\in P_i}|m|\right)^{|Q_i|}\left(\prod_{(n,\D)\in Q_i}m(\D)\right)^{|P_i|}\right]
\end{equation}
and
\begin{equation}\label{eq:bcomp}
\hat b=\prod_{i=1}^L\left[\left(\prod_{(n,\D)\in Q_i}|n|\right)^{|P_i|}\left(\prod_{(m,\G)\in P_i}m(\G)\right)^{|Q_i|}\right].
\end{equation}

\begin{example} \label{ex:pc1}
Let $n=2$ and $r=1$ and $V_{2,1}$ be free on $\{x\}$. 
Let $\psi$ be the regular infinite element of $G_{2,1}$ of Examples \ref{snf0} and \ref{ex:snf1}. Then $\psi$  is 
in quasi-normal form with respect to the $A$-basis $X=\{x\a_1,x\a_2\}$ and  
$\cM_\psi=\{(1,\a_2),(\text{-}1,\a_1)\}$.

Let $\varphi$ be the regular infinite element of $G_{2,1}$ defined by a bijective map from 
\[Y_\phi=\{x\a_1,x\a_2\a_1^3,x\a_2\a_1^2\a_2,x\a_2\a_1\a_2,x\a_2^2\}\]
to 
\[Z_\phi=\{x\a_1^2,x\a_1\a_2\a_1,x\a_1\a_2^2\a_1,x\a_1\a_2^3,x\a_2\}\]
given as illustrated below.

\begin{center}
$\phi:$
\begin{minipage}[b]{0.5\linewidth}
\centering
 \Tree [. 1 [  [. [  2 3   ] [.4  ] ]. [.5  ] ].  ]  \quad $\longrightarrow$ \Tree [ [ [.3  ][ [.4  ] [  [.5  ] [.1  ] ] ]  ]   [.2 ] ]  \end{minipage}
\end{center}

Then $Y_\phi$ is the minimal expansion of $\{x\}$ associated to $\phi$ and 
the minimal expansion of $\{x\}$ contained in $Y_\phi\mA\cup Z_\phi\mA$ is $X$. We have 
$X\mA\setminus Y_\phi\mA=\{x\a_2,x\a_{2}\a_1,x\a_{2}\a_1^2\}$ and  $X\mA\setminus Z_\phi\mA=\{x\a_1,x\a_{1}\a_2,x\a_{1}\a_2^2\}$
The $X$-components of these elements are:
\[ 
x\a_1 \mapsto x\a_1\a_2^3\mapsto  x\a_1\a_2^6\mapsto \cdots
\]
\[ 
x\a_1\a_2 \mapsto x\a_1\a_2^4\mapsto  x\a_1\a_2^7\mapsto \cdots
\]
\[ 
x\a_1\a_2^2 \mapsto x\a_1\a_2^5\mapsto  x\a_1\a_2^8\mapsto \cdots
\]
\[
 \cdots\mapsto x\a_2\a_1^6 \mapsto  x\a_2\a_1^3 \mapsto  x\a_2
\]
\[
 \cdots\mapsto x\a_2\a_1^7 \mapsto  x\a_2\a_1^4 \mapsto  x\a_2\a_1
\]
\[
 \cdots\mapsto x\a_2\a_1^8 \mapsto  x\a_2\a_1^5 \mapsto  x\a_2\a_1^2
\]
so $\phi$ is in quasi-normal form with respect to $X$ and $\cM_\varphi=\{(1,\a_2^3),(\text{-}1,\a_1^3)\}$.
In the notation above, we have partitions $\cM_\psi=P_1\cup P_2$ and $\cM_\phi=Q_1\cup Q_2$ with $P_1=\{(1,\a_2)\}$, $P_2=\{,(\text{-}1,\a_1)\}$, $Q_1=\{(1,\a_2^3)\}$ and $Q_2=\{(\text{-}1,\a_1^3)\}$,
so we obtain bounds $\hat a=9$ and $\hat b=1$.  

 Assume there exists positive integers $a,b$ such that $\psi^a\sim \varphi^b$. We may now assume that $a\le 9$ and $b=1$. 
The map $\hsa: \Ms \to \Msa$  is given by 
\[\hsa(1,\a_2)=(1/d_1,\a_2^{a/d_1})\textrm{, }\hsa(-1,\a_1)=(-1/d_2,\a_1^{a/d_2}),\]
where $d_1=\gcd(1,a)=1$ and $d_2=\gcd(\text{-1},a)=1$.
Thus 
\[\Msa=\{(1,\a_2^{a}),(\text{-}1,\a_1^{a})\}.\]
The only possible choice for $a$ making $\Msa=\Mhb=\cM_\phi$ is $a=3$. 
 Applying Algorithm \ref{alg:reginf}   to  $\psi^3$ and $\varphi$ 
we find a conjugating element $\rho$, given by $x\a_1\rho= x\a_2$ and $x\a_2\rho=x\a_1$. 
\end{example}

\begin{remark} \label{rem:negative_powers}
In Corollary~\ref{AJDCOROLLARYPC}  the powers $a$ and $b$ were positive, giving us upper bounds $a \leq \hat a$ and $b \leq \hat b$ for the minimal powers which solve the power conjugacy problem. Now suppose that $a < 0$ and $b > 0$. We may write $\psi^a = (\psi^{-1})^{-a}$ and then $-a > 0$. If we apply Corollary~\ref{AJDCOROLLARYPC} to $(\psi^{-1}, \phi)$, we obtain a second pair of bounds $-a \leq \bar a$ and $b \leq \bar b$. Observing that $(m, \G) \in \cM_\psi$ if and only if $(-m, \G) \in \cM_{\psi^{-1}}$, we note that this replacement $\psi \mapsto \psi^{-1}$ preserves the absolute value $|m|$ of all characteristic multipliers. Thus each of the terms $|m_{i,j}|$, $|n_{i,k}|$, $|m|$ and $|n|$ in equations~(\ref{ag:abnd}--\ref{eq:bcomp}) is unchanged. We conclude that $\bar a = \hat a$ and $\bar b = \hat b$.

The same argument applies equally well to the remaining two cases $a > 0$, $b < 0$ and $a < 0$, $b < 0$. Thus, once we have obtained $\hat a$ and $\hat b$, we need only to check the ranges $1 \leq |a| \leq \hat a$ and $1 \leq |b| \leq \hat b$ to find minimal conjugating powers.
\end{remark}

\begin{example}\label{ex:pc2}
Let $\psi$ be as in Example \ref{ex:pc1} and let $\phi$ be as in Example \ref{pcExample1}. 
Then $\cM_\psi=\{(1,\a_2),(\text{-}1,\a_1)\}$ and $\cM_\phi=\{(\text{-}2,\a_1),(1,\a_2)\}$.  
In the notation above, we have partitions $\cM_\psi=P_1\cup P_2$ and $\cM_\phi=Q_1\cup Q_2$ 
with $P_1=\{(1,\a_2)\}$, 
$P_2=\{(\text{-}1,\a_1)\}$, 
$Q_1=\{(1,\a_2)\}$ and 
$Q_2=\{(\text{-}2,\a_1)\}$,
so we obtain bounds $\hat a=1$ and $\hat b=2$.

Assume there exist positive integers $a,b$ such that $\psi^a\sim\phi^b $; with $a=1$ and $b\le 2$.  
The map
$\hhb: \Mh \to \Mhb$ is given by
\[\hhb(1,\a_2)=(1/d_1,\a_2^{b/d_1}),\;
\hhb(\text{-}2,\a_1)=(\text{-}2/d_2,\a_1^{b/d_2}),\]
where $d_1=\gcd(1,b)=1$ and $d_2=\gcd(\text{-2},b)=b$. 
Thus, 
\[\Mhb=\{(1,\a_2),(\text{-}2,\a_1)\}
\quad\text{or}\quad
\{(1,\a_2^2),(\text{-}1,\a_1)\}.\]
As $\cM_\psi\neq\Mhb$, for $b=1$ and $b=2$, there is no pair of positive integers $a,b$ such that $\psi^a\sim \phi^b$. 
 The same argument applies on replacing $\phi$ or $\psi$ by $\phi^{-1}$ or $\psi^{-1}$, respectively, so no nontrivial power of $\phi$ is conjugate to a power of $\psi$.
\end{example}
In order to solve the power conjugacy problem for general regular infinite elements of $G_{n,r}$ we require an algorithm which finds all pairs $(a,b)$, within the 
bounds calculated, rather than merely deciding whether or not such a pair exists. This is the algorithm we describe here. It constructs 
a set $\cPC_{RI}$ consisting of triples $(a,b,\rho)$, such that $\rho^{-1}\psi^a\rho=\phi^b$.
\begin{algorithm}\label{alg:pcRI}
Let $\psi$ and $\phi$ be regular infinite elements of $G_{n,r}$.
 \be[\bfseries Step 1:]
\item Construct $A$-bases $X_\psi$ and $X_\phi$ with respect to which $\psi$ and $\phi$ are in quasi-normal form (Lemma \ref{lem:qnf}).
\item Construct   the sets $\cM_\psi$ and $\cM_\varphi$ (see Definition \ref{setMultipliers}). 
\item Calculate the bounds on $\hat a$ and $\hat b$, using equations \eqref{eq:acomp} and \eqref{eq:bcomp}.
\item For all pairs $a,b$ such that $1\le |a| \le \hat a$ and $1\le |b| \le \hat b$, input $\psi^a$ and $\phi^b$ to Algorithm \ref{alg:reginf}. 
If a conjugating automorphism $\rho$ is returned, add $(a, b,\rho)$ to the set  $\cPC_{RI}$. 

\item If $\cPC_{RI}=\emptyset$, output ``No'' and halt. Otherwise output $\cPC_{RI}$. 
\ee
\end{algorithm}
%
 Corollary \ref{AJDCOROLLARYPC} may be strengthened. 
\begin{corollary}\label{cor:pcris}
Given regular infinite elements $\psi, \phi\in G_{n,r}$ there is a finite subset $\cPC_{RI}$ of $\ZZ\times \ZZ\times G_{n,r}$, 
which may be effectively constructed,
such that   $\psi^a\sim \phi^b$ if and only if $a=cg$ and $b=dg$, for some $(c,d,\rho)\in \cPC_{RI}$ and $g\in \ZZ$. Moreover,
for all $(c,d,\rho)\in \cPC_{RI}$ and $g\in \ZZ$, we have  $\rho^{-1}\psi^{cg}\rho=\phi^{dg}$.
\end{corollary}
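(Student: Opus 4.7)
The plan is to verify that the set $\cPC_{RI}$ produced by Algorithm~\ref{alg:pcRI} has the three properties claimed: finiteness (with effective constructibility), a ``scaling clause'' giving the easy direction of the equivalence, and completeness giving the hard direction. I would organize the proof along those three observations, with Proposition~\ref{divisors} doing all the real work.

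First I would establish finiteness and effective constructibility. Algorithm~\ref{alg:pcRI} only tests finitely many pairs $(a,b)$ with $1\le |a|\le \hat a$ and $1\le |b|\le \hat b$, where the bounds are computed from \eqref{eq:acomp} and \eqref{eq:bcomp}; for each pair, Algorithm~\ref{alg:reginf} either outputs a single conjugator or reports failure. Thus $|\cPC_{RI}|\le 4\hat a\hat b$, and the whole procedure is effective because each subroutine invoked (quasi-normal form computation via Lemma~\ref{lem:qnf}, enumeration of semi-infinite $X$-components via Lemma~\ref{9.1H}, and Algorithm~\ref{alg:reginf}) is effective.

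Next I would dispatch the scaling clause and the ``if'' direction together by a single line of algebra. Given $(c,d,\rho)\in\cPC_{RI}$, construction guarantees $\rho^{-1}\psi^c\rho=\phi^d$; raising both sides to the $g$-th power yields
\[
\rho^{-1}\psi^{cg}\rho = (\rho^{-1}\psi^c\rho)^g = (\phi^d)^g = \phi^{dg},
\]
for every $g\in\ZZ$, so in particular $\psi^{cg}\sim\phi^{dg}$ via $\rho$.

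For the ``only if'' direction, I would suppose $\psi^a\sim\phi^b$ with $a,b\neq 0$. After replacing $\psi$ by $\psi^{-1}$ and/or $\phi$ by $\phi^{-1}$ as appropriate, we may assume $a,b>0$; crucially, Remark~\ref{rem:negative_powers} says the bounds $\hat a$ and $\hat b$ are invariant under these substitutions. Proposition~\ref{divisors} then furnishes positive integers $\alpha,\beta,g$ with $a=g\alpha$, $b=g\beta$ and $\psi^\alpha\sim\phi^\beta$, and the calculation \eqref{ag:abnd}--\eqref{eq:bcomp} shows $\alpha\le\hat a$ and $\beta\le\hat b$. Algorithm~\ref{alg:reginf} applied to $(\psi^\alpha,\phi^\beta)$ therefore returns a conjugator $\rho$, producing $(\alpha,\beta,\rho)\in\cPC_{RI}$ with $(a,b)=(\alpha g,\beta g)$; reversing the initial sign substitutions yields the required triple for the original $\psi,\phi$.

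The main obstacle is ensuring that the single pair of bounds $(\hat a,\hat b)$ suffices across all four sign combinations of $(a,b)$. This is precisely what Remark~\ref{rem:negative_powers} secures: because the quantities $|m|$, $m(\G)$ etc.\ appearing in \eqref{eq:acomp}--\eqref{eq:bcomp} are preserved by $\psi\mapsto\psi^{-1}$ and $\phi\mapsto\phi^{-1}$, the iteration range in Step~4 of Algorithm~\ref{alg:pcRI} captures every case without needing to redo the bound computation four times.
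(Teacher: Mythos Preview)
Your proposal is correct and follows essentially the same route as the paper's own proof: finiteness from the algorithm's bounded search, the easy direction by raising $\rho^{-1}\psi^c\rho=\phi^d$ to the $g$th power, and the hard direction by invoking Proposition~\ref{divisors} to extract $(\alpha,\beta)$ within the bounds $\hat a,\hat b$, with the sign cases handled via Remark~\ref{rem:negative_powers}. Your write-up is in fact more careful than the paper's terse version, particularly in spelling out why the single bound pair $(\hat a,\hat b)$ covers all four sign combinations and in making explicit that ``reversing the sign substitutions'' lands you on a triple $(\pm\alpha,\pm\beta,\rho)$ that was actually tested in Step~4 of Algorithm~\ref{alg:pcRI}.
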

\begin{proof}
From Lemma \ref{divisors} and the description of Algorithm \ref{alg:pcRI}, $\cPC_{RI}$ is a finite set and 
it follows that if $\psi^a\sim \psi^b$, for some positive $a,b\in\ZZ$, then $(a/g,b/g,\rho)\in \cPC_{RI}$ and in this
case  $\rho^{-1}\psi^a\rho=\phi^b$. 
Replacing one or other, 
or both, of $\psi$ and $\phi$ by their inverses the same holds, without the constraint that $a,b$ be positive. On the other hand if $(c,d,\rho)$ is in 
$\cPC_{RI}$ then 
$\rho^{-1}\psi^{c}\rho=\phi^{d}$, 
so 
$\rho^{-1}\psi^{cg}\rho=\phi^{dg}$, 
for all $g\in\ZZ$.
\end{proof}
\subsection{The power conjugacy algorithm}\label{powerconjAlgorithmSec}
We combine the algorithms of Sections \ref{torPower} and \ref{regPower} to give an algorithm for the power conjugacy problem in $G_{n,r}$. 
In fact in Sections \ref{torPower} and \ref{regPower} we find a description of all solutions of the power conjugacy 
problem for periodic and regular infinite automorphisms, respectively: and the  algorithm 
in this section does the same for arbitrary elements of $G_{n,r}$.
 
If we are only interested in the existence of a solution to the power conjugacy problem then we may 
essentially ignore the periodic part of automorphisms, as long as the regular infinite part is non-trivial. To see this, 
suppose $\psi$ and $\phi$ are elements of $G_{n,r}$ and we have decompositions 
 $\psi=\psi_{P}*\psi_{RI}$, $\phi=\phi_P*\phi_{RI}$. Assume that we have found that  $V_{RI,\psi}$ is non-trivial 
and $\psi_{RI}^a$ is conjugate to $\phi_{RI}^b$, $a,b\neq 0$. In this case,
$\psi_P$ and $\phi_P$ have finite orders, $m$ and $k$ say, and so we immediately have a solution $\psi^{amk}\sim \phi^{bmk}$, $amk,bmk\neq 0$, of the
power conjugacy problem. 
 The algorithm described below allows this type of solution but also tries to find a
 solution to the power conjugacy problem corresponding to each pair $(c,d)$ such that $\psi_P^c\sim \phi_P^d$. 
Thus, in Theorem \ref{PCG}, we obtain a description of all solutions
to the power conjugacy problem, for $\psi$ and $\phi$. (That is, all pairs $(a,b)$ such that $\psi^a\sim \phi^b$. We do not 
find all possible conjugators $\rho$.)
\begin{algorithm}\label{powerconjAlgorithm}
Let $\psi$ and $\phi$ be elements of $G_{n,r}$.
 \be[\bfseries Step 1:]
\item Run Steps \ref{it:ca1}, \ref{it:ca2} and \ref{it:ca3} of Algorithm \ref{conjAlgorithm}.
\item Input $\psi_{RI}$ and $\phi_{RI}$ to Algorithm \ref{alg:pcRI}. 
\item If $X_{RI,\psi}$ is non-empty (that is, $V_{RI,\psi}$ is non-empty) and $\mathcal{PC}_{RI}$ is empty, output ``No'' and stop.
\item\label{it:pca2}  Compute the orders $k$ and $m$ of $\psi_P$ and $\phi_P$. Input $\psi_P^a$ and $\phi_P^b$ to Algorithm \ref{alg:periodic}, for all
$c,d$ such that $1\le c\le k$ and $1\le d\le m$.  Construct the set
 $\mathcal{PC}_P$ of all triples $(c,d,\rho)$ found such that 
$\rho^{-1}\psi^{c}\rho$ is conjugate to $\varphi^{d}$.  If $X_{RI,\psi}$ is non-empty, 
adjoin the triple $(0,0,\theta_0)$ to $\cPC_P$, where $\theta_0$ is the identity
map of the algebra $V_{n,s_P}$, of Step \ref{it:ca3} of Algorithm \ref{conjAlgorithm}.
\item If $\mathcal{PC}_P$ is empty, output ``No'' and stop. If $\mathcal{PC}_P$ is non-empty and $X_{RI,\psi}$ is empty output 
$\cPC_P$ and stop. 
\item If this step is reached then both $\cPC_P$ and $\cPC_{RI}$ are non-empty.  
For all $(\a,\b, \rho_{RI})$ in $\mathcal{PC}_{RI}$ and all pairs $(c,d,\rho_P)$ in $\mathcal{PC}_P$ 
consider the simultaneous congruences  
\[\a x\equiv c \mod k\textrm{ and } \b x\equiv d\mod m,\]
where $k$ and $m$ are the orders of $\psi_P$ and $\phi_P$ found in Step \ref{it:pca2}. 
For each  positive solution $x=g$ (less than lcm$(k,m)$)  
add  $(\a g,\b g,g,\rho_P\ast \rho_{RI})$ to the set $\cPC$ (which is empty at the start). 
\ee
\end{algorithm}
We verify that this algorithm solves the power conjugacy problem in the proof of the following theorem.
\begin{theorem}\label{PCG}
The power conjugacy problem for the Higman-Thompson group $G_{n,r}$ is solvable. Furthermore, given 
elements $\psi, \phi\in G_{n,r}$, let $\psi_P$ have order $k$, let $\phi_P$ have order $m$ and let $l=\textrm{lcm}(k,m)$. There is a finite subset
$\cPC\subseteq \ZZ^3\times G_{n,r}$, which may be effectively constructed, such that 
$\psi^a\sim \phi^b$ if and only if $(ag/h,bg/h,g,\rho)\in \cPC$, 
where $\rho\in G_{n,r}$ and $g,h\in\ZZ$ such that
$h\equiv g\mod l$, $h|a$ and $h|b$. In this case $\rho^{-1}\psi^{a}\rho=\phi^b$.  
\end{theorem}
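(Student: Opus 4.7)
The plan is to verify that Algorithm \ref{powerconjAlgorithm} terminates, is correct, and produces a set $\cPC$ realising the parameterisation claimed in the theorem; the algorithm's existence then gives decidability. The starting point is Theorem \ref{decomp}, which decomposes $V_{n,r} = V_{P,\psi} \ast V_{RI,\psi}$ and writes $\psi = \psi_P \ast \psi_{RI}$. The first key observation is that for any non-zero integer~$a$, an element of $V_{n,r}$ lies in a finite $\psi$-orbit if and only if it lies in a finite $\psi^a$-orbit, so $V_{P,\psi} = V_{P,\psi^a}$; likewise, Lemma~\ref{AJDLEMMACM} applied to characteristic elements gives $V_{RI,\psi} = V_{RI,\psi^a}$. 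Consequently $\psi^a = \psi_P^a \ast \psi_{RI}^a$ as elements of $G_{n,r}$, and by Theorem~\ref{decomp}.\ref{it:decomp3}, $\psi^a$ is conjugate to $\phi^b$ via $\rho$ if and only if $\rho = \rho_P \ast \rho_{RI}$ with $\psi_P^a \sim \phi_P^b$ via $\rho_P$ and $\psi_{RI}^a \sim \phi_{RI}^b$ via $\rho_{RI}$.

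The two pieces of the problem have already been solved. For the regular infinite part, Corollary~\ref{cor:pcris} produces a finite, effectively constructible set $\cPC_{RI}$ of triples $(\alpha,\beta,\rho_{RI})$ with the property that $\psi_{RI}^a \sim \phi_{RI}^b$ iff $(a,b) = (t\alpha, t\beta)$ for some $t \in \ZZ$ and some triple in $\cPC_{RI}$. For the periodic part, since $\psi_P$ and $\phi_P$ have finite orders $k$ and $m$, the map $a \mapsto \psi_P^a$ factors through $\ZZ/k\ZZ$ and similarly for $\phi_P^b$; so one enumerates all $(c,d)$ with $1 \le c \le k$, $1 \le d \le m$ and applies Algorithm~\ref{alg:periodic} (whose correctness is Proposition~\ref{conjTorsion}) to form a finite set $\cPC_P$. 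An arbitrary pair $(a,b)$ satisfies $\psi_P^a \sim \phi_P^b$ via $\rho_P$ precisely when $(a \bmod k,\, b \bmod m,\, \rho_P) \in \cPC_P$.

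To combine the two parts, fix a triple $(\alpha,\beta,\rho_{RI}) \in \cPC_{RI}$ and a triple $(c,d,\rho_P) \in \cPC_P$, and seek those $t \in \ZZ$ for which the candidate pair $(a,b) = (t\alpha, t\beta)$ simultaneously satisfies $t\alpha \equiv c \pmod{k}$ and $t\beta \equiv d \pmod{m}$. By elementary CRT-style arithmetic, the solution set is either empty or a single congruence class modulo $l = \mathrm{lcm}(k,m)$, so enumerating positive representatives $g$ in $(0,l]$ captures every solution; for each such $g$ we record $(A,B,g,\rho) := (\alpha g,\, \beta g,\, g,\, \rho_P \ast \rho_{RI})$ in $\cPC$. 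The full orbit of pairs realised by this conjugator is $\{(\alpha h, \beta h) : h \equiv g \pmod l\}$, which, writing $a = Ah/g$ and $b = Bh/g$, matches exactly the parameterisation in the theorem; the divisibility conditions $h \mid a$, $h \mid b$ and integrality of $Ah/g$, $Bh/g$ are automatic from $A = \alpha g$, $B = \beta g$.

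The main obstacle is the bookkeeping of edge cases. When $V_{RI,\psi}$ is trivial, the automorphism is purely periodic and the answer comes directly from $\cPC_P$; when $V_{P,\psi}$ is trivial the congruence system is vacuous and $\cPC$ is obtained directly from $\cPC_{RI}$. The mixed case requires the trivial triple $(0,0,\mathrm{id}_{V_{n,s_P}})$ to be adjoined to $\cPC_P$, to capture pairs $(a,b)$ with $k \mid a$ and $m \mid b$ for which the periodic parts become trivial. Negative powers present no additional difficulty: since $(\alpha, \beta, \rho_{RI}) \in \cPC_{RI}$ combined with negative $g \in \ZZ$ already realises opposite-sign pairs (cf.\ Remark~\ref{rem:negative_powers}), and since $t \in \ZZ$ ranges over all integers in the CRT step, every sign combination is covered. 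Termination of the algorithm is clear because $\cPC_{RI}$, $\cPC_P$ and the solution set in $(0,l]$ are all finite.
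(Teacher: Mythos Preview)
Your proposal is correct and follows essentially the same route as the paper's proof: decompose via Theorem~\ref{decomp}, invoke Corollary~\ref{cor:pcris} for the regular-infinite part and the finite enumeration for the periodic part, and then combine by solving the simultaneous congruences $\alpha x\equiv c\pmod k$, $\beta x\equiv d\pmod m$ and recording the resulting quadruples. Your explicit justification that $V_{P,\psi}=V_{P,\psi^a}$ and $V_{RI,\psi}=V_{RI,\psi^a}$ is a useful addition. One small inaccuracy: the solution set of the two congruences need not be a \emph{single} residue class modulo $l$ (for instance when $\gcd(\alpha,k)>1$), but it is certainly periodic modulo $l$, so your conclusion that enumerating $g\in(0,l]$ captures every solution remains valid.
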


\begin{proof}
Apply  Algorithm \ref{powerconjAlgorithm}  to  $\psi$ and $\phi$. 
If there exist $a,b\in\ZZ$ such that $\psi^a\sim \phi^b$ then $\psi_{P}^a\sim  \phi_{P}^b$ and $\psi_{RI}^a\sim  \phi_{RI}^b$. 
In this case let  $\psi_{P}$ and  $\phi_{P}$ have orders
$k$ and $m$, respectively and let $a_1,b_1\in\ZZ$ be such that $1\le a_1<k$ and $1\le b_1<m$ and 
$a_1\equiv a\mod k$,  $b_1\equiv b\mod m$. Then there exists $\rho_P$ such that $(a_1,b_1,\rho_P)\in \cPC_I$.
Furthermore, from  Corollary \ref{cor:pcris}, 
 there exists  $(a_2,b_2,\rho_{RI})\in \cPC_{RI}$ and $h\in \ZZ$ such that $a=a_2h$ and $b=b_2h$. Let $g$ be such that 
$1\le g< \textrm{lcm}(k,m)$, and  $g\equiv h \mod \textrm{lcm}(k,m)$ so $g\equiv h \mod k$ and $g\equiv h\mod m$. As $h$ is a solution to the 
congruences $a_2x\equiv a_1\mod k$ and $b_2x\equiv b_1 \mod m$, it follows that $g$ is also a solution to these congruences. Therefore
 $(a_2g,b_2g,g,\rho_P\ast\rho_{RI})\in \cPC$. As $a_2=a/h$ and $b_2=b/h$,  this is an element of $\ZZ^3\times G_{n,r}$ of the required form. 

Conversely, assume $(u,v,g,\rho_P\ast\rho_{RI})\in \cPC$, where $u=ag/h$ and $v=bg/h$, 
for some $a,h\in \ZZ$ satisfying the hypotheses of the theorem. 
Then there exist $(\a,\b, \rho_{RI})$ in $\mathcal{PC}_{RI}$ and  
$(c,d,\rho_P)$ in $\mathcal{PC}_P$ such that $u=\a g\equiv c\mod k$ and $v=\b g\equiv d\mod m$. 
As $g\equiv h\mod l$ this implies that $a=(u/g)h=\a h\equiv c\mod k$ and $b=(v/g)h=\b h\equiv d\mod m$. 
Therefore 
$\psi_P^a=\psi_P^c\sim \phi_P^d=\phi_P^b$, by definition of $\cPC_P$, and indeed $\rho_P^{-1}\psi_P^a\rho_P=\phi_P^b$.  
Also, $a=\a h$ and $b=\b h$ implies  $\rho_{RI}^{-1}\psi_{RI}^a\rho_{RI}=\phi_{RI}^b$, by Corollary \ref{cor:pcris}, 
so \[\psi^a=(\psi_P\ast \psi_{RI})^a=\psi_P^a\ast \psi_{RI}^a\sim \phi_P^b\ast \phi_{RI}^b=(\phi_P\ast \phi_{RI})^b=\phi^b\] and 
$\rho_P\ast\rho_{RI}$ is a conjugating element.
\end{proof}

Examples which illustrate how the algorithm works on automorphisms which are not necessarily periodic or regular infinite 
can be found at \cite{DMR}: follow the link to ``Examples'' and 
refer to the examples named ``\texttt{mixed\_pconj\_phi}'' and ``\texttt{mixed\_pconj\_psi}''. 

\bibliographystyle{alpha}

\begin{thebibliography}{00}
\bibitem
{AS74} M.~Anshel and P.~Stebe, ``The solvability of the conjugacy problem for certain HNN groups'', {\it Bull. Amer. Math. Soc.}, {\bf 80} (2) (1974) 266--270.

\bibitem
{Bar14} N.~Barker, ``Topics in Algebra:
The Higman-Thompson Group $G_{2,1}$ 
and Beauville $p$-groups'', \emph{Thesis, Newcastle University} (2014)
\bibitem
{BM}J.~M.~Belk and F.~Matucci, ``Conjugacy and dynamics in Thompson's groups'', \emph{Geom. Dedicata} \textbf{169} (1) (2014) 239--261.

\bibitem
{Bez14} N.V.~Bezverkhnii,
``Ring Diagrams with Periodic Labels and Power Conjugacy Problem in Groups with Small Cancellation Conditions C (3) -T (6)'', 
\emph{Science and Education of the Bauman MSTU}, \textbf{14} (11) (2014).

\bibitem
{BK08}
 V.N.~Bezverkhni{\v\i}, A.N.~Kuznetsova,
``Solvability of the power conjugacy problem for words in Artin groups of extra large type'',
\emph{Chebyshevskii Sb.} \textbf{9} (1)
(2008)  50--68.

\bibitem
{Bleaketal} C.~Bleak, H.~Bowman, A.~Gordon, G.~Graham, J.~Hughes, F.~Matucci and J.~Sapir, ``Centralizers in R.Thompson's group $V_{n}$'', \emph{Groups, Geometry and Dynamics} \textbf{7}, No. 4 (2013), 821--865.

\bibitem
{BMOV} O.~Bogopolski, A.~Martino, O.~Maslakova and E.~Ventura, 
``The conjugacy problem is solvable in free-by-cyclic groups'',
\emph{Bulletin of the London Mathematical Society},
\textbf{38},
(10) (2006) 787--794.


\bibitem
{B} M.~G.~Brin, ``Higher dimensional Thompson groups'', {\em Geom. Dedicata}, {\bf 108} (2004) 163--192.

\bibitem
{Br} K.~S.~Brown, ``Finiteness properties of groups'', Journal of Pure and Applied Algebra, {\bf 44} (1987) 45-75.

\bibitem
{BCR} J.~Burillo, S.~Cleary and C.~E.~R\"over, ``Obstructions for subgroups of Thompson's group $V$'', \url{arxiv.org/abs/1402.3860}

\bibitem
{CFP} J.~W.~Cannon,~W.J.~Floyd and W.~R.~Parry, ``Introductory notes on Richard Thompson's groups'', {\em Enseign. Math.}, (2) {\bf 42}(3--4) (1996) 215--256. 


\bibitem
{Cohn} P.~M.~Cohn, ``Universal Algebra''. Mathematics and its Applications, 6, D. Reidel Pub. Company, (1981).

\bibitem
{cohnAlgebra3} P.~M.~Cohn, ``Algebra, Volume 3''. J. Wiley, (1991).

\bibitem
{C77} L. P. J. Comerford, A note on power-conjugacy, {\em Houston J. Math.} 3 (1977), no. 3, 337---341.

\bibitem
{DMP} W.~Dicks, C.~Martinez-P\'{e}rez, ``Isomorphisms of Brin-Higman-Thompson groups'', \emph{Israel Journal of Mathematics}, \textbf{199} (2014), 189--218.


\bibitem
{Fes14} A.V.~Fesenko, ``Vulnerability of Cryptographic Primitives Based on the Power Conjugacy Search Problem in Quantum Computing'', 
\emph{Cybernetics and Systems Analysis},
 \textbf{50} (5)
(2014)
815--816.


\bibitem
{Higg} G.~Higman, ``Finitely presented infinite simple groups'',  {\em Notes on Pure Mathematics}, Vol.~8 (1974). 


\bibitem
{JT} B.~J\'{o}nsson and A.~Tarski, ``On two properties of free algebras'', {\em Math. Scand.}, {\bf 9} (1961) 95--101.

\bibitem
{KA09}
D.~Kahrobaei and 
M.~Anshel,
``Decision and Search in Non-Abelian
Cramer-Shoup Public Key Cryptosystem'', 
\emph{Groups-Complexity-Cryptology},
\textbf{1} (2) (2009) 217-–225.

\bibitem
{LM71} S.~Lipschutz and  C.F.~Miller, ``Groups with certain solvable and unsolvable decision problems'', {\em Comm. Pure Appl. Math.}, \textbf{24}
(1971) 7--15. 

\bibitem
{Lothaire} M.~Lothaire, ``Combinatorics on Words'', Addison-Wesley, Advanced Book Program, World Science Division, (1983).

\bibitem
{MPN} C.~Martinez-Perez, B.~Nucinkis, ``Bredon cohomological finiteness conditions for generalisations of Thompson's groups'', {\em Groups Geom. Dyn.} {\bf 7} (4) (2013) 931--959.

\bibitem
{MT} R.~McKenzie and R.~J.~Thompson, ``An elementary construction of unsolvable word problems in group theory'', Word problems: decision problems and the Burnside problem in group theory, Studies in Logic and the Foundations of Math., 71, pp. 457--478. North-Holland, Amsterdam, (1973).

\bibitem
{P} E.~Pardo, ``The isomorphism problem for Higman-Thompson groups'', \emph{Journal of Algebra}, \textbf{344} (2011), 172--183.

\bibitem
{P08} S.J.~Pride, ``On the residual finiteness and other properties of (relative) one-relator groups'', {\em Proc. Amer. Math. Soc.} {\bf136} (2) (2008) 377--386.

\bibitem
{DMR} D.~M.~Robertson, ``{\tt thompson}: a package for Python 3.3+ to work with elements of the Higman-Thompson groups $G_{n,r}$''. Source code available from \url{https://github.com/DMRobertson/thompsons\_v} and documentation available from \url{http://thompsons-v.readthedocs.org/}.

\bibitem
{Olga}O.~P.~Salazar-Diaz, ``Thompson's group V from a dynamical viewpoint'', {\em Internat. J. Algebra Comput.}, 1, 39--70, 20, (2010).

\bibitem
{RT} R.~J.~Thompson, unpublished notes.\\ \url{http://www.math.binghamton.edu/matt/thompson/index.html}
\end{thebibliography}

\end{document}